\documentclass[11pt]{amsart}   	
\usepackage[top=1in,bottom=1in,left=1in,right=1in]{geometry}                		

\usepackage{xcolor}

\usepackage[unicode=true,colorlinks=true,linkcolor=blue,urlcolor=blue, citecolor=blue]{hyperref}
\usepackage{multirow}
\usepackage{array}

\usepackage{cellspace}
\usepackage{todonotes}

\usepackage{enumerate}
\usepackage{amssymb,amsfonts,amsthm,amsmath, mathrsfs}
\usepackage{thmtools}

\usepackage{graphicx}
\usepackage{xypic}
\usepackage{hhline}

\usepackage{colordvi}
\usepackage{caption}

\usepackage{amscd}
\usepackage{xcolor}

\usepackage{rotating}

\numberwithin{equation}{section}
\declaretheorem[style=plain,parent=section]{theorem}
\declaretheorem[style=plain,sibling=theorem]{corollary}
\declaretheorem[style=plain,sibling=theorem]{lemma}

\declaretheorem[style=plain,sibling=theorem]{proposition}

\declaretheorem[style=definition,sibling=theorem]{definition}

\declaretheorem[style=definition, qed=\hfill $\diamond$, sibling=definition]{example}
\declaretheorem[style=remark,sibling=theorem]{remark}

\newcommand{\Trop}{\text{Trop}}
\newcommand{\trop}{\text{trop}}

\newcommand \mult{\operatorname{mult}}

\newcommand{\RR}{\mathbb{R}}
\newcommand{\CC}{\mathbb{C}}

\newcommand{\cP}{\mathcal{P}}
\newcommand{\cQ}{\mathcal{Q}}

\newcommand{\sC}{\mathscr{C}}

\newcommand{\pr}{\mathbb{P}}
\newcommand{\PS}{\CC\{\!\{t\}\!\}}

\newcommand{\ww}{\omega}

\newcommand{\TP}{\mathbb{T}}
\newcommand{\TPr}{\TP\pr}

\newcommand{\K}{\ensuremath{\mathbb{K}}}

\newcommand{\resK}{\ensuremath{\widetilde{\K}}}

\newcommand{\Star}{\ensuremath{\operatorname{Star}}}

\newcommand{\Dn}[1]{\ensuremath{D_{#1}}}

\newcommand {\dunion}{\,\mbox {\raisebox{0.25ex}{$\cdot$} \kern-1.83ex $\cup$}
  \,}

\newcommand{\sextic}{\ensuremath{f}} 
\newcommand{\lenl}{\ensuremath{l}} 
\newcommand{\tclass}{\ensuremath{\Omega}} 
\newcommand{\bclass}{\ensuremath{\tclass^b}} 
\newcommand{\bclassns}{\ensuremath{\bclass_{ns}}} 
\newcommand{\tR}{\ensuremath{\mathscr{T}}} 

\makeatletter
\@namedef{subjclassname@2020}{%
 \textup{2020} Mathematics Subject Classification}
\makeatother

\title[A partition theorem for tropical tritangent classes]{A lifting partition theorem for tropical tritangent classes to smooth space sextic curves}

\author[M.A.~Cueto, H.~Markwig and Y.~Ren]{Mar\'ia Ang\'elica Cueto, Hannah Markwig and Yue Ren}
\date{\today} 

\keywords{Tropical geometry, space sextic curves, tritangent classes, lifting multiplicities, polyhedral complexes}
\subjclass[2020]{14T15, 14T25, 14H50 (primary), 14Q05, 14P99 (secondary)}

\begin{document}

\begin{abstract}
  The set of tritangent planes to smooth tropical space sextic curves has 15 connected components, recording continuous displacements of planes preserving the tritangency condition. These 15 tritangent classes are polyhedral complexes in $\RR^3$, and each of them contains the tropicalization of precisely eight tritangent planes to any smooth space sextic curve with the given tropicalization. 

  Prior joint work of the authors with Len confirms that each tropical tritangent plane has 0, 1, 2, 4 or 8  lifts to classical tritangent planes defined over the algebraic closure of the field over which the original algebraic curve is defined. Our main theorem states that when the input classical curve is generic, then only six out of the ten possible partitions of 8 into powers of 2 arise from lifting multiplicities of tritangent classes. Furthermore, we show that these partitions are completely determined by the dimension of a suitable connected subcomplex of the class  and the existence of a member with a tropical tangency  of a predetermined combinatorial type.
\end{abstract}

\maketitle

\section{Introduction}\label{sec:introduction}

The classical question of counting the number of tritangent planes to a smooth,   space sextic complex curve $C$ dates back to work of Clebsch and Coble~\cite{Clebsch, Coble}. By working over a non-Archimedean, non-trivially valued algebraically  closed field $\K$ with residue field of characteristic not 2 or 3 (such as $\PS$) these questions can be addressed using tropical geometry methods~\cite{CLMR25Lifting,HL17}.

 Even though the tropical count of tritangent planes to these curves is infinite, the collection of all tropical tritangent planes to a given smooth tropical space sextic curve $\Gamma$ can be grouped into 15 equivalence  classes, corresponding to continuous translations of these planes within this collection~\cite{HL17}. Jensen and Len's work~\cite{JL18} confirms that each tritangent class contain the tropicalization of exactly eight classical tritangents, which recovers the classical count.
 
In~\cite{CLMR25Lifting}, the present authors and Len revisited this question from a computational perspective with three goals in mind: determine which tritangent planes to $\Gamma$ lift to classical tritangent planes, decide their  lifting multiplicities and provide an algorithm to compute these lifts. 
In particular,~\cite[Theorem 1.2]{CLMR25Lifting} shows that these individual lifting multiplicities are  either 0, 1, 2, 4, or 8. In turn, these numbers can be determined by the local structure of $\Gamma$ at each tropical tangency point.

Since each tritangent class has eight lifts in total, it is natural to ask which combinations of non-zero lifting multiplicities can occur within a tritangent class to a fixed  $\Gamma$. This information, which we refer to as the \emph{lifting partition} of the input class, is recorded as a quadruple, indicating the number of members with lifting multiplicities 1, 2, 4 and 8, respectively.

Our first main result gives a precise answer to this question, under mild genericity assumptions.

\begin{theorem}   \label{thm:partitionThm}
Assume that the input classical curve is general relative to its tropicalization  $\Gamma$. Then, there are six possible values for the lifting partition of a given tritangent class to $\Gamma$, namely $(8,0,0,0)$, $(4,2,0,0)$,  $(0,2,1,0)$, $(0, 4, 0, 0)$, $(0,0,2,0)$ or $(0,0,0,1)$.
\end{theorem}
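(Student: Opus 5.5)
The plan is to combine three inputs: the local lifting-multiplicity rules of \cite[Theorem 1.2]{CLMR25Lifting}, the structure of tritangent classes as polyhedral complexes from \cite{HL17}, and the global count of eight lifts per class from \cite{JL18}.

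First I will restrict attention to the \emph{lifting locus}: the members of a class $\tclass$ with nonzero lifting multiplicity. By \cite{CLMR25Lifting}, a tropical tritangent $\Lambda$ lifts only if its three tangency points are of specific local types. Each tangency type contributes a factor $1$, $2$ or $4$ to the multiplicity, and the product lies in $\{1,2,4,8\}$. Under the genericity hypothesis I expect the lifting members to be finite in number. They should also be organized by a connected subcomplex $\bclass$ of $\tclass$, namely the locus where the tangency points sit at vertices or at non-generic positions of $\Gamma$, as the abstract suggests. Multiplicity $8$ occurs only when all three tangencies carry a factor $2$. I will argue that it forces $\bclass$ to be a single point, so a class containing a multiplicity-$8$ member has partition $(0,0,0,1)$.

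Next I will do a case analysis on $\dim \bclass\in\{0,1,2,3\}$. In each case I will use the local models of \cite{HL17} for how a tangency point moves within the class: along an edge, at a vertex, or as a multiplicity-two intersection. The goal is to list which combinations of tangency types can coexist among the extremal members of $\bclass$. Genericity will exclude coincidences that would merge two tangency points or create extra symmetric lifts.

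Some partitions are excluded by a symmetry argument. The set of lifts carries an action of the $\mathbb{Z}/2$ monodromy coming from the square roots appearing in the local lifting equations. This shows that members of multiplicity $2^k$ come in configurations compatible with a $2$-group acting on the $8$ lifts. I expect this to eliminate the four partitions $(6,1,0,0)$, $(2,3,0,0)$, $(4,0,1,0)$ and $(2,1,1,0)$. Concretely, each must contradict either the connectedness of $\bclass$ or the existence criterion stating that a member with a prescribed combinatorial tangency type exists exactly when some multiplicity-$4$ member exists.

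The key remaining claim is that multiplicity $1$ members either account for all eight lifts or exactly four of them. I will try to derive it from a parity count along the edges of $\bclass$: moving across an edge swaps one tangency type, which halves or doubles the local factor.

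Finally, realizability of the six partitions will be shown by explicit examples, for instance via the honeycomb and other tropical sextics computed in \cite{CLMR25Lifting}. The main obstacle is the exclusion step, ruling out the four partitions above. This needs a global combinatorial classification of the possible shapes of $\bclass$, not just local information. I would first attempt an exhaustive enumeration of the tangency-type configurations along $\bclass$ using the finite list of local types from \cite{CLMR25Lifting}.
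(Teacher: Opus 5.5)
You correctly list the four partitions of $8$ into powers of $2$ that have to be excluded: $(6,1,0,0)$, $(2,3,0,0)$, $(4,0,1,0)$ and $(2,1,1,0)$. However, the mechanism you propose for excluding them cannot work, and that exclusion is the whole content of the theorem.

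A lifting multiplicity is the size of a fibre of tropicalization restricted to the eight classical tritangents in $\tclass$. Any group acting on these eight lifts compatibly with tropicalization (Galois, monodromy, or sign choices in the local quadratic systems) preserves every fibre. Each fibre is then a union of orbits, and no restriction on the fibre sizes follows. For instance, $(6,1,0,0)$ is compatible with the trivial action, or with an involution swapping the two lifts over the multiplicity-two member.

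The ``parity count along edges of $\bclass$'' is not an argument either. Local factors change only at isolated members, and nothing in your plan controls how many such members occur or how their factors combine.

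Your fallback existence criterion is misquoted and circular. In the paper the criterion is the presence of a member of $|\bclassns|$ with a type (4b) tangency, not of a multiplicity-four member. Classes with partition $(0,0,2,0)$ have multiplicity-four members but no (4b) member, while $(4,2,0,0)$ classes have a (4b) member but no multiplicity-four member. Moreover, this criterion is an output of the classification, not an independent input.

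What is missing is the global analysis that replaces your exclusion step. The paper proceeds as follows.
\begin{enumerate}
\item It discards the cells of $\tclass$ whose members cannot lift by genericity (\autoref{pr:unbounded}, \autoref{lm:unboundedtranslations}), obtaining $\bclass$, which is a strong deformation retract of $\tclass$.
\item It removes the special cells, where two tangencies lie on one leg; these do not lift by condition (iii) of \autoref{def:fgenericRelToGamma}. This gives $\bclassns$, which is shown to be path-connected.
\item It classifies the generic members of top-dimensional cells (\autoref{tab:combClassificationGenericPerDim}).
\item For each of these finitely many configurations, it follows explicit local moves (the parallelogram constructions and the moving lemmas of \autoref{sec:technicalLemmasDim1-2}) to locate every liftable member and its multiplicity.
\end{enumerate}

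The case split must be on $\dim\bclassns$, not on $\dim\bclass$ as you propose. The two can differ (tuples $(0,1,1)$ and $(1,2,2)$ in \autoref{thm:dimComparison}). In that situation every top-dimensional cell of $\bclass$ is special and contributes no lifts, so reading off tangency types from ``extremal members of $\bclass$'' gives wrong information.

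For the same reason, your claim that a multiplicity-eight member forces $\bclass$ to be a point is false: $\tclass_4$ in \autoref{ex:0etc} has $\bclass$ a segment. The claim is also unnecessary, since eight lifts per class give $(0,0,0,1)$ immediately.

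Two further corrections:
\begin{enumerate}
\item Multiplicity eight also arises from a single type (8) tangency, or from (3f) times a factor two, not only from three factors of two.
\item $\bclass$ is not the locus where tangency points sit at vertices of $\Gamma$. It is a typically positive-dimensional connected region containing the finite set of liftable members, and its connectedness is what makes the search by local moves exhaustive.
\end{enumerate}
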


The proof of this statement is inspired by prior works of the first two  authors and Len on effective methods for lifting tropical bitangent lines to smooth plane tropical quartic curves~\cite{CM20, LM17}. In order to perform a systematic continuous search of liftable tritangents within a given class, it becomes essential to know the structure of these sets. Our results confirm that, unlike what occured in the quartic case, a complete classification of these classes is not need to accomplish our task for sextic curves.

Basic duality between $\RR^3$ and the space of non-degenerate tropical planes in $\RR^3$ identifies each such plane with the location of its unique vertex (up to sign).
Our second main result  describes the combinatorics underlying each tritangent class, viewed as subsets of $\RR^3$:

\begin{theorem} \label{thm:polyhedralStructure} Each tritangent class $\tclass$ to $\Gamma$ is a connected rational polyhedral complex in $\RR^3$. In general, these sets do not admit a coarsest structure as polyhedral complexes. 
  \end{theorem}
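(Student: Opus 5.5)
We will prove the three assertions of Theorem~\ref{thm:polyhedralStructure} separately: the class is polyhedral, it is connected, and it need not have a coarsest polyhedral structure.

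\emph{Polyhedrality.} We work in the dual picture, identifying a tropical plane $\Lambda$ with its vertex $v\in\RR^3$. The tropical sextic $\Gamma$ is a finite balanced rational polyhedral graph. For a fixed combinatorial type we record the cell of $\Lambda$ containing each edge or vertex of $\Gamma$, and which cells of $\Gamma$ meet which cells of $\Lambda$. Within such a type, every intersection point $\Gamma\cap\Lambda$ moves affinely in $v$. The tangency conditions of~\cite{HL17} are of two kinds: each tangency component carries multiplicity $2$ (or the divisor $\Gamma\cdot\Lambda$ is $2$ times an effective divisor of degree~3 up to linear equivalence), and the relevant components must lie in prescribed positions. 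Fixing the type, each condition becomes a finite union of rational linear equalities and inequalities in $v$.

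The set of planes of a given type is itself a relatively open rational polyhedron, because it is a cell of the common refinement of the arrangement dual to $\Gamma$. Consequently the set of tritangent vertices is a finite union of rational polyhedra. It can be refined to a polyhedral complex, for instance by intersecting with the common refinement of all type cells and all the hyperplanes that appear. The class $\tclass$ is a connected component of this set. It is therefore a union of cells of the complex, closed under taking faces, and hence a subcomplex.

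\emph{Connectedness.} This is built into the definition of a class as a connected component, the classes being equivalence classes under continuous translation~\cite{HL17}. We will also need the fact that the closure of a component is still a tritangent set. This follows because the tangency conditions are closed: a limit of tritangent planes remains tritangent, since degenerations of tangency points are recorded by the local multiplicity rules.

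\emph{No coarsest structure.} Here we need an explicit example, namely a sextic $\Gamma$ and a class $\tclass$ with the following two features:
\begin{itemize}
\item $\tclass$ contains a two- or three-dimensional region whose support is a non-convex polyhedral set, or a set whose natural cells overlap non-compatibly;
\item $\tclass$ admits two different subdivisions neither of which refines the other, with no common coarsening.
\end{itemize}
The standard mechanism is a support in which two maximal convex pieces overlap in a full-dimensional region. An L-shaped planar set is the typical case: it can be cut into two rectangles in two different ways, and the two cuttings have no common coarsening. We would try to construct such a set by placing $\Gamma$ so that one tangency can slide along an edge while another slides along a different edge, with the sliding ranges constrained by each other.

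The main obstacle is the example. Finding an honest sextic realizing such a non-convex two-dimensional piece, and verifying it, is the step we expect to be hardest. We would do it computationally, enumerating the types near a chosen $\Gamma$, much as in~\cite{CLMR25Lifting}.
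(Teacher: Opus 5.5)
For the first assertion you take a different route from the paper. The paper argues locally. Proposition~\ref{pr:localMovesTritangents} works through the tangency types one by one, describing the region of local moves as a union of relatively open polyhedra cut out by hyperplanes that depend only on $\Gamma$. Corollary~\ref{cor:TritangentsArePolyhedralComplexes} then adds closedness.

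You argue globally instead. The combinatorial type of $\Lambda\cap\Gamma$ is constant on the cells of a finite rational hyperplane arrangement in parameter space, tritangency depends only on the type, and a closed union of relatively open cells is a subcomplex. This is valid and shorter. The paper's case analysis buys something in exchange: the local dimensions and the tangency types allowed in each cell, which it uses throughout.

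Your version needs three repairs.
\begin{enumerate}
\item The condition to impose is Definition~\ref{def:tropicalTritangents}: the components of $\Lambda\cap\Gamma$ have stable multiplicities $(2,2,2)$, $(2,4)$ or $(6)$. It is not ``each component has multiplicity $2$''. This condition visibly depends only on the type.
\item Drop the alternative ``$2D$ up to linear equivalence''. All divisors $\Gamma\cdot\Lambda$ are linearly equivalent, so that condition does not constrain $\Lambda$ at all.
\item The class $\tclass$ lives in the coordinates $(v_0,\lenl)$, not at the plane vertex. Polyhedrality transfers through \eqref{eq:TropSegre} only after refining along $x+y=z$.
\end{enumerate}

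The genuine gap is the second assertion. It is an existence claim, so you must exhibit a class with no coarsest structure, and your proposal offers only a search plan. The paper needs no search: the witness comes out of Lemma~\ref{lm:case2Dim3}. There, when the slope-one edge $e=\overline{CC'}$ carrying the diagonal tangency meets $\cP_0$ or $\cP_1$, the class acquires lower-dimensional cells outside the $3$-polytope $\cQ$ (top row of Figure~\ref{fig:extraCellDim3HV}). Remark~\ref{rem:coarsestStructure} draws the conclusion from this.

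The obstruction there comes from lower-dimensional cells glued to the convex polytope $\cQ$, that is, from non-purity. It does not come from a non-convex full-dimensional piece, and this mechanism is easy to certify. Suppose a lower-dimensional cell meets a convex top-dimensional piece in a set that is not a union of its faces. Then that piece must be subdivided, and this can be done in at least two distinct minimal ways, for instance by different planes through an attaching chord of a facet. Two distinct minimal structures already rule out a coarsest one.

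Your L-shape reasoning is also wrong as stated. The two cuttings into rectangles are not face-to-face, since the short rectangle's edge is only part of an edge of the long one, so they are not polyhedral complex structures. The correct minimal structures are cuts from the reflex vertex into two convex pieces sharing a full edge. These come in a one-parameter family, so the L-shape does lack a coarsest structure, but for this reason.
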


As in the quartic case, the genericity of the input sextic curve relative to $\Gamma$ ensures that not all cells of a given tritangent class $\tclass$ to $\Gamma$ contribute liftable members. For this reason, after refining the given structure on $\tclass$ if necessary, we can peel off non-contributing cells and build a connected bounded subcomplex $\bclass \subseteq \tclass$ with the same lifting partition as $\tclass$ but a simpler structure. Most notably, when $\Gamma$ has generic edge lengths, this subcomplex yields a refined version of~\autoref{thm:partitionThm}:

\begin{theorem}\label{thm:refinedPartitionGeneric}
Assume the input classical curve is generic relative to $\Gamma$. If $\Gamma$ has generic edge lengths, then the lifting partition of a tritangent class $\tclass$ to $\Gamma$ is restricted by the dimension of the associated bounded complex $\bclass$. The precise conditions are given in~\autoref{tab:classificationPartitionsGeneric}. No length restrictions need to be imposed on $\Gamma$ if $\tclass$ is either discrete or full-dimensional. \end{theorem}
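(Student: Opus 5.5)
We would prove \autoref{thm:refinedPartitionGeneric} by combining the local lifting rules of~\cite[Theorem 1.2]{CLMR25Lifting} with a case analysis on the dimension $d=\dim\bclass\in\{0,1,2,3\}$. The global constraint is that the lifting multiplicities of the members of $\tclass$ sum to eight~\cite{JL18}.

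\emph{Step 1: reduction to $\bclass$.} We first build $\bclass$. Starting from the polyhedral structure of \autoref{thm:polyhedralStructure}, we refine it so that on each open cell the combinatorial type of every tropical tangency point is constant. This type records whether the tangency is at a vertex or on an edge of $\Gamma$, the local intersection multiplicity, and the relative position of the vertex of the plane. By~\cite{CLMR25Lifting}, the lifting multiplicity of a member depends only on these local types. The genericity of the classical curve relative to $\Gamma$ then forces liftable members to be isolated: the lifting equations at the tangency points impose independent conditions, so a cell of positive dimension contains liftable members only at finitely many special positions, which are the vertices of the refined cell. Unbounded cells and cells none of whose vertices lift can be peeled off. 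What remains is a connected bounded subcomplex $\bclass$ whose liftable members all lie among its vertices. We then show that the lifting partition of $\bclass$ equals that of $\tclass$.

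\emph{Step 2: dimension versus local types.} For each $d$ we determine which local tangency types can occur at vertices of $\bclass$. If $d=0$, the class is a single plane with three tangencies, and all eight lifts sit on it. The local multiplicities $1,2,4,8$ multiply over the three tangency points, so the partition is $(0,0,0,1)$; no edge-length hypothesis is needed. If $d=3$, the full-dimensional cells force all three tangencies to be of the simplest transverse kind. Each vertex then lifts with multiplicity one, which yields $(8,0,0,0)$; again this uses no length genericity. For $d=1$ and $d=2$, we would enumerate the possible moves of the plane vertex along directions of $\Gamma$ that preserve tritangency. Each one-dimensional direction corresponds to one tangency point sliding along an edge or through a vertex of $\Gamma$. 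Counting how many moving tangencies terminate at multiplicity-two or multiplicity-four configurations distributes the eight lifts as $(4,2,0,0)$, $(0,4,0,0)$, $(0,2,1,0)$ or $(0,0,2,0)$, according to \autoref{tab:classificationPartitionsGeneric}.

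\emph{Step 3: the role of generic edge lengths.} In the intermediate dimensions, coincidences among the edge lengths of $\Gamma$ could make two sliding tangencies reach special positions simultaneously. Such a coincidence would merge vertices of $\bclass$, or change its dimension without changing the partition. Assuming generic lengths rules out these degeneracies, so each endpoint of a one-dimensional cell carries exactly one special tangency.

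The main obstacle is Step 2 for $d=1,2$, namely proving that the listed partitions are the only ones consistent with a given dimension. Our first attempt would be a parity and connectivity argument. Moving along an edge of $\bclass$ changes exactly one local type, which halves or doubles one factor of the multiplicity. Combined with the total of eight and the connectedness of $\bclass$, this should exclude the four partitions of $8$ into powers of $2$ that \autoref{thm:partitionThm} rules out.
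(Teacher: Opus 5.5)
There is a genuine gap. The core of the theorem is Step~2 for $d\le 2$, and to a lesser extent $d=3$. There you assert the outcome and offer a heuristic that cannot work.

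\emph{Step~2 and the parity heuristic.} Multiplicities do not halve or double along cells of $\bclass$. A generic member of a positive-dimensional cell carries a type (1a) tangency of local multiplicity $0$, and positive multiplicity appears only at isolated positions. For instance, in the $1$-dimensional classes of \autoref{pr:3fOr3a3c}, the members in the interior of the relevant segment have multiplicity $0$, while the two liftable ones both have multiplicity $4$. So connectedness plus the total count of $8$ imposes no relation between different liftable members. Your argument does not exclude $(6,1,0,0)$, $(2,3,0,0)$, $(4,0,1,0)$ or $(2,1,1,0)$. It does not explain why the dimension fixes the row of \autoref{tab:classificationPartitionsGeneric}, e.g.\ why a $2$-dimensional $\bclass$ cannot give $(8,0,0,0)$ or $(0,0,2,0)$. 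Nor does it detect that $(4,2,0,0)$ and $(0,2,1,0)$ occur exactly when a member with a (4b) tangency is present.

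\emph{The case $d=3$.} Knowing that a generic member has three (1a) tangencies does not give multiplicity one at the liftable members. Boundary members may carry diagonal (4a)/(6a) or (4a') tangencies, whose local multiplicity is $\mu\in\{1,2\}$ (\autoref{rm:4a6a4ap6ap}). You must also rule out extra liftable members in lower-dimensional cells of $\tclass$ lying outside the $3$-cell. This is what the polytope constructions in \autoref{lm:case1Dim3} and \autoref{lm:case2Dim3} do. For lower dimensions, the paper first classifies the generic members of top-dimensional cells (\autoref{thm:dim210}, \autoref{tab:combClassificationGenericPerDim}). It then tracks the liftable members configuration by configuration, using degenerate-parallelogram constructions and the movement lemmas of \autoref{sec:technicalLemmasDim1-2}. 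Your proposal has no substitute for this.

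\emph{Where edge-length genericity enters.} Step~3 misplaces this hypothesis, and that is the second missing idea. The obstruction is not merging vertices but \emph{special} tritangents (\autoref{def:specialTritangents}), which have two tangencies on the same leg of $\Lambda$. By condition (iii) of \autoref{def:fgenericRelToGamma} they never lift. Yet they can fill cells of $\bclass$ of dimension one or two that exceed the dimension of the rest of the complex.

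\emph{The paper's route.} The paper proves the classification in terms of the non-special complex $\bclassns$ (\autoref{thm:refinedLiftingPartition}). The path-connectedness of $\bclassns$ (\autoref{thm:pathConnectednessOfNSComplexes}) justifies starting the local-move searches from a single generic member. Only then does it show $\dim\bclassns=\dim\bclass$ for generic lengths (\autoref{thm:comparingDimensions}); genericity is used to force the outer tangency on the special leg to be of type (1a). Without it, the dimension tuples $(0,1,1)$ and $(1,2,2)$ of \autoref{thm:dimComparison} occur (\autoref{ex:0etc}). The table then genuinely fails, e.g.\ $\dim\bclass=1$ with lifting partition $(0,0,0,1)$.

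\emph{The extreme cases and Step~1.} Your $d=0$ argument is fine once all liftable members are known to lie in the connected complex $\bclass$ of \autoref{def:boundedComplex}. The length-free claim for $d=3$ comes from maximal special cells having dimension at most two (\autoref{pr:dimSpecialCells}), which you have not identified. Finally, Step~1 replaces that $\bclass$ by a different object, obtained by peeling off cells with no liftable vertex. Its dimension need not match that of $\bclass$, and it need not be connected. In the situation of \autoref{cor:1And4ax2Class}, $\bclassns$ can be a chain of several segments whose only liftable members are its two $1$-valent vertices. Your peeling would then delete the middle segments.
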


            \begin{table}[tb]
              \begin{tabular}{|c|c||c|c|}
                \hline
                $\dim \bclass$ & Lifting partition of $\tclass$ &
                $\dim \bclass$ & Lifting partition of $\tclass$\\
               \hline\hline
                $3$ & $(8,0,0,0)$&
                $2$ & $(4,2,0,0)$ or $(0,4,0,0)$ \\\hline
                $1$ & $(0,2,1,0)$ or $(0,0,2,0)$ &
                $0$ & $(0,0,0,1)$\\
                \hline
              \end{tabular}
              \caption{Classification of possible lifting partitions of a tritangent class to a tropical smooth sextic curve $\Gamma$  in $\TPr^3$ with generic edge lengths.\label{tab:classificationPartitionsGeneric}}
            \end{table}

A similar result can be stated with no length restrictions when $\tclass$ has arbitrary dimension, and we do so in~\autoref{thm:refinedLiftingPartition}. In such cases, the role of $\bclass$ is replaced by a second connected subcomplex $\bclassns \subseteq \bclass$, which we call the  non-special bounded complex. Its construction is described in~\autoref{sec:non-special-bounded}. \autoref{thm:comparingDimensions} confirms that both subcomplexes have the same dimension for generic edge lengths, allowing us to obtain~\autoref{thm:refinedPartitionGeneric} as a direct corollary to~\autoref{thm:refinedLiftingPartition}. \autoref{thm:dimComparison} provides the list of 11 possible tuples recording the dimensions of these three complexes, confirming that the roles of $\bclassns$ or $\bclass$ in both versions of the refined lifting partition theorem cannot be played by  $\tclass$.

The rest of the paper is organized as follows. In~\autoref{sec:preliminaries}, we review the construction of tritangent classes to a given curve $\Gamma$, the classification of local tangencies, and the main lifting results of~\cite{CLMR25Lifting}. In particular, we review how Clebsch and Coble's classical question and its tropical counterpart can be reduced to the search of tritangent $(1,1)$-curves to smooth $(3,3)$-curves in $\pr^1\times \pr^1$ via the classical Segre embedding. \autoref{ex:realizableLifts} confirms all tuples listed in~\autoref{thm:partitionThm} can be realized for convenient choices of $\Gamma$.

In~\autoref{sec:local-moves} we present a proof of the first claim in~\autoref{thm:polyhedralStructure} by describing local moves that preserve a given tangency on $\Gamma$. \autoref{sec:polyhedra-bounded-complex} is devoted to the construction of the bounded complex $\bclass$ associated to a given tritangent class $\tclass$. \autoref{cor:bcomplexDeformation} confirms the connectivity of $\bclass$ by realizing its support as a strong deformation retract of the support of $\tclass$. In~\autoref{sec:proof-lift-part-3d} we present a proof of~\autoref{thm:refinedPartitionGeneric} when the bounded subcomplex $\bclass$ is full-dimensional. \autoref{rem:coarsestStructure} provides examples of tritangent classes with no  coarsest polyhedral complex structure, confirming the second claim in~\autoref{thm:polyhedralStructure}.

          {Sections}~\ref{sec:comb-bound-compl} through~\ref{sec:proof-lift-part-1d-or-less} are devoted to the proof of~\autoref{thm:refinedPartitionGeneric} beyond dimension 3. \autoref{tab:combClassificationGenericPerDim} summarizes the content of~\autoref{lm:4valentSingleton} and~\autoref{thm:dim210}, describing all possible generic members of top-dimensional cells of the complex $\bclass$. As expected, each of them will be the starting point in the search of liftable members within $\bclass$.
In~\autoref{sec:non-special-bounded} we introduce the non-special bounded complex $\bclassns$, show its connectedness (\autoref{thm:pathConnectednessOfNSComplexes}), and determine under which conditions both complexes $\bclassns$ and $\bclass$ have the same dimension (\autoref{thm:comparingDimensions}). The main result in this section is \autoref{thm:refinedLiftingPartition}, which generalizes~\autoref{thm:refinedPartitionGeneric}.~\autoref{sec:technicalLemmasDim1-2} complements the findings from~\autoref{tab:combClassificationGenericPerDim} by describing, through a series of combinatorial lemmas,  which trajectories within each complex $\bclassns$ must be pursued to find liftable members. Finally, 
{Sections}~\ref{sec:proof-lift-part-2d-or-less} and~\ref{sec:proof-lift-part-1d-or-less} establish the proof of the general lifting partition theorem when the complex $\bclassns$ is not full-dimensional.

The paper concludes with  \autoref{sec:comp-dimens-tclass}, in which we present a full classification of the possible tuples recording the dimensions of the complexes $\bclassns$, $\bclass$ and $\tclass$. Examples realizing all these tuples are provided, confirming the key role played by the dimension of the subcomplex $\bclassns$ in determining the lifting partition of each tritangent class.

\section*{Acknowledgments}

This project started during a week-long Research in Pairs stay at The Oberwolfach Institute for Mathematics (Germany). The authors thank the institute and its staff for their hospitality and  excellent working conditions. In addition, we thank Yoav Len for fruitful discussions during the development of this article. MAC acknowledges the Universit\`a degli Studi di Parma (Italy) for hosting her during the final stages of this work.

MAC was supported by NSF Standard Grants DMS-1700194 and DMS-1954163 (USA), as well as INdAM Research Project E53C23001740001 (Italy). HM was supported by the DFG project MA 4797/9-1 and YR was supported by UKRI Future Leaders Fellowship MR/Y003888/1 and the EPSRC grant EP/Y028872/1.

\medskip

\section{Tropical tritangent class, local tangencies and their lifting multiplicities}\label{sec:preliminaries}

Throughout, we let $C$ be a smooth sextic curve in $\pr^3$. 
Being a non-hyperelliptic genus four canonical curve, $C$ is the complete intersection of a (unique) quadric and a cubic surface in $\pr^3$. When $C$ is generic, the quadric surface is smooth, thus, up to an automorphism of $\pr^3$, we may fix it to be the standard Segre quadric $V(x_0x_1-x_2x_3)$.
Under the standard Segre embedding
\[ \varphi\colon \pr^1\times \pr^1 \to \pr^3\qquad \varphi(([s_0:s_1], [t_0:t_1])) = [s_0t_0:s_1t_0: s_0t_1: s_1t_1],\] a plane in $\pr^3$ corresponds to a $(1,1)$-curve in $\pr^1\times \pr^1$, and the space sextic yields a $(3,3)$-curve in the same ambient space. In particular, tritangent planes to the input smooth space sextic curve become  tritangent $(1,1)$-curves $V(\ell)$ to a smooth $(3,3)$-curve $V(\sextic)$ in $\pr^1\times \pr^1$. Throughout, we assume the Newton polytopes of both $\ell$ and  $\sextic$ are squares, of side lengths one and  three, respectively.

Following the \emph{max} convention in Tropical Geometry, we let $\Lambda$ and $\Gamma$ denote the tropicalization in $\TPr^1\times \TPr^1$ of  $V(\ell)$ and $V(\sextic)$, respectively. We will always assume $\Gamma$ is  smooth, i.e., dual to a unimodular triangulation of the square of side length three. For an overview of the basic constructions in Tropical Geometry relevant to our work, we refer the reader to~\cite{MSBook, Mi06}.

\begin{remark}\label{rm:actionBySn} The  dihedral group $\Dn{4}$ of order eight records automorphisms of the unit square preserving its boundary. This action extends to the space of smooth tropical curves in $\TPr^1\times \TPr^1$ of bidegree $(a,a)$ for any positive integer $a$. Concrete formulas appear in~\cite[Table 2]{CLMR25Lifting}. 
\end{remark}

\begin{figure}[t]
 \includegraphics[scale=0.35]{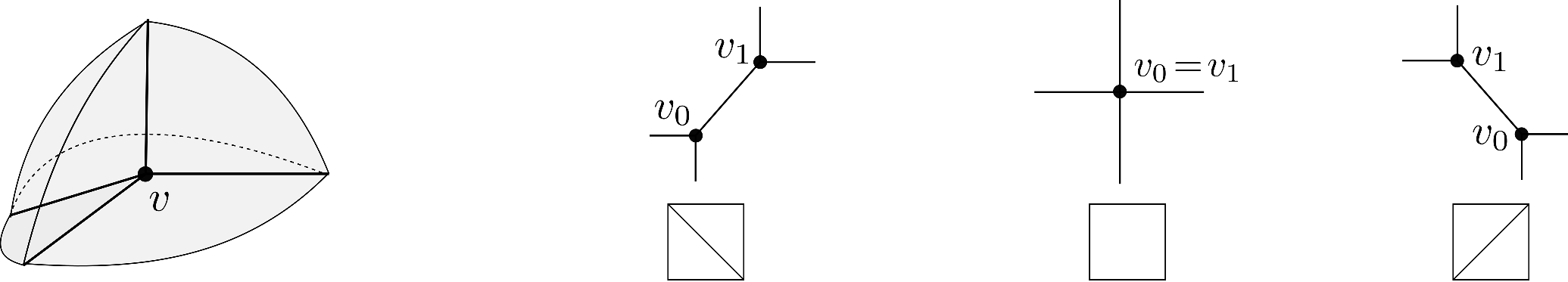}
  \caption{Correspondence between tropical planes and $(1,1)$-curves under the tropical Segre embedding.\label{fig:Planesvs11Curves}}
  \end{figure}

As shown in \autoref{fig:Planesvs11Curves}, a generic tropical plane in $\TP\pr^3$ is completely determined by the location of its unique vertex in $(\TP\pr^3)^\circ \simeq \RR^3$, which we label by $v$. In turn, a tropical $(1,1)$-curve $\Lambda$ in $\TPr^1\times \TPr^1$ is parameterized by the location of its two vertices in $(\TPr^1\times \TPr^1)^{\circ}\simeq \RR^2$, called $v_0$ and $v_1$. Note that $\Lambda$ may have a single vertex, in which case we set $v_0=v_1$.

Since the slope of the unique edge of $\Lambda$ has two fixed values, namely 1 or -1,  there is a  linear dependence between its endpoints. Thus, we may choose an alternative way to parameterize such curves, namely, by recording  the location of its lowest vertex (called $v_0$) and the signed length $\lenl$ of its unique edge. We incorporate signs to distinguish between the three possible combinatorial types of such curves. A negative length corresponds to a curve with edge of slope $-1$, whereas a positive one has slope 1. The 4-trivalent case corresponds to $\lenl=0$, when $v_0=v_1$.

Our next result shows confirms that  this choice is compatible with the  \emph{tropical Segre embedding}  $\trop(\varphi)\colon \TPr^1\times \TPr^1 \to \TPr^3$ given by left multiplication with the matrix
\[\begin{pmatrix} 1 & 1& 0 & 0\\
0 & 1 & 1 & 0\\
1 & 0 & 0 & 1\\
0 & 0 & 1 & 1
\end{pmatrix}.
\]

\begin{proposition} The tropical Segre embedding induces an isomorphism between generic tropical planes in $\TPr^3$  and generic tropical (1,1)-curves in $\TPr^1\times \TPr^1$. More precisely, it corresponds to a bijective piecewise linear map
\begin{equation}\label{eq:TropSegre}
  \psi\colon \RR^3\to \RR^3 \qquad (x,y,z) \mapsto \begin{cases} (x,z-x, z-x-y) & \text{ if } x+y\geq z, \\ (x,y, z-x-y) &\text{ if }   x+y\leq z.
  \end{cases}
\end{equation}
  \end{proposition}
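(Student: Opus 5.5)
The plan is to compute directly how a generic tropical plane pulls back under $\trop(\varphi)$. We then read off the parameters $(v_0,\lenl)$ of the resulting $(1,1)$-curve and check that the resulting map is a piecewise linear bijection.

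\emph{Step 1: pull back the plane.} A generic tropical plane in $\TPr^3$ is $V(\max(X_0+a_0,X_1+a_1,X_2+a_2,X_3+a_3))$ with all $a_i\in\RR$. Its vertex is determined by the $a_i$ modulo the diagonal $\RR(1,1,1,1)$, so after normalizing $a_0=0$ it corresponds to a point $v=(x,y,z)\in\RR^3$ (up to the sign convention mentioned before the statement). Composing with $\trop(\varphi)$, i.e.\ substituting the four tropical monomials $S_0+T_0$, $S_1+T_0$, $S_0+T_1$, $S_1+T_1$ given by the rows of the matrix, gives the tropical bihomogeneous polynomial
\[
\max(c_0+S_0+T_0,\ c_1+S_1+T_0,\ c_2+S_0+T_1,\ c_3+S_1+T_1),
\]
where the coefficients $c_i$ are a fixed permutation of the $a_i$ dictated by the matrix. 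This is exactly the tropicalization of a $(1,1)$-form whose Newton polytope is the full unit square, because all four coefficients are finite. Conversely, every such form arises in this way. Hence genericity of the plane, meaning a finite vertex in $(\TPr^3)^\circ$, corresponds exactly to genericity of the $(1,1)$-curve, meaning a full square Newton polytope. Both parameter spaces are therefore $\RR^4/\RR\mathbf{1}\cong\RR^3$.

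\emph{Step 2: compute the vertices of the $(1,1)$-curve.} In the affine chart $s=S_1-S_0$, $t=T_1-T_0$, the polynomial becomes $\max(c_0,\,c_1+s,\,c_2+t,\,c_3+s+t)$. The regular subdivision of the square is decided by the sign of $(c_0+c_3)-(c_1+c_2)$, which selects one of its two diagonals. When this quantity is zero the subdivision is trivial and the curve has a single $4$-valent vertex.

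In each case the two vertices are the points where three of the four linear pieces tie. For the triangles $\{0,1,2\}$ and $\{1,2,3\}$ these are
\[
(c_0-c_1,\;c_0-c_2) \quad\text{and}\quad (c_2-c_3,\;c_1-c_3).
\]
The other diagonal gives the analogous pair coming from the triangles $\{0,1,3\}$ and $\{0,2,3\}$. The bounded edge joins these two points and has slope $\pm1$.

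From this data we record the lowest vertex $v_0$. The signed length $\lenl$ is then plus or minus the difference of the first coordinates of the two vertices, with the sign determined by the slope. In each case the result is affine-linear in $(x,y,z)$, and the wall between the two cases is the hyperplane $(c_0+c_3)=(c_1+c_2)$. After translating into $(x,y,z)$ this wall should become $x+y=z$, and the two linear pieces should produce the two branches of~\eqref{eq:TropSegre}.

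\emph{Step 3: bijectivity.} We verify that the two linear formulas agree on $x+y=z$. There both reduce to $(x,y,0)$, since $z-x=y$ there, so $\psi$ is continuous and piecewise linear.

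Next, each piece is an invertible linear map of $\RR^3$ with triangular matrix and determinant $\pm1$. It carries the half-space $\{x+y\geq z\}$, respectively $\{x+y\leq z\}$, onto the half-space $\{\lenl\leq 0\}$, respectively $\{\lenl\geq 0\}$, because the third coordinate $z-x-y$ changes sign across the wall. The two images cover $\RR^3$ and meet only along the common image $\{\lenl=0\}$ of the wall, on which the two pieces coincide. Hence $\psi$ is a bijection, with a piecewise linear inverse obtained by inverting each piece.

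\emph{Main obstacle.} The conceptual content is routine; the delicate part is bookkeeping of conventions. We must pin down how the rows of the matrix permute the $a_i$ into the $c_i$, and the sign convention identifying the plane with its vertex $v$ (the text says "up to sign"). We must also decide which vertex is the "lowest" in each case and fix the sign of $\lenl$ to match the stated convention: negative for slope $-1$, positive for slope $1$. I would first fix these conventions by testing on the single-vertex case $\lenl=0$ and on one explicit point on each side of the wall. After that, the computation above will either produce~\eqref{eq:TropSegre} exactly or differ from it by a harmless reflection that the conventions absorb.
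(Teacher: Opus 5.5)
Your proposal is correct and follows the paper's proof. Pull back the linear form, split by the sign of $c_0+c_3-c_1-c_2$ (the paper's $A+D-B-C$), and read off the lowest vertex and signed length; your Step 3 spells out the bijectivity the paper only asserts (the branch on $x+y\geq z$ is not literally triangular, but it still has determinant $1$). The closing hedge is unnecessary: the matrix rows make your permutation the identity, and with the paper's convention $v=(c_0-c_1,c_0-c_2,c_0-c_3)$ you get $x+y-z=c_0+c_3-c_1-c_2$, lowest vertex $(x,y)$ with $\lenl=z-x-y>0$ when $x+y<z$, and lowest vertex $(x,z-x)$ with $\lenl=z-x-y<0$ when $x+y>z$, which are exactly the two branches of $\psi$.
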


\begin{proof} Following our earlier discussion, we identify $\RR^3$ with $(\TPr^3)^{\circ}$ and $(\TPr^1\times \TPr^1)^{\circ} \times \RR$, respectively. Thus, we view $v$ and $(v_0,\lenl)$ in $\RR^3$. We must show that the map $v\mapsto (v_0,\lenl)$ matches~\eqref{eq:TropSegre}.
  
  A plane $\pi$ in $\pr^3$ with generic tropicalization is determined by the vanishing of a linear equation   $a +b\, y + c\, z+ d\, w = 0$ with four non-zero terms. In turn, the (1,1)-curve obtained as the pullback of $\pi$ under the Segre embedding has defining equation $a + b \,s+ c\, t+ d \, st = 0$  for $([1:s], [1:t])\in \pr^1\times \pr^1$.

  The tropicalization of the plane $\pi$ in $\RR^3$ is given by the corner locus of the function $\max\{A, B+y, C + z, D +w\}$ where $A, B, C$ and $D$ correspond to the negative valuation of the coefficients $a, b, c$ and $d$, respectively. In particular, the unique vertex of this tropical plane equals $v=(A-B, A-C, A-D) \in \RR^3$. In turn, the tropicalization of the (1,1)-curve (denoted by $\Lambda$) is the corner locus of the function $\max\{A, B+s, C + t, D +s+t\}$ in $\RR^2$. Its combinatorial type depends on the sign of $A+D - (C+B)$, i.e. on the sign of the expression $x+y-z$  specialized at $v$.

  In what follows, we determine the values of $(v_0, \lenl)$  for each case. 
  First, assume $A+D> C+B$. In this situation, the edge of $\Lambda$ has slope -1, and its vertices have coordinates $v_0=(A-B, B-D)$, $v_1= (C-D, A-C)$. In particular, its unique edge has length $A+D - (C+B)> 0$. Thus, the signed length of this edge is $\lenl = (C+B)-(A+D)$. 
  
  In turn, if $A+D<C+B$, the edge of $\Lambda$ has slope 1, with vertices $v_0=(A-B, A-C)$, and $v_1=(C-D, B-D)$. Furthermore, the length of the edge of $\Lambda$ is $\lenl=B+C-A-D> 0$.  Finally, if $A+D=C+B$, then $v_0=v_1=(A-B, B-D)$ and $\lenl=0$.
  
  In all three cases, the formula seen in~\eqref{eq:TropSegre} matches the correspondence $v\mapsto (v_0, \lenl)$. The construction confirms it is bijective map.
\end{proof}

The next definition appeared in~\cite[Section 2]{CLMR25Lifting} and it is inspired by the classical notion of tangency between two plane curves. Note that set-intersections of tropical curves must be replaced by their stable analogs (see, e.g.,~\cite{jen.yu:16} and references therein).

\begin{definition}\label{def:tropicalTritangents} Let $\Gamma$ and $\Lambda$  be two tropical curves in $\TPr^1\times \TPr^1$ of bidegrees $(3,3)$ and $(1,1)$, respectively. Assume that $\Gamma$ is smooth. Then, we say that $\Lambda$ is \emph{tritangent} to $\Gamma$ if any of the following three conditions hold:
  \begin{enumerate}[(i)]
  \item $\Lambda\cap \Gamma$ has three components, each with total stable intersection multiplicity 2, or
    \item $\Lambda\cap \Gamma$ has two components, of total stable intersection multiplicities 2 and 4, respectively, or
\item $\Lambda \cap \Gamma$ is connected, and its stable intersection multiplicity is 6.
  \end{enumerate}
\end{definition}

To simplify the construction of  curves $\Lambda$ tritangent to  $\Gamma$ it is convenient to predetermine the  possible tangencies that can occur on $\Gamma$. The following statement, which originally appeared in \cite[Theorem 3.1]{CLMR25Lifting}, provides a complete classification of such local tangencies.

\begin{theorem}\label{thm:classificationRealizableLocalTangencies} Up to $\Dn{4}$-symmetry, there are 38 possible local tangencies between a $(1,1)$-tropical curve and a smooth $(3,3)$-tropical curve in $\TPr^1\times \TPr^1$. They are depicted in~\autoref{fig:classificationLocalTangencies}. 
  \end{theorem}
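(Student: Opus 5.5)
The statement to be justified is \autoref{thm:classificationRealizableLocalTangencies}, the classification of local tangencies. My plan is a finite combinatorial enumeration driven by the duality between $\Gamma$ and a unimodular triangulation of the square of side length three. The argument splits into four steps.

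\textbf{Step 1: reduce to local data on $\Gamma$.} A local tangency is a connected component $Z$ of $\Lambda\cap\Gamma$ whose total stable intersection multiplicity is $2$, $4$ or $6$. Such a $Z$ is either a point, a segment, or a small tree contained in $\Lambda$. It meets only finitely many vertices and edges of $\Gamma$, and the relevant local data are the corresponding cells of the triangulation, together with the slopes of the edges of $\Lambda$ that hit $Z$.

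\textbf{Step 2: enumerate the possible shapes of $Z$.} Since $\Lambda$ has at most two vertices and its edge has slope $\pm1$, the possibilities are limited.
\begin{enumerate}[(a)]
\item $Z$ is a point. It is either a transverse crossing of multiplicity $2$ (an edge of $\Gamma$ of weight-$2$ direction, or a vertex of $\Lambda$ on an edge of $\Gamma$), or a vertex of $\Gamma$ lying on an edge of $\Lambda$.
\item $Z$ is a segment where an edge or ray of $\Lambda$ overlaps an edge of $\Gamma$ of the same direction. The direction must be horizontal, vertical or diagonal, and must be compatible with the unimodular triangulation.
\item $Z$ contains a vertex of $\Lambda$, or the whole bounded edge of $\Lambda$, lying along $\Gamma$.
\end{enumerate}

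\textbf{Step 3: compute multiplicities and prune.} In each case I compute the stable intersection multiplicity by the local mixed-area rule: the sum over points of the displaced stable intersection, or equivalently the determinants of the primitive directions involved. I keep only the configurations whose total is even and at most $6$. I also check that the component is genuinely a single connected piece, so that the intersection points arising from the ends of $Z$ are counted correctly.

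\textbf{Step 4: quotient by symmetry and check realizability.} I reduce the surviving list modulo the $\Dn{4}$-action of \autoref{rm:actionBySn}. Then I check that each remaining type actually occurs, by exhibiting a unimodular triangulation and a position of $\Lambda$ realizing it. This yields the $38$ pictures.

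\textbf{Main obstacle.} I expect the hardest part to be completeness and correct bookkeeping in Step 2(c). When $Z$ contains vertices of $\Gamma$ with various adjacent edge directions, the number of subcases grows. The multiplicity must then also account for rays of $\Lambda$ leaving $Z$ and edges of $\Gamma$ leaving $Z$, which requires a careful local stable-intersection computation at the endpoints of $Z$.

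To control this, I would organize the enumeration by the triangles of the triangulation dual to the vertices of $\Gamma$ met by $Z$. A unimodular triangle in the side-three square has only finitely many edge-direction patterns up to $\Dn{4}$, so this case tree is finite. I would cross-check the resulting list against a computer enumeration over all unimodular triangulations.
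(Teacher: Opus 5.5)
The paper gives no proof of this statement; it quotes it from \cite[Theorem 3.1]{CLMR25Lifting}, so there is no in-paper argument to compare with. Judged on its own, your plan has a real gap. It classifies only the combinatorial shape of the component $\Upsilon$ of $\Lambda\cap\Gamma$ and its total stable intersection multiplicity. The pictures in \autoref{fig:classificationLocalTangencies} record more than that: the tropical tangency points (black dots), the multiplicity each one carries, and the potential tangencies in (3f) and (8). These are the points $P,P',P''$ of \autoref{def:lift}, and they feed into \autoref{tab:LiftingMultiplicities} and the local moves of \autoref{pr:localMovesTritangents}.

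None of your four steps can produce these points, because stable intersection does not see them. For a (3c) tangency, displacing $\Lambda$ sends the two intersection points to the two endpoints of the horizontal edge of $\Gamma$ lying in the leg of $\Lambda$. Yet the tangency point is the midpoint of that edge, as used in \autoref{lm:dim1Parallelogram}. Likewise, no mixed-area count tells you how the multiplicity $4$ of (3bb) or (3f) splits into tangency points, or where they sit.

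Placing the dots needs input from the algebraic curves, namely where $\Trop(p)$ of a classical tangency point $p$ can lie. One route: restrict $\sextic$ to $V(\ell)\cong\pr^1$. A double root forces a cancellation among the coefficients attached to $\Upsilon$, which is possible only because the non-transverse overlap creates ties. That cancellation puts the root's valuation at the average of the two endpoint values. Another route is the tropical modification technique of \cite{LM17}. Without such an argument, Steps 1--4 give at best a list of even-multiplicity component types, not the classification as stated.

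A secondary issue is that the enumeration is only announced, and the case list in Step 2 is already incomplete.
\begin{itemize}
\item In (a) you allow point components only of multiplicity $2$. A vertex of $\Lambda$ on an edge of $\Gamma$ of slope $-1/3$ has multiplicity $4$ (type (4b)), and isolated points of multiplicity $4$ also occur in (2b), (5b) and (4b').
\item You omit a vertex of $\Lambda$ sitting at a vertex of $\Gamma$ with no overlapping edges, as in (5a) and (5b).
\item The $4$-valent case ($\lenl=0$, all primed types) needs its own branch, since local intersection numbers at a $4$-valent vertex differ from the trivalent ones.
\end{itemize}
Organizing the search by dual triangles and reducing modulo $\Dn{4}$ is a sensible way to get the combinatorial types. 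Still, the number $38$ only comes out once completeness of this case tree is actually proved, and the proposal defers that to an unspecified computer check.
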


\begin{figure}[tb]
\includegraphics[scale=0.33]{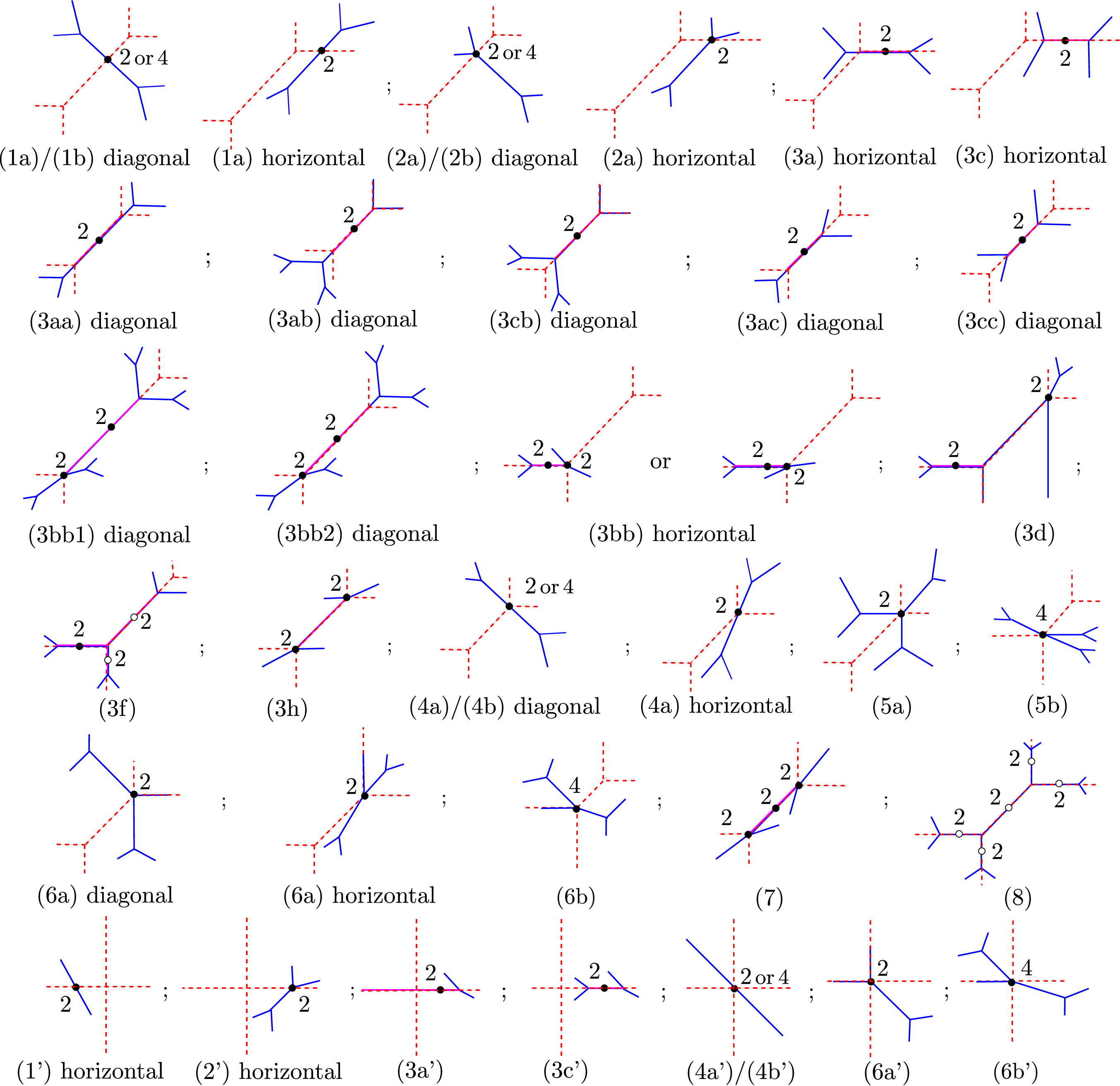}
  \caption{Representatives of all possible local tangency types between a $(1,1)$-tropical curve $\Lambda$ (in dashed lines) and a smooth $(3,3)$-tropical curve $\Gamma$, under the action of $\Dn{4}$. The `a' and `b' labels for types (1), (2), (4), (5) and (6), distinguish between tangencies of multiplicity two and four, respectively. The black dots show the location of the tropical tangency point. The numbers adjacent to each dot indicate  multiplicities values. Unfilled dots in (3f) and (8) denote potential tangencies, and their total multiplicities are four and six, respectively.\label{fig:classificationLocalTangencies}}
  \end{figure}

\begin{definition}\label{def:lift} Let $\Lambda$ be a tritangent curve to $\Gamma$, and
denote by $P$, $P'$ and $P''$ the corresponding tangency points (which are allowed to coincide). We say that $(\Lambda,P,P',P'')$ \emph{lifts} to a tritangent tuple $(\ell,p,p,p'')$ to $V(\sextic)$ if $V(\ell)$ is tangent to $V(\sextic)$ at the points $p, p$ and $p''$, with 
  \[\Trop\, V(\ell) = \Lambda, \quad \Trop\, p = P, \quad \Trop\, p' = P' \quad\text{and} \quad \Trop\, p'' = P''.
  \]
   The \emph{lifting multiplicity} of $\Lambda$ equals the number of such tritangent tuples.
\end{definition}

As we mentioned in~\autoref{sec:introduction}, the existence of infinitely many tritangent planes  to a given $\Gamma$ is a common phenomenon. Continuous displacement of the vertices of these planes preserving the tritangent condition produce equivalence classes of tritangent planes. Under the tropical Segre map, these movements correspond to continuous compatible translations of the vertices $v_0$ and $v_1$ of each $\Lambda$, or equivalently, continuous displacements of $v_0$ followed by piece-wise linear maps on the signed length component with slopes in $\{0, \pm 1\}$. The next definition summarizes these facts:

\begin{definition} A~\emph{tropical tritangent class} 
  of $\Gamma$ is a connected component of the subset of $\RR^3$ consisting of all points $(v_0,\lenl)$ encoding all smooth tropical $(1,1)$-curves that are tritangent to $\Gamma$. We label these sets by $\tclass$.
\end{definition}

The following result from~\cite[Theorem 5.2]{HL17} describes how the classical count of tritangent planes to $V(\sextic)$ can be recovered from the tritangent classes of $\Gamma$:

\begin{theorem}\label{thm:countingTritangentClassesAndLifts}
  Each $\Gamma$ has exactly 15 tritangent classes. Furthermore, each of them contains the tropicalization of exactly eight tritangent curves $V(\ell)$ to $V(\sextic)$ in $\pr^1\times \pr^1$.
\end{theorem}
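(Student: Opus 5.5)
The plan is to translate the statement into a count of theta characteristics and then use the specialization map from the curve to its tropicalization, following the strategy of~\cite{HL17,JL18}.

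\textbf{Step 1: tritangents as odd theta characteristics.} Let $X=V(\sextic)$. It is a canonically embedded curve of genus $g=4$, and its hyperplane sections cut out $K_X$. If a tritangent plane meets $X$ in $2(p+p'+p'')$, then $\theta=p+p'+p''$ satisfies $2\theta\sim K_X$ and $h^0(\theta)\geq 1$. For generic $X$, the tritangent planes correspond bijectively to the odd theta characteristics of $X$, and there are $2^{g-1}(2^g-1)=120$ of them.

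\textbf{Step 2: tropical tritangents as effective tropical theta characteristics.} The minimal skeleton of $\Gamma$ is a metric graph of genus $4$, one cycle for each of the four interior lattice points of the square of side three. A tropical tritangent $\Lambda$ gives an effective divisor $D=P+P'+P''$ on this graph. The stable intersection $\Lambda\cdot\Gamma$ retracts to a canonical divisor, so $2D\sim K_\Gamma$. Moving $\Lambda$ continuously inside a class does not change the linear equivalence class of $D$. I would then show the converse: any two tritangents with linearly equivalent divisors are joined by such moves. This means that tritangent classes correspond to effective tropical theta characteristics, with every effective one realized by some $\Lambda$.

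By Zharkov's theorem, a genus-$g$ metric graph has exactly $2^g$ theta characteristics, and exactly one of them is non-effective. This leaves $2^4-1=15$ effective ones, which gives the first claim.

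\textbf{Step 3: distributing the 120 odd classes.} Specialization gives a surjective group homomorphism $\mathrm{Jac}(X)[2]\to \mathrm{Jac}(\Gamma)[2]$ from a group of order $2^{8}$ onto a group of order $2^{4}$. Hence each tropical theta characteristic is the image of exactly $16$ classical ones.

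Following~\cite{JL18}, I would then use the Weil pairing and Riemann's parity formula (parity of $h^0$ is quadratic) on a fibre. Over the non-effective tropical theta characteristic, all $16$ preimages must be even, since an odd one has a section and its tropicalization would be effective. Over each effective one, parity restricted to the fibre is a nondegenerate quadratic form on the $2^4$-element kernel, and I expect it to take the value "odd" exactly $8$ times. The count is consistent: $15\cdot 8=120$.

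Combining Steps 1 and 2, the tropicalizations of the $8$ odd lifts in a fibre are tritangents lying in the corresponding class $\tclass$. This gives exactly eight tritangent curves $V(\ell)$ per class.

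\textbf{Main obstacle.} I expect the hardest step to be the parity computation on each fibre, that is, proving that each effective fibre splits evenly as $8$ odd and $8$ even. The identification of tritangent classes with linear equivalence classes in Step 2 also needs care. There I would use the local moves of $\Lambda$ to show that the divisor of $\Lambda$ moves through its whole linear system and stays inside a single connected component.
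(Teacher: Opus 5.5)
Your overall route is the one behind the paper's citation. The paper does not reprove this statement; it quotes \cite[Theorem 5.2]{HL17}, which rests on three ingredients: the classical odd-theta-characteristic count, Zharkov's $2^4$ tropical theta characteristics with exactly one non-effective, and \cite{JL18} for how the odd ones are distributed. There is, however, a genuine gap in Step 3, exactly where you expect trouble, because the mechanism you propose cannot work. A nondegenerate quadratic form on $(\mathbb{Z}/2)^4$ takes the value $1$ on either $6$ or $10$ vectors, depending on its Arf invariant, never on $8$, and it is never constant. So if parity on a fibre were governed by a nondegenerate form, two things would follow. No fibre could contain exactly eight odd characteristics. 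And your own correct observation, that the fibre over the non-effective tropical characteristic is entirely even, would be contradicted.

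The missing idea is the opposite statement. Since the skeleton has genus $4=g$, $X$ is a Mumford curve. Then the kernel $N$ of $\mathrm{Jac}(X)[2]\to\mathrm{Jac}(\Gamma)[2]$ is the $2$-torsion of the torus in the analytic uniformization of the Jacobian, and $N$ is \emph{Lagrangian} for the Weil pairing: $N^{\perp}=N$.

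Granting this, use Riemann's relation $q(\theta)+q(\theta+a)+q(\theta+b)+q(\theta+a+b)=e_2(a,b)$ for the parity $q$. Pick $\theta_0$ over the non-effective tropical characteristic, so $q$ is constant (even) on $\theta_0+N$. Every other fibre has the form $\theta_0+b+N$ with $b\notin N$, and the relation gives $q(\theta_0+b+a)=q(\theta_0+b)+e_2(a,b)$ for $a\in N$. Since $b\notin N=N^{\perp}$, the functional $e_2(\cdot,b)|_N$ is nonzero. It therefore equals $1$ on exactly $8$ elements, which yields the $8/8$ split; this is the argument of \cite{JL18}.

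This step cannot be replaced by counting. The total $15\cdot 8=120$ alone does not exclude some fibres splitting $0/16$ and others $16/0$.

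Separately, the converse in your Step 2 is left unproved: tritangents with linearly equivalent tangency divisors should lie in a single connected component. This is what makes the count exactly $15$ rather than at least $15$. Your suggested mechanism does not deliver it. You have shown neither that tangency divisors of tropical tritangents fill out all of $|\theta|$, nor that $\Lambda\mapsto D$ has connected fibres, so connectedness of $|\theta|$ does not transfer to the set of tritangents.
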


\noindent These numbers are deduced from the classical proof of Coble-Clebsch formula and the computation of tropical theta characteristics on metric graphs via Zharkov's Algorithm~\cite{Zha10}.

By analogy with prior work of the first two authors on bitangent classes to smooth tropical plane quartics~\cite{CM20}, it is natural to ask if the location and lifting multiplicity of the liftable members of $\tclass$ can be detected solely from $\sextic$ and the combinatorial type of $\Gamma$.
The proof of~\autoref{thm:partitionThm} yields an affirmative answer to this question.  Most notably, we can do so  without providing a  full classification of all tropical tritangent classes to $\Gamma$. Note that obtaining such a list would not be a simple task, given the high number of local tangency types provided in~\autoref{thm:classificationRealizableLocalTangencies} and the existence of $5\,941$ possible combinatorial types for $\Gamma$ (up to $\Dn{4}$-symmetry).

Prior work of Len and the second author~\cite{LM17} shows how tropical tangencies  between two curves can often be used to compute lifting multiplicities, by solving local versions of the classical system $\ell=\sextic = \det(\nabla \ell, \nabla \sextic)=0$ determining tangencies between the input curves, with respect to the tangency points.

Our companion paper with Len~\cite{CLMR25Lifting} carries out this program successfully for the problem at hand, with mild genericity assumptions on the classical curve $V(\sextic)$, which we now make precise:

\begin{definition}\label{def:fgenericRelToGamma} Given $\Gamma$, we say that $\sextic$ is \emph{generic relative to $\Gamma$} if the following conditions hold:
\begin{enumerate}[(i)]
  \item the polynomial  $\sextic$  defines a smooth curve in $\TPr^1\times \TPr^1$ with $\Trop V(\sextic) = \Gamma$,
\item the curve $V(\sextic)$ has no hyperflexes,
\item if two tropical tangencies occur on the relative interior of the same leg or edge of $\Lambda$, the corresponding local systems are inconsistent,
  \item if $\Gamma$ is trivalent, and  $\Lambda$ has a vertex whose two adjacent legs contain no tangency points, then the $9\times 9$ system of local equations defined by the tangency points between $\Gamma$ and $\Lambda$ has no solutions in the $9$-dimensional torus over the residue field $\resK$.
  \end{enumerate}
\end{definition}

Our next statement, extracted from~\cite[Theorem 1.2]{CLMR25Lifting} confirms that the lifting multiplicity of a given $\Lambda$ can be obtained from the number of solutions of each local system.  The explicit computation of this local data, which we refer to as \emph{local lifting multiplicities}, was a main contribution of~\cite{CLMR25Lifting}. 

\begin{theorem}\label{thm:totalLifting} Let $\Gamma$ be a smooth tropical $(3,3)$-curve in $\TPr^{1}\times \TPr^1$, and assume that $\sextic$ is generic relative to $\Gamma$.  Then, any  $(1,1)$-curve $\Lambda$ tritangent to $\Gamma$ has 0, 1, 2, 4 or 8 lifts to a tritangent curve $V(\ell)$ to $V(\sextic)$. Furthermore, when non-zero, this number agrees with the product of the local lifting multiplicities of the corresponding tropical tangencies listed in~\autoref{tab:LiftingMultiplicities}.
\end{theorem}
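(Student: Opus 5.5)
The statement is quoted from~\cite[Theorem 1.2]{CLMR25Lifting}, and I would reprove it by following the Len--Markwig strategy~\cite{LM17}: a tropical tangency is lifted by solving an \emph{initial} version of the classical tangency system and then applying a Hensel-type argument.

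First, I would write $\ell = a + b\,s + c\,t + d\,st$ with $\mathrm{val}$ of the coefficients prescribed by $\Lambda$, that is, by $(v_0,\lenl)$. After dividing by $a$, there are three unknown coefficients. For each tangency point $P$ of $\Lambda\cap\Gamma$, I would write $p=(s,t)$ with $\Trop\, p = P$. Scaling by the torus element determined by $P$, I would pass to the initial forms of the system
\[
\ell(p)=\sextic(p)=\det(\nabla \ell,\nabla\sextic)(p)=0 .
\]
These initial forms involve only the monomials of $\ell$ and $\sextic$ that are active at $P$. These are the terms dual to the cell of $\Lambda$ containing $P$ and to the triangle, edge or vertex of the subdivision dual to the relevant cell of $\Gamma$.

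Second, I would go through the 38 local types of~\autoref{thm:classificationRealizableLocalTangencies}. For each type, I would compute the number of solutions of its local system over $\resK$, in the variables given by the initial coefficients of $\ell$ and of the point coordinates. This produces the entries of~\autoref{tab:LiftingMultiplicities}. For tangencies on the relative interior of an edge or leg, I expect the local system to reduce to a binomial or quadratic equation in one coefficient ratio, giving local multiplicity $1$ or $2$. The higher-multiplicity types (the `b' types, (3f) and (8)) need the finer structure of $\Gamma$ near $P$ and should give $2$ or $4$. Here, genericity condition (ii) of~\autoref{def:fgenericRelToGamma}, the absence of hyperflexes, rules out degenerate solutions.

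Third, and this is the main obstacle, I must show that the global count is the \emph{product} of the local counts. The plan is to show that the local systems attached to $P$, $P'$ and $P''$ constrain essentially disjoint sets of the three coefficient ratios of $\ell$, or at worst form a triangular system. In the triangular case, each solution of one local system specializes the next system to one with the same number of solutions. Genericity condition (iii) excludes two tangencies on a common edge or leg, which would overdetermine a single ratio. Condition (iv) handles the exceptional configuration in which a vertex of $\Lambda$ has two tangency-free legs and the ratios are not pinned down locally; there I would analyse the full $9\times 9$ system directly. For each residue solution, I would check that the Jacobian of the initial system is nonzero, which is again guaranteed by genericity. The multivariate Hensel lemma then gives a unique lift over $\K$, so lifts are in bijection with residue solutions.

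Finally, each local multiplicity lies in $\{1,2,4\}$, so the product is a power of $2$. By~\autoref{thm:countingTritangentClassesAndLifts}, the tritangent class containing $\Lambda$ has only eight lifts in total, so the product is at most $8$. Hence the number of lifts lies in $\{0,1,2,4,8\}$, with $0$ occurring exactly when some local system is inconsistent.
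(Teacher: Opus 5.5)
You have the same architecture as \cite{CLMR25Lifting}, from which the paper imports this statement without reproving it: local tangency systems, a block structure across the three tangencies, and Hensel's lemma. But there is a genuine gap in your second step, which is where essentially all the content lies. Passing to initial forms at $P$ and then applying Hensel only works when $P$ is an isolated point of $\Lambda\cap\Gamma$. For most of the 38 types, e.g.\ (3a), (3c), (3aa)--(3bb2), (3d), (3h), (3f), (7) and (8), the tangency sits on a segment shared by $\Lambda$ and $\Gamma$. At a point of such a segment, the initial forms of $\ell$ and $\sextic$ are binomials in the same direction. The initial system then fixes only one coordinate of $p$, leaves its position along the segment free, and gives no nondegenerate equation from $\det(\nabla\ell,\nabla\sextic)$. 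So there is no finite residue system with invertible Jacobian to feed into Hensel's lemma. Even the tropical tangency point itself (e.g.\ the midpoint of the edge of $\Gamma$ for type (3c)) is invisible at the level of initial forms. This is exactly where \cite{CLMR25Lifting}, following \cite{LM17}, uses tropical modifications (see \autoref{rm:tropicalGenericity}). Even with modifications, types (3f) and (8) need mildly generic edge lengths on the relevant component of $\Lambda\cap\Gamma$. In degenerate cases, their entries $4$ and $8$ in \autoref{tab:LiftingMultiplicities} are only confirmed a posteriori, by combining the count of eight lifts per class with \autoref{thm:partitionDim0} and \autoref{pr:3fOr3a3c}. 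You use \autoref{thm:countingTritangentClassesAndLifts} merely as an upper bound, which cannot recover these values.

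Your third step also overlooks that some local counts are not local. For types (4a) and (6a) diagonal, and for (4a') and (6a'), the entry in \autoref{tab:LiftingMultiplicities} is the quantity $\mu$ from \eqref{eq:mu}. It equals $1$ precisely when $P'$ and $P''$ lie in different halfspaces cut out by the line spanned by the edge of $\Gamma$ through $P$ (\autoref{rm:4a6a4ap6ap}). Your scheme gives each type a count from its own local system, with each solution ``specializing the next system to one with the same number of solutions''. That cannot produce these entries; you must analyse the coupling with the positions of the other two tangencies explicitly.

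Two smaller points. First, nonvanishing of the Jacobian at residue solutions is not among conditions (i)--(iv) of \autoref{def:fgenericRelToGamma}. Separability of the residue solutions is where the hypothesis that the residue characteristic is not $2$ or $3$ has to enter, and your sketch never uses it. Second, your predicted values are off: the `b' types in fact contribute $0$ or $1$, and type (8) contributes $8$, so the local multiplicities do not all lie in $\{1,2,4\}$.
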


Our next two remarks given more precisions regarding the  values collected in~\autoref{tab:LiftingMultiplicities}.

\begin{table}[t]\begin{center}
              \begin{tabular}{|c|c|}
                \hline Multiplicity & Tangency types \\
                \hline
                0 &  (1),  (2b), (3ab), (3cb), (3bb), (3bb1), (3bb2), (4b), (7); (1'),  
                (3a')  \\
                1 & (2a), (4a) h/v,   (5b), (6a) h/v,  (6b); (2a'),  (4b'),  (6b')\\
                 2 & (3a), (3c),  (3aa), (3ac), (3cc), (3d), (3h), (5a); (3c'),  \\
                4 & (3f) \\
                8 & (8) \\
                $\mu$ & (4a) diagonal, (6a) diagonal ; (4a') , (6a')\\
                \hline
              \end{tabular}
            \end{center}
  \caption{Possible values for the lifting multiplicities of all 38 local tangency types, following the notation of~\autoref{fig:classificationLocalTangencies}. Trivalent types and $4$-valent ones are separated by `;'. The multiplicities on the last row depend on the quantity $\mu$ from~\eqref{eq:mu}, whose value is either 1 o 2. For types (2), (4) (5) and (6), we use the labels `a' and `b' to distinguish between tangencies of multiplicity two and four, respectively.\label{tab:LiftingMultiplicities}}
          \end{table}

\begin{remark}\label{rm:4a6a4ap6ap}
  As was highlighted in~\cite[Table 1]{CLMR25Lifting}, the lifting multiplicities of tangency types (4a), (6a), (4a') and (6a') are not fixed. Their value depends both on whether the tangency is diagonal or not, as well as on the location of the complementing tangency points on $\Lambda$.

  When the type is either (4a) or (6a) and of  vertical or horizontal nature, its lifting multiplicity equals 1 by our findings from~\cite[Propositions 4.3 and 4.5]{CLMR25Lifting}. On the contrary,~\cite[Propositions 4.3 and 7.2]{CLMR25Lifting} confirm that in the remaining cases, the local lifting multiplicity  equals the quantity $\mu$ seen in~\autoref{tab:LiftingMultiplicities}, which we know define.

 We let  $P$ be a tangency on $\Lambda$ of type (4a')/(6a') or (4a)/(6a) and of diagonal nature. Since we  know that  $P$ is contained on a edge of $\Gamma$ of slope $\pm 1$, we can consider the two halfspaces  $H^{\pm}$ of $\RR^2$ determined by the line through $P$ spanned by this edge. We let $P'$ and $P''$ be the tangencies between $\Lambda$ and $\Gamma$ complementing $P$ and define
\begin{equation}\label{eq:mu}
  \mu:=\max\{|H^+\cap \{P',P''\}|,|H^-\cap \{P',P''\}|\}.
\end{equation}
Thus,  $\mu=1$ if $P'$ and $P''$ lie on different halfspaces. Otherwise, we have $\mu=2$.
\end{remark}
  
\begin{remark}\label{rm:tropicalGenericity}
  We should note that the tropical modification techniques used to determine the values seen \autoref{tab:LiftingMultiplicities} for almost all tangency types arising from a non-discrete connected component of $\Lambda\cap \Gamma$ have no requirements on $\Gamma$ beyond its smoothness.
  The only exceptions correspond to types (3f) and (8), known as the star-shape and tree-shape intersections. In these two cases, the techniques require the lengths of the edges in the corresponding connected component of $\Lambda\cap \Gamma$ be mildly generic. We refer to~\cite[Remark 2.10]{CLMR25Lifting} for the precise conditions.

  However,~\autoref{thm:countingTritangentClassesAndLifts} combined with the explicit computations of the lifting partitions for those classes containing members with these tangency types that we carry  in~\autoref{sec:proof-lift-part-1d-or-less}  (see~\autoref{thm:partitionDim0} and~\autoref{pr:3fOr3a3c}), confirm that the values for the exceptional types listed in the table are valid even when these mild genericity conditions are not satisfied by $\Gamma$. This phenomenon is analogous to the one observed by Geiger in the case of tropical bitangent lines to smooth plane quartics~\cite{gei:25}.
   \end{remark}

\autoref{thm:totalLifting} confirms the first roadblock that needs to be overcome before proving~\autoref{thm:partitionThm}: we must determine the largest connected subset of each class where the product formula for the lifting multiplicity of its member holds. These sets will be denoted by $\bclassns$. Its construction will be done in two-steps, which we carry out in 
        {Sections}~\ref{sec:polyhedra-bounded-complex} and~\ref{sec:non-special-bounded}. It is  dependent on the  genericity of $\sextic$ relative $\Gamma$. As byproduct of the construction, we will confirm that  both sets are path-connected. This will allow us to fix a base point in each set and search for liftable members by prescribed paths. The determination of our initial points will be accomplished in~\autoref{sec:comb-bound-compl}, whereas the choice of trajectories is the subject of our efforts in~\autoref{sec:technicalLemmasDim1-2}.

              \begin{figure}[t]
                \includegraphics[scale=0.85]{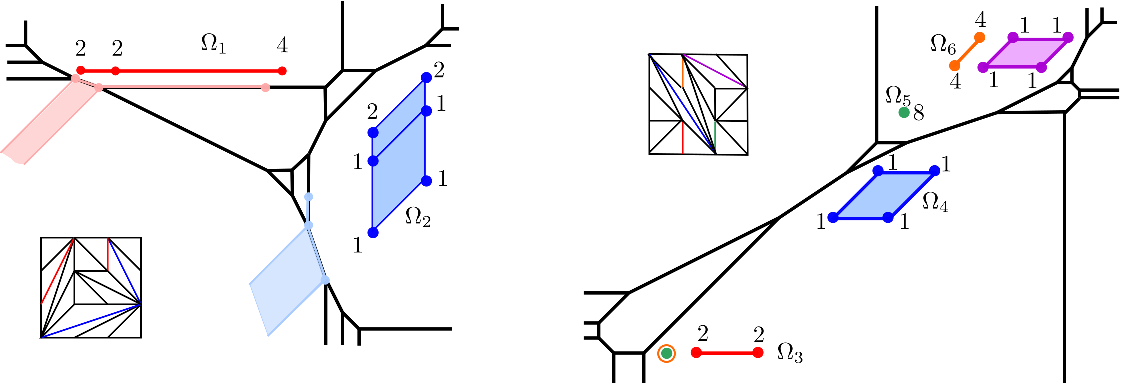}
                \caption{Examples of tritangent classes (with multiplicities) realizing all six lifting partitions from~\autoref{thm:partitionThm}. \label{fig:examplesLift}}
              \end{figure}

              \smallskip
              
              We end the present section by providing examples realizing the lifting partitions predicted  by~\autoref{thm:partitionThm}. For each of them, the corresponding subsets $\bclassns$ mentioned above agree with the union of the bounded components of each $\tclass$.

              \begin{example}\label{ex:realizableLifts}  Let $\Gamma$ and $\Gamma'$ be the two tropical curves seen on the left and right of~\autoref{fig:examplesLift}, respectively. For each of them, we depict a subset of their 15 tritangent classes.
                Members for each of these classes are trivalent, and have a slope one edge. Thus, we can record these classes as pairs of colored subsets in the plane marking the location of the vertices for each of these members. Lighter colored regions indicate the location of lower vertices. Labels indicate lifting multiplicities. 

                  The left picture presents two  tritangent classes to $\Gamma$, labeled $\tclass_1$ and $\tclass_2$, with lifting partitions $(0,2,1,0)$ and  $(4,2,0,0)$, respectively. In turn, the right picture shows four tritangent classes to $\Gamma'$, denoted by $\tclass_3, \ldots, \tclass_6$, which can be recovered from the six colored polyhedra seen in the figure as follows. First, the top left parallelogram (unlabeled and colored in purple), records the position of the top vertex of members of the classes $\tclass_3$ and $\tclass_4$. It can be paired with either the  segment (in red) or the bottom parallelogram (in blue), to yield these two classes. Their  lifting partitions are  $(0,4,0,0)$ and $(8,0,0,0)$, respectively.

                Similarly, the (green) vertex  and the slope one segment (in orange) record the location of top-vertices of members of the classes  $\tclass_5$ and $\tclass_6$, respectively, whose lifting partitions equal  $(0,0,0,1)$ and $(0,0,2,0)$. The location of the lower vertex for all members in these two classes is unique, and corresponds to the unlabeled vertex featured at the bottom, and colored in green and orange.
              \end{example}

\section{Local moves and combinatorial structures on tritangent classes}
\label{sec:local-moves}
In order to prove the lifting partition theorem for tritangent classes to a given smooth $(3,3)$-curve $\Gamma$ in $\TPr^1\times \TPr^1$, we must first understand their structure. This will be achieved by describing their local behavior. We remark that all results in this section are purely combinatorial: they  rely solely on the duality between $\Gamma$ and unimodular triangulations of the standard square of side length three. The metric structure on $\Gamma$ and the genericity conditions from \autoref{def:fgenericRelToGamma} play no role.

Throughout, we let $\Lambda$ be a tropical $(1,1)$-curve in $\TPr^1\times \TPr^1$ tangent to $\Gamma$. We let $P$ be the corresponding tangency, i.e., the tuple of tangency points on the component of $\Lambda\cap \Gamma$ with even stable intersection multiplicity corresponding to the tangency condition.
Any move on $\Lambda$ will shift the point   $P$.
In what follows we discuss how to to perturb the point $(v_0,\lenl)$  in $\RR^3$ corresponding  to $\Lambda$ while mantaining the tangency condition at the translation of $P$.

We restrict such movements to a bounded neighborhood of the input point in $\RR^3$, and  let $\Xi$ be the region of $\RR^3$ recording such local moves. As we will see throughout the various results presented in this section,  the nature of $\Xi$ depends on three different conditions, namely the valency of $\Lambda$, the transversality (or not) of the tangency  $P$, and whether or not $P$ contains a vertex of $\Gamma$. An additional role is played by the relative position of each vertex of $\Lambda$ with respect to $\Gamma$.

The next proposition describes $\Xi$ for each tangency point $P$.

\begin{proposition}\label{pr:localMovesTritangents} Let $\Lambda$ be a tropical tritangent $(1,1)$-curve to $\Gamma$ and let $P$ be one of the corresponding tangencies. The relative position of $P$ within $\Lambda$ and $\Gamma$ restricts how to move the point $(v_0,\lenl)$ encoding $\Lambda$ so that the corresponding translation of $P$ remains a tangency. \autoref{fig:localMoves} describes these moves.
\end{proposition}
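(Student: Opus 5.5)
The plan is to reduce the statement to a finite, purely combinatorial case analysis indexed by the 38 local tangency types of \autoref{thm:classificationRealizableLocalTangencies}, up to the $\Dn{4}$-action of \autoref{rm:actionBySn}. The $\Dn{4}$-action acts compatibly on the parameters $(v_0,\lenl)$, with a sign change in $\lenl$ for reflections exchanging slopes $\pm1$. Hence it suffices to compute $\Xi$ for one representative of each orbit and transport the answer. For each representative I fix the position of $P$ on $\Lambda$ and on $\Gamma$. The relevant data are: which leg, edge or vertex of $\Lambda$ carries $P$; which edges or vertex of $\Gamma$ meet it; and the valency of $\Lambda$, meaning the sign of $\lenl$.

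Next I describe infinitesimal moves of $\Lambda$ in coordinates. A translation of $v_0$ by $(a,b)$ keeping $\lenl$ fixed translates all of $\Lambda$. Changing $\lenl$ with $v_0$ fixed moves only $v_1$ along the unique edge direction $(1,\pm1)$. Moving only $v_1$ is thus the vector $(0,0,\delta)$, while moving only $v_0$ is a combination of the two. Leg positions are determined by $v_0$ and $v_1$ through the formulas in the proof of \eqref{eq:TropSegre}. The tangency at $P$ depends only on the local piece of $\Lambda$ near $P$, namely one leg, or the edge together with the vertex it touches. So the tangency condition becomes a condition on the affine line (or vertex) of $\Lambda$ through $P$ relative to the cells of $\Gamma$ near $P$.

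The case split then goes as follows.
\begin{enumerate}[(a)]
\item $P$ lies in the relative interior of a leg or edge of $\Lambda$, and the intersection is non-transverse, i.e. it contains a segment of $\Gamma$ parallel to the leg. Moving the leg orthogonally destroys the overlap, unless the move keeps it sliding along a parallel edge. So $\Xi$ is a half-space or a hyperplane in the $(v_0,\lenl)$-coordinates: the leg may slide along itself, and may shift in one direction only while still meeting a parallel edge of $\Gamma$ with the correct multiplicity.
\item The tangency is transverse but passes through a vertex of $\Gamma$, or through a vertex of $\Lambda$ lying on $\Gamma$, as in types (3*) and (4*). Here the constraint is that the vertex stays on the relevant edge of $\Gamma$. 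This gives a codimension-one or codimension-two linear condition on $v_0$ or $v_1$, hence on $(v_0,\lenl)$.
\item The 4-valent cases $\lenl=0$. Here $\Xi$ is a union of the corresponding cones on the two sides $\lenl>0$ and $\lenl<0$ glued along $\lenl=0$. These must be computed separately using the two branches of $\psi$.
\end{enumerate}
In each case, multiplicity and tangency type are read off from the stable intersection, dually via the subdivision of the Newton polygons of $\ell\cdot\sextic$. $\Xi$ is then the locally polyhedral cone at the point where the dual mixed cell keeps the same type (or a degeneration still giving even multiplicity 2 or 4). These cones are what \autoref{fig:localMoves} records.

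The main obstacle is the bookkeeping in the degenerate types: star and tree shapes (3f) and (8), and tangencies at a vertex of $\Lambda$ meeting a vertex of $\Gamma$. In these cases a small move may change the tangency type while preserving tangency, so $\Xi$ is a union of several cones rather than a single one. To handle them, I would enumerate the mixed subdivisions of the local Newton polygons adjacent to the given one, and keep those where the component still has even multiplicity. This is a finite check per type.
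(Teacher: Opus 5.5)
Your overall strategy is the paper's: a finite case analysis over the 38 types modulo $\Dn{4}$, with the $4$-valent types handled by gluing the pieces with $\lenl<0$, $\lenl=0$ and $\lenl>0$. The gap is your case (b). Its conclusion is false exactly where \autoref{fig:localMoves} has content.

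Take a leg or the edge of $\Lambda$ passing transversally through a vertex of $\Gamma$ (types (2a)/(2b)), or a vertex of $\Lambda$ in the interior of an edge $e$ of $\Gamma$ (types (4a)/(4b)). Keeping that vertex on $\Gamma$ is not the only way to preserve the tangency. Sliding off to one side leaves a single transverse crossing of multiplicity $2$ (resp.\ $4$) in the interior of an edge of $\Gamma$, i.e.\ a type (1a)/(1b) tangency. Only the other side destroys it. For instance, take (4a) with $v_1$ in the interior of an edge $e$ of slope $-1$. Pushing $v_1$ across $e$ makes the edge of $\Lambda$ cross $e$ once with multiplicity $2$. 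Pulling it back towards $v_0$ makes the two positive legs cross $e$ at two distinct points of multiplicity one. Hence $\Xi$ is $3$-dimensional, a parallelepiped with all but one facet removed (\autoref{lm:OpenCellsTrivalent}, \autoref{lem:Type4And4pCells}). It is not a locus cut out by one or two linear conditions. These full-dimensional pieces, always made of type (1a), (1b) or (1') configurations, are exactly what \autoref{pr:3dCells} later relies on, and your case (b) would miss all of them.

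Your split also misplaces types. The (3*) types are non-transverse, so they belong with case (a). For those covered by \autoref{lm:OpenCellsTrivalent}, the governing quantity is $k_0$ from \eqref{eq:kdim}: each non-transverse tangency of the component off the vertices of $\Gamma$ pins one direction, so $\dim\Xi=3-k_0\leq 2$. In particular, (3f) and (8) are rigid (an open segment and a point), not unions of several cones as you anticipate.

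Joins of several cells do occur. They arise for (2'), (3a'), (3c'), (4a') and for the eight types (5a), (5b), (6a), (6b), (3d), (3h), (6a'), (6b'). The paper treats the latter by locating each $v_i$ in the complete fan spanned by the rays of $\Star_\Gamma(v_i)$ and the negatives of those of $\Star_\Lambda(v_i)$. When $\Lambda$ is $4$-valent, the $B_2$ arrangement replaces the latter rays. The fan cells are then cut down to parallelograms or triangles.

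To repair your argument, apply your own criterion uniformly to all 38 types, not only to those you label degenerate. Keep every nearby configuration in which the component containing the translate of $P$ still has even stable intersection multiplicity.
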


\begin{proof} We follow the same strategy as in the proof of~\cite[Lemma 3.2]{CM20}, proceeding by a case-by-case analysis according to the nature of the tangency type of $P$.  Up to $\Dn{4}$-symmetry, there are 38 cases to consider, as shown in~\autoref{fig:classificationLocalTangencies}. Nonetheless, for several of them the combinatorial structure of their local moves agree, leading to only 15 possibilities which we depict in~\autoref{fig:localMoves}. 

  Three remarks are in order. First, we are only concerned with mantaining the tangency condition of the translation of $P$ even though the translated remaining tangencies between $\Lambda$ and $\Gamma$ may cease to be. Second, we always assume that $\lenl\geq 0$, and require our local moves preserve its sign if $\lenl>0$. As we mention earlier,  we only consider bounded local moves, even though in some cases, we can continue translating the input point $(v_0,\lenl)$ along an infinite direction. Such a situation arises, for example, for types (1a) and (1b). Indeed, for the $\Dn{4}$-representatives of this tangency type seen in~\autoref{fig:classificationLocalTangencies}, we can move $v_0$ away from $v_1$, while fixing both $v_1 = v_0 + \lenl (1,1)$ and $P$. 

  \begin{figure}[t]
  \includegraphics[scale=0.36]{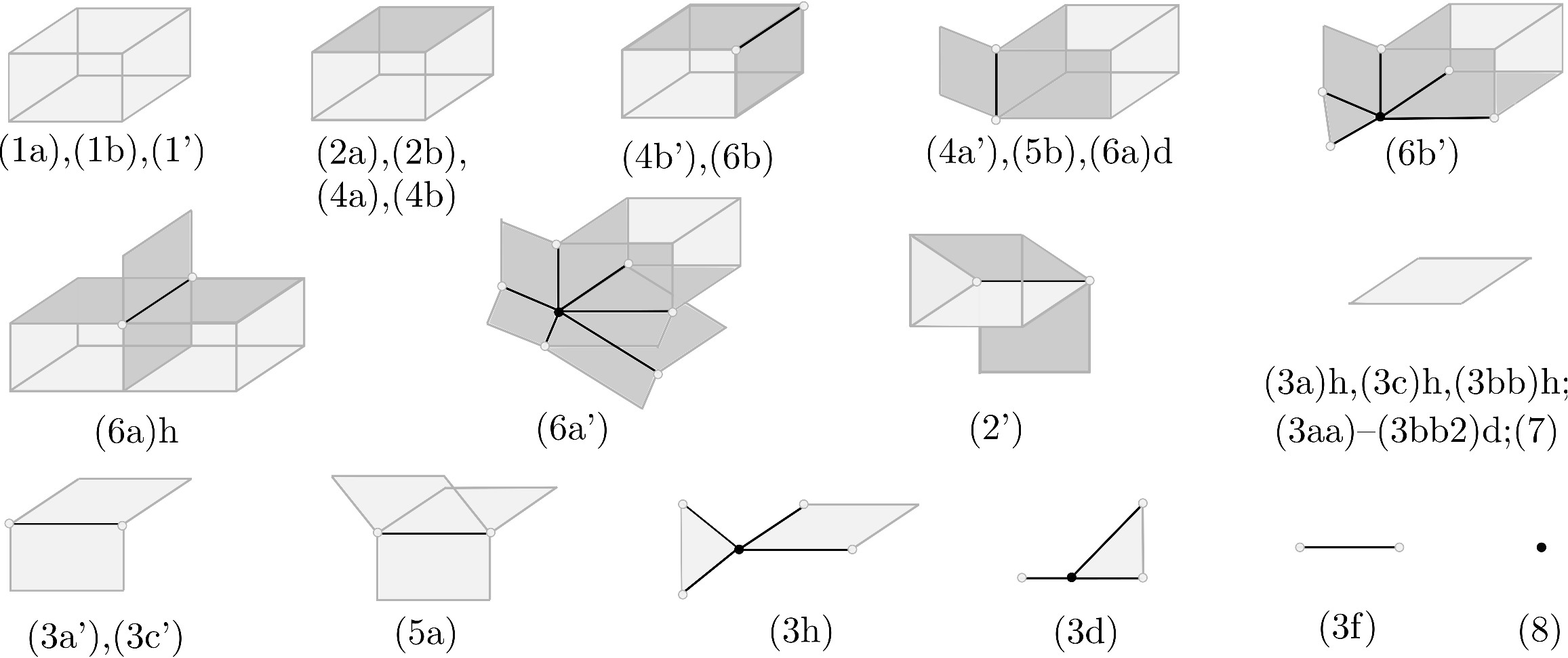}
  \caption{Local moves for all 38 local tangencies depicted in~\autoref{fig:classificationLocalTangencies}, ordered by dimension.  Each local move  consists of a union of relative open polyhedra in $\RR^3$. 
    In particular, only  black filled dots, black edges and dark gray parallelograms are part of these local moves. The point $(v_0,\lenl)$ representing the  curve $\Lambda$ with the given local tangency is the center of mass of the lowest dimensional cell in each local move.\label{fig:localMoves}}
\end{figure}

          {Lemmas}~\ref{lm:OpenCellsTrivalent} and~\ref{lm:OpenCellsFour-valent} below discuss the case when $P$ is  not a vertex of $\Lambda$ and its type is not (3d), depending on the valency of $\Lambda$. In turn,~\autoref{lem:Type4And4pCells} characterizes local moves for tangency types (4a), (4b), (4a') and (4b').

  To finish, we must determine $\Xi$ for the remaining eight  local tangency types. 
 The possible displacements of both vertices of $\Lambda$ for each case can be seen in   \autoref{fig:PositionVerticesMixedDimension}. The roman labels on each picture show how to combine  the areas describing the local movement of each individual vertex of $\Lambda$ to obtain a valid curve $\Lambda'$ tangent to $\Gamma$ at the translation of $P$.
In what follows, we briefly explain how we obtain our findings. Notice that in all these cases, each $v_i$ is either outside $\Gamma$ or it is a vertex of this curve. We treat both situations separately.

  First, assume  one of the vertices of $\Lambda$, say $v_i$, is not in $\Gamma$. Then, $\Lambda$ is trivalent and $P$ corresponds to a local tangency type (5a) or (6a) when $i=0$, and either (5b) or (6b) for $i=1$. In principle, for these four types, we can move $v_i$ within an open neighborhood $U$. For the first three cases, we may choose $U$ to be the join of two open parallelograms along a common slope one open edge  with center of mass $v_i$ in their closure. However, 
  for type (6b), the remaining vertex of $\Lambda$  further restricts the movement of $v_1$. More precisely, the set $U$ becomes the intersection of an open parallelogram with center of mass $v_1$ and one of the two closed halfspaces determined by the unique slope one line though $v_1$. This situation is similar to the one occurring for type (2) tangencies, which is treated in \autoref{lm:OpenCellsTrivalent}.  \autoref{fig:PositionVerticesMixedDimension} shows the compatible movement on the remaining vertex of $\Lambda$ for each of the regions described above.

  Finally, we discuss the case when all vertices of $\Lambda$ are in the connected component of $\Gamma\cap \Lambda$ containing $P$. To determine the displacements for each vertex $v_i$, we must look for valid locations to move $v_i$ within each face of a complete fan centered at $v_i$. In the trivalent case, the rays of this fan are obtained from the union of those in $\Star_{\Gamma}(v_i)$ and the negatives of those in $\Star_{\Lambda}(v_i)$. In turn, if $\Lambda$ is $4$-valent, we replace the latter by the eight rays of the $B_2$ arrangement. In both cases, a non-transverse tangency along a ray of $\Star_{\Gamma}(v_i)$ further restricts the movement to a line. This happens for the vertex $v_0$ in type (3d).

  If the tangencies are proper, the valid locations to displace $v_i$ within a 2-dimensional cell of such a fan can be further restricted to a polygon (either a parallelogram or a triangle). We choose this option to ensure that $\overline{\Xi}$ is a join of polyhedra. The set $\Xi$ is obtained by removing appropriate faces from each polytope in $\overline{\Xi}$, namely, those not containing $(v_0,\lenl)$.
  
  In all but one case, namely type  (6b'), the corresponding polyhedra are either parallelograms or parallelepipeds. In turn, for type (6b') the set $\overline{\Xi}$ is  obtained by joining a parallelepiped and a triangle to a parallelogram with vertex $(v_0,0)$ and edge directions $(-1,0,0)$ and $(-1,-1,1)$, along these two edges.
\end{proof}

\begin{figure}
  \includegraphics[scale=0.38]{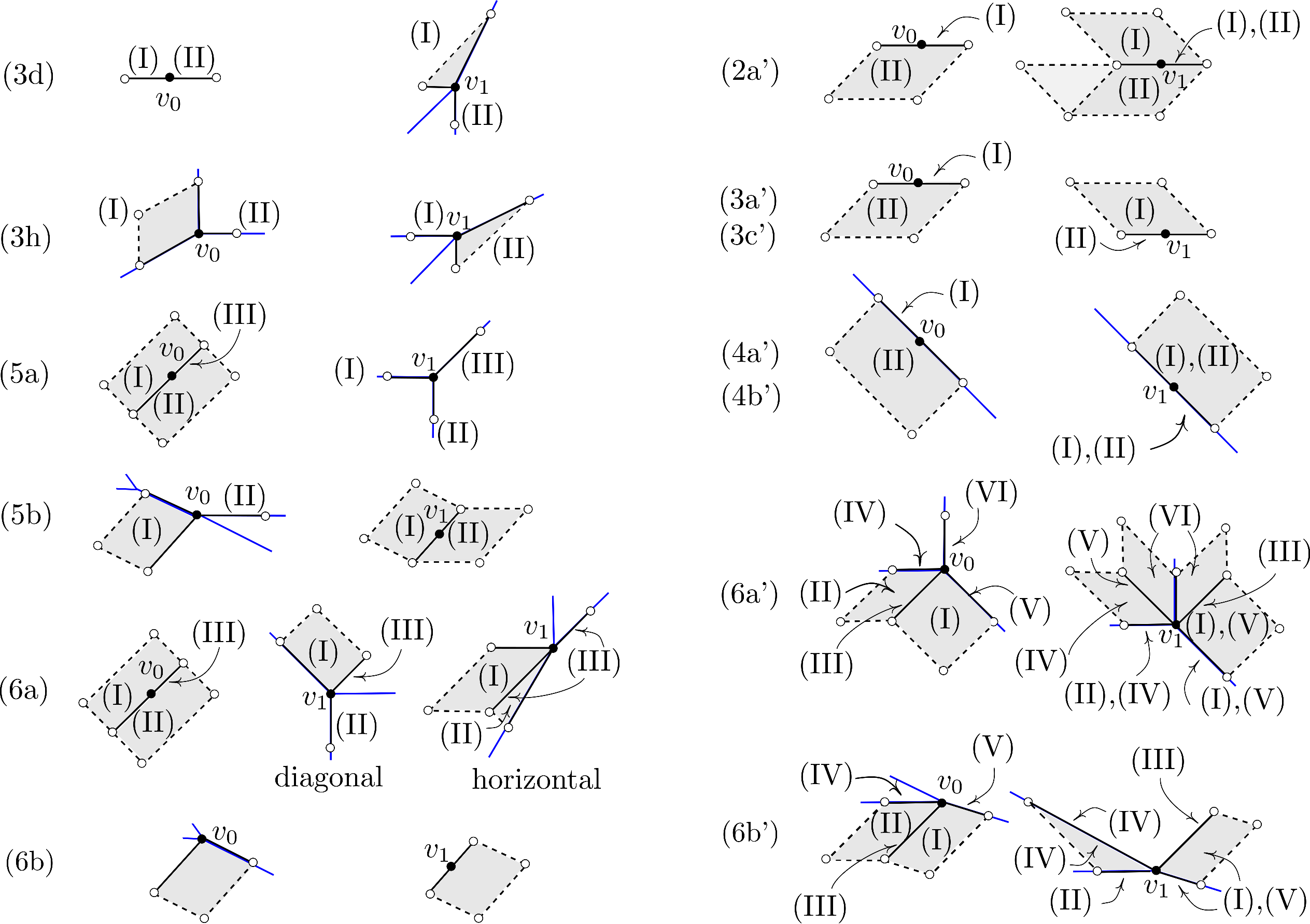}
  \caption{Possible displacements for the vertices $v_0$ and $v_1$ of $\Lambda$ for local moves of mixed-dimension. Relevant edges of $\Gamma$ are indicated in blue.\label{fig:PositionVerticesMixedDimension}}
\end{figure}

The next technical lemmas are used to simplify the proof of~\autoref{pr:localMovesTritangents}. For both statements, we let $(v_0,\lenl)$ be a point in $\RR^3$ corresponding to a tropical $(1,1)$-curve $\Lambda$ tangent to $\Gamma$ at a point $P$. Furthermore, we assume $P$ is not a vertex of $\Lambda$. We let ${\Upsilon}_P$ be the connected component of $\Lambda\cap \Gamma$ containing $P$, and define the following quantity:
\begin{equation}\label{eq:kdim}
  k_0:=\text{number of non-transverse tangency points in } {\Upsilon}_P \text{ that are not vertices of }\Gamma.
  \end{equation}

The first lemma covers  local tangencies types  that are not (3d), namely, (1a)/(1b), (2a)/(2b), (3f), (7) and (8), the  horizontal tangencies of type (3a), (3c) and (3bb), as well as the diagonal tangencies (3aa) through (3bb2) listed in~\autoref{fig:classificationLocalTangencies}.
\begin{lemma}\label{lm:OpenCellsTrivalent}
  Let  $(v_0,\lenl), P$,  $\Lambda$ and $k_0$ be as above with $\lenl>0$. Assume the local tangency type of $P$ is not (3d). Then, the local moves of $(v_0,\lenl)$  agree with  a $(3-k_0)$-dimensional parallelepiped 
  in $\RR^3$ with either all or all but one of its facets removed.
\end{lemma}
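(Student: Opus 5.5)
The plan is to work in the coordinates $(v_0,\lenl)$ with $\lenl>0$ fixed in sign, so that locally $\Lambda$ is trivalent with a slope one edge and the parameter space near $(v_0,\lenl)$ is an open ball in $\RR^3$. Since $P$ is not a vertex of $\Lambda$, every tangency point in ${\Upsilon}_P$ lies in the relative interior of an edge or leg of $\Lambda$. I would first record, for each point of ${\Upsilon}_P$, which edge or leg $e$ of $\Lambda$ contains it, and express the translation of that piece of $\Lambda$ linearly in $(v_0,\lenl)$. A leg moves by the translation of its endpoint vertex, and the bounded edge moves by translating $v_0$ or $v_1=v_0+\lenl(1,1)$. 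Only the component of this motion transverse to the direction of $e$ matters. The claim is then that the local move set $\Xi$ is cut out by two kinds of conditions.

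The first kind are equalities. A non-transverse tangency point which is not a vertex of $\Gamma$ is a segment where $e$ overlaps an edge of $\Gamma$. To preserve it, the transverse displacement of $e$ must vanish, which is a single linear equation in the displacement $(\Delta v_0,\Delta\lenl)$. I would check case by case, for the listed types (1a)/(1b), (2a)/(2b), (3f), (7), (8), horizontal (3a), (3c), (3bb) and diagonal (3aa)--(3bb2), that these $k_0$ equations are linearly independent. They come from distinct edges/legs or from parallel overlaps forcing the same equation only once, and this is where the exclusion of (3d) matters. The solution space is therefore an affine subspace of dimension $3-k_0$.

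The second kind are inequalities. The remaining features of ${\Upsilon}_P$ must keep the same combinatorics: transverse crossings through vertices of $\Gamma$, endpoints of overlap segments at vertices of $\Gamma$, and the vertices of $\Lambda$ staying off or on $\Gamma$ as required. Each such feature imposes at most a bound on the transverse displacement of the relevant edge in the direction perpendicular to $e$. Typical examples are a point sliding along an edge of $\Gamma$ until it hits a vertex, or an overlap shrinking to a point. After restricting to a bounded neighbourhood, these bounds come in parallel pairs, one strict on each side, so the region is a parallelepiped. Its edge directions are the directions along which exactly one feature moves, for instance $(1,0,0)$, $(0,1,0)$ and $(-1,-1,1)$, restricted to the kernel of the equalities.

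The main obstacle is determining which facets are included. Generically all bounds are strict, since leaving the open region changes the tangency type, and all facets are removed. In the "all but one" situations, such as type (2), one side of one pair is instead a closed condition. There the tangency persists on the boundary hyperplane but is lost beyond it, as already noted for (6b) in the proof of~\autoref{pr:localMovesTritangents} via a closed halfspace bounded by a slope one line. I would verify on the representatives of~\autoref{fig:classificationLocalTangencies} that at most one such closed side occurs. This holds because only one feature, namely the position of $P$ relative to a vertex of $\Lambda$ or a vertex of $\Gamma$ on $e$, can tolerate degeneration without destroying even multiplicity. All other features must remain strict.
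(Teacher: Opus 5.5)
Your outline is essentially the paper's proof. There, too, the dimension $3-k_0$ comes from each non-transverse tangency off the vertices of $\Gamma$ pinning the transverse motion of one edge or leg of $\Lambda$; the paper simply splits into $k_0=0$ (types (1)/(2)), $k_0=1$, $k_0=2$ (type (3f)) and $k_0=3$ (type (8)), chooses the $\varepsilon$'s so that $\Xi$ is a parallelepiped, and keeps a facet only for type (2). Two of your side remarks need correcting: what is preserved is the even multiplicity of the components, not their combinatorial type (a type (2) tangency becomes type (1) in the interior of $\Xi$), and (3d) is excluded not because overlap equations become dependent but because there the vertices of $\Lambda$ sit at vertices of $\Gamma$, which makes its local moves mixed-dimensional (see the proof of \autoref{pr:localMovesTritangents}), so your ``at most one closed side'' claim genuinely has to be checked case by case for included types of this kind, such as (7) and (3bb).
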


\begin{proof} 
We determine $\Xi$ by analyzing the possible perturbations of $v_0$ and $\lenl$ separately. The non-transverse tangencies in ${\Upsilon}_P$ lying in the relative interior of edges of $\Gamma$ impose restrictions on the movement of $v_0$. The number of such tangencies is precisely  the quantity $k_0$. 

  If $k_0 = 0$, then we are in the presence of a type (1) or (2) tangency. In the first case, we can move $v_0$ and $\lenl$ independently in small open neighborhoods of $\RR^2$ and $\RR_{>0}$, respectively. More precisely, we can find $\varepsilon, \varepsilon', \varepsilon''>0$ small enough to ensure that the set $\Xi$ is the interior of the parallelepiped with center of mass $(v_0,\lenl)$, whose edges have directions $(1,1,0), (-1,1,0)$ and $(0,0,1)$, of lengths $2\varepsilon, 2\varepsilon'$ and $2\varepsilon''$, respectively. 

  Similarly, for a type (2) tangency, the movements of $v_0$ and $\lenl$ will also be independent of each other. To describe $\Xi$ we may assume, upon $\Dn{4}$-symmetry, that $v_0$ is adjacent to the leg or edge of $\Lambda$ containing $P$.We let $e$ be its direction. While for any point $(v_0',\lenl') \in \Xi$, the parameter $\lenl'$ can take any value within an open segment centered at $\lenl$ of length $2\varepsilon''$, the point $v_0'$ will lie in a parallelogram with all but one edge removed. This edge will have center of mass $v_0$ and  direction $e$. Thus, the set $\Xi$ becomes a parallelepiped with all but one facet removed. We only preserve the relative interior of the facet with center of mass $(v_0,\lenl)$.

  In turn, if $k_0=1$, then $\Upsilon_P$ contains an edge  with a non-transverse tangency in its relative interior. Without loss of generality, we assume $v_0$ is adjacent to the edge or leg of $\Lambda$ containing $P$. 
  We let $e$ be the direction of this edge and $P_0$ be the corresponding non-transverse tangency in $\Upsilon_P$. This condition forces $v_0$ to move within the open segment with endpoints $v_0\pm \varepsilon e$ for some $\varepsilon>0$ small enough. The length parameter $\lenl'$ can vary independently within an interval $(\lenl -\varepsilon', \lenl+\varepsilon')$ for some positive quantity $\varepsilon'\ll 1$.
Thus, $\Xi$ becomes the open parallelogram in $\RR^2$ with center of mass $(v_0,\lenl)$ and edges with directions $(e,0)$ and $(0,0,1)$, of lengths $2\varepsilon$ and $2\varepsilon''$, respectively. 

Next, assume $k_0=2$. Our assumptions and~\autoref{thm:classificationRealizableLocalTangencies} confirm that we are in the presence of a type (3f) tangency. Next, we describe $\Xi$ for a $\Dn{4}$-representative with $v_0\in \Upsilon_P$. Since the directions of the two edges of $\Upsilon_P$ carrying the non-transverse tangency are linearly independent, the vertex $v_0$ must remain in  place if the tangency conditions were to be preserve. However, the length parameter is free to take any value in an open interval of $\RR_{>0}$ centered at $\lenl$. Thus, the set $\Xi$ is an open segment in $\RR^3$ with center of mass $(v_0,\lenl)$.

Finally, if $k_0=3$, the same classification result ensures our tangency is of type (8). The three possible distributions of points in $\Lambda$ (up to symmetry) are seen at the bottom of~\cite[Figure 4]{CLMR25Lifting}. In this case, the directions of the edges or legs of $\Lambda$ carrying the three non-transverse tangencies prevent $v_0$ and $\lenl$ from moving. We obtain a $0$-dimensional local movement, as claimed.
\end{proof}


Our next lemma determines the local moves for tangency types (1'), (2'), (3a') and (3c'):
\begin{lemma}\label{lm:OpenCellsFour-valent}    Let  $(v_0,\lenl), P$,  $\Lambda$ and $\Upsilon_P$ be as above.  Assume that $\lenl=0$ and that $P$ is not a vertex of $\Gamma$. Then, the local moves  of $(v_0,0)$ agree with either
  \begin{enumerate}[(i)]
  \item   the interior of a parallelepiped (for type (1')),
  \item a join of two parallelograms with three facets removed, glued along a common edge (for types (3a') and (3c')), or
    \item a join of a parallelogram and a triangular prism with all facets removed except for one (a parallelogram), glued along a common edge (for type (2')).
    \end{enumerate}
    In all cases, the point $(v_0,0)$ is the center of mass of the lowest dimensional component of the movement.
\end{lemma}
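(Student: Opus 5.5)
We will argue exactly as in the proof of \autoref{lm:OpenCellsTrivalent}. Separately, we determine which positions are allowed for the vertex $v_0$ and which values are allowed for the signed length $\lenl$ near $0$. The new feature is that $\lenl=0$ sits at the junction of the three combinatorial types of $(1,1)$-curves. A perturbation may therefore produce a trivalent curve with an edge of slope $1$ ($\lenl>0$) or of slope $-1$ ($\lenl<0$). The parameter $v_0$ always records the lowest vertex, and by~\eqref{eq:TropSegre} the map $\psi$ is linear on each of the two halfspaces $\lenl\geq 0$ and $\lenl\leq 0$. So the plan is to compute $\Xi\cap\{\lenl\geq 0\}$ and $\Xi\cap\{\lenl\leq 0\}$ independently, each as a polyhedral set, and then glue them along their common part in $\{\lenl=0\}$.

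\textbf{Type (1').} Here $P$ is a transverse tangency in the relative interior of a leg of $\Lambda$ and of an edge of $\Gamma$, and no vertex of $\Lambda$ lies in $\Upsilon_P$. Any small perturbation of both vertices of $\Lambda$ keeps the intersection with that edge of $\Gamma$ of multiplicity two. Hence $v_0$ moves in an open parallelogram and $\lenl$ moves in an open interval around $0$, in both sign regimes. The two halves glue to the interior of a single parallelepiped with center of mass $(v_0,0)$, which gives (i).

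\textbf{Types (3a'), (3c') and (2').} Up to $\Dn{4}$-symmetry we place $P$ on a leg adjacent to the $4$-valent vertex $v_0=v_1$. For (3a') and (3c') the tangency is non-transverse along an edge of $\Gamma$ of direction $e$, so $k_0=1$. On each side $\lenl>0$ and $\lenl<0$, the vertex carrying the leg containing $P$ must slide along $e$. Following the $k_0=1$ case of \autoref{lm:OpenCellsTrivalent}, this yields a parallelogram spanned by $(e,0)$ and the direction in which $\lenl$ varies. Passing through $\psi$, the leg is attached to $v_0$ for one sign of $\lenl$ and to $v_1=v_0+\lenl(1,\pm1)$ for the other. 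The two parallelograms therefore have different second edge directions but share the segment through $(v_0,0)$ in direction $(e,0)$. Keeping only the half-open parts on each side produces the two parallelograms glued along a common edge, with three facets removed. For (2') the tangency is transverse but requires $P$ to stay on a prescribed side, as for type (2) in \autoref{lm:OpenCellsTrivalent}. On one sign of $\lenl$ the constraint acts only on $v_0$, giving a half-open parallelogram. On the other sign the constraint involves both $v_0$ and $\lenl$, cutting the parallelepiped by a plane through the edge at $\lenl=0$. This gives a triangular prism, and the two pieces are glued along the edge through $(v_0,0)$. In every case we check that $(v_0,0)$ is the center of mass of the lowest-dimensional cell, which holds by choosing symmetric $\varepsilon$-neighbourhoods.

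\textbf{Main obstacle.} The hardest step is (2'): we must pin down exactly which faces of the triangular prism and of the parallelogram survive, which requires tracking the half-open condition through the nonlinear map $\psi$ on both sides of $\lenl=0$. I would handle it by working directly with the vertex coordinates $(v_0,v_1)$ on each side, writing the side condition as linear inequalities, and only then translating to $(v_0,\lenl)$.
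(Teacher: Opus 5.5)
Your treatment of types (1'), (3a') and (3c') follows the paper's proof. There too the sign of $\lenl'$ decides whether the leg carrying $P$ hangs from $v_0'$ or from $v_1'$. This gives $\overline{D}\times(-\varepsilon,\varepsilon)$ for (1'). For (3a')/(3c') it gives two parallelograms glued along $D\times\{0\}$: $D\times(-\varepsilon,0)$ and the one spanned by $D\times\{0\}$ and $\varepsilon(-1,-1,1)$.

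The gap is in type~(2'). You say that on the side of $\lenl=0$ where the constraint only involves $v_0$ you get a half-open parallelogram. That is not what this constraint gives. A closed half-plane condition on $v_0'$, with $\lenl'$ free in $(-\varepsilon,0]$, cuts out a three-dimensional half-open parallelepiped. This is exactly how the trivalent type~(2) case of \autoref{lm:OpenCellsTrivalent} produces a parallelepiped. That piece would meet your triangular prism along a two-dimensional face in $\{\lenl'=0\}$, not along an edge. So your pieces have neither the dimensions nor the gluing claimed in (iii).

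More fundamentally, the two pieces in (iii) are not the two sign-halves of $\Xi$. In the paper's description, $\Xi\cap\{\lenl'=0\}$ contains the two-dimensional set $(D\cup\cP)\times\{0\}$. With $P$ the top endpoint of $e$ on the positive horizontal leg, the paper splits according to where $v_0'$ sits relative to the horizontal line $L$ through $P$.
\begin{itemize}
\item If $v_0'\in D\subset L$, then $\lenl'>0$ would lift the positive horizontal leg above $P$. So only $\lenl'\le 0$ survives, and this gives the parallelogram $D\times(-\varepsilon'',0]$.
\item If $v_0'$ lies in the open parallelogram $\cP$ spanned by $D$ and $D-\varepsilon'(1,1)$, then $v_1'$ is placed on one of the two slope $\pm1$ segments joining $v_0'$ to $L$. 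Hence $|\lenl'|$ is bounded by the vertical distance from $v_0'$ to $L$, for both signs of $\lenl'$.
\end{itemize}
The second case is the triangular prism. It straddles $\lenl'=0$, keeps only its facet $\{\lenl'>0,\ v_1'\in D\}$, and meets the parallelogram exactly in $D\times\{0\}$.

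Note that for $\lenl'<0$ the leg through $P$ hangs from $v_0'$. So the bound on $|\lenl'|$ on that side does not follow from the half-plane condition you invoke. To arrive at (iii) you would have to supply and justify that restriction separately.
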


\begin{proof} As usual,  we let $\Xi$ be the bounded region determined by the local moves of $(v_0,0)$. Our assumptions of $v_0, P$ and $\Lambda$ combined with~\autoref{thm:classificationRealizableLocalTangencies} ensure that $P$ is of types (1'), (2'), (3a') or (3c'). In particular, $\Upsilon_P$ contains a single tangency point, namely $P$.  We let $e$ be the edge or leg of $\Gamma$ carrying the tangency $P$. Exploiting $\Dn{4}$-symmetry, we  suppose that $P$ lies on a horizontal leg of $\Lambda$, picking the representatives for each type depicted in~\autoref{fig:classificationLocalTangencies}. 

 The local movement of $(v_0,0)$ depends on the tangency type of $P$. We treat each case separately, in increasing order of complexity.   Note that since $\Lambda$ is $4$-valent, a $(1,1)$-curve $\Lambda'$ corresponding to a point $(v_0',\lenl')$ in the local movement of $(v_0,0)$ has no combinatorial restrictions. In particular, the length entry $\lenl'$ can have arbitrary sign.

  First, assume $P$ has type (1'), so it lies on the negative horizontal leg of $\Lambda$.  First, we determine the possible points with $\lenl'=0$. Since $P$ lies in the relative interior of the edge $e$, a simple inspection reveals that we can find a bounded open neighborhood $D$ of $v_0$ in $\RR^2$ so that any point $(v_0',0)\in \overline{D}\times \{0\}$ produces a curve $\Lambda'$ with a tangency of type (1') along the edge $e$ of $\Gamma$ and the negative horizontal leg of $\Lambda'$. 
  Furthermore, we may assume $D$ is the interior of a parallelogram with center of mass $v_0$ and edge directions $(1,1)$ and $(-1,1)$. By compactness of $\overline{D}$ we can next pick $\varepsilon>0$ small enough so that any point $(v_0',\lenl')\in \overline{D}\times (-\varepsilon, \varepsilon)$ preserves this tangency type. This proves our claim for type (1').

  Next, assume $P$ is of type (3a') or  (3c'). Let $e$ be the leg, respectively, edge of $\Gamma$ containing $P$. By our choice of representatives, $P$ lies on the positive horizontal leg of $\Lambda$ and $e$ is also horizontal. For any $\Lambda'$ corresponding to a point $(v_0',\lenl')$ in the region $\Xi$,  one of the vertices of $\Lambda'$ (call it $v'$) must lie in a bounded open  segment $D$ of the horizontal line in $\RR^2$ through $P$  containing $v_0$ as its center of mass. Furthermore, the  tangency point obtained by translating $P$ must lie on the positive horizontal leg of $\Lambda'$, so the sign of $\lenl'$ determines the nature of $v'$.  Note that any point in $D\times\{0\}$ will be in $\Xi$. Next,  we show that the local moves for $(v_0',0)$ are obtained by gluing two open parallelograms along the edge $D\times \{0\}$ contained in both of their closure.

 If $\lenl'<0$, then $v'=v_0'$ is restricted to $D$. Thus, we can find $0<\varepsilon\ll 1$ so that the  the open parallelogram $D\times (-\varepsilon, 0)$ is contained in $\Xi$ . On the contrary, if $\lenl'>0$, then $v'=v_1'$ and $\lenl'$ is uniquely determined (in a linear way) by $D$ and $v_0'$. Thus, we need only describe the possible values of $v_0'$.
 A simple inspection reveals that we can find $0<\varepsilon\ll 1$ so any point in the open  parallelogram in $\RR^2$ determined by the edges $D$ and $D-\varepsilon(1,1)$ is a valid location for $v_0'$. Thus, the relative interior of the parallelogram in $\RR^3$ with parallel edges $D\times \{0\}$ and $(D\times \{0\})+\varepsilon(-1,-1,1)$ is a subset of $\Xi$. The gluing of both parallelograms along $D\times \{0\}$ lies in $\Xi$, as desired.

 Finally, assume  $P$ is of type (2'), so it lies in the positive horizontal leg of $\Lambda$. Our choice of $\Dn{4}$-representative places $P$ on the top endpoint of the edge $e$ of $\Gamma$ responsible for this horizontal tangency. In particular, by shrinking $\Xi$ further  we may assume that the translation of $P$ corresponding to any point of $\Xi$ also lies on the edge $e$ of $\Gamma$ and also on the positive horizontal leg of $\Lambda'$.  This forces  $v_0'$ to lie in the closed halfspace determined by the horizontal line of $\RR^2$ through $P$ and containing the remaining endpoint of $e$.

 Since $P$ is the top vertex of $e$, we can find $0<\varepsilon,\varepsilon'\ll 1$ so that $v_0'$ lies in either the open segment $D$  of $\RR^2$ with endpoints $v_0 \pm \varepsilon (1,0)$ or in the open parallelogram $\cP$ determined by  $D$ and $D-\varepsilon' (1,1)$. These regions are labeled (I) and (II), respectively, in the corresponding picture in~\autoref{fig:PositionVerticesMixedDimension}. Note that if $v_0'\in D$ we must have $\lenl'\leq 0$, and furthermore, we can find $0<\varepsilon''\ll 1$ so that $D\times (-\varepsilon'', 0]$ lies in $\Xi$. This determines a parallelogram in $\RR^3$ with all but one edge removed, namely $D\times \{0\}$.

   Finally, assume $v_0'$ belongs to the parallelogram $\cP$ and let $L$ be the horizontal line through $P$. Then, $v_1'$ can be any point in the union of two segments of slopes $\pm 1$ with common endpoint $v_0'$ and the second endpoint on $L$. Note that both segments have the same length. The union of all these segments spans the trapezoid marked with (II) in~\autoref{fig:PositionVerticesMixedDimension}. These conditions determine a triangular prism with all but one facet removed, namely, the one containing all points  $(v_0', \lenl')$ where $v_0'\in \cP$, $\lenl'> 0$ and $v_1'\in D$. 
   We conclude from this that the local moves for type (2') are as described: both regions are glued along a segment, namely $D\times \{0\}$, whose center of mass is $(v_0,0)$. 
\end{proof}

Our next result describes local moves for local tangency types (4a)/(4b) and (4a')/(4b'):
\begin{lemma}\label{lem:Type4And4pCells} Assume $(v_0,\lenl)\in \RR^3$ determines a tropical $(1,1)$-curve $\Lambda$ tangent to $\Gamma$ at a point $P$ of type  (4a), (4b), (4a') or (4b'). Then, the region $\Xi$ describing the local movement of $(v_0,\lenl)$ is either
  \begin{enumerate}[(i)]
  \item a parallelepiped with all by one facet removed (for types (4a) and (4b)),
  \item a parallelepiped with all by two adjacent facets removed (for type (4b'), or
  \item the join of a parallelogram and  a parallelepiped with all by two adjacent facets removed, along this common edge  (for type (4a')).
  \end{enumerate}
  In all cases, $(v_0,\lenl)$ is the center of mass of the lowest dimensional boundary face of  $\Xi$.
\end{lemma}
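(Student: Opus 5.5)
We follow the strategy of \autoref{lm:OpenCellsTrivalent} and \autoref{lm:OpenCellsFour-valent}. First we fix $\Dn{4}$-representatives of the four types as in \autoref{fig:classificationLocalTangencies}. Type (4) tangencies occur at a vertex of $\Gamma$ lying in the relative interior of a leg or edge of $\Lambda$. We then describe $\Xi$ by tracking the vertex $v_0$ and the signed length $\lenl$ separately. Since $P$ is not a vertex of $\Lambda$, the condition that the translate of $P$ stays a tangency of the same kind only concerns the leg or edge $f$ of $\Lambda$ containing $P$, together with the vertex $V$ of $\Gamma$ it passes through. Let $e$ be the direction of $f$, and let $H$ be the closed halfplane of $\RR^2$ bounded by the line $V+\RR e$ that contains the relevant edge of $\Gamma$ at $V$, i.e.\ the one making the tangency non-transverse.

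For types (4a) and (4b) we have $\lenl>0$, so the sign of $\lenl$ is preserved nearby. We argue exactly as for type (2) in \autoref{lm:OpenCellsTrivalent}. We may assume $v_0$ is the endpoint of $f$. Keeping the tangency forces the line carrying $f$ to stay in $H$, so $v_0'$ ranges over a parallelogram centered at $v_0$ intersected with $H$. That is, $v_0'$ lies in an open parallelogram with one closed edge, of direction $e$ and center of mass $v_0$. Meanwhile $\lenl'$ varies freely in $(\lenl-\varepsilon,\lenl+\varepsilon)$. If $f$ is instead adjacent to $v_1$, we use the compatible linear change of coordinates $v_1=v_0+\lenl(1,1)$, as in the proof of~\eqref{eq:TropSegre}. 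The product is a parallelepiped with all but one facet removed, and $(v_0,\lenl)$ is the center of mass of the kept facet. This proves (i).

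For the four-valent types (4a') and (4b') we have $\lenl=0$, and $\lenl'$ may take either sign. We split $\Xi$ according to the sign of $\lenl'$ and proceed as in the proof of the type (2') case of \autoref{lm:OpenCellsFour-valent}. With $P$ on the positive horizontal leg, one vertex $v'$ of $\Lambda'$ must lie on the horizontal line through the translate of $P$, and on the correct side determined by $H$. This side condition is what distinguishes type (4) from types (3a')/(3c').
\begin{itemize}
\item When $\lenl'\le 0$, $v'=v_0'$ is constrained to a half-open region, giving a half-open parallelogram in $\RR^3$.
\item When $\lenl'\ge 0$, $v'=v_1'=v_0'+\lenl'(1,1)$. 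The admissible $v_0'$ sweep a region bounded by translates of $D$ in direction $-(1,1)$, producing a piece with edge direction $(-1,-1,1)$.
\end{itemize}
For (4b') the side condition lets both pieces combine into a single parallelepiped, which keeps exactly the two facets through $(v_0,0)$: the one with $\lenl'=0$ and the one coming from $H$. For (4a') the multiplicity-two configuration forces the piece on one side of $\lenl'=0$ to collapse to a parallelogram, glued along the common edge $D\times\{0\}$. Finally we check that $(v_0,\lenl)$ is the center of mass of the lowest-dimensional face in each case; this follows by choosing symmetric $\varepsilon$'s.

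The main obstacle is the four-valent analysis: determining precisely which boundary facets belong to $\Xi$, and why (4a') versus (4b') yields a join rather than a single parallelepiped. This requires carefully reading off, from the local pictures in \autoref{fig:PositionVerticesMixedDimension}, how the half-plane constraint from $V$ interacts with the switch between $v_0$ and $v_1$ at $\lenl=0$. I would first verify this by drawing both vertex displacements for each representative, then translate the regions to $(v_0',\lenl')$ coordinates.
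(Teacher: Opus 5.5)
\textbf{Gap: type (4) is misidentified.} You describe a type (4) tangency as a vertex of $\Gamma$ in the relative interior of a leg or edge of $\Lambda$, and you write ``since $P$ is not a vertex of $\Lambda$''. That is the type (2)/(2') configuration, already covered by Lemmas~\ref{lm:OpenCellsTrivalent} and~\ref{lm:OpenCellsFour-valent}. In types (4a), (4b), (4a'), (4b') the point $P$ \emph{is} a vertex of $\Lambda$, lying in the relative interior of an edge $e$ of $\Gamma$. That is exactly why these types need a separate lemma; see also Lemmas~\ref{lm:4valentSingleton} and~\ref{lm:type4b_1}.

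For the trivalent types your conclusion (i) survives, but not for the reason you give. The tangency persists if and only if the vertex of $\Lambda'$ on $e$ stays on $e$ or on one closed side of its affine span: the side from which the appropriate leg or edge of $\Lambda'$ crosses $e$ with full multiplicity. If $P=v_1$, then $v_0'$ ranges over a small open parallelogram. The length $\lenl'$ ranges over a half-open interval whose closed endpoint is the value $\mu(v_0')$ at which $v_1'$ hits $e$. Since $\mu$ is linear in $v_0'$, you get a parallelepiped with one slanted facet kept, and $(v_0,\lenl)$ is its center of mass. This is what the paper means by ``analogous to (2a)/(2b)''. The half-plane comes from $e$, not from a vertex $V$ of $\Gamma$ on a leg $f$.

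\textbf{For (4a') and (4b') the misidentification breaks the argument.} Here $P$ is the $4$-valent vertex itself. There is no leg carrying $P$, no horizontal segment $D$, and no vertex confined to a horizontal line.

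The correct picture for $\lenl'\geq 0$ is as follows. The vertices $v_0'$ and $v_1'$ lie in the two closed halves, cut by the line spanned by $e$, of a small parallelogram centered at $v_0$ with edge directions $e$ and $(1,1)$. Write $v_0'=v_0+a\,e+b\,(1,1)$ and $c=b+\lenl'$. The region becomes $|a|<\varepsilon$, $-\varepsilon<b\leq 0\leq c<\varepsilon$. This is a box whose kept facets $\{b=0\}$ and $\{c=0\}$ record $v_0'\in e$ and $v_1'\in e$. They meet along the edge $\{(v_0',0): v_0'\in e\}$ of $4$-valent members, whose midpoint is $(v_0,0)$. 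So $\lenl'=0$ is an \emph{edge} of $\Xi$, not a facet. The gluing edge has direction $(e,0)$, not $D\times\{0\}$ with $D$ horizontal.

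Whether a piece with $\lenl'<0$ exists depends on the direction of $e$, and your sketch does not supply this.
\begin{itemize}
\item For (4b'), take $e$ of direction $(-1,3)$ up to symmetry. Every nearby curve with an edge of slope $-1$ meets $e$ in components of odd multiplicity ($2+1+1$ or $3+1$). Hence $\lenl'<0$ never occurs and $\Xi$ is just the box, so your claim that ``both pieces combine'' is false.
\item For (4a'), $e$ has direction $(-1,1)$. An edge of slope $-1$ is then parallel to $e$. Off the line of $e$ it produces two multiplicity-one crossings, so it must run along $e$ with both vertices on $e$. This produces the extra parallelogram, glued to the box along the edge of $4$-valent members.
\end{itemize}
Your phrase ``the multiplicity-two configuration forces the piece \ldots{} to collapse'' does not give this reason. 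The facet bookkeeping you leave unresolved is precisely the content of the lemma.
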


\begin{proof} The statement for types (4a) and (4b) is analogous to that of types (2a) and (2b), as we now explain. For our choice of representative for $\Lambda$ seen in~\autoref{fig:classificationLocalTangencies}, we know that $P=v_1$. We let $e$ be the edge of $\Gamma$ containing the tangency  $P$. We are allowed to vary $v_0$ within a small  open parallelogram $\cP$ with edge directions  $e$ and $e^{\perp}$, 
  and center of mass $v_0$. The edge lengths can be picked small enough to guarantee  that $\overline{\cP}$ is disjoint from $e$ but the slope one line through any point $v_0'\in \cP$ intersects the relative interior of the edge $e$.

  Setting $v_1'$ as this intersection point determines the maximal value $\mu$ of $\lenl'$ with $(v_0', \lenl')\in {\Xi}$. By compactness of $\overline{\cP}$ we can find $0<\varepsilon\ll 1$ so that $\{v_0'\}\times (\mu-\varepsilon, \mu]\subseteq {\Xi}$ for all $v_0'\in \overline{\cP}$. Since $e$ is one of the edges of $\cP$ and the quantity $\mu$ depends linearly on $v_0'$, it follows that $\Xi$ is a  parallelepiped with all but one facet removed. By construction, the  point $(v_0,\lenl)$ is the center of mass of this facet.

    Next, we assume $P$ has type (4a') or (4b') and let $e$ be the edge of $\Gamma$ containing $P$. The multiplicity of $P$ determines the direction of $e$: it is $(-1,1)$ for type (4a'), and either $(-1,3)$, $(3,-1)$, $(1,3)$ or $(-3,1)$ for type (4b'). Up to $\Dn{4}$-symmetry, we may restrict to the first two options. The region $\Xi$ will be different for each of them, since for type (4a') the length parameter $\lenl'$ in each point $(v_0',\lenl')$ of $\Xi$ can have arbitrary sign, whereas $\lenl'\geq 0$ for type (4b').

 \autoref{fig:PositionVerticesMixedDimension} shows the possible locations of the vertices $v_0'$ and $v_1'$. Each of them lies on opposite halfspaces of an open parallelogram with center of mass $v_0$, edge directions parallel to $e$ and $(1,1)$ sliced along the line spanned by  $e$. The points in $(v_0',\lenl')$ with $\lenl'\geq 0$ determine a parallelepiped $\cQ$ in $\RR^3$ with all but two facets removed. Those remaining correspond to curves $\Lambda'$ where at least one of its vertices lies on the edge $e$. This characterizes $\Xi$ for a type (4b') tangency, since the direction of $e$ prevents any member of $\Xi$ to have an edge with negative slope. 

 The situation is different for type (4a'). Assuming the edge lengths of $\cQ$ are sufficiently small, the local moves for this type are obtained by joining $\cQ$ and a parallelogram in $\RR^3$ along the edge with center of mass $(v_0',0)$. This new component corresponds to curves $\Lambda'$ where both $v_0'$ and $v_1'$ lie on the edge $e$, and $\lenl'\leq 0$. In particular, we may find $0<\varepsilon''\ll 1$ so the distance between $v_1'$ and $v_0'$ is strictly less than $\varepsilon''$. This concludes our proof.
\end{proof}

\autoref{pr:localMovesTritangents} has an important topological implication: each point of a tritangent class admits a local dimension, corresponding to the dimension of the local move. In addition, whenever a local move is bounded, its boundary is determined by  a finite collection of hyperplanes in $\RR^3$ depending only on $\Gamma$. Since tritangent classes are closed subsets of $\RR^3$ by construction, we conclude:
\begin{corollary}\label{cor:TritangentsArePolyhedralComplexes}
Each tritangent class of $\Gamma$ is a connected polyhedral complex in $\RR^3$. The same is true for its inverse image under the tropical Segre embedding from~\eqref{eq:TropSegre}.
\end{corollary}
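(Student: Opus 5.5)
The plan is to establish three properties of a tritangent class $\tclass\subseteq\RR^3$ (connected, closed, locally polyhedral with finitely many hyperplane directions) and then transport them through $\psi$ from~\eqref{eq:TropSegre}.

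\textbf{Connectedness.} This holds by definition, since $\tclass$ is a connected component of the set $T\subseteq\RR^3$ of points $(v_0,\lenl)$ encoding $(1,1)$-curves tritangent to $\Gamma$.

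\textbf{Closedness.} Tritangency is expressed through stable intersection multiplicities of the components of $\Lambda\cap\Gamma$, as in~\autoref{def:tropicalTritangents}. These multiplicities behave upper semicontinuously under limits: a limit of tritangent curves keeps even multiplicities at the limiting components, possibly with several tangencies merging. Hence $T$ is closed, and so is each of its connected components.

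\textbf{Local polyhedral structure.} Fix $x=(v_0,\lenl)\in\tclass$, with tangencies $P,P',P''$. Any nearby point of $T$ near $x$ must keep tangencies near each of these points. So a small neighborhood of $x$ in $\tclass$ equals the intersection $\Xi_P\cap\Xi_{P'}\cap\Xi_{P''}$ of the local-move regions. Here some care is needed: one has to check that nearby curves cannot acquire a new tangency elsewhere, which holds because $\Gamma\cap\Lambda$ varies continuously and tangencies away from $\Upsilon_P$ would be detected at $x$ itself.

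By~\autoref{pr:localMovesTritangents} and~\autoref{lm:OpenCellsTrivalent}--\autoref{lem:Type4And4pCells}, each $\Xi$ is a finite union of relatively open polyhedra whose closures are cut out by hyperplanes. Their normals have directions determined by the edges of $\Gamma$ and $\Lambda$, so they lie in a finite set of rational directions, and the hyperplanes are positioned at affine functions of vertices of $\Gamma$. The intersection of finitely many such unions is again a finite union of relatively open polyhedra. Consequently every point of $\tclass$ has a neighborhood $U$ with $\tclass\cap U$ polyhedral, cut out by hyperplanes from a family $\mathcal{H}$. This family is finite, since it depends only on $\Gamma$ (its vertices and edge directions) together with the finitely many slope directions of $(1,1)$-curves. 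In particular, $\tclass$ admits a local dimension at every point.

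\textbf{Global structure.} Consider the arrangement of $\mathcal{H}$ together with the hyperplanes $\lenl=0$ and $x+y=z$. Its (possibly unbounded) cells are polyhedra, and the structure of $\tclass$ on each open cell is locally constant. Therefore $\tclass$ is a union of closed cells of a common refinement. This gives a rational polyhedral complex with finitely many cells.

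\textbf{Main obstacle.} The expected hard point is this last step: the locally constant behaviour must hold on unbounded cells too. The plan is to argue that along unbounded directions only moves of type (1) or (2) occur, and these are translation-invariant.

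\textbf{Transfer through the tropical Segre embedding.} The map $\psi$ is a bijective piecewise linear homeomorphism that is linear with integer coefficients on the two half-spaces $x+y\geq z$ and $x+y\leq z$. Its inverse is therefore piecewise linear, linear on $\lenl\geq 0$ and $\lenl\leq 0$. Refining the structure of $\tclass$ by $\lenl=0$, each cell maps linearly onto a rational polyhedron. Hence $\psi^{-1}(\tclass)$ is a rational polyhedral complex, and it is connected because $\psi^{-1}$ is continuous.
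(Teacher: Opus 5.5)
Your proposal is correct and follows the paper's route. The paper deduces the corollary in one line from \autoref{pr:localMovesTritangents} (each point of $\tclass$ has a local dimension, and the genuine boundaries of local moves lie on finitely many hyperplanes depending only on $\Gamma$), together with closedness of the classes and connectedness by definition; you make the closedness, the globalization through a finite hyperplane arrangement, and the transfer through the two linear pieces of $\psi$ explicit. Drop your ``main obstacle'': the local description holds at every point with hyperplanes from a single finite family, so $\tclass$ meets each face of the arrangement, bounded or not, in a relatively open and closed subset and connectedness of faces already finishes the argument. Your proposed fix is also false, since e.g.\ \autoref{lm:unboundedtranslations}(ii) allows a non-transverse tangency along the edge of $\Lambda$ on a ray of direction $(0,0,1)$ contained in $\tclass$.
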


Our next result characterizes full-dimensionality of cells on each  tritangent class. 

\begin{proposition}\label{pr:3dCells}
  Assume a tritangent class of $\Gamma$ contains a cell $\sC$ of dimension 3. Then, any member $\Lambda$ in the relative interior of $\sC$ can only contain type (1a), (1b) or (1') tangencies.
\end{proposition}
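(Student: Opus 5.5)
The plan is to use the local structure from \autoref{pr:localMovesTritangents} together with the elementary fact that the intersection of finitely many local moves has dimension at most the minimum of their dimensions. Let $\Lambda$ be a member in the relative interior of a $3$-dimensional cell $\sC$, with tangencies $P$, $P'$, $P''$ (possibly coinciding). Any small perturbation of $(v_0,\lenl)$ inside $\sC$ stays in the class. It also preserves all three tangency conditions, since the combinatorial type of $\Lambda\cap\Gamma$ is constant on the relative interior of a cell once the structure is refined as in \autoref{cor:TritangentsArePolyhedralComplexes}. Hence a neighbourhood of $(v_0,\lenl)$ in $\sC$ is contained in $\Xi_P\cap\Xi_{P'}\cap\Xi_{P''}$, where $\Xi_Q$ is the local move of the tangency $Q$. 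So each $\Xi_Q$ must contain a $3$-dimensional open ball around $(v_0,\lenl)$, not merely a $3$-dimensional piece having $(v_0,\lenl)$ on its boundary.

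Next I would go through the 15 local-move shapes of \autoref{fig:localMoves} and check for which tangency types the point $(v_0,\lenl)$ is an interior point of $\Xi$. By \autoref{lm:OpenCellsTrivalent}, $\Xi$ is a $(3-k_0)$-dimensional parallelepiped, so it is $3$-dimensional only when $k_0=0$, that is, for types (1) and (2). For type (2) one facet is kept and $(v_0,\lenl)$ is its center of mass, so $(v_0,\lenl)$ lies on the boundary. Hence only type (1) gives an interior point.

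\autoref{lm:OpenCellsFour-valent} treats the $4$-valent types. For (1') the move is an open parallelepiped containing $(v_0,0)$ in its interior. For (2'), (3a') and (3c') the point $(v_0,0)$ sits on the common lower-dimensional edge of a join, so it is never interior.

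\autoref{lem:Type4And4pCells} treats types (4a), (4b), (4a') and (4b'). In every case $(v_0,\lenl)$ is the center of mass of a boundary face, so none of these give interior points.

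For the remaining mixed types (3d), (5a), (5b), (6a), (6b) and (6b'), together with the vertex-of-$\Gamma$ cases, I would argue from the proof of \autoref{pr:localMovesTritangents}. There $(v_0,\lenl)$ is the center of mass of the lowest-dimensional cell of $\overline{\Xi}$. The pieces are glued along faces through $(v_0,\lenl)$, or restricted to a closed halfspace as for (6b). So $(v_0,\lenl)$ is always a boundary point of each $3$-dimensional piece, and $\Xi$ is not a neighbourhood of it.

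The main obstacle is this last group. There $\Xi$ is a union of several polyhedra meeting at $(v_0,\lenl)$, and one must rule out that the pieces happen to cover a full open ball around it. I would argue directly in the plane: for these types some vertex $v_i$ either lies on $\Gamma$ or is constrained by a slope-one line. I would then exhibit one explicit direction of displacement of $v_i$, or of $\lenl$, that destroys the tangency, using \autoref{fig:PositionVerticesMixedDimension}. For instance, moving $v_i$ to the side of the edge of $\Gamma$ not covered by the regions labelled with roman numerals makes the intersection transverse or changes its multiplicity. Once every type other than (1a), (1b) and (1') is excluded, the proposition follows.
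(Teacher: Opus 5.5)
Your proposal is correct and follows essentially the same route as the paper. The paper also proves the statement by inspecting the local-move shapes from \autoref{pr:localMovesTritangents} (with Figures~\ref{fig:localMoves} and~\ref{fig:PositionVerticesMixedDimension}) and noting that the $3$-dimensional components of these local moves contain only members with tangencies of type (1a), (1b) or (1'); you make explicit that each $\Xi_Q$ must contain an open ball around $(v_0,\lenl)$, and your planned explicit bad-direction check for the mixed types is exactly the figure inspection the paper relies on.

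One small correction: the inclusion of a neighbourhood of $(v_0,\lenl)$ in $\Xi_P\cap\Xi_{P'}\cap\Xi_{P''}$ does not come from \autoref{cor:TritangentsArePolyhedralComplexes}, which says nothing about the combinatorial type being constant on open cells. It follows because $\sC^{\circ}$ is an open subset of $\RR^3$ contained in the class, so every nearby point is tritangent and keeps a tangency near each of $P$, $P'$, $P''$.
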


\begin{proof} By construction, each tangency of $\Lambda$  only admits 3-dimensional local moves. This corresponds to the possibilities seen in the  first seven pictures of~\autoref{fig:localMoves}. Inspecting each one of them with the aid of~\autoref{fig:PositionVerticesMixedDimension} we see that the 3-dimensional component of the local moves for these 13 tangency types (up to symmetry) correspond to types (1a), (1b) or (1'). 
  \end{proof}

\begin{remark}\label{rk:dimensionCellRestrictsTangencyTypes}  Notice that the dimension of a cell $\sC$ in a tritangent class of $\Gamma$ restricts the tangency types that can occur in a  member $\Lambda$ of its relative interior. Indeed, $\dim\sC$ is a lower bound for the dimension of the unique open cell of local moves containing $\Lambda$ as its center of mass. From~\autoref{fig:localMoves} we see that if $\dim \sC=2$, then the only local tangencies that  $\Lambda$ can have  are (1a), (1b), (2a), (2b), (4a), (4b), (3a), (3c), (3bb), (3aa) through (3bb2), (7), (1') and (2'). Similarly, (3h), (3d), (8), (6a') and (6b') can only appear if $\dim \sC=0$. The remaining  types are all allowed if $\dim \sC=1$.
\end{remark}

In order to combine local moves associated to various tangency points on a member of a tritangent class  of $\Gamma$ it becomes useful to compare the position of vertices of $\Gamma$ relative to the edge directions of $\Lambda$. The following definition arises naturally from~\cite[Definition 3.4]{CM20}) and it will be central to the constructions in 
{Sections}~\ref{sec:proof-lift-part-3d}, \ref{sec:non-special-bounded}, \ref{sec:technicalLemmasDim1-2} and~\ref{sec:proof-lift-part-2d-or-less}.

\begin{definition}\label{def:orderingRelToEdges}
  Let $v,v'$ be two points of $\RR^2$. We say that $v$ is \emph{smaller} than $v'$ \emph{relative to a weight} $\ww \in \RR^2$, and write $v\prec_\ww v'$, if $v\cdot \ww <v'\cdot \ww$. If $v\cdot \ww = v'\cdot \ww$, we say that the points are $\ww$-aligned and write $v=_{\ww} v'$. When $w=(1,-1), (1,1)$, $(1,0)$ and   $(0,1)$ we write the corresponding partial order as $\preceq_{d}$, $\preceq_{ad}$, $\preceq_{h}$ and $\preceq_v$, respectively.
\end{definition}

\section{The bounded subcomplex of a tritangent class}
\label{sec:polyhedra-bounded-complex}

The characterization of local moves preserving local tangencies carried out in~\autoref{sec:local-moves} confirms that  tritangent classes to $\Gamma$ have the structure of polyhedral complexes in $\RR^3$. As explained in~\autoref{sec:introduction}, our strategy to prove~\autoref{thm:partitionThm} consists of moving within each class to find liftable members and determine their multiplicities. Item (iv) of the genericity assumptions  listed in \autoref{def:fgenericRelToGamma} will allow us to peel off suitable cells in any input class $\tclass$ to simplify our search. The resulting subcomplex, which we denote by $\bclass$, will be connected, bounded and contain all liftable members of $\tclass$. Its construction is the main result of this section.

\begin{remark}\label{rm:finevCoarseStructure}
 As we will see in~\autoref{rem:coarsestStructure}, tritangent classes do not always admit a  coarsest polyhedral complex structure as was the case for bitangent classes to tropical smooth plane quartic curves~\cite{CM20}. For this reason, and to simplify several arguments in the search for liftable members in each class $\tclass$, we endow each of them  with a structure that is compatible with the polyhedral structure on $\RR^2$ induced by the curve $\Gamma$. More precisely, for  each cell $\sC$ of  $\tclass$, the vertices of any given  member in its relative interior lies in a fixed component of the structure on $\RR^2$ induced by $\Gamma$.
\end{remark}

In what follows, we present combinatorial and tangency restrictions on members of a given cell of $\tclass$ based on its boundedness. These statements will determine which cells to remove from $\tclass$ to build the bounded subcomplex $\bclass$.

\begin{proposition}\label{pr:unbounded}
  Let $\sC$ be an unbounded cell in a tritangent class. Then, its relative interior has a trivalent member $\Lambda$ with a vertex in the interior of a connected component of $\RR^2\smallsetminus \Gamma$ dual to the vertex $(0,0)$, $(3,0)$, $(0,3)$ or $(3,3)$ in the Newton subdivision of $\sextic$. Furthermore, in the first two cases, the corresponding vertex is $v_0$, whereas it is $v_1$ for the last two. In addition, the slope of the edge of $\Lambda$ is  unique for each case: it is positive for the first and last cases, and negative for the other two.
\end{proposition}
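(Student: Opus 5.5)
Since $\sC$ is an unbounded polyhedron in $\RR^3$, it contains a ray: there is $u=(u_0,u_\lenl)\neq 0$ and a point $(v_0,\lenl)$ in the relative interior of $\sC$ with $(v_0,\lenl)+t u$ in the relative interior of $\sC$ for all $t\geq 0$. By the compatibility convention of~\autoref{rm:finevCoarseStructure}, all members $\Lambda_t$ along this ray have each vertex in a fixed cell of the subdivision of $\RR^2$ induced by $\Gamma$. A vertex that moves with nonzero velocity must lie in an unbounded cell. A vertex that stays fixed can be a vertex of $\Gamma$ or lie in a bounded region.

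The plan is then:
\begin{enumerate}[(1)]
\item Show that at least one vertex $v_i$ of $\Lambda_t$ escapes to infinity. Since $v_1=v_0+|\lenl|(1,\pm 1)$ and $u\neq 0$, a pure change of $\lenl$ with $v_0$ fixed already moves $v_1$. So this is automatic.
\item Show that an escaping vertex cannot lie on $\Gamma$ itself, that is, on a leg of $\Gamma$. If it did, the three tangencies would have to be carried along. The degree count (total stable intersection multiplicity $6=\Lambda\cdot\Gamma$) forces all intersections of $\Lambda_t$ with $\Gamma$ to be accounted for. But a vertex sliding out along a leg of $\Gamma$ makes two legs of $\Lambda_t$ go off transversally with escaping intersections. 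I would inspect the local moves in~\autoref{fig:localMoves} to see that such moves are bounded, or else that they can be traded for a move pushing the vertex off $\Gamma$ into an adjacent region. This shows the escaping vertex lies in the interior of an unbounded component of $\RR^2\smallsetminus\Gamma$.
\item Identify which unbounded components are possible. The unbounded regions are dual to the boundary lattice points of the $3\times 3$ square. For the non-corner boundary points, say $(a,0)$ with $0<a<3$, the region is a strip between two parallel vertical legs of $\Gamma$. For a vertex $v_i$ in this strip moving off to infinity, the vertical leg of $\Lambda$ at $v_i$ stays inside the strip and meets $\Gamma$ only transversally. The other two legs of $\Lambda$ (a horizontal one and the edge) must then cross the neighbouring legs of $\Gamma$ far away. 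I would show this pushes at least one transverse intersection to infinity, which destroys the even multiplicities required in~\autoref{def:tropicalTritangents}, since tangencies come from bounded local configurations. This leaves only the four corner regions.
\item Pin down which vertex of $\Lambda$ is involved and the sign of the edge slope, by checking orientations. In the region dual to $(0,0)$ (the lower-left region in the max convention), the vertex going to infinity must have both of its legs pointing outward, i.e.\ its left and down legs. Hence it is the lowest vertex $v_0$. Moreover, the edge must point towards the upper right, so it has slope $1$ and $\lenl>0$. The 4-valent case is excluded because all four legs would then start inside a corner region, and the curve would miss the tangencies required on the opposite side. The other corners follow by $\Dn{4}$-symmetry (\autoref{rm:actionBySn}), which swaps the roles of $v_0$ and $v_1$ and the sign of the slope as stated.
\end{enumerate}

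The main obstacle is step (3) together with step (2): ruling out non-corner unbounded regions and escape along legs of $\Gamma$. This requires a careful count of where the multiplicity-$6$ intersection can sit. I would try a degree argument first: intersect with the legs of $\Gamma$ in that direction. If it fails, I would fall back on a case check against the local tangency list of~\autoref{thm:classificationRealizableLocalTangencies}.
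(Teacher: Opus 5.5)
Your plan rests on the same mechanism as the paper's proof. Once a vertex of $\Lambda$ runs off to infinity, some branch of $\Lambda$ meets a leg of $\Gamma$ far from the bounded part of $\Gamma$, and this produces a component of $\Lambda\cap\Gamma$ of odd stable multiplicity. The paper organizes this as a case split on the signs of the first two coordinates of the recession direction $w=(a,b,c)$; you organize it by the location of the escaping vertex. That is a fine alternative, and your step (3) is essentially right: a vertex escaping down a non-corner strip has a horizontal leg that crosses a neighbouring vertical leg of $\Gamma$ far down.

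The genuine gap is step (2), which is where the paper does real work (its case $b=0$, $a>0$, ruling out $v_0$ on the lowest positive horizontal leg). Neither route you propose can close it.
\begin{enumerate}[(i)]
\item The local moves of \autoref{pr:localMovesTritangents} are confined to bounded neighbourhoods by construction. Inspecting them says nothing about whether a vertex can travel to infinity along a leg.
\item Trading the move for one that pushes the vertex off $\Gamma$ does not prove the statement. By the convention of \autoref{rm:finevCoarseStructure}, if one member of $\sC^{\circ}$ has a vertex on a given leg of $\Gamma$, then all members do. So the member you produce lies outside $\sC^{\circ}$, whereas the proposition asks for a member \emph{in} $\sC^{\circ}$.
\end{enumerate}
What you need is a direct contradiction. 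It comes from the same far-crossing count as in (3), plus one extra observation. Suppose a trivalent vertex of $\Lambda$ lies far out on a leg of $\Gamma$ and one of its legs runs along that leg to infinity. Then that component of $\Lambda\cap\Gamma$ has stable multiplicity $1$: after a small generic translation, exactly one of the two other branches at that vertex crosses the leg.

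For instance, let $v_0$ escape to the right along the lowest positive horizontal leg, with edge of slope one. The edge, together with the positive vertical leg at $v_1$, crosses the two higher positive horizontal legs of $\Gamma$ far to the right, each with multiplicity one. The only alternative is that $v_1$ lies on one of them, which is the overlapping situation just described. For a $4$-valent vertex the overlap gives multiplicity $2$, so there you instead use that the line of $\Lambda$ transverse to the leg crosses the two parallel legs of $\Gamma$ far out.

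Step (4) needs a similar global supplement. That both legs at the escaping vertex point outward follows from a local crossing only when the escape direction is not axis-parallel. Suppose $v_1$, with an edge of slope $-1$, moves straight down inside $(0,0)^{\vee}$. Its upward leg meets $\Gamma$ at a bounded point of the boundary of $(0,0)^{\vee}$, where an even multiplicity is possible (for instance a type (2a) tangency at a vertex of $\Gamma$). The contradiction must then come from $v_0$, which also escapes; its rightward leg, or the edge of $\Lambda$, crosses a negative vertical leg of $\Gamma$ far down. Likewise, the $4$-valent case is excluded by such a far transverse crossing of its rightward or upward leg, not because tangencies are ``missed on the opposite side''. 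With these repairs your argument goes through and is essentially a reorganization of the paper's proof.
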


\begin{proof} Since $\sC$ is an unbounded polyhedron,  there exists  a  non-zero vector $w=(a,b,c)\in \RR^3$  such that $(v_0,\lenl) + \RR_{\geq 0}w \subseteq \sC$ for any $(v_0,\lenl)\in \sC^{\circ}$.  Given $\lambda \in \RR_{\geq 0}$, we write $(v_0', \lenl')= (v_0,\lenl) + \lambda w$ and let $\Lambda$ be the  $(1,1)$-curve  tritangent to $\Gamma$ parameterized by this point. If $\lambda\gg 0$, we know that $\Lambda$ is trivalent since no $4$-valent tritangent can have its vertex in an unbounded chamber of $\RR^2\smallsetminus \Gamma$ or in the interior of a leg of $\Gamma$.   We write $v'_1$ for the remaining vertex of $\Lambda$.

  We analyze three possibilities for $w$ up to $\Dn{4}$-symmetry depending on the signs of $a$ and $b$.
  First, assume $a=b=0$. Then, given $(v_0,\lenl)\in \sC$ we can find $\lambda\gg 0$ so that $v_1' \in (3,3)^{\vee}$ and $\lenl'>0$ if $c> 0$, whereas $v_1'\in (0,3)^{\vee}$ and $\lenl'<0$ if $c< 0$.
 
  Second, suppose $b=0$ and $a>0$. By construction, we can find $\lambda \gg 0$ so that $v_0'$ lies in $\overline{(3,j)^{\vee}}$  for some $j\in \{0,1,2,3\}$. We claim that $j=0$. Indeed, if this were not the case, there would be a multiplicity one intersection point between the negative vertical leg of $\Lambda$ and a positive horizontal leg of $\Gamma$, in violation of the tritangency property.

  To show  $v_0'\in (3,0)^{\vee}$ it suffices to check that  $v_0'$ cannot lie on the lowest positive horizontal leg of $\Gamma$. Since $\lambda\gg 0$, we can assume $v_0'$ is in the relative interior of such leg and $|\lenl'|\gg 0$. Then, $v_0'$ is part of a type (3a) horizontal tangency, forcing $\lenl'> 0$. However, if $\lenl'\gg 0$, the curve $\Lambda$ and one of the remaining two positive horizontal legs of $\Gamma$ would meet transversely at a multiplicity one intersection point, contradicting the tritangency property.

 Finally, we must analyze the case when $a,b\neq 0$. There are two possibilities, up to $\Dn{4}$-symmetry  based on whether or not the signs of $a$ and $b$ agree. If $a,b>0$, then $v_0'\in (3,3)^{\vee}$ and the negative vertical leg of $\Lambda$ intersects a positive horizontal leg of $\Gamma$ at a multiplicity one point, which cannot occur. 
 On the contrary, if $a>0$ and $b<0$,  we have $v_0'\in (3,0)^{\vee}$. Furthermore,  $\lenl'<0$ since in all other cases, the negative horizontal leg of $\Lambda$ would intersect the rightmost negative vertical leg of $\Gamma$ at a point of multiplicity one.
\end{proof}

\begin{remark}\label{rm:firstCellsToToss}
  The genericity conditions on $\sextic$ relative to $\Gamma$ ensure that no tritangent satisfying the conditions of \autoref{pr:unbounded} can lift, even if they belong to bounded cells of $\tclass$. In particular the following collections of cells of $\tclass$ will not contribute to its lifting partition:
\begin{equation}\label{eq:unboundedCellsToToss}\begin{aligned}
    \mathscr{U}_{0}^{+}&:=\{\sC \in \tclass: \exists (v_0,\lenl)\in \sC^{\circ} \text{ with } \lenl > 0 \text{ and } v_0 \in (0,0)^{\vee}\},\\
\mathscr{U}_{1}^{+}&:=\{\sC \in \tclass: \exists (v_0,\lenl)\in \sC^{\circ} \text{ with } \lenl > 0   \text{ and } v_0+\lenl(1,1) \in (3,3)^{\vee} \}, \\
\mathscr{U}_{0}^{-}&:=\{\sC \in \tclass: \exists (v_0,\lenl)\in \sC^{\circ} \text{ with } \lenl > 0 \text{ and } v_0 \in (3,0)^{\vee}\},\\
\mathscr{U}_{1}^{-}&:=\{\sC \in \tclass: \exists (v_0,\lenl)\in \sC^{\circ} \text{ with } \lenl > 0 \text{ and }  v_0-\lenl(-1,1) \in (0,3)^{\vee} \}.
\end{aligned}
\end{equation}
Our assumptions on the polyhedral structure of $\tclass$ stated in~\autoref{rm:finevCoarseStructure} guarantee that in the definitions of these four sets, the existential quantifiers can be replaced with universal ones.
\end{remark}

Our next statement characterizes tritangents complementing those from~\autoref{pr:unbounded} that admit an unbounded displacement within the corresponding tritangent class:

\begin{lemma}\label{lm:unboundedtranslations} Let $\Lambda$ be the curve  tritangent to $\Gamma$ corresponding to a point $(v_0,\lenl)$ in a positive-dimensional tritangent class $\tclass$ of $\Gamma$. Assume that $\lenl>0$, $v_1\in \Gamma$ and let $P$ be a tangency point in the same connected component of $\Lambda\cap \Gamma$ as $v_1$. If the set $(v_0,\lenl) + \RR_{\geq 0} (0,0,1)$ lies in $\tclass$, then: 
              \begin{enumerate}[(i)]
              \item The edge in the Newton subdivision of $\sextic$ dual to that of $\Gamma$ responsible for the tangency at $P$ has either endpoints $(2,3)$ and $(3,2)$ or $(2,2)$ and $(3,3)$.
              \item The tangency type of $P$ is either  (5a), (4a), (6a) or non-transverse along the edge of $\Lambda$.
                \item If  $P$ is a non-transverse diagonal tangency, we can continuously move $v_1$ in the  (-1,-1) direction while fixing the remaining tangency points. The movement will stop at a local tangency of type (4a'), (5a), (6a) horizontal or vertical.
              \end{enumerate}
            \end{lemma}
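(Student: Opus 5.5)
The approach is to study the ray $(v_0,\lenl)+\lambda(0,0,1)$, $\lambda\geq 0$, directly. Along it $v_0$ stays fixed and $v_1=v_0+\lenl(1,1)$ moves off to infinity in the $(1,1)$ direction. By~\autoref{pr:unbounded} and its proof (case $a=b=0$, $c>0$), for $\lambda\gg0$ the vertex $v_1$ lies in the chamber $(3,3)^{\vee}$. The plan is to track the tangency $P$ attached to $v_1$ from $\lambda=0$ onwards, and to use the fact that tritangency must be preserved all along the ray. In particular, no transverse multiplicity one intersection may appear between $\Gamma$ and the positive legs of $\Lambda$ or the edge of $\Lambda$.

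For (i), I would argue by contradiction using the positive horizontal and vertical legs of $\Lambda$. Suppose the component $\Upsilon_P$ containing $v_1$ involves an edge $e$ of $\Gamma$ whose dual edge is not one of the two listed. Translating $v_1$ in the $(1,1)$ direction slides the legs of $\Lambda$ off $e$. Either the tangency is lost, contradicting that the ray stays inside the single class $\tclass$, or the legs meet one of the unbounded legs of $\Gamma$ transversely with multiplicity one. The only edges that persist under this diagonal translation are those contained in the boundary of $(3,3)^{\vee}$ or parallel to $(1,1)$. These are dual to the edge $(2,3)$–$(3,2)$ (the segment bounding $(3,3)^\vee$, crossed at its midpoint by the diagonal) and to the edge $(2,2)$–$(3,3)$ (the diagonal edge of $\Gamma$ of direction $(1,1)$ ending in that chamber). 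Using the unimodularity of the triangulation near the corner $(3,3)$, I would list the possible triangles there and rule out the remaining edges one at a time.

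For (ii), I would read off from~\autoref{fig:classificationLocalTangencies} and~\autoref{fig:localMoves} which tangency types at the vertex $v_1$, or on the positive legs or edge adjacent to it, involve only these two dual edges and admit a local move containing the direction $(0,0,1)$. By~\autoref{pr:localMovesTritangents} the local move must contain a half-segment in direction $(0,0,1)$. Inspecting the cases should leave exactly (5a), (4a), (6a), or a non-transverse tangency along the edge of $\Lambda$, which is possible only for the diagonal edge dual to $(2,2)$–$(3,3)$.

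For (iii), in the diagonal non-transverse case the edge of $\Lambda$ overlaps the diagonal edge $e$ of $\Gamma$. Moving $v_1$ in direction $(-1,-1)$ shortens $\lenl$ without changing $v_0$, and the other tangency points stay fixed. I would follow this motion until $v_1$ reaches the lower endpoint of $e$ or $v_0$. The local configuration at the stopping point should then match (4a') if $v_1$ meets $v_0$ (so $\lenl=0$), and (5a) or (6a) horizontal/vertical if $v_1$ hits the vertex of $\Gamma$ at the end of $e$, depending on the triangles adjacent to the edge $(2,2)$–$(3,3)$. The main obstacle is the exhaustive case check in (i) and (iii), which requires enumerating the unimodular triangulations near the corner $(3,3)$. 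I would handle it by fixing the triangle containing $(3,3)$ up to the reflection in the diagonal.
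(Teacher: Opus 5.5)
Your proposal has two gaps: a duality error that affects (i) and (iii), and an elimination step that cannot be completed because the statement as printed misses some cases.

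\textbf{The duality is reversed.} The edge $(2,3)$--$(3,2)$ of the Newton subdivision has direction $(1,-1)$, so its dual edge in $\Gamma$ has direction $(1,1)$. That dual edge separates $(2,3)^{\vee}$ from $(3,2)^{\vee}$ and meets $\overline{(3,3)^{\vee}}$ only at its upper endpoint, the vertex dual to the corner triangle $(2,3),(3,2),(3,3)$. It is the edge dual to $(2,2)$--$(3,3)$ that has direction $(1,-1)$, lies on the boundary of $(3,3)^{\vee}$, and is crossed transversally by the slope one line through $v_0$.

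Consequently, in (iii) the edge of $\Lambda$ overlaps the edge dual to $(2,3)$--$(3,2)$. The type reached when $v_1$ hits its lower endpoint is governed by the second triangle containing $(2,3)$--$(3,2)$, whose third vertex is $(1,3)$, $(2,2)$ or $(3,1)$. It is not governed by triangles adjacent to $(2,2)$--$(3,3)$. The other tangencies stay fixed because the positive horizontal and vertical legs of $\Lambda$ run inside $(3,2)^{\vee}$ and $(2,3)^{\vee}$ respectively, so they carry no tangencies.

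Also, ``the tangency is lost, contradicting that the ray stays inside $\tclass$'' is not a valid inference. Tangencies may change along a path inside a class; what must persist is only that every component of $\Lambda\cap\Gamma$ has even multiplicity. The paper avoids tracking $P$ forward. It uses~\autoref{pr:unbounded} to put $v_1$ in $(3,3)^{\vee}$, then retracts $v_1$ along $(-1,-1)$ until it first meets $\Gamma$. This confines the analysis to the boundary of $(3,3)^{\vee}$ and the slope one edge entering its corner, where the newly created component must be even.

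\textbf{The reduction to the two listed edges fails.} Your edge-by-edge elimination cannot be completed. The boundary of $(3,3)^{\vee}$ may contain edges dual to $(3,3)$--$(a,b)$ for $(a,b)$ among $(2,2),(1,2),(2,1),(0,2),(2,0),(0,1),(1,0)$. Parity sorts them quickly. If $v_1$, whose dual triangle is $\mathrm{conv}\{(1,0),(0,1),(1,1)\}$, lies in the relative interior of such an edge, the local multiplicity is the width of this triangle under $t\mapsto \det((3-a,3-b),t)$. This width is $2$ for $(2,2)$, $3$ for $(1,2),(2,1)$, $5$ for $(0,1),(1,0)$, but $4$ for $(0,2)$ and $(2,0)$.

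In the last case $P=v_1$ is a (4b) tangency on an edge of slope $-3$ (resp.\ $-1/3$), and the ray $(v_0,\lenl)+\RR_{\geq 0}(0,0,1)$ does stay in $\tclass$. Once $v_1$ enters $(3,3)^{\vee}$, $P$ becomes a (1b) crossing on the edge of $\Lambda$ of multiplicity $|\det((1,1),(1,-3))|=4$. This is the $\Dn{4}$-image of the configuration in~\autoref{lm:type4b_1}. The (5b)/(6b) tangencies at the endpoints of that edge behave the same way.

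So items (i) and (ii), as stated, omit these cases. The paper's proof has the same gap when it asserts that the bidegree leaves only (5a), (3ab), (3cb), (4a) and (6a). Item (iii), which is what defines the sets $\tR_i^{\pm}$ in~\autoref{rm:secondCellsToToss}, is unaffected. Your plan for (iii) works once the edge is identified correctly.
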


            \begin{proof} The result follows by analyzing the tangency types of members of  a tritangent class that can occur when moving freely in  the $(0,0,1)$ direction. The proof of~\autoref{pr:unbounded} confirms that by fixing $v_0$ and moving $v_1$ away from $v_0$ we can find a trivalent member $\Lambda'$ of $\tclass$ with a slope one edge and  whose  top vertex lies in $(3,3)^{\vee}$. Starting from $\Lambda'$, we displace this vertex in the $(-1,-1)$ direction until we reach $\Gamma$. By construction, all these intermediate curves will belong to $\tclass$.

              Once we reach $\Gamma$, the corresponding top vertex will lie in a connected component of $\Lambda\cap\Gamma$ of even stable intersection multiplicity. The bidegree of $\Gamma$ restricts the possible local tangency types to (5a), (3ab), (3cb), (4a) or (6a) diagonal. Claims (i) and (ii) follows by construction, whereas (iii) is a consequence of (i) since the remaining tangency points must lie outside the positive horizontal and vertical legs of $\Lambda$.
            \end{proof}

            \begin{remark}\label{rm:secondCellsToToss} The genericity conditions on $\sextic$ ensure that relative interiors of cells of  $\tclass$ containing a member satisfying the conditions of~\autoref{lm:unboundedtranslations}(iii) will not contribute to the lifting partition. The collection recording such cells is obtained as the union of the following four sets:
              \begin{equation}\label{eq:secondSetToRemove}
                \begin{aligned}
                  \tR_{0}^{+}:=&\{\sC\in \tclass: \exists(v_0,\lenl) \in \sC^{\circ} \text{with }  \lenl >0, v_0^{\vee} \supseteq \{(1,0), (0,1)\}, 
                  (i, 2-i)\notin v_0^{\vee} \text{ for }i=0,1,2\},\\
                  \tR_{0}^{-}:=&\{\sC\in \tclass: \exists(v_0,\lenl) \in \sC^{\circ} \text{with }  \lenl <0, v_0^{\vee} \supseteq \{(2,0), (3,1)\},
(1+i, i)\notin v_0^{\vee} \text{ for }i=0,1,2\},\\
                  \tR_{1}^{+}:=&\{\sC\in \tclass: \exists(v_0,\lenl) \in \sC^{\circ} \text{with }  \lenl >0, v_1 = v_0+\lenl (1,1) \text{ satisfies } v_1^{\vee} \supseteq \{(2,3), (3,2)\} \text{ and}\\
                  & \qquad (i+i, 3-i)\notin v_1^{\vee} \text{ for }i=0,1,2\}\},\\
                  \tR_{1}^{-}:=&\{\sC\in \tclass: \exists(v_0,\lenl) \in \sC^{\circ} \text{with }  \lenl <0 , v_1 = v_0 - \lenl (-1,1) \text{ satisfies } v_1^{\vee}\supseteq \{(0,2), (1,3)\} \text{ and} \\ & \qquad (1+i, i)\notin v_1^{\vee} \text{ for }i=0,1,2\}.
                \end{aligned}
              \end{equation}
              As with the sets defined in~\eqref{eq:unboundedCellsToToss},~\autoref{rm:finevCoarseStructure} ensures that all existential quantifiers featuring in the definitions of the above sets can be replaced by universal ones.
            \end{remark}

          {Remarks}~\ref{rm:firstCellsToToss} and~\ref{rm:secondCellsToToss} allow us to build a important subcomplex containing all liftable members of the input tritangent class. Here is the precise definition:

\begin{definition}\label{def:boundedComplex}
  Let $\tclass$ be a tritangent class on $\Gamma$ and endow it with a
  polyhedral complex structure compatible with the one induced on
  $\RR^2$ by $\Gamma$. We let $\bclass$ be the following collection of
  cells obtained from~\eqref{eq:unboundedCellsToToss}
  and~\eqref{eq:secondSetToRemove}:
  \begin{equation}\label{eq:bclass}
\bclass :=    \tclass \smallsetminus  \bigcup_{i=0}^1(\mathscr{U}_{i}^{+} \cup \mathscr{U}_{i}^{-}\cup \tR_{i}^{+}\cup \tR_{i}^{-}).
  \end{equation}
  We endow $\bclass$ with the induced polyhedral structure and call it the \emph{bounded subcomplex} of $\tclass$.  
\end{definition}

\noindent 
Note that, as it occurs with $\tclass$,  the complex $\bclass$ need not be pure-dimensional. Thus, we set
\begin{equation}\label{eq:dimbclass}
\dim \bclass:=
\max\{\dim \sC: \sC  \text{ a cell of } \bclass\}.
\end{equation}
\noindent
            
The following result is a direct consequence of the construction of $\bclass$:

\begin{corollary}\label{cor:bcomplexDeformation} The support of the subcomplex $\bclass$ is a strong deformation retract of $|\tclass|$. In particular, it is connected when viewed as a subset of $\RR^3$ with the induced subspace topology.
            \end{corollary}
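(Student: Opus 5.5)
The plan is to build an explicit deformation retraction of $|\tclass|$ onto $|\bclass|$ that flows along the unbounded directions found in \autoref{pr:unbounded} and \autoref{lm:unboundedtranslations}. Connectedness of $|\bclass|$ then follows from connectedness of $|\tclass|$ (\autoref{cor:TritangentsArePolyhedralComplexes}), since a retract of a connected space is connected.

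First, I would describe the removed cells. By \autoref{rm:firstCellsToToss} and \autoref{rm:secondCellsToToss}, with the structure from \autoref{rm:finevCoarseStructure}, each removed cell lies in one of eight families. Up to $\Dn{4}$-symmetry these reduce to two model cases:
\begin{enumerate}[(a)]
\item a cell in $\mathscr{U}_1^{+}$, whose members have top vertex $v_1$ in the chamber $(3,3)^{\vee}$ and a slope one edge;
\item a cell in $\tR_1^{+}$, whose members have $v_1$ on the diagonal edge of $\Gamma$ dual to $(2,3),(3,2)$, carrying a non-transverse tangency.
\end{enumerate}
In case (a), moving $v_1$ in the $(-1,-1)$ direction, i.e. decreasing $\lenl$ with $v_0$ fixed, keeps the curve in $\tclass$. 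The proof of \autoref{lm:unboundedtranslations} shows this until $v_1$ first hits $\Gamma$, where it lands in a configuration of type (5a), (4a), (6a), or non-transverse diagonal. In case (b), \autoref{lm:unboundedtranslations}(iii) gives a further slide in the $(-1,-1)$ direction, fixing the other tangencies, until a tangency of type (4a'), (5a), or (6a) horizontal/vertical is reached. That terminal configuration no longer lies in $\tR_1^{+}$ or $\mathscr{U}_1^{+}$, so it belongs to $\bclass$, unless it meets one of the $v_0$-families, which I handle next.

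Second, I would define the retraction $H\colon|\tclass|\times[0,1]\to|\tclass|$ as a composition of two flows. The first flow acts on the $v_1$-side: it linearly decreases $|\lenl|$ with $v_0$ fixed, at constant speed, up to the stopping value $\lenl^*(v_0)$. The stopping value is the first parameter at which the curve leaves the union of the $v_1$-removed cells. The second flow acts symmetrically on the $v_0$-side, for the families $\mathscr{U}_0^{\pm},\tR_0^{\pm}$: it moves $v_0$ in the $(1,1)$ direction, respectively $(-1,1)$, keeping $v_1$ fixed. Each stopping value is a piecewise linear function of the fixed data, because the relevant boundaries are cut out by finitely many hyperplanes depending only on $\Gamma$, as noted after \autoref{lem:Type4And4pCells}. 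Hence each flow is continuous, fixes $|\bclass|$ pointwise, and stays inside $|\tclass|$ by the local move analysis of \autoref{pr:localMovesTritangents}.

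The main obstacle is compatibility. The $v_0$-flow and the $v_1$-flow must commute, or at least the second must not re-enter cells removed by the first. The stopping functions must also glue continuously across cells of different dimension, and the retraction must not collapse $\tclass$ into something disconnected. I would first try to show that the two ends never interfere. A member with $v_1$ in a removed configuration has its remaining tangencies away from the positive legs (as in \autoref{lm:unboundedtranslations}(iii)), and the degree bookkeeping from \autoref{pr:unbounded} shows that a single member cannot be simultaneously in a $v_0$-removed and a $v_1$-removed family with incompatible slope signs. If that holds, the sequential composition is a strong deformation retraction. If it fails, I would fall back on a cell-by-cell collapse: order the removed cells by decreasing dimension and remove them via elementary collapses along their free faces, namely the faces at the stopping locus. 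Each such collapse is a strong deformation retraction.
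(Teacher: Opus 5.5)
Your construction is the paper's: retract $|\tclass|$ onto $|\bclass|$ by sliding the vertices of $\Lambda$ toward each other along the line spanned by its edge until they leave the removed families. The flow ends at members with tangencies of type (4a), (5a) or (6a), or, if the two vertices collide, at a $4$-valent member with a (4a') tangency. Only the bookkeeping differs. The paper builds $\rho'\circ\rho$: it first clears the chamber families $\mathscr{U}_i^{\pm}$, moving both vertices at once, and then the edge families $\tR_i^{\pm}$. You instead run one flow for $v_1$ that chains $\mathscr{U}_1^{\pm}$ into $\tR_1^{\pm}$, followed by one flow for $v_0$. Either organization works.

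The gap is that you leave open exactly the step you call the main obstacle, and the reason you offer for it misses the point. Ruling out simultaneous membership in a $v_0$-family and a $v_1$-family of opposite signs is automatic, since these families are defined by opposite signs of $\lenl$. Overlaps with the same sign, such as $\mathscr{U}_0^{+}\cap\mathscr{U}_1^{+}$ or $\tR_0^{+}\cap\tR_1^{+}$, do occur (the paper treats the latter as a separate case), and they are harmless.

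What actually makes the two flows compatible is the following. By \autoref{rm:finevCoarseStructure}, whether a point lies in the relative interior of a cell of $\mathscr{U}_1^{\pm}\cup\tR_1^{\pm}$ depends only on the position of $v_1$ and the sign of $\lenl$. The $v_0$-flow fixes $v_1$ and keeps the sign of $\lenl$ until it reaches $\lenl=0$, so it never re-enters a $v_1$-family. The same holds symmetrically for the $v_1$-flow and the $v_0$-families.

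The one genuine interaction is the collision case, where $v_0$ and $v_1$ lie in the same component of $\Lambda\cap\Gamma$ along a slope one edge. There the flow must stop at the $4$-valent member, and you have to check, as the paper does, that this member lies in no removed cell. This holds because all eight families require $\lenl\neq 0$.

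Your fallback should be dropped. By \autoref{pr:unbounded}, the $\mathscr{U}$ families contain every unbounded cell of $\tclass$, so elementary collapses are not available. Moreover, the faces at the stopping locus are precisely where the removed cells are glued to $\bclass$, so they are not free.
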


\begin{proof}
  We establish the statement in a two-step process. First, we use the collections in~\eqref{eq:unboundedCellsToToss} to build an intermediate complex  \[\tclass':=\tclass\smallsetminus \bigcup_{i=0}^1(\mathscr{U}_{i}^{+} \cup \mathscr{U}_{i}^{-})
  \]
 and a strong deformation retract $\rho\colon |\tclass| \to |\tclass'|$ onto its support. In turn, the four sets from~\eqref{eq:secondSetToRemove} determine a  second deformation retract $\rho'\colon |\tclass'| \to |\bclass|$
  onto  $|\bclass|$. The composition $\rho'\circ \rho\colon \tclass \to \bclass$ will yield the desired map.

  To simplify notation, we identify each point $(v_0,\lenl)$ of a cell in $\tclass$  with the tuple $(v_0,v_1)$ recording the coordinates of the  vertices of the corresponding tritangent.
  To determine the map $\rho$ outside $|\tclass'|$, it suffices to specify the image of any point in the relative interior of a cell $\sC$ in $\mathscr{U}_{0}^{+}\cup \mathscr{U}_{1}^{+} \cup \mathscr{U}_{0}^{-}\cup \mathscr{U}_{1}^{-}$. Exploiting $\Dn{4}$-symmetry we may assume $\sC \in \mathscr{U}_{0}^{+}\cup \mathscr{U}_{1}^{+}$. Given $(v_0,v_1)\in \sC^{\circ}$, we use the slope one line $L$ through $v_0$ and $v_1$ to assign $\rho(v_0,v_1)= (v_0',v_1')$, where
  \[
  \{v_0'\}:=\begin{cases}
  \{v_0\} & \text{ if } v_0\notin (0,0)^{\vee},\\
  L\cap \overline{(0,0)^{\vee}} & \text{ otherwise},
  \end{cases}\quad \text{ and } \quad
   \{v_1'\}=\begin{cases}
  \{v_1\} & \text{ if } v_0\notin (3,3)^{\vee},\\
  L\cap \overline{(3,3)^{\vee}} & \text{ otherwise}.
  \end{cases}
  \]
  In practice, the point $(v_0',v_1')$ is obtained from $(v_0,v_1)$ by fixing any vertex outside the chambers $(0,0)^{\vee}$ and $(3,3)^{\vee}$ and otherwise moving them towards each other until we meet the boundary of the corresponding chamber. Thus, we know that $(v_0',v_1') \in \tclass$ and that the map $\rho$ is continuous. \autoref{rm:finevCoarseStructure} and the definition of the sets $\mathscr{U}_{0}^{+}$ and $\mathscr{U}_{1}^{+}$ ensure that $\rho(v_0',v_1')\in |\tclass'|$.

  To conclude, we build the map $\rho'\colon |\tclass'|\to |\bclass|$. As before, it suffices to determine the image of a point in the relative interior of any of the cells in~\eqref{eq:secondSetToRemove}.  Exploiting the action of $\Dn{4}$ it is enough to define $\rho'$ on the relative interior of any cell $\sC$ of $\tR_{0}^{+}\cup \tR_{1}^{+}$. 
  As with the definition of $\rho$, for each $(v_0,v_1)\in \sC^{\circ}$ we  set $\rho'(v_0,v_1) = (v_0'',v_1'')$ for two suitable points $v_0'',v_1''$ on the line $L$. We need only analyze two cases, namely $\sC\in \tR_{0}^{+}\cap \tR_{1}^{+}$ or $\sC\in \tR_{0}^{+}\smallsetminus \tR_{1}^{+}$.

  First, assume $\sC\in \tR_{0}^{+}\cap \tR_{1}^{+}$.   Since $v_1^{\vee}\supseteq \{(1,0),(0,1)\}$ and $v_1^{\vee}\supseteq \{(2,3),(3,2)\}$ by~\autoref{rm:secondCellsToToss}, we set $v_0''$ and $v_1''$ to be the top and bottom vertices of the slope one edges of $\Gamma$ containing $v_0$ and $v_1$, respectively. The definition of both sets $\tR_i^+$ for $i=0,1$ ensures that $v_0''\neq v_0$ and $v_1''\neq v_1$ and that the point corresponding to $(v_0'', v_1'')$ lies in the relative interior of a cell in $\bclass$.

  In practice, to define $\rho$, we move the vertices of $\Lambda$ towards each other until the tangencies of the components of $\Gamma \cap \Lambda$ containing $v_0$ and $v_1$ are of type (5a) or (6a), either horizontal or vertical and assigning the resulting tritangent as the image of $\Lambda$ under $\rho$. This confirms the continuity of $\rho$.

  Finally, we treat the case when  $\sC\in \tR_{0}^{+}\smallsetminus \tR_{1}^{+}$, so $v_0\in \Gamma$. The definition of $v_0''$ and $v_1''$ depends on whether or not $v_1$ is in the connected component of $\Gamma \cap  \Lambda$ containing $v_0$. If it is not, we set $v_1''=v_1$ and let $v_0''$ be the top vertex of the slope one edge of $\Gamma$ containing $v_0$.  This assignment is continuous: it corresponds to fixing the vertex $v_1$ of $\Lambda$ and moving $v_0$ towards it until we get a type (5a) or (6a) tangency. The resulting tritangent lies in $|\bclass|$.

  On the contrary, if $v_0$ and $v_1$ are in the same connected component of $\Lambda\cap \Gamma$, the bidegree of $\Gamma$ ensures that $v_1$ is not a vertex of $\Gamma$. We set $v_0''=v_1''=v_1$. This choice corresponds to fixing the vertex $v_1$ of $\Lambda$ and moving $v_0$ towards it until we get a $4$-valent tritangent $\Lambda''$. Notice that $\Lambda''$ has a type (4a') tangency in the interior of a slope one edge of $\Gamma$. Therefore, this point cannot be in the relative interior of any cell in the sets $\mathscr{U}_i^{\pm}$ or $\tR_{i}^{\pm}$ for $i=0,1$. Thus, $(v_0'',v_1'')$  belongs to $|\bclass|$.
\end{proof}

            Our last two statements in this section characterize the possible valencies of general members in relative interiors of cells of $\bclass$ in terms of its dimension. 

            \begin{lemma}\label{lm:trivalentsAreEnough}
We let $\sC$ be a maximal cell of $\bclass$. If $\sC$ is positive-dimensional, then a general member in its relative interior is always trivalent.
            \end{lemma}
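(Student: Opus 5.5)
The plan is to argue by contradiction. Suppose $\sC$ is a maximal cell of $\bclass$ with $\dim\sC\geq 1$ and that a general member of $\sC^{\circ}$ is $4$-valent. The $4$-valent members of $\tclass$ are exactly the points with $\lenl=0$, so they lie in the plane $H=\{\lenl=0\}\subset\RR^3$. If a general (hence open dense) set of points of $\sC^{\circ}$ has $\lenl=0$, then, since $\sC$ is a polyhedron, all of $\sC$ lies in $H$. Thus $\dim\sC\in\{1,2\}$ and every member of $\sC^{\circ}$ is $4$-valent. The goal is to show that a general $\Lambda\in\sC^{\circ}$ admits local moves leaving $H$, and that these moves give a strictly larger cell of $\bclass$ containing $\sC$ as a face. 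This contradicts maximality.

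The key input is the description of local moves of $4$-valent tangencies in \autoref{pr:localMovesTritangents}, \autoref{lm:OpenCellsFour-valent} and \autoref{lem:Type4And4pCells}. The possible tangency types at a $4$-valent $\Lambda$ are (1'), (2'), (3a'), (3c'), (4a'), (4b'), (6a') and (6b'). By \autoref{rk:dimensionCellRestrictsTangencyTypes}, the types (6a') and (6b') force $\dim\sC=0$, so they are excluded here. For each remaining type, the lemmas show that the local move set $\Xi_P$ at $(v_0,0)$ is a join of polyhedra: the lowest-dimensional piece contains $(v_0,0)$, and there is at least one piece with points having $\lenl>0$, namely the half-parallelepiped or prism component. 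In every case, the intersection $\Xi_P\cap H$ is a face of a component of $\Xi_P$ that extends into $\lenl>0$ (or into $\lenl<0$). The local move set of the tritangent is $\Xi=\bigcap_P \Xi_P$, and the component of $\Xi$ containing $\sC^{\circ}$ near $\Lambda$ is $\bigcap_P(\Xi_P\cap H)$. I would check, using the explicit half-space descriptions in \autoref{fig:PositionVerticesMixedDimension}, that the side $\lenl>0$ components (after a $\Dn{4}$ choice making all tangencies compatible) intersect in a set of dimension $\dim\sC+1$ having $\sC$ near $\Lambda$ in its closure. The point is that each $\Xi_P$ near $H$ looks like $(\Xi_P\cap H)\times[0,\varepsilon)$ up to a linear shear in the $(-1,-1,1)$ direction, and these shears are compatible once $v_0'$ is restricted to a common open subset.

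Next I would verify that the new cell survives the pruning of \autoref{def:boundedComplex}. The trivalent members near $\Lambda$ have both vertices close to $v_0$, which lies in a bounded region of $\RR^2$, because $4$-valent members cannot sit in the unbounded chambers (proof of \autoref{pr:unbounded}). So they avoid the sets $\mathscr{U}_i^{\pm}$. They also avoid $\tR_i^{\pm}$, because the vertex combinatorics ($v_0^{\vee}$, $v_1^{\vee}$) of nearby members are inherited from $v_0^{\vee}$, and $\sC$ itself was not removed. Under the compatible structure of \autoref{rm:finevCoarseStructure}, this yields a cell of $\bclass$ properly containing $\sC$, giving the contradiction.

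I expect the main obstacle to be the compatibility step: showing that the off-$H$ components of the several $\Xi_P$ at the (up to three) tangencies of $\Lambda$ all open to the same side of $H$ and meet in full dimension. The first thing I would try is a case analysis on the legs of $\Lambda$ carrying the tangencies. For types (2') and (4b'), the sign of $\lenl$ is forced to be nonnegative, and by symmetry I would reduce to the case where all sign constraints agree, using that tritangency puts the tangencies on legs in a balanced configuration. The genuinely bad configuration, where two tangencies force opposite signs of $\lenl$, would then have to be shown to force $\dim\sC=0$ via \autoref{rk:dimensionCellRestrictsTangencyTypes}.
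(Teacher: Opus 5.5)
Your overall plan matches the paper's: $\sC$ lies in the plane $\{\lenl=0\}$, so you deform a general member off it into a cell having $\sC$ as a face. But the step that carries the proof, namely that the deformed cell still belongs to $\bclass$, is argued incorrectly.

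Closeness to $v_0$ does not keep trivalent perturbations out of the sets $\mathscr{U}_i^{\pm}$. The vertex of a $4$-valent tritangent lies on $\Gamma$, and it may lie on an edge bounding a corner chamber. For instance, let $\Lambda$ have a (4a') tangency at its vertex along a slope $-1$ edge $e$, with its other two tangencies on the negative legs. The bidegree forces $e^{\vee}$ to join $(2,2)$ and $(3,3)$. The full-dimensional piece of the (4a') local moves in \autoref{lem:Type4And4pCells} (slope-one edge crossing $e$, $\lenl>0$) then places $v_1'$ in $(3,3)^{\vee}$, i.e.\ in a cell of $\mathscr{U}_1^{+}$. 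Likewise, ``$\sC$ itself was not removed'' gives no information. Every set in \eqref{eq:unboundedCellsToToss} and \eqref{eq:secondSetToRemove} is defined through relative-interior points with $\lenl\neq 0$, so a cell inside $\{\lenl=0\}$ is never removed, whereas its trivalent neighbours may be. The proof of \autoref{cor:bcomplexDeformation} exhibits exactly this: a $4$-valent member with a (4a') tangency arising as a limit of cells of $\tR_0^{+}$.

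Consequently, the side of $\{\lenl=0\}$ to move to must be chosen according to membership in $\bclass$. It cannot be chosen by comparing sizes or signs of the pieces of the $\Xi_P$, and a $\Dn{4}$ change of representative cannot make incompatible constraints compatible. In the example above, the only admissible deformation is the lower-dimensional piece in which both vertices slide along $e$, producing a (3aa) tangency. This deformation is available only when the two leg tangencies are not both of type (3c'). That is where the hypothesis $\dim\sC>0$ is genuinely used: two (3c') tangencies pin the vertex and yield the singleton of \autoref{lm:4valentSingleton}. \autoref{rk:dimensionCellRestrictsTangencyTypes} cannot detect this, because it only constrains individual tangency types, not their combinations. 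So your proposed disposal of the ``bad configuration'' via that remark does not work.

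The paper closes the argument with a case analysis. It enumerates the six possible tangency configurations of a $4$-valent member; its vertex lies on $\Gamma$ by the bidegree and carries a (3a'), (4a'), (4b'), (6a') or (6b') tangency. It exhibits an explicit deformation for each configuration and certifies, via \autoref{pr:unbounded} and \autoref{lm:unboundedtranslations}, that the resulting cell lies in $\bclass$. Your proposal needs that case analysis in place of the unproved compatibility claim and the proximity argument.
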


            \begin{proof} We let $\Lambda$ be a general member of $\sC$. We argue by contradiction, and assume that $\Lambda$ is $4$-valent.
The bidegree of $\Gamma$ restricts the tangency types that can occur on $\Lambda$ precisely because its unique vertex must lie in $\Gamma$. In particular,  $\Lambda$ must contains a  local tangency of one of the following five types: (3a'), (4a'), (6a'), (4b') or (6b').

              We let $P$ be the  tangency point in the same connected component of $\Lambda\cap \Gamma$ as the vertex of $\Lambda$. ~\autoref{fig:trivalentsAreEnough} shows five of the six possible locations of the tangency points of  $\Lambda$ and how $\Lambda$ can be deformed to a member $\Lambda'$ while fixing all but the tangency at $P$, which turns into the tangency $P'$. In all cases, \autoref{pr:unbounded} and~\autoref{lm:unboundedtranslations} confirm that $\Lambda'$ lies in a new cell of $\bclass$  containing $\sC$ in its boundary. This contradicts the maximality of $\sC$.

              The only case that is absent from the figure corresponds to a type (4a') tangency at the vertex $\Lambda$ along a slope -1 edge of $\Gamma$ (called $e$), with the remaining two tangency points lying on the negative horizontal and vertical legs of $\Lambda$.  Since $\bclass$ is not $0$-dimensional, one of these tangencies cannot be of type (3c'). Therefore, we can deform $\Lambda$ into a trivalent tritangent $\Lambda'$ with an edge of slope -1, a type (3aa) diagonal tangency along the edge $e$  and a type (1a) tangency on one of its negative legs. Since $v_0,v_1\in \Gamma$ and each vertex is adjacent to a leg containing a tangency in its relative interior, the cell containing $\Lambda'$ lies in $\bclass$. Again, we reach a contradiction since $\sC$ is a face of this cell.
              \end{proof}

            \begin{lemma}\label{lm:4valentSingleton}
              Suppose that $\dim \bclass =0$ and that its unique member $\Lambda$ is $4$-valent. Then, $\Lambda$ carries a type (4a') tangency. In addition, the remaining two tangency points are of type (3c'), both lying on the same halfspace determined by the  span of the edge of $\Gamma$ containing the unique vertex of $\Lambda$.
            \end{lemma}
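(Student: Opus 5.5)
We argue by exclusion, reusing the deformation arguments from the proof of \autoref{lm:trivalentsAreEnough}. Since $\Lambda$ is $4$-valent, its unique vertex $v_0=v_1$ lies on $\Gamma$. The bidegree of $\Gamma$ then forces the tangency $P$ in the component of $\Lambda\cap\Gamma$ containing the vertex to be of type (3a'), (4a'), (6a'), (4b') or (6b'). The other two tangencies lie on legs of $\Lambda$, so they are of types (1'), (2'), (3a') or (3c') by \autoref{lm:OpenCellsFour-valent}. The plan has two steps. First, show that unless $P$ is of type (4a') along an edge $e$ of slope $\pm1$, the member $\Lambda$ can be deformed inside $\tclass$ to a trivalent member lying in a positive-dimensional cell of $\bclass$. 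Second, in the (4a') case, pin down the remaining two tangencies.

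For the first step we run through the five cases depicted in \autoref{fig:trivalentsAreEnough}. In each case we fix the two tangencies away from the vertex and slide the vertex along the local moves given by \autoref{lm:OpenCellsFour-valent} and \autoref{lem:Type4And4pCells}. This splits the vertex into a trivalent curve $\Lambda'$ that stays tritangent. By \autoref{pr:unbounded} and \autoref{lm:unboundedtranslations}, the curve $\Lambda'$ lies in none of the sets $\mathscr{U}_i^{\pm}$ or $\tR_i^{\pm}$. So $\bclass$ contains a positive-dimensional cell having $\Lambda$ in its closure, which contradicts $\dim\bclass=0$. Note that these deformations only need the local moves at $P$ to be positive-dimensional while the other tangencies are kept fixed; maximality of a cell is not used. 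For types (3a'), (4b'), (6a') and (6b') we read the moves off \autoref{fig:localMoves} and \autoref{fig:PositionVerticesMixedDimension}. For (6a') and (6b') these moves are $0$-dimensional at the vertex, so there the deformation must instead come from the leg tangencies. Since both leg tangencies are then not (3c'), they each have a free direction, and we use that.

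For the second step, assume $P$ is of type (4a') along $e$, and up to $\Dn{4}$-symmetry take $e$ of slope $-1$. If one of the remaining tangencies is not of type (3c'), it is of type (1'), (2') or (3a') and has a non-trivial move. The closing argument in the proof of \autoref{lm:trivalentsAreEnough} then deforms $\Lambda$ into a trivalent member carrying a (3aa) tangency along $e$ and a (1a) tangency on a leg, with both vertices on $\Gamma$. This member lies in $\bclass$, which is a contradiction. Hence both remaining tangencies are of type (3c').

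To show they lie in the same halfspace cut out by the line spanned by $e$, suppose instead that they lie on opposite sides. Then we can split the vertex into a slope $+1$ edge, or a slope $-1$ edge sliding along $e$. The two (3c') tangencies sit on horizontal and vertical edges of $\Gamma$ on opposite sides, and we check that they can be preserved by moving one vertex along each of them. That produces a $1$-dimensional family in $\bclass$, again a contradiction. The same-side configuration is exactly the one in which both legs carrying the (3c') tangencies emanate from the same vertex after splitting; this pins that vertex and hence pins $\Lambda$.

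I expect this halfspace dichotomy to be the main obstacle. It requires careful bookkeeping, with \autoref{fig:PositionVerticesMixedDimension} for (4a'), of which legs attach to which vertex after splitting.
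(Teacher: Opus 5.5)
Your overall route is the paper's. You rule out every configuration except the (4a') one by deforming $\Lambda$ into a positive-dimensional cell of $\bclass$, and then force (3c') legs with the (3aa) deformation from the proof of \autoref{lm:trivalentsAreEnough}. However, two of your steps fail as written.

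The first is your handling of (6a') and (6b'). Their local moves are not $0$-dimensional. \autoref{rk:dimensionCellRestrictsTangencyTypes} only says that the cell of $\Xi$ having $\Lambda$ as centre of mass is a point. The proof of \autoref{pr:localMovesTritangents} describes $\overline{\Xi}$ as a join of parallelograms and parallelepipeds (plus a triangle for (6b')), and along these the vertex tangency persists with a different type, such as (5b), (6b) or (4b). Your fallback, that ``both leg tangencies are then not (3c')'', has no justification and is false in general. Types (4b') and (6b') have multiplicity four, so there is only one leg tangency, and it may be of type (3c'). A $4$-valent member with a (6b') and a (3c') tangency can occur as an endpoint of the segment in \autoref{lm:type4b3c}. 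For such a $\Lambda$ your argument produces no deformation at all. The positive-dimensional part of $\Xi$, with the (3c') leg kept fixed, does produce one.

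The second is the order of your (4a') argument. Counting stable intersections along the four legs shows that, up to $\Dn{4}$, the slope $-1$ edge $e$ is dual to one of two edges. Either it is dual to the edge with endpoints $(2,2)$ and $(3,3)$, with tangencies on both negative legs; this is exactly the configuration treated at the end of the proof of \autoref{lm:trivalentsAreEnough}. Or it is dual to the edge with endpoints $(2,0)$ and $(3,1)$, with tangencies on the negative horizontal and positive vertical legs, i.e.\ on opposite sides of the span of $e$.

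The (3aa) deformation only works in the first case. In the second, a slope $-1$ split keeps both tangency legs at $v_1$, so $v_0$ must slide along $e$ towards the corner vertex of $\Gamma$ dual to the triangle with vertices $(2,0)$, $(3,0)$, $(3,1)$. These cells lie in $\tR_0^-$, not in $\bclass$. Hence your step ``some leg tangency is not (3c'), contradiction'' does not cover the opposite-side case, and your halfspace step then assumes both tangencies are (3c').

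Reverse the order. Let $v$ be the vertex of $\Lambda$ and take the slope $+1$ split $v_0=v+t(-1,0)$, $v_1=v+t(0,1)$. This preserves both leg tangencies whatever their types and creates a (1a) crossing of $e$ at $v+\tfrac{t}{2}(-1,1)$. It also keeps $v_0\in(2,0)^\vee$ and $v_1\in(3,1)^\vee$, so it yields a $1$-dimensional family in $\bclass$. Only after this, in the same-side case, does the (3aa) argument force (3c').

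Drop your final sentence as well. In the opposite-side case the slope $-1$ split also puts both tangency legs at one vertex, so your characterization is wrong. Moreover, the lemma is only a necessary condition, so you do not need to show that the same-side configuration is rigid.
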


            \begin{proof} The bidegree of $\Gamma$ confirms that, up to symmetry, there are at most six possible configurations of tangency points on $\Lambda$. Five of them are depicted in~\autoref{fig:trivalentsAreEnough}, together with non-local moves that make $\Lambda$ into a trivalent member of $\tclass$.  In all these cases, $\bclass$ is positive-dimensional by~\autoref{pr:unbounded}.

              The sixth option  corresponds to a type (4a') tangency at the vertex of $\Lambda$ along a slope -1 edge of $\Gamma$ (labeled $e$) and  two extra tangencies on the negative horizontal and vertical legs of $\Lambda$, respectively. The bidegree of $\Gamma$ forces the dual edge $e^{\vee}$ to have vertices $(2,2)$ and $(3,3)$. Unless these two tangencies are of type (3c') we will be able to deform $\Lambda$ into a trivalent member of $\bclass$ that has two tangency points on its negative legs (either by having $v_1\in e$ and  $v_0\in (2,2)^{\vee}$,  or having both vertices remain in $e$ as part of a type (3aa) tangency). This concludes our proof. 
              \end{proof}

                        \begin{figure}
              \includegraphics[scale=0.35]{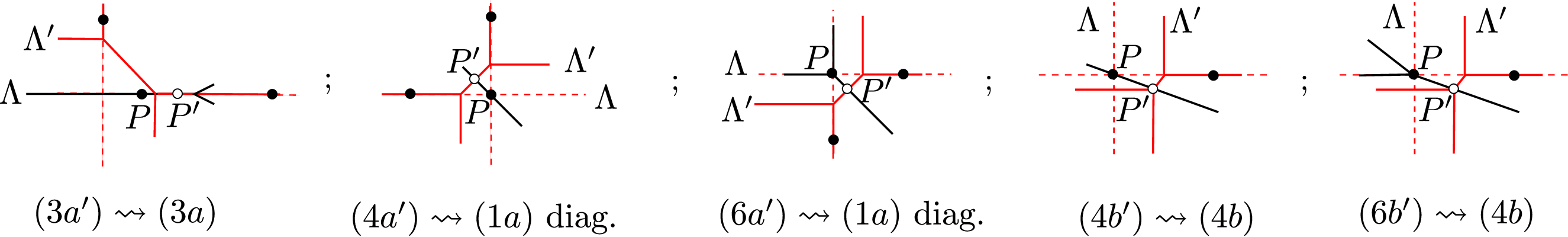}
              \caption{Deformation of sample 4-valent members $\Lambda$ with a give tangency point $P$ in a positive-dimensional cell of $\bclass$ into a trivalent member $\Lambda'$ with corresponding new tangency $P'$.\label{fig:trivalentsAreEnough}}
            \end{figure}

\section{Proof of the lifting partition theorem when $\dim \bclass = 3$}\label{sec:proof-lift-part-3d}

As we discussed in~\autoref{sec:introduction}, our proof strategy for the lifting partition theorem is dimension dependent. In this section, we provide a complete description of the complex $\bclass$ in dimension three and obtain the aforementioned theorem as a direct corollary. The partial orderings $\prec_v$ and $\prec_d$ from~\autoref{def:orderingRelToEdges} play a central role in our construction.

Throughout, we let  $\sC$ be a $3$-dimensional  cell of $\bclass$.
As we saw in~\autoref{pr:3dCells}, any member $\Lambda$ in its interior  can only contain tangencies of type (1a) or (1a').
This fact and~\autoref{lm:trivalentsAreEnough} allows us to restrict our analysis to the case when $\Lambda$ is trivalent, with a slope one edge,  and has three type (1a) tangencies. The bidegree of $\Gamma$ prevents both vertices of $\Lambda$ from being in the same connected component of $\RR^2\smallsetminus \Gamma$, so one of these tangencies will lie on the unique edge of $\Lambda$. We call this point $P$. Combining this observation with~\autoref{pr:unbounded}, we conclude that there are only two cases to analyze, up to $\Dn{4}$-symmetry. 

\autoref{fig:boundedDim3Cases} describes the partial Newton subdivision of $\sextic$ recording the cells responsible for the three tangencies of $\Lambda$. We record the six vertices of $\Gamma$ which are endpoints of the edges carrying the tangencies as $A$, $A'$, $B$, $B'$ and $C$, $C'$. Note that in both cases, the slope of the edge of $\Gamma$ containing $P$ is uniquely determined in each case.

\begin{figure}
  \includegraphics[scale=0.35]{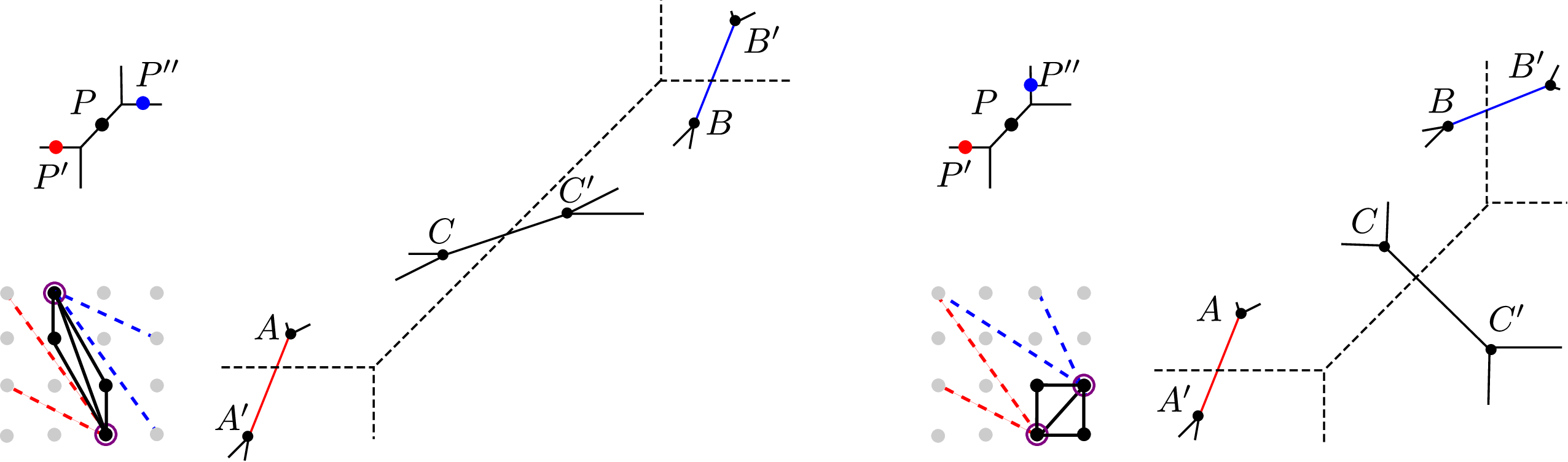}
  \caption{Possible distribution of tangencies between $\Lambda$ and $\Gamma$, the associated partial Newton subdivisions of $\sextic$, and relevant edges of $\Lambda$ and $\Gamma$, where $\Lambda$ is a generic member of a 3-dimensional cell of $\bclass$. We mark the dual vertices corresponding to the chambers of $\RR^2\smallsetminus \Gamma$ containing the vertices of $\Lambda$. The tangency points, the edges of $\Gamma$ containing them and their possible dual edges are color-coded accordingly.\label{fig:boundedDim3Cases}}
\end{figure}

Our next result characterizes lifting partitions for  tritangents classes with $\dim \bclass =3$. The proof technique is similar to that of~\cite[Lemma 4.6]{CM20}.

\begin{theorem}\label{thm:partitionDim3}
  If $\dim \bclass = 3$, then the lifting partition of $\tclass$ equals $(8,0,0,0)$.
\end{theorem}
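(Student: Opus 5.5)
Since every class contains exactly eight lifts by \autoref{thm:countingTritangentClassesAndLifts}, it suffices to exhibit eight distinct members of $\bclass$ that each lift with multiplicity one. Equivalently, we need eight members each carrying three tangencies of local lifting multiplicity $1$, as read off from \autoref{tab:LiftingMultiplicities} via \autoref{thm:totalLifting}. The eight members should be indexed by independent binary choices, one for each of the three tangencies. Once they are found, the partition is forced to be $(8,0,0,0)$: no multiplicity is left for any other member.

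To find them, start from a generic trivalent member $\Lambda$ of the $3$-cell $\sC$. By \autoref{pr:3dCells} and \autoref{lm:trivalentsAreEnough} it has a slope one edge and three type (1a) tangencies, one of them the point $P$ on the edge. We are in one of the two configurations of \autoref{fig:boundedDim3Cases}, with tangency edges $AA'$, $BB'$, $CC'$ of $\Gamma$. Each transverse (1a) tangency of multiplicity two occurs because the relevant edge or leg of $\Lambda$ crosses the corresponding edge of $\Gamma$ with the right weights. Sliding $\Lambda$ within $\sC$ along suitable directions (translating $v_0$, or changing $\lenl$) moves that crossing point to either endpoint of the edge of $\Gamma$. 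Following the strategy of \cite[Lemma 4.6]{CM20}, for each tangency we will move it to the endpoint $A$ or $A'$ that is extremal with respect to the relevant partial order. For the legs this is $\prec_v$ or $\prec_h$; for the edge of $\Lambda$ carrying $P$ it is $\prec_d$ or $\prec_{ad}$. The movement turns the (1a) tangency into a type (2a) tangency (leg or edge of $\Lambda$ passing through a vertex of $\Gamma$), or possibly (4a)/(6a) horizontal or vertical, or (5b)/(6b). All of these have local multiplicity $1$ by \autoref{tab:LiftingMultiplicities} and \autoref{rm:4a6a4ap6ap}.

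The key steps, in order, are as follows. First, use the product structure of the local moves from \autoref{lm:OpenCellsTrivalent} (a $3$-dimensional parallelepiped, since $k_0=0$) to show the three tangencies can be pushed independently. Second, for each choice in $\{A,A'\}\times\{B,B'\}\times\{C,C'\}$, show that the resulting curve is still tritangent and lies in $\bclass$. This means checking it avoids the chambers and configurations in \eqref{eq:unboundedCellsToToss} and \eqref{eq:secondSetToRemove}, using \autoref{pr:unbounded}. Third, verify that the eight members are pairwise distinct, which holds since they are distinguished by which vertices of $\Gamma$ they pass through. Fourth, check that each has lifting multiplicity exactly $1$ via \autoref{thm:totalLifting}.

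The main obstacle is the second step. Moving one tangency to a vertex of $\Gamma$ may force a vertex of $\Lambda$ to cross into another chamber, or collide with another tangency, before the target vertex is reached. The comparisons under $\prec_v$ and $\prec_d$ among $A,A',B,B',C,C'$ decide which endpoints are reachable. Here I would do a case analysis on the two configurations of \autoref{fig:boundedDim3Cases} and on the possible dual edges of the edge carrying $P$, showing in each case that the ordering allows all $2^3$ combinations, possibly after replacing a (2a) endpoint by a (4a)/(5b)/(6b) tangency of multiplicity one. The counting from \autoref{thm:countingTritangentClassesAndLifts} then rules out any further liftable members.
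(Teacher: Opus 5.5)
Your reduction is sound, and slightly more economical than the paper's. Once eight distinct members of multiplicity one are exhibited, \autoref{thm:countingTritangentClassesAndLifts} forces $(8,0,0,0)$, so you would not need the paper's last step in \autoref{lm:case2Dim3}, which rules out liftable members outside $\cQ$. The gap is in the step that carries all the content: the claim that the eight members come from independent binary choices in $\{A,A'\}\times\{B,B'\}\times\{C,C'\}$, up to trading a (2a) end for a (4a)/(6a) horizontal or vertical tangency. This does hold in the first configuration of \autoref{fig:boundedDim3Cases}. There the orderings \eqref{eq:orderingDim3} keep $e=\overline{CC'}$ away from both parallelograms $\cP_0,\cP_1$. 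So for each of the two diagonal values through $C$ and $C'$, each vertex of $\Lambda$ ranges over a segment whose ends give (2a), (4a) or (6a) tangencies; this is \autoref{lm:case1Dim3}. Note that \autoref{lm:OpenCellsTrivalent} cannot supply this independence in general. It describes local moves preserving a single tangency, and says nothing about the global region, which is the polytope cut out by the chambers of $\RR^2\smallsetminus\Gamma$ and the edges $e,e',e''$.

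In the second configuration (\autoref{lm:case2Dim3}) the claim fails. The $\prec_v$/$\prec_h$ position of $C'$ relative to $A,A'$ and $B,B'$ is undetermined there, so the line spanned by $e$ can cut corners off $\cP_0$ or $\cP_1$. Some combinations with $P=C'$ are then not realizable, because the required position of $v_0$ or $v_1$ lies on the wrong side of $e$. When $e$ is in position (5), $C'$ is never reached and must be replaced by the point $C''$. The members you are missing are those in which a vertex of $\Lambda$ lies on $e$ itself, e.g.\ where $e$ meets the horizontal line through an endpoint of $e'$ or the vertical line through an endpoint of $e''$. Such members carry a diagonal (4a) tangency at that vertex, or, if both vertices land on $e$, a $4$-valent (4a') or (6a') tangency. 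These types are absent from your list of admissible replacements; the multiplicity-four types (5b)/(6b) that you list do not arise among these members. Their local multiplicity is the quantity $\mu$ from \eqref{eq:mu}. It equals $1$ only because $P'$ (left of $v_0$) and $P''$ (above $v_1$) lie on opposite sides of the line through $e$, and this is a check you would have to make. Moreover, the liftable set is not a $2\times2\times2$ product. At a diagonal value where a vertex of $\Lambda$ sits on $e$, only one position of that vertex works, so the eight members split, for instance, as $4+2+2$ or $4+2+1+1$ across the relevant diagonal values. This is the paper's ``only eight of these 16 combinations produced diagonally aligned vertices''. To close the gap you need this relative-position analysis of $e$ with respect to $\cP_0,\cP_1$; with it, your counting argument does finish the proof.
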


\begin{proof} We distinguish two cases, depending on the location of the tangency points on $\Lambda$, or equivalent, on the slope of the edge of $\Gamma$ carrying the tangency $P$.  
  {Lemmas}~\ref{lm:case1Dim3} and~\ref{lm:case2Dim3} below confirm the statement by discussing the two representatives depicted in~\autoref{fig:boundedDim3Cases}.
\end{proof}

\begin{lemma}\label{lm:case1Dim3}
  Assume that $\bclass$ is 3-dimensional, and let $\Lambda$ be a general member in the interior of a maximal cell of $\bclass$ that carries a tangency along an edge of  $\Gamma$ of slope $1/3$. Then, the complex $\bclass$ is supported on a polytope  in $\RR^3$. Moreover, $\tclass$ has lifting partition $(8,0,0,0)$ and all liftable members lie on the boundary of this polytope.
\end{lemma}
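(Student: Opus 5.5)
Fix the $\Dn{4}$-representative from the first picture of~\autoref{fig:boundedDim3Cases}. There $\Lambda$ is trivalent with a slope one edge and has three type (1a) tangencies. The tangency $P$ lies on the edge of $\Lambda$, along an edge $AA'$ of $\Gamma$ of slope $1/3$. The other two tangencies lie on edges $BB'$ and $CC'$ of $\Gamma$, meeting legs of $\Lambda$ adjacent to $v_0$ and $v_1$ respectively.

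First I would describe $\sC$ explicitly as the set of $(v_0,\lenl)$ for which all three transverse multiplicity-two intersections persist. A leg of $\Lambda$ meets a fixed edge of $\Gamma$ exactly when its supporting line separates the two endpoints of that edge, and the edge of $\Lambda$ must cross $AA'$. With the orders $\preceq_v$, $\preceq_h$ and $\preceq_d$ from~\autoref{def:orderingRelToEdges}, each condition is a pair of linear inequalities on $(v_0,v_1)$, where $v_1=v_0+\lenl(1,1)$. For example, $v_0$ must lie between $B$ and $B'$ in the relevant order, $v_1$ must lie between $C$ and $C'$, and $A$ and $A'$ must lie on opposite sides of the slope one line through $v_0$. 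The slope $1/3$ of $AA'$ forces the crossing with the slope one edge to be the unique even intersection there. These six inequalities cut out a parallelepiped, or a polytope with parallel pairs of facets. I would then check, using~\autoref{pr:unbounded} and the genericity behind the sets in~\eqref{eq:unboundedCellsToToss} and~\eqref{eq:secondSetToRemove}, that the cells of $\tclass$ outside this closed polytope lie in $\bclass$'s discarded collections. The key point is that leaving through any facet either produces a member as in~\autoref{pr:unbounded}, or pushes a vertex of $\Lambda$ into a chamber that is discarded. Hence $|\bclass|$ equals this polytope $\cP$. I expect this to be the main obstacle: it requires showing that no further cells of $\bclass$ attach to $\cP$ along its boundary, which means examining the facets where a tangency degenerates (to types (2a), (4a), (5a), (6a) and so on) and verifying, via the local moves of~\autoref{pr:localMovesTritangents}, that continuing past them exits $\bclass$.

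Next I would locate the liftable members. By~\autoref{tab:LiftingMultiplicities}, type (1a) has multiplicity zero, so interior points do not lift. The same holds for relative interiors of facets where only one tangency has degenerated to type (2b) or to a multiplicity zero type. Liftable members therefore sit at vertices, or on low-dimensional faces, of $\cP$ where each of the three tangencies has become of type (2a), (4a) or (6a), with multiplicity one by~\autoref{rm:4a6a4ap6ap}, or of type (5b). Equivalently, each vertex of $\Lambda$ and the edge point $P$ sits at a vertex of $\Gamma$ among $A,A',B,B',C,C'$.

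Finally I would enumerate the vertices of $\cP$. Each vertex corresponds to choosing one endpoint in each of the three pairs, giving $2^3=8$ configurations. For each configuration I would check that the corresponding tritangent is realized on the boundary of $\cP$ and has product lifting multiplicity $1\cdot1\cdot1=1$ by~\autoref{thm:totalLifting}. I would also check that configurations with a type (2b) or (4b) degeneration contribute zero. Since the total count is $8$ by~\autoref{thm:countingTritangentClassesAndLifts}, finding eight distinct multiplicity one members gives the lifting partition $(8,0,0,0)$, with all lifts on $\partial\cP$.
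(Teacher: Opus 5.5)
Your plan follows the paper's proof, and using the total of eight lifts per class to avoid excluding further lifts is a legitimate shortcut. The plan is: confine $v_0$ and $v_1$ to parallelograms $\cP_0,\cP_1$, bounded by horizontal lines through the endpoints of the edges carrying the horizontal tangencies (your $BB'$, $CC'$) and by slope-one lines through the endpoints of the slope $1/3$ edge (your $AA'$); show the support is convex; then locate the lifts. The gap is in the region itself.

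Your criterion that a leg meets an edge of $\Gamma$ exactly when its supporting line separates that edge's endpoints is false, because a leg is a ray. The vertex must also lie on the correct side of the edge, i.e.\ $v_0\in\overline{(2,0)^{\vee}}$ and $v_1\in\overline{(1,3)^{\vee}}$ for the representative in the figure. Your six inequalities only encode $v_0\in\cP_0$ and $v_1\in\cP_1$. The slope $1/3$ edge does avoid both parallelograms, by the $\prec_d$ and $\prec_v$ orders forced by the partial Newton subdivision. However, $BB'$ and $CC'$, when of slope $2$, can cut through $\cP_0$ and $\cP_1$; the paper distinguishes five relative positions. Then $|\bclass|$ is the set of diagonally aligned pairs in $\cP_0\cap\overline{(2,0)^{\vee}}$ and $\cP_1\cap\overline{(1,3)^{\vee}}$. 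This is strictly smaller than your parallelepiped. Its cut-off corners put a vertex of $\Lambda$ in the neighbouring chamber, where the leg misses the edge and the curve is not tritangent.

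This breaks your enumeration of the lifts. When a corner is cut off, ``one endpoint from each of the three pairs'' no longer yields eight members. Your claim that every tangency of a liftable member sits at a vertex of $\Gamma$ also fails. The replacement members have a vertex of $\Lambda$ on the cutting edge, carrying a horizontal (4a) tangency, or (6a) when that vertex is the edge's endpoint.

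The count $2^3$ is still correct, but it must be argued as in the paper. For each endpoint $Q$ of the slope $1/3$ edge, $v_0$ runs along the slope-one edge of the polygon $\cP_0\cap\overline{(2,0)^{\vee}}$ diagonally aligned with $Q$. Exactly the two endpoints of that edge give positive local multiplicity: type (2a) at one end, and (2a), (4a) or (6a) horizontal at the other. The same holds for $v_1$.

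Counting vertices alone does not suffice, because the sliced polygon can acquire vertices that do not lift. An example is $v_0$ at an endpoint of the cutting edge lying inside a horizontal side of $\cP_0$; there the diagonal tangency is of type (1a).

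Finally, your statement that (4a)/(6a) tangencies always have multiplicity one misreads~\autoref{rm:4a6a4ap6ap}: diagonal ones have multiplicity $\mu\in\{1,2\}$. This is harmless here only because the slope $1/3$ edge misses both parallelograms, so the diagonal tangency is always of type (1a) or (2a). You should state and justify this.
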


\begin{proof} 
  Up to $\Dn{4}$-symmetry we may assume that the two horizontal legs of $\Lambda$ contain tangency points, as seen on the left of~\autoref{fig:boundedDim3Cases}. Note that the vertices $v_0$ and $v_1$ of $\Lambda$ belong to the connected component of $\RR^2\smallsetminus \Gamma$ dual to  $(2,0)$ and $(1,3)$, respectively.

  As indicated in~\autoref{fig:parallelogramOnlyHDim3}, we draw six lines through the endpoints of the three edges of $\Gamma$ carrying the tangencies: four horizontal lines through $A, A', B$ and $B'$, and  two slope one lines through $C$ and $C'$. We let $\cP_0$ and $\cP_1$ be the two parallelograms determines by them, with the convention that each  vertex of $\Lambda$ satisfies $v_i\in \cP_i$ for $i=0,1$. 
  
  The partial Newton subdivision of $\sextic$ seen on the left of ~\autoref{fig:boundedDim3Cases} yields particular orderings among four of these vertices with respect to $\prec_v$ and $\prec_d$, namely
  \begin{equation}\label{eq:orderingDim3}
    A\prec_d C\prec_d C'\prec_d B \quad \text{ and } \quad A\prec_v C\prec_v C'\prec_v B.
  \end{equation}
  As a consequence, we know that the edge $e:=\overline{CC'}$ does not intersect either of the parallelograms $\cP_0$, $\cP_1$.  In particular, placing $v_0$ along the top and right edges of $\cP_0$, and $v_1$ on the bottom and right edges of $\cP_1$ (in a compatible way) yield elements of  $\bclass$.
  Unlike  $e$, the edges $e':=\overline{AA'}$ and $e'':=\overline{BB'}$ can intersect the parallelograms $\cP_0$ and $\cP_1$.

  In what follows we show that the support of $\bclass$ agrees with a polytope in $\RR^3$ built out of $\cP_0$, $\cP_1$, and the chambers $(0,2)^{\vee}$ and $(1,3)^{\vee}$.  
  By construction, points of $\cP_0\cap \overline{(2,0)^{\vee}}$ and $\cP_1\cap \overline{(1,3)^\vee}$ that are diagonally aligned will produce elements in the support of  $\tclass$. The precise shape of both polygons  depends on the position of the edges $e'$ and $e''$ relative to $\cP_0$ and $\cP_1$, respectively.

\begin{figure}
  \includegraphics[scale=0.25]{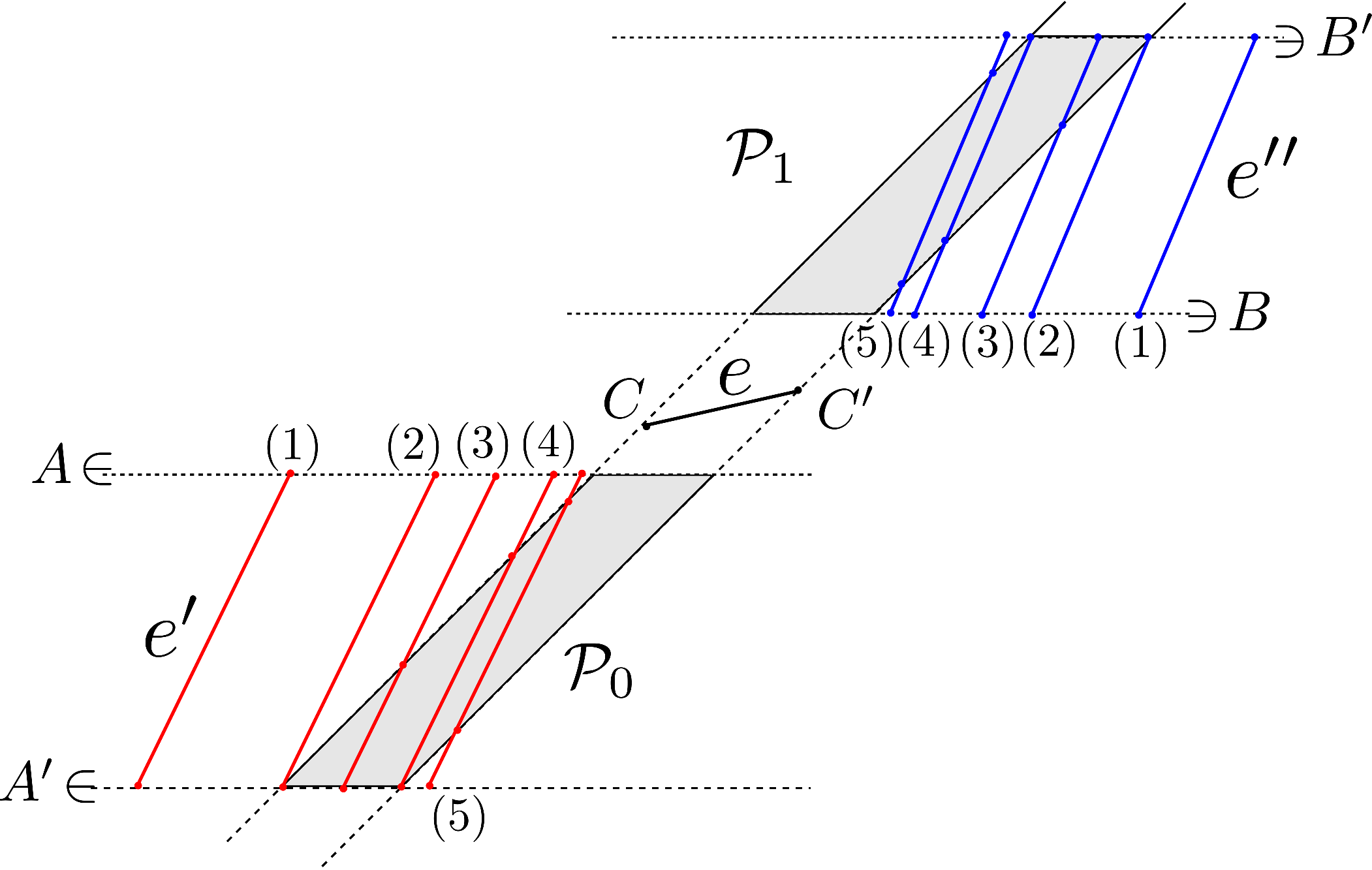}
  \caption{Positions of the (red) edge $e':=\overline{AA'}$, the (blue) edge $e'':=\overline{BB'}$ and the (black) edge $e:=\overline{CC'}$, relative to the parallelograms  $\cP_0$ and  $\cP_1$. 
    \label{fig:parallelogramOnlyHDim3}}
\end{figure}

    By symmetry, it is enough to discuss the situation for the pair $(e',\cP_0)$. We will show that the vertex  $v_0$ of any member of $\bclass$ must lie in $\cP_0 \cap \overline{(2,0)^{\vee}}$.   
There are five cases to consider, and their viability depends on the slope of each edge. All options will be valid if $e'$ has slope $2$, whereas (1) can only occur if its slope is $2/3$. 

  \begin{itemize}
  \item[(1)] \emph{The edge $e'$ avoids $\cP_0$}. In particular, this means that the bottom and left edges of $\cP_0$ are valid locations for $v_0$ for any member of $\bclass$. Indeed, the vertex $v_1$ can be placed on the bottom edge of $\cP_1$. The local moves of $v_0$ within $\tclass$ are constrained  to $\cP_0$ by $e'$ and $e$. 
  \item[(2)] \emph{The point $A'$ is a vertex of $\cP_0$ and $A'=_d C$.} If so, $A'$ is a valid position for the vertex $v_0$ of a member of $\tclass$, since  $v_1$ can be placed on the bottom edge of $\cP_1$. The tangency type at $v_0$ will be (6a) horizontal, and we will be able to move $v_0$ in the direction $(-1,-1)$, leading to an unbounded 2-dimensional movement within $\tclass$. The local moves of  any other point of $\cP_0\cap\overline{(0,2)}^{\vee}$ are restricted to $\cP_0$.
  \item[(3)] \emph{The point $A'$ lies in the relative interior of an edge of $\cP_0$}. As in Case (2), $A'$ is a valid location for $v_0$. Furthermore, the same is true for any  point  in  $\cP_0\cap e'$ since we will be able to place $v_1$ on the bottom edge of $\cP_1$ and produce a member of $\tclass$.  By construction, any valid local move of $v_0$ away from $\cP_0\cap \overline{(0,2)^{\vee}}$ produces elements in cells of $\tclass$ outside $\bclass$. Hence, local moves withing $\bclass$ are constrained to the polygon  $\cP_0\cap \overline{(0,2)^{\vee}}$.
  \item[(4)] \emph{The point $A'$ is a vertex of $\cP_0$ and $C\prec_d A'$}. In this case, $e'\cap  \cP_0$ is a segment containing $A'$. The description of $\cP_0\cap e'$ and the local moves for a member of $\bclass$ with $v_0 \in \cP_0\cap \overline{(0,2)^{\vee}}$ is the  same as in (3).
  \item[(5)] \emph{We have $A'\notin \cP$ and $e'\cap  \cP_0$ is a segment not containing $A'$}. The description is the same as with (3) but the role of $A'$ is now played by the lower endpoint of the segment $e'\cap \cP_0$. 
  \end{itemize}

  From the above analysis and $\Dn{4}$-symmetry we conclude that for any member of $\bclass$ we have $v_0\in \cP_0\cap \overline{(0,2)}^{\vee}$ and $v_1\in \cP_1\cap \overline{(1,3)}^{\vee}$. Furthermore, for each $(v_0,\lenl)\in \bclass$, the value of $\lenl$ lies in a segment, whose endpoints correspond to placing $v_1$ on the bottom edge of $\cP_1$ and either the top edge of $\cP_1$, or the edge $e''\cap \cP_1$ depending on whether or not  $v_0\preceq_d B'$. These constraints are linear in $v_0$, confirming that $\bclass$ is a polytope in $\RR^3$.

It remains to determine which points of $\bclass$ have positive lifting multiplicity. Note that by construction, all members of $\bclass$  contain a diagonal tangency point (labeled $P$)  along $\overline{CC'}$ and its type is either (1a) or (2a). Therefore, the only ones that can lift are those for which $P=C$ or $C'$. These correspond  to tritangents where $v_0$ and $v_1$ lie on the right and left edges of $\cP_0 \cap \overline{(2,0)^\vee}$ and $\cP_1\cap \overline{(1,3)^{\vee}}$, respectively. Whereas the points along the horizontal lines through $A$ and $B$ lead to type (2a) horizontal tangencies, the others can be of type (2a), (4a) or (6a), depending on the five possible positions of $v_0$ and $v_1$ relative to $\cP_0$ and $\cP_1$, respectively, seen in~\autoref{fig:parallelogramOnlyHDim3}. It follows from here that the lifting partition for $\tclass$ is $(8,0,0,0)$. Furthermore, for all liftable members, the vertices $v_0$ and $v_1$ lie on the boundary of $\bclass$.
\end{proof}

  \begin{figure}
  \includegraphics[scale=0.3]{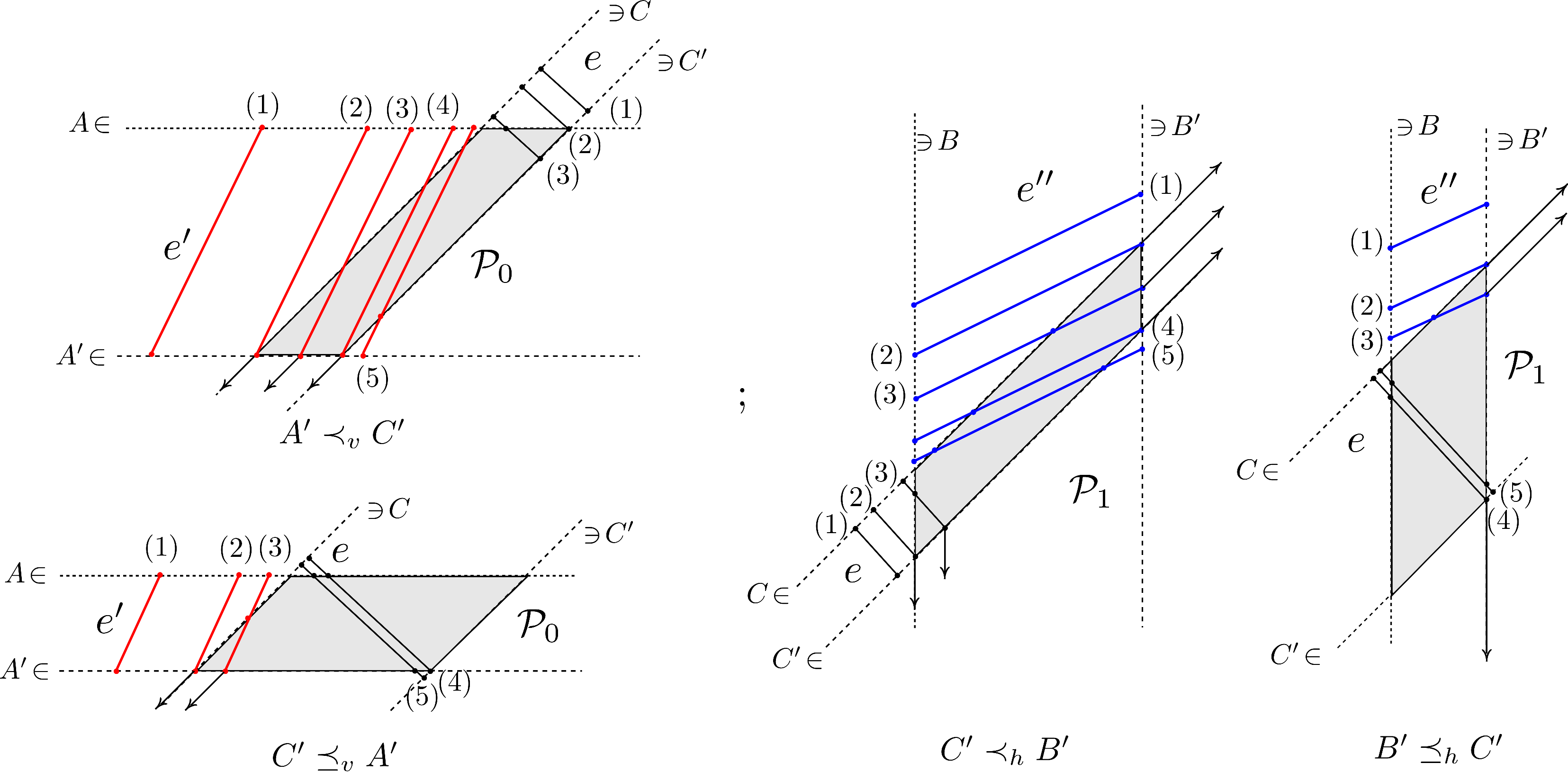}
    \caption{From left to right: location and local moves  of the vertices $v_0$ and $v_1$ for a tritangent $\Lambda$ as in the right of~\autoref{fig:boundedDim3Cases}, and positions of the (red) edge $e':=\overline{AA'}$, the (blue) edge $e'':=\overline{BB'}$ and the (black) edge $e:=\overline{CC'}$, relative to the parallelograms  $\cP_0$ and  $\cP_1$. The latter depends on the relative $\prec_h$-order between $C'$ and $A'$, and the $\prec_v$-order between $C'$ and $B'$.\label{fig:parallelogramHAndVDim3}}
\end{figure}

\begin{lemma}\label{lm:case2Dim3}
 Suppose that $\bclass$ is 3-dimensional and a general member $\Lambda$ on the interior of a top-dimensional cell has a tangency along a slope one edge of $\Gamma$. Then, $\tclass$ has lifting partition $(8,0,0,0)$.  Furthermore, all liftable members correspond to points   on the boundary of a unique $3$-dimensional polytope contained in the support of $\bclass$.
\end{lemma}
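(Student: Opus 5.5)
We will mimic the proof of~\autoref{lm:case1Dim3}, now using the configuration on the right of~\autoref{fig:boundedDim3Cases}. Up to $\Dn{4}$-symmetry, $\Lambda$ is trivalent with a slope one edge. It has a type (1a) tangency $P$ on this edge, lying along a slope one edge $e:=\overline{CC'}$ of $\Gamma$. Its other two type (1a) tangencies lie on one horizontal and one vertical leg, along the edges $e':=\overline{AA'}$ and $e'':=\overline{BB'}$ of $\Gamma$. As in~\autoref{fig:parallelogramHAndVDim3}, we draw lines through the six endpoints and obtain parallelograms $\cP_0\ni v_0$ and $\cP_1\ni v_1$:
\begin{itemize}
\item a horizontal line through $A$ and $A'$ (or vertical, depending on which leg of $\Lambda$ carries that tangency);
\item a vertical line through $B$ and $B'$;
\item slope one lines through $C$ and $C'$.
\end{itemize}
The non-transverse diagonal tangency along $e$ is not a defining direction here. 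Instead, $\overline{CC'}$ is parallel to the edge of $\Lambda$, so the constraint from $e$ is that the edge of $\Lambda$ overlaps $e$ in a segment.

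The first step is to read off from the partial Newton subdivision the orderings that play the role of~\eqref{eq:orderingDim3}. These should be $\prec_h$- and $\prec_v$-inequalities between $C,C'$ and $A,A',B,B'$. They guarantee that $e$ does not cut the interiors of the relevant regions, and that the top and right, respectively bottom and left, edges of $\cP_0$ and $\cP_1$ give valid positions in $\bclass$.

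We then run the same five-case analysis of the position of $e'$ relative to $\cP_0$, and of $e''$ relative to $\cP_1$. In each case, $v_0$ ranges over $\cP_0\cap\overline{v_0^{\vee}}$ and $v_1$ over $\cP_1\cap\overline{v_1^{\vee}}$, the chambers dual to the appropriate lattice points. Any move outside these sets lands in the cells removed in~\eqref{eq:unboundedCellsToToss} or~\eqref{eq:secondSetToRemove}, by~\autoref{pr:unbounded} and~\autoref{lm:unboundedtranslations}. Given $v_0$, the parameter $\lenl$ then ranges over an interval whose endpoints depend piecewise linearly on $v_0$. The breakpoint occurs where $v_0$ becomes $\prec_h$- or $\prec_v$-aligned with $A'$ or $B'$, which is the dichotomy recorded in~\autoref{fig:parallelogramHAndVDim3}.

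Here lies the expected difficulty. Unlike~\autoref{lm:case1Dim3}, where the support was a single polytope, the support of $\bclass$ may now consist of a main polytope together with lower-dimensional cells attached to it. These attached cells come from the extra freedom created when an endpoint of $e$ is aligned with $A'$ or $B'$. This is why the statement only speaks of a \emph{unique $3$-dimensional polytope contained in} $|\bclass|$. We will identify this polytope as the set of $(v_0,\lenl)$ for which both vertices lie in the closed parallelograms intersected with their chambers. For each attached piece, we will check that all of its members carry a type (1) or (2b)/(4b) tangency, hence have multiplicity $0$ by~\autoref{tab:LiftingMultiplicities}.

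Finally, we count lifts via~\autoref{thm:totalLifting}. On the tangency along $e$, positive multiplicity requires $P$ to be an endpoint $C$ or $C'$, giving a local factor of $1$ from type (2a), (4a) or (6a) non-diagonal. The same holds for the other two tangencies: each must sit at an endpoint of $e'$, respectively $e''$, or at the corner produced by the clipping. These conditions cut out finitely many points on the boundary of the polytope, each with multiplicity $1$. We will then verify that each of the three tangencies contributes exactly two admissible endpoint choices, and that these choices are independent because the constraints are along independent directions. This gives $2^3=8$ liftable members, each of multiplicity $1$, hence the partition $(8,0,0,0)$, which is consistent with~\autoref{thm:countingTritangentClassesAndLifts}. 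Proving this independence in every relative-position case of~\autoref{fig:parallelogramHAndVDim3} is the main obstacle; we would handle it case by case as in the five-case list of~\autoref{lm:case1Dim3}.
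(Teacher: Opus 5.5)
\textbf{There is a genuine gap, and it begins in the setup.}

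A type (1a) tangency on the bounded edge of $\Lambda$ requires the edge $e=\overline{CC'}$ of $\Gamma$ to cross it transversally with multiplicity $|\det((1,1),u)|=2$, where $u$ is the primitive direction of $e$. So once you normalize $\Lambda$ to have a slope one edge, $e$ has slope $-1$. It is dual to the Newton edge joining $(2,0)$ and $(3,1)$, with $v_0\in\overline{(2,0)^{\vee}}$ and $v_1\in\overline{(3,1)^{\vee}}$. The ``slope one'' in the statement only makes sense for the mirrored representative, where $\Lambda$'s edge has slope $-1$.

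An edge of $\Lambda$ overlapping $e$, as you describe, is a non-transverse diagonal tangency. It pins both vertices to the line spanned by $e$, so it cannot occur in the interior of a $3$-cell (\autoref{pr:3dCells}). It would also make your two slope one lines through $C$ and $C'$ coincide.

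The more serious problem is your claim that the orderings from the Newton subdivision keep $e$ from cutting the relevant regions. That is false here, and it is precisely what separates this case from \autoref{lm:case1Dim3}. The subdivision only yields $A\prec_h C\prec_h B$, $A\prec_v C\prec_v B$ and $A,B\prec_d C$. Nothing controls the $\prec_v$-position of $C'$ relative to $A,A'$, nor its $\prec_h$-position relative to $B,B'$. Hence $e$ may cut $\cP_0$ and/or $\cP_1$. This has three consequences that your plan does not handle.

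First, $\cP_0\cap\overline{(2,0)^{\vee}}$ and $\cP_1\cap\overline{(3,1)^{\vee}}$ are clipped by $e$. In position (5) one must truncate $e$ at the $\preceq_d$-minimal point $C''$ of $e\cap((A'+\RR(1,0))\cup(B'+\RR(0,1)))$ before pairing diagonally aligned points into $\cQ$. The boundary of $\cQ$ then contains members with a vertex on $e$, possibly $4$-valent.

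Second, the tangency along $e$ has positive local multiplicity not only when $P\in\{C,C'\}$. It also does when it is a (4a)/(6a) tangency at a vertex of $\Lambda$, or a (4a')/(6a') tangency for a $4$-valent member. Along a slope $\pm1$ edge these are diagonal, so ``(4a) or (6a) non-diagonal'' cannot occur there. Their multiplicity is the quantity $\mu$ of \autoref{rm:4a6a4ap6ap}, which you must check equals $1$; it does, since $P'$ and $P''$ lie on opposite sides of the line spanned by $e$.

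Your count of ``two independent endpoint choices per tangency'' therefore breaks down in clipped configurations. For example, if $C'$ lies strictly between the two horizontal lines bounding $\cP_0$, pairing $P=C'$ with $v_0$ on the upper line puts $v_0$ on the far side of $e$. The paper instead allows up to four locations for $P$: $C$, $C'$, and the intersections of $e$ with the lines through $A$ and $B$ defining the parallelograms. This gives four candidate positions for each vertex, and exactly eight of the sixteen pairs are diagonally aligned.

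Third, the extra cells outside $\cQ$ exist whenever $e$ meets a parallelogram, not only in an alignment. They meet $\cQ$, so you cannot expect all their members to carry a zero-multiplicity type from \autoref{tab:LiftingMultiplicities}. What must be shown is that genericity of $\sextic$ leaves only finitely many potentially liftable members in these cells, and that all of them already lie in $\cQ$.
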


\begin{proof}
  By the action of $\Dn{4}$ we may assume that $\Lambda$  has a slope one edge and  tangencies along its negative horizontal and positive vertical legs, as seen in the right of~\autoref{fig:boundedDim3Cases}. We employ the same techniques as in the proof of~\autoref{lm:case1Dim3}, focusing on the major combinatorial differences. The relevant parallelograms and the relative positions of the edges $e, e'$ and $e''$ with respect to them can be seen in~\autoref{fig:parallelogramHAndVDim3}.

  The partial Newton subdivision of $\sextic$ imposed by $\Lambda$ appears on the  the right of~\autoref{fig:boundedDim3Cases}. It determines only some inequalities between $A, B$ and $C$, namely,
  \begin{equation}\label{eq:inequalitiesCase2Dim3}
    A\prec_hC\prec_h B\quad,\quad
    A\prec_vC\prec_v B\quad and \quad
    A, B\prec_d C.
  \end{equation}
  Unlike what happened in~\autoref{lm:case1Dim3}, the edge $e$ can intersect either of the parallelograms $\cP_0$ and $\cP_1$ due to its slope value and  the  lack of information between the relative $\prec_v$ (respectively, $\prec_h$)  order between $C'$ and the points $A,A'$ (respectively, $C'$ and $B,B'$).
    In addition, there will be up to five relative positions for the edges $e$, $e'$ and $e''$ with respect to these parallelograms. They depend on the orders $\prec_h, \prec_v$ indicated in~\autoref{fig:parallelogramHAndVDim3} as well as on the slope of $e'$ and $e''$. If $e'$ has slope $2/3$, then $e'$ only admits position (1) relative to  $\cP_0$. Similarly, if $e''$ has slope $3/2$, only position (1) for this edge with respect to $\cP_1$ can be reached. The figure also includes the unbounded directions in which $v_0$ and $v_1$ can move while remaining in $\tclass$.

The inequalities in~\eqref{eq:inequalitiesCase2Dim3} confirm that any  member of $\bclass$ with a slope one edge and a tangency along $e$ must have its vertices $v_0$ and $v_1$ in the polytopes  $\cP_0 \cap \overline{(2,0)^{\vee}}$ and $\cP_1 \cap \overline{(3,1)^{\vee}}$, respectively.
    The positions of the edge $e$ relative to the parallelograms $\cP_0$ and $\cP_1$  determine which points of these polytopes can be combined to produce a polytope $\cQ\in \RR^3$ whose interior points $(v_0,\lenl)$ yield trivalent members of  $|\bclass|$ with slope one edges. This polytope will satisfy the conditions of the statement. We distinguish two situations.

  First  suppose that the position of $e$ relative to each parallelogram is one of (1) through  (4). Then,   as in the proof of~\autoref{lm:case1Dim3},  each point in $\cP_0 \cap \overline{(2,0)^{\vee}}$ is diagonally aligned with points in a slope one segment (possibly of length zero) on  $\cP_1 \cap \overline{(3,1)^{\vee}}$. By symmetry, the same holds if we reverse the roles of $v_1$ and $v_0$. This determines a $3$-dimensional polytope in $\RR^3$ contained in the support of $\bclass$. We let $\cQ$ be this polytope.

On the contrary, assume $e$ has position (5) with respect to either $\cP_0$ or $\cP_1$. We let $C''$ be the unique minimal point with respect to $\preceq_d$ on the finite set
\[e\cap ((A'+\RR (1,0)) \cup (B'+\RR(0,1))).\]
Points $v_0 \in \cP_0$ and $v_1\in \cP_1$ with either $C''\prec_d v_0$ or $C''\prec_d v_1$ will not admit a diagonally-aligned companion in the other set to produce a member of $\tclass$. Thus, replacing $C'$ with $C''$ and building the corresponding parallelograms restricts the  positions of $e$ relative to either of them to cases (1) through (4). This yields  a $3$-dimensional polytope  in the support of $\bclass$. As before, we set $\cQ$ to be this convex body.

In both situations discussed above, arguments analogous to those used in the proof of~\autoref{lm:case1Dim3} confirm that  the polytope $\cQ$ lies in the support of $\bclass$. Note that if the relative position of $e$ with respect to one of the parallelograms $\cP_0$ or $\cP_1$ is (1), all members corresponding to points in $\cQ$ are trivalent. In all other cases,  the boundary of $\cQ$ can include $4$-valent tritangents. If so, their unique vertex must lie in $e$.

Next, we determine which   members of $\cQ$ produce tritangents $\Lambda'$ that  lift over $\overline{\K}$.
 The information provided by the tangency point $P'$ on the edge $e'$ of $\Gamma$ restricts the location of $v_0$ to segments in the boundary of $\cP_0\cap \overline{(2,0)^{\vee}}$ that are either horizontal  or included in $e'\cup e$. Similarly, the point $P''$ has positive local lifting multiplicity if and only if the point $v_1$ lies on a vertical segment or a subset of $e\cup e''$ in the boundary of $\cP_1\cap \overline{(3,1)^{\vee}}$.

 \begin{figure}[t]
  \includegraphics[scale=0.35]{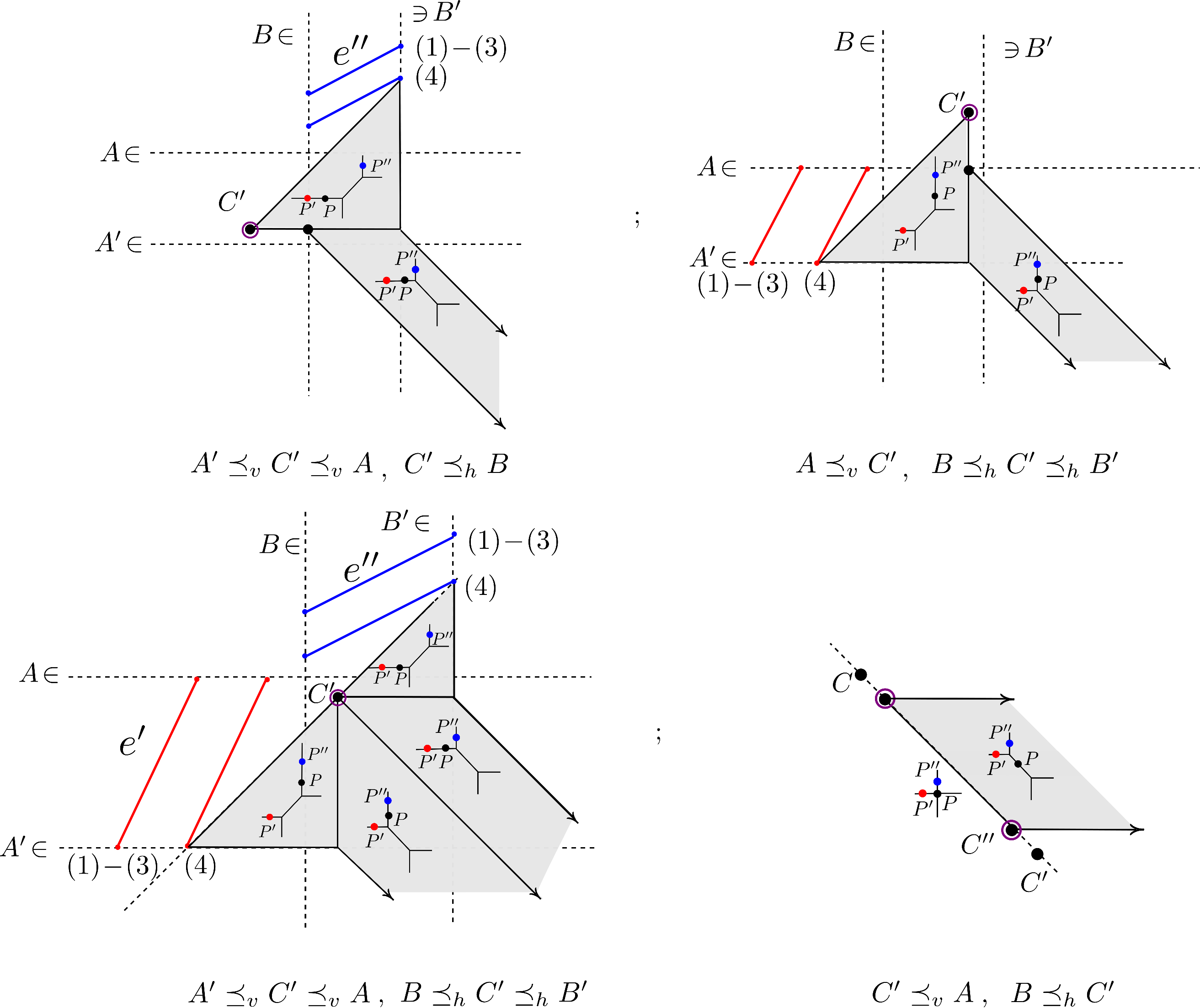}
  \caption{Combinatorial type and location of the tangency points for potential extra members of  $\tclass$ outside the polytope $\cQ$ not featured in~\autoref{fig:parallelogramHAndVDim3}. Their appearance depends on the $\prec_v$ order between $A, C', A$ and the $\prec_h$-order between $B,B', C$ indicated below each set. The point $C''$ is defined in the proof of \autoref{lm:case2Dim3}. 
    \label{fig:extraCellDim3HV}}
\end{figure}

In turn, the tangency point $P$ on the edge $e$ yields a pair  $(\Lambda, P)$  with positive local lifting multiplicity  if and only if its type is (2a), (4a) or (6a) for trivalent members, and either (4a') or (6a') for 4-valent ones. In addition to $C$ and $C'$ there are at most two other possible locations for $P$, namely the intersection points of $e$ and lines through $A$ and $B$ used to define $\cP_0$ and $\cP_1$. 
These options determine four possible locations for both $v_0$ and $v_1$. However, only eight of these 16 combinations produced diagonally aligned vertices. Moreover, four of these will a type (2a) diagonal tangency at $C$.  Thus, $\cQ$ has eight liftable members, all of multiplicity one, in its boundary.

To finish our proof, we must describe the remaining members of $\tclass$ and confirm that no point outside $\cQ$ produces liftable tritangents. These extra tritangents appear in  \autoref{fig:extraCellDim3HV}. Their nature depends on the $\preceq_v-$ and $\preceq_h$-order between $C'$ and the pairs $A, A'$ and $B,B'$, respectively. Their existence is precisely due to the fact that $e$ may intersect the parallelograms  $\cP_0$ and  $\cP_1$. In all cases, these extra members lie in lower dimensional cells of $\tclass$. The genericity assumption on $\sextic$ ensures that only a finite number of members in these extra cells (marked in purple) can potentially lift over $\overline{\K}$. However, all of them are already in $\cQ$. This concludes our proof.
  \end{proof}

            \begin{remark}\label{rem:coarsestStructure} The existence of cells of $\tclass$ outside the polytope $\cQ$ seen in the top row of~\autoref{fig:extraCellDim3HV} confirms that $\tclass$ does not admit a coarsest structure as a polyhedral complex.           \end{remark}

\section{The bounded complex $\bclass$ and its generic members  beyond dimension 3}\label{sec:comb-bound-compl}

In~\autoref{sec:polyhedra-bounded-complex}, we introduced the bounded complex $\bclass$ associated to a tritangent class $\tclass$ of $\Gamma$. It consists of a subset of the collection of bounded cells in $\tclass$ and, furthermore, it is a strong deformation retract of $\tclass$. In this section we study some of its combinatorial properties when its dimension is at most two. Our main objective is to classify the possible distributions of tangency types of a generic member in the relative interior of a top-dimensional cell of $\bclass$. \autoref{thm:dim210} below accomplishes this task, paving the way to prove the lifting partition theorem beyond dimension 3.

The following is the main result of this section. Combined with~\autoref{pr:3dCells}, and 
{Lemmas}~\ref{lm:trivalentsAreEnough} and~\ref{lm:4valentSingleton}, it gives a full characterization of  all general members of maximal dimensional cells in $\bclass$.
            
\begin{theorem}\label{thm:dim210}
  Let $\tclass$ be a tritangent class of $\Gamma$. Assume that $\dim \bclass \leq 2$ and let $\sC$ be a cell of $\bclass$ with  $\dim \sC =   \dim\bclass$. Then, up to symmetry, any generic  member in the relative interior of $\sC$ 
  agrees with one of the curves listed in~\autoref{tab:combClassificationGenericPerDim}. 
\end{theorem}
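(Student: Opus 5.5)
We will proceed by a case analysis on $d:=\dim\bclass\in\{0,1,2\}$, organized by the tangency types that a generic member $\Lambda$ of the relative interior of $\sC$ can carry. The starting point is~\autoref{rk:dimensionCellRestrictsTangencyTypes}. The local moves of $\Lambda$ inside $\tclass$ form the intersection of the three regions $\Xi$ from~\autoref{pr:localMovesTritangents}, one for each tangency. Since $\sC$ is maximal in $\bclass$, this intersection, after removing the cells from~\eqref{eq:unboundedCellsToToss} and~\eqref{eq:secondSetToRemove}, must have dimension exactly $d$ near $\Lambda$. When $d>0$, \autoref{lm:trivalentsAreEnough} lets us assume $\Lambda$ is trivalent. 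When $d=0$ and $\Lambda$ is $4$-valent, \autoref{lm:4valentSingleton} already settles the configuration. Using $\Dn{4}$-symmetry we may also fix the sign of $\lenl$, so that $\Lambda$ has a slope one edge.

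The core step is a codimension count. Each tangency type $P_j$ has a local move $\Xi_j$ whose dimension $3-c_j$ can be read off~\autoref{fig:localMoves}; for instance, $c_j=k_0$ in the setting of~\autoref{lm:OpenCellsTrivalent}, $c_j=1$ for types (2a)/(4a), and so on. For a generic member of $\sC$, the affine spans of the $\Xi_j$ should intersect transversally, up to the coincidences forced when two tangencies share a vertex of $\Lambda$. This gives $d=3-\sum_j c_j$ plus a correction term for shared constraints. For each $d$ we will enumerate the triples of types from~\autoref{thm:classificationRealizableLocalTangencies} whose codimensions add up correctly. Each triple is then filtered by the bidegree-$(3,3)$ constraints: each of the four leg directions of $\Lambda$ can meet $\Gamma$ only in the prescribed intersection multiplicities, and the tangency points must be distributed among the legs and the edge of $\Lambda$ as in~\autoref{pr:unbounded}. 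We also filter by the requirement that the cell not lie in any $\mathscr{U}_i^{\pm}$ or $\tR_i^{\pm}$. Concretely, a vertex of $\Lambda$ lying in a corner chamber, or a configuration as in~\autoref{lm:unboundedtranslations}(iii), is excluded.

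Next comes a maximality filter, mirroring the proofs of~\autoref{lm:trivalentsAreEnough} and~\autoref{lm:4valentSingleton}. For each surviving configuration we try to exhibit a non-local deformation of $\Lambda$: fix two tangencies and move the third along the direction allowed by its $\Xi_j$, for example by sliding a vertex of $\Lambda$ along a leg of $\Gamma$ until it detaches. Such a deformation lands $\Lambda$ in a higher-dimensional cell of $\bclass$, and its existence would contradict $\dim\sC=\dim\bclass$. The configurations that cannot be deformed in this way are exactly the entries of~\autoref{tab:combClassificationGenericPerDim}. For $d=2$ we expect one type (2a)/(2b)/(4a)/(3a)-like constraint together with two type (1) tangencies. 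For $d=1$ we expect either two codimension-one constraints or one codimension-two constraint such as (3f) or non-transverse tangencies. For $d=0$ we expect the rigid types (3h), (3d), (8), (6a'), (6b') or combinations that pin both vertices.

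The main obstacle will be the transversality and correction term in the codimension count. This occurs when two tangencies lie in the same connected component of $\Lambda\cap\Gamma$, or when both constrain the same vertex of $\Lambda$, as happens for the mixed-dimensional moves of types (5), (6) and (2'). In these cases the naive sum overcounts. To handle this, we will work vertex by vertex using~\autoref{fig:PositionVerticesMixedDimension}. We compute the allowed region for $v_0$ and for $v_1$ separately, and then intersect it with the diagonal-alignment condition $v_1=v_0+\lenl(1,1)$. This reduces each ambiguous case to a planar check of the orderings $\prec_h,\prec_v,\prec_d$ among the vertices of $\Gamma$ involved, much as in the proofs of~\autoref{lm:case1Dim3} and~\autoref{lm:case2Dim3}.
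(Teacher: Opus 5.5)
Your opening reductions are fine: trivalence via \autoref{lm:trivalentsAreEnough} when $d>0$, the $4$-valent singleton via \autoref{lm:4valentSingleton}, and a slope-one edge by symmetry. After that, however, the proposal describes a search without carrying it out. The assertion that the configurations surviving your filters ``are exactly the entries'' of \autoref{tab:combClassificationGenericPerDim} is the theorem itself. The maximality filter (``we try to exhibit a non-local deformation'') has no criterion for deciding when a vertex is genuinely immobile inside $\bclass$, so nothing certifies that a configuration you fail to deform is in the table.

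The one concrete mechanism you offer, $d=3-\sum_j c_j$ under transversality, is not well defined. The regions $\Xi_j$ are unions of relatively open polyhedra. For many types, e.g.\ (2), (4), (2'), (4a'), (4b'), the point $(v_0,\lenl)$ sits on a boundary face of a higher-dimensional piece, so no affine span records the constraint. Your own bookkeeping shows this: (2a) falls under \autoref{lm:OpenCellsTrivalent} with $k_0=0$, so your rule $c_j=k_0$ gives a $3$-dimensional local move, yet you also set $c_j=1$ for (2a). The ``correction term'' is not a uniform rule either, since whether two constraints on the same vertex are independent depends on their directions and on the metric of $\Gamma$. A horizontal (3c) and a vertical (3c) at $v_0$ impose two conditions. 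A (3a) and a (3c) on the same leg of a special member impose one; this is allowed for non-generic $\Gamma$, cf.\ \autoref{lm:specialConfigurations}, and with the remaining tangency of type (1a) it gives a $2$-dimensional cell by \autoref{pr:dimSpecialCells}, where your sum predicts $1$.

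What is missing is the organizing principle and the pinning criterion that make the enumeration finite and checkable. The paper first lists the five ways the two vertices of a generic trivalent member can move:
\begin{enumerate}[(i)]
\item one vertex moves in an open ball and determines the other;
\item both move independently along slope-one segments;
\item both move along coupled segments;
\item one is fixed and the other moves;
\item both are fixed.
\end{enumerate}
Each pattern fixes $\dim\sC$ outright, so no codimension bookkeeping is needed. Patterns (i) and (iii) are enumerated in \autoref{lm:movementi} and \autoref{lm:movementiii}. Pattern (ii) forces a non-transverse diagonal tangency of type (3aa), (3ac) or (3cc). Patterns (iv) and (v) rest on \autoref{lm:fixAVertex}: a vertex is immobile in $\bclass$ only if two non-transverse tangencies lie on two rays of its star, or if it carries a (5a) or diagonal (4a)/(6a) tangency on the boundary of an unbounded chamber, complemented by a non-transverse diagonal tangency. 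In the latter case the vertex can still move in $\tclass$, but only into cells discarded in \eqref{eq:bclass}, so the pinning comes from that removal rather than from the $\Xi_j$.

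The outcomes you anticipate also contradict what the filters must produce. For $d=0$ you expect (3h), (3d), (6a'), (6b'). But members with (3h) or (3d) admit moves within $|\bclass|$ fixing neither vertex, and (6a'), (6b') are excluded by \autoref{lm:4valentSingleton}, which you cite yourself. Those types come from \autoref{rk:dimensionCellRestrictsTangencyTypes}, which is only a necessary condition for a type to occur in a cell of given dimension, not a description of top-dimensional cells of $\bclass$. For $d=2$ you allow a (2b) tangency together with two type (1) tangencies. Since (2b) has multiplicity four, this exceeds the total stable intersection multiplicity $6$.
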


            \begin{proof}
  Pick a general trivalent member in the relative interior of $\sC$. Up to symmetry, we assume its unique bounded edge has slope one. Since $\sC\in \bclass$, we know that $\Lambda$ has no tangency of type (3ab) or (3cb). Similarly,~\autoref{pr:unbounded} confirms that if a vertex of $\Lambda$ lies outside $\Gamma$, then, one of the two legs of $\Lambda$ adjacent to it must contain a tangency point.

  First, we analyze the possible movements of both vertices of $\Lambda$. There are precisely five options:
  \begin{enumerate}[(i)]
  \item \label{movei} one vertex  moves around an open ball in $\RR^2$ centered around it, but the location of the second vertex is uniquely determined by the position of the first one;
  \item \label{moveii} both vertices move independently along  an open segment, which is forced to have slope one;
  \item \label{moveiii} both vertices move on open segments, but the position of the first one determines the location of the second one;
  \item \label{moveiv} one vertex is fixed, and the other one moves along an open segment of slope one;
       \item \label{movev} both vertices are fixed.
  \end{enumerate}
 Note that the dimension of $\sC$ is fixed for each case. Indeed, the first two can only occur if $\dim \sC=2$, whereas the next two happen if $\dim \sC=1$. Finally, the last option forces $\sC$ to be a point.

             \begin{table}[tb]
              \begin{tabular}{|c|c|c|}
                \hline
                  $\dim\bclass$  & \multicolumn{2}{c|}{Generic members of a cell $\sC$ of $\bclass$ with $\dim \sC=\dim\bclass$} \\
                \hline
                \multirow{4}{*}{2}  &  (i) & (ii) \\
                \cline{2-3} 
&                        \multirow{3}{*}{\includegraphics[scale=0.3]{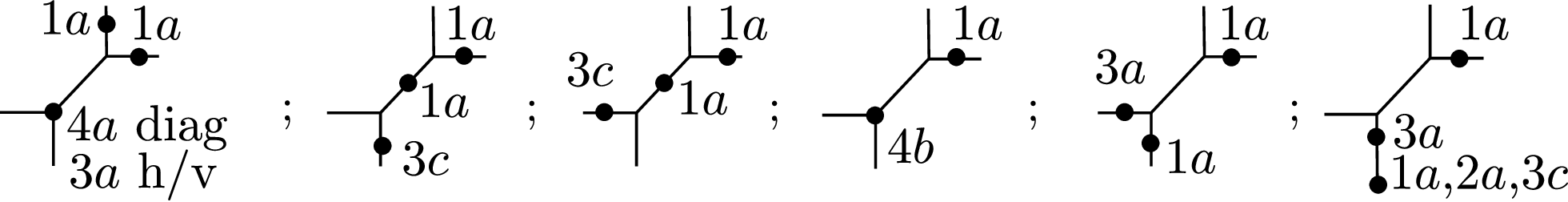}} &    \multirow{3}{*}{\includegraphics[scale=0.3]{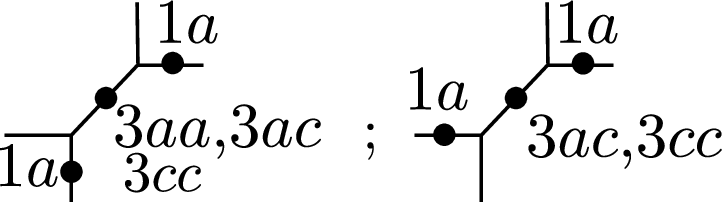}}  \\
                & & \\
                & & \\
                         \hline
                \multirow{7}{*}{1}  &  (iii) & (iv) \\
                \cline{2-3} 
&                        \multirow{5}{*}{\includegraphics[scale=0.3]{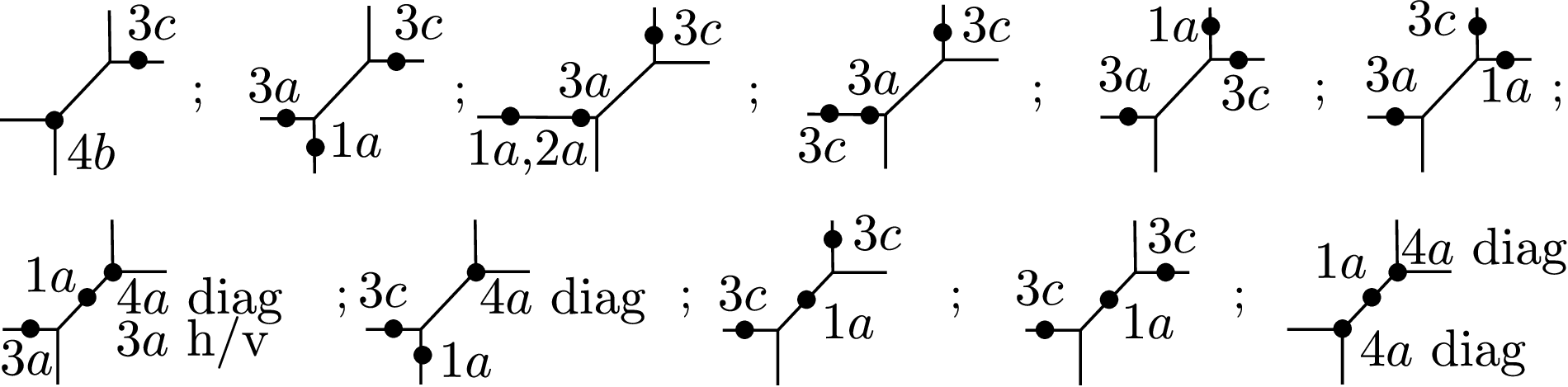}} &    \multirow{5}{*}{\includegraphics[scale=0.3]{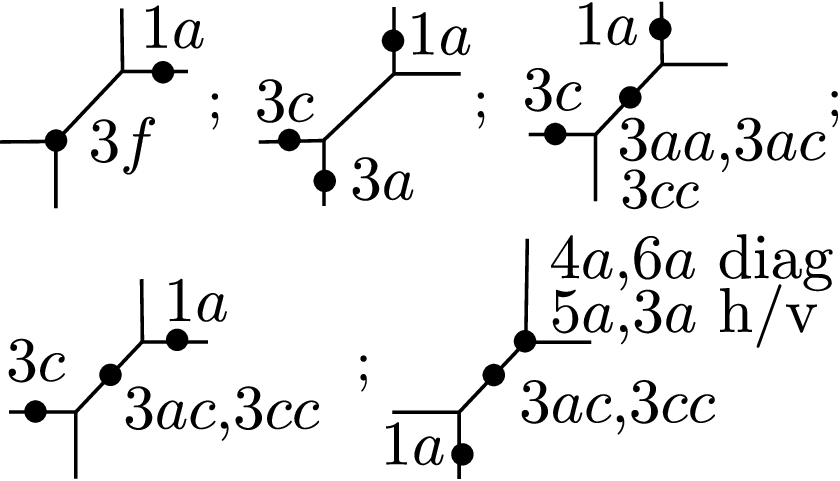}}
                \\
                & & \\
                & & \\
                & & \\
                & & \\
                & & \\
                         \hline
                \multirow{7}{*}{0}  &  \multicolumn{2}{c|}{(v)}  \\
                \cline{2-3} & \multicolumn{2}{c|}{\includegraphics[scale=0.3]{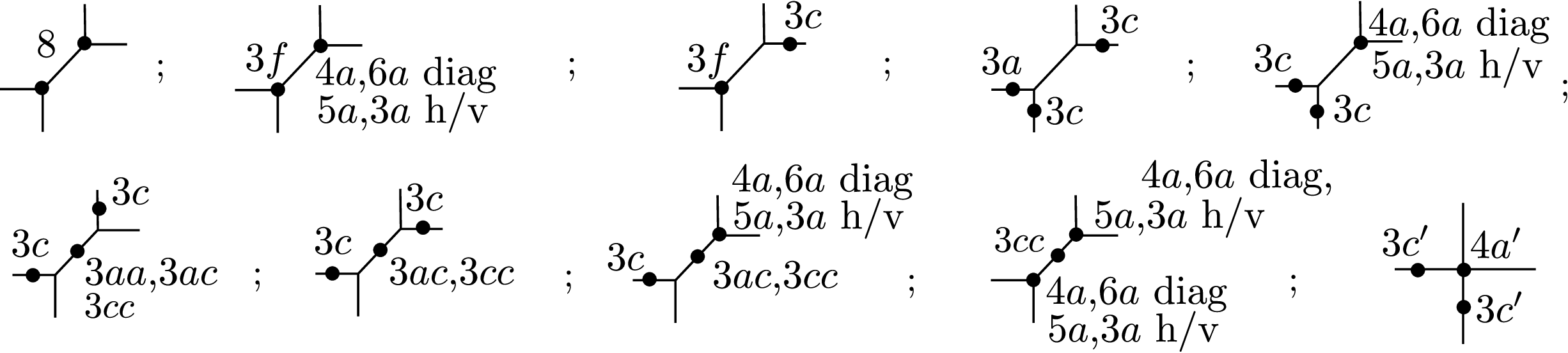}}  \\               
                         \hline
              \end{tabular}
              \caption{Possible generic 
                members that can appear in the interior of a top-dimensional  cell of the bounded complex of a tritangent class depending on the local moves of the vertices of $\Lambda$ discussed in the proof of~\autoref{thm:dim210}.\label{tab:combClassificationGenericPerDim}}
            \end{table}

We will see that each of the possibilities listed above produces one of the sample members in~\autoref{tab:combClassificationGenericPerDim}.
We treat each case separately. Movement labeled \eqref{movei} and ~\eqref{moveiii} are treated in 
{Lemmas}~\ref{lm:movementi} and~\ref{lm:movementiii}.

  Next, assume the local moves of $v_0$ and $v_1$ are as in \eqref{moveii}. It follows for this that any tangency belonging to a leg of $\Lambda$ must be of type (1a). Since $\dim \sC=2$,   \autoref{pr:3dCells} and \autoref{rk:dimensionCellRestrictsTangencyTypes}  combined ensure that $\Lambda$ has a non-transverse diagonal tangency at a point $P'$, of type   (3aa), (3ac), (3cc), (3bb1), (3bb2), or  (7).  Without loss of generality, we choose the representatives for these types seen in~\autoref{fig:classificationLocalTangencies}. 

  We claim that the first three cases suffice, since the last three are not generic. Indeed, 
  if $P'$ has type (7), then moving $v_0$ and $v_1$ towards each other produces a member in $\sC^{\circ}$ with a type (3aa) tangency. Similarly if $P'$ has type (3bb1) or (3bb2), moving $v_0$ away from $v_1$  gives rise to  a member in $\sC^{\circ}$ with a tangency of type (3cc) or (3ac), respectively.

  If $P'$ has type (3aa), (3ac) or (3cc), the condition $\sC\in \bclass$ forces type (1a) tangencies on precisely one leg adjacent to each vertex of $\Lambda$. Up to symmetry, there is one choice for type (3aa) (imposed by the bidegree of $\Gamma$) and two for the remaining types. This produces the last two options seen in the first row of the table, concluding our proof when $\dim \sC=2$.

Next, if the movement corresponds to option \eqref{moveiv}, then one of the vertices cannot move if we want to remain in $\sC$. \autoref{lm:fixAVertex} determines the tangencies responsible for fixing it. Any extra tangency must be transverse since $\dim \sC=1$. The bidegree of $\Gamma$ and the condition $\sC\subset \bclass$ force the latter to be of type  (1a) and adjacent to the movable vertex of $\Lambda$. The possible configurations are found in the (1, (iv)) entry of the table.

Finally, we discuss the case when $\dim \sC=0$, corresponding to the movement labeled \eqref{movev}. After checking that a type (8) tangency fixes $\Lambda$, and that the only companions to a type (3f) that fix both vertices of $\Lambda$ are those seen in the table,~\autoref{lm:fixAVertex} confirms the remaining possible configurations of tangency points indicated in the table. This concludes our proof.
\end{proof}

            Our next lemmas are used in the proof of~\autoref{thm:dim210}. The first two correspond to the movements labeled (i) and (iii). The third one describes tangency types that  fix one of the vertices of the tritangent curve $\Lambda$.

                        \begin{lemma}\label{lm:movementi}               Let $\bclass$ and $\sC$ be as in~\autoref{thm:dim210}. Assume that $\sC$ has dimension two, and its relative interior contains a trivalent member with a vertex moving on an open ball in $\RR^2$. Then, $\sC$ contains one of the trivalent samples appearing in the $(2,(i))$-entry in~\autoref{tab:combClassificationGenericPerDim}.
            \end{lemma}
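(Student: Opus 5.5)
We plan a case analysis driven by which vertex moves freely and where it sits relative to $\Gamma$. Up to $\Dn{4}$-symmetry we fix a general trivalent member $\Lambda\in\sC^{\circ}$ with a slope one edge, and assume the vertex $v_i$ moves in an open ball $U\subset\RR^2$ while the other vertex $v_j$ is determined by $v_i$. Since $\dim\sC=2$, \autoref{pr:3dCells} says $\Lambda$ has at least one tangency that is not of type (1a), (1b) or (1'). \autoref{rk:dimensionCellRestrictsTangencyTypes} then restricts this tangency to the two-dimensional types. If $v_i$ could move freely, none of these types may constrain $v_i$. So $v_i$ lies in the interior of a chamber of $\RR^2\smallsetminus\Gamma$, and the tangencies on its two adjacent legs are transverse of type (1a). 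By \autoref{pr:unbounded}, and because $\sC$ lies in $\bclass$ so that it avoids $\mathscr{U}_i^{\pm}$, at least one of these two legs must carry a tangency.

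First we determine the constraint on $v_j$. For $v_j$ to be a function of $v_i$ with one degree of freedom removed, the component of $\Lambda\cap\Gamma$ through the bounded edge or through $v_j$ must carry exactly one condition. Using \autoref{fig:localMoves} and \autoref{fig:PositionVerticesMixedDimension}, we would list the two-dimensional local moves in which the edge direction stays slope one and the other vertex still moves in a 2-ball. These are a type (2a)/(2b) tangency at $v_j$ on an adjacent leg, a type (4a)/(4b) tangency with $v_j$ on an edge of $\Gamma$, or a non-transverse horizontal/vertical tangency (3a), (3c), (3bb) on a leg at $v_j$. Next we discard the non-generic options and those excluded from $\bclass$. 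The types (2b), (4b), (3bb) and (3ab)/(3cb) have lifting multiplicity $0$, or they fall into $\tR_i^{\pm}$ via \autoref{lm:unboundedtranslations} and \autoref{rm:secondCellsToToss}. A (2a) configuration in which $v_j$ could also slide would enlarge the cell, which contradicts maximality of $\sC$ in the same way as in \autoref{lm:trivalentsAreEnough}.

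Then we distribute the remaining tangency points. The bidegree $(3,3)$ of $\Gamma$, together with the requirement that every vertex outside $\Gamma$ have a tangency on an adjacent leg, leaves finitely many patterns up to symmetry. We would enumerate these as in the proof of \autoref{lm:case1Dim3}, using the orderings $\prec_h,\prec_v,\prec_d$ of \autoref{def:orderingRelToEdges} on the relevant dual edges. Each surviving pattern must match a sample in the $(2,(i))$ entry of \autoref{tab:combClassificationGenericPerDim}. If the general member found is not literally one of the samples, for instance because a tangency sits at a vertex of $\Gamma$, we move inside $\sC^{\circ}$ along the free direction of $v_i$ until reaching a listed configuration.

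We expect the main obstacle to be the exhaustiveness of the list of one-condition local moves at $v_j$. In particular, it will be hard to rule out mixed-dimensional types such as (5a), (6a), (2') or (4a'), whose local moves contain two-dimensional pieces. For these we would check each picture in \autoref{fig:PositionVerticesMixedDimension} and show one of two things: either the two-dimensional piece forces $v_i$ onto a segment, which is not movement (i), or it yields members lying in $\mathscr{U}_i^{\pm}\cup\tR_i^{\pm}$, which contradicts $\sC\subseteq\bclass$.
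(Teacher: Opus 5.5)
\textbf{There is a genuine gap: you discard type (4b), but (4b) members are among the samples the lemma must produce.}

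Your opening agrees with the paper. The free vertex cannot lie on $\Gamma$, and since $\sC$ is a cell of $\bclass$, one of its adjacent legs must carry a tangency, necessarily of type (1a). The problem is the filtering step.

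\begin{itemize}
\item This lemma is a purely combinatorial classification of generic members of a top-dimensional cell of $\bclass$. Having lifting multiplicity $0$ in \autoref{tab:LiftingMultiplicities} is therefore not a valid reason to discard a configuration.
\item Type (4b) is not caught by $\tR_i^{\pm}$ either. Those sets only involve a vertex of $\Lambda$ on one of the four slope $\pm 1$ edges of $\Gamma$ dual to $\{(1,0),(0,1)\}$, $\{(2,0),(3,1)\}$, $\{(2,3),(3,2)\}$ and $\{(0,2),(1,3)\}$. A (4b) tangency does not sit there.
\item In fact (4b) genuinely occurs under movement (i). Take $v_1$ free in $(1,3)^{\vee}$, with a (1a) tangency on its positive horizontal leg. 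Place $v_0$ on the slope $-1/3$ edge of $\Gamma$ dual to $\{(0,0),(1,3)\}$, carrying a (4b) tangency and sliding along that edge as $v_1$ moves.
\item \autoref{lm:type4b_1} shows these members lie in $|\bclass|$. The paper's proof ends exactly with ``$P$ has type (4b) or (3a)''.
\end{itemize}

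Discarding (4b) makes your list strictly smaller than the $(2,(i))$-entry, so the argument would prove a false statement. Downstream it would also break \autoref{thm:partitionDim2}, where the (4b) member is precisely what separates $(4,2,0,0)$ from $(0,4,0,0)$. Likewise, (3bb) has to be removed by a combinatorial non-genericity argument, not by its lifting multiplicity.

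Beyond this, your enumeration is only sketched and misses the idea that organizes the paper's proof. Once the (1a) tangency is placed on the positive horizontal leg, counting intersection multiplicities along the legs at $v_1$ puts $v_1$ in $(1,1)^{\vee}$ or $(1,3)^{\vee}$. Which one depends on whether the positive vertical leg carries a tangency.
\begin{itemize}
\item If $v_1\in(1,1)^{\vee}$, the bidegree and \autoref{pr:unbounded} force $v_0\in\Gamma$ with a (4a) diagonal or (3a) tangency.
\item If $v_1\in(1,3)^{\vee}$, there are two options. Either $v_0$ lies in an adjacent chamber across a diagonal (1a) crossing, complemented by a (3c) tangency on a leg adjacent to $v_0$. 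Or $v_0\in\Gamma$, and one must exclude (5a), (6a) and, by a maximality argument, (4a) diagonal, leaving (4b) or (3a).
\end{itemize}
Your list of ``one-condition'' moves at $v_j$ ignores which chamber $v_1$ is in. Yet (4a) diagonal is admissible in the first case and must be ruled out in the second. This chamber-dependent analysis is where the real work lies, and the proposal leaves it as an unspecified enumeration.
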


                        \begin{proof}
                     Let $\Lambda$ be a generic trivalent member of $\sC$.     Without loss of generality, we suppose that the vertex $v_1$ moves around an open ball centered at this point and that the position of $v_0$ is uniquely determined by that of $v_1$. Since $\Lambda\in \sC^{\circ}$ is general, the description of the local moves for $v_1$ force this vertex to to lie outside $\Gamma$. Since $\sC\in \bclass$, it follows that one of the legs adjacent to $v_1$ must contain a tangency point. By symmetry, we assume it is the positive horizontal one. In turn, the local movement of $v_1$ ensures that any such tangency point is of type (1a) or (1b). If the positive vertical leg of $\Lambda$ carries a tangency, then $v_1\in (1,1)^{\vee}$, whereas if this leg contains no tangency, we have $v_1\in (1,3)^{\vee}$.

  If $v_1\in (1,1)^{\vee}$, the bidegree of $\Gamma$ combined with~\autoref{pr:unbounded} forces $v_0\in \Gamma$. Since $\Lambda$ is a generic member of $\sC$ and its unique edge has slope one, it follows from~\autoref{rk:dimensionCellRestrictsTangencyTypes} that the remaining tangency of $\Lambda$ must be one of two types: either (4a) diagonal, or (3a) horizontal or vertical. In addition, it must be contained in the same connected component of $ \Gamma \cap \Lambda $ as $v_0$. This configuration appears as the first element in the first row of~\autoref{tab:combClassificationGenericPerDim}.

  On the contrary, if $v_1\in (1,3)^{\vee}$, the bidegree of $\Gamma$ ensures that $v_0\notin (1,3)^{\vee}$. Thus, we have two options for $v_0$: either $v_0\in \Gamma\cap \overline{(1,3)^{\vee}}$ or $v_0$ lies in the closure of a chamber of $\RR^2\smallsetminus \Gamma$ adjacent to $(1,3)^{\vee}$, forcing $\Lambda$ to have a diagonal tangency of type (1a) or (1b). The last option implies that $\sC\notin \bclass$ by~\autoref{pr:unbounded}, contradicting our assumptions on $\sC$. In turn, if the type (1a) occurs, then the remaining tangency point of $\Lambda$ is of type (3c) and  lies on a leg adjacent to $v_0$. This corresponds to the second and third members seen in the first row of the table.

  Finally, if $v_0\in \Gamma\cap \overline{(1,3)^{\vee}}$, then $\Lambda$ has a tangency point in the same component of $ \Gamma \cap \Lambda$ as $v_0$. Call this point $P$. The dimension of $\sC$ and~\autoref{rm:finevCoarseStructure} discards the possibility of a type (6a) or (5a) tangency at $P$.  In turn, the bidegree of $\Gamma$ forces  $P$ to be either a diagonal or vertical tangency.

  We claim that the first situation cannot occur for dimension reasons. To prove the latter, we argue by contradiction and suppose $v_0$ is a type (4a) diagonal tangency. In such situation, the remaining tangency of $\Lambda$ would lie on its negative vertical  leg and  its type would be (1a), (2a) or (3c). For the first two options, a local move would produce a member in a 3-dimensional cell of $\bclass$ adjacent to $\sC$, contradicting the maximality of $\sC$. Similarly, if its type were (3c), then moving $v_0$ downwards and $v_1$ accordingly would produce a new member in a 2-dimensional cell of $\bclass$ with $\Lambda$ in its boundary, which cannot occur since $\Lambda\in \sC^{\circ}$.

  We conclude from this analysis and~\autoref{pr:unbounded} that $P$ has type (4b) or (3a) (either horizontal or vertical). For this last option, the table shows the possible locations and types of the remaining tangency point, which are determined for bidegree and dimension reasons. 
              \end{proof}

            \begin{lemma}\label{lm:movementiii}
              Let $\bclass$ and $\sC$ be as in~\autoref{thm:dim210}. Assume that $\sC$ has dimension one, and its relative interior contains a trivalent member whose vertices move (not independently), along open segments. Then,  $\sC$ contains one of the trivalent samples listed in the $(1,(iii))$-entry in~\autoref{tab:combClassificationGenericPerDim}.
            \end{lemma}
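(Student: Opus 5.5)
We would mirror the proof of~\autoref{lm:movementi}. Let $\Lambda$ be a generic trivalent member of $\sC^{\circ}$; up to $\Dn{4}$-symmetry its bounded edge has slope one. The first step is to identify which features of $\Lambda$ force each vertex onto a segment. Since the vertices move on open segments and not in open balls, each $v_i$ either lies on an edge of $\Gamma$ (as part of a component of $\Lambda\cap\Gamma$ carrying a tangency), or lies off $\Gamma$ but is constrained through a non-transverse tangency on an adjacent leg or on the bounded edge. Because the movements are coupled, the displacement direction of $v_0$ determines that of $v_1$ through the slope one edge. In particular, the two segments are either parallel and not of slope one, or one of them is diagonal and the coupling comes from a third constraint. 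By~\autoref{rk:dimensionCellRestrictsTangencyTypes}, with $\dim\sC=1$ and $\sC$ maximal, the tangencies of $\Lambda$ must avoid the $0$-dimensional types (3h), (3d), (8), (6a') and (6b'). Moreover, at least one tangency must be a genuinely one-dimensional type, such as (3f), (5a), (5b), (6a), (6b), (3a)/(3c) on a leg at a vertex, or (4a)/(4b); otherwise~\autoref{pr:3dCells} and~\autoref{lm:OpenCellsTrivalent} would produce a $2$-dimensional cell of $\bclass$ containing $\sC$.

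Next we split into cases by the location of $v_0$ and $v_1$ relative to $\Gamma$. \textbf{Case A:} both vertices lie on $\Gamma$. Each vertex then sits in a tangency component of type (4a), (4b), (5a), (6a) or (3a)/(3c). The requirement that both translate along parallel edge directions while staying diagonally aligned limits the possible dual edges, via the bidegree of $\Gamma$ and the orderings of~\autoref{def:orderingRelToEdges}. We would discard (5a) and (6a) by~\autoref{rm:finevCoarseStructure}, as in~\autoref{lm:movementi}, unless they are forced. \textbf{Case B:} exactly one vertex, say $v_1$, lies off $\Gamma$. Then $v_1$ is confined to a segment by a non-transverse tangency of type (3a)/(3c) on an adjacent leg. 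By~\autoref{pr:unbounded}, the other leg adjacent to $v_1$ carries a transverse tangency or none, subject to the membership conditions~\eqref{eq:unboundedCellsToToss}. The vertex $v_0\in\Gamma$ must then move compatibly in that same direction. \textbf{Case C:} neither vertex lies on $\Gamma$. Then the constraints come from non-transverse tangencies on legs adjacent to each vertex. Two of them are needed, and they must be parallel and horizontal or vertical, since a diagonal one would give movement (ii) or (iv). The third tangency must be transverse and, by the bidegree, sits on the bounded edge or on a remaining leg.

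In each case we would run the same elimination scheme used in~\autoref{lm:movementi}. If a configuration admits a local move, obtained by replacing a non-transverse tangency with a transverse one or by detaching a vertex from $\Gamma$, that lands in a $2$-dimensional cell of $\bclass$, maximality of $\sC$ rules it out. If a non-generic member can be perturbed inside $\sC^{\circ}$ to a simpler one, for instance (7), (3bb1) or (3bb2) into (3aa), (3cc) or (3ac), we replace it by the perturbed member. Finally, configurations violating~\eqref{eq:bclass} are discarded using~\autoref{pr:unbounded} and~\autoref{lm:unboundedtranslations}. The survivors should be exactly the samples in the $(1,(iii))$ entry of~\autoref{tab:combClassificationGenericPerDim}.

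The main obstacle is Case A together with the coupling condition. There, ruling out hidden $2$-dimensional neighbours requires checking, for each combination of vertex tangency types, that no joint move of both vertices off their edges keeps all three tangencies. To organize this we would fix the dual edges through the Newton subdivision of $\sextic$, as in~\autoref{fig:boundedDim3Cases}, and compare the local move regions from~\autoref{fig:PositionVerticesMixedDimension} for the two vertices. This reduces the coupling condition to intersecting two such regions under diagonal alignment.
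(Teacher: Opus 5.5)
Your case split rests on a false structural claim. The slope-one edge of $\Lambda$ only imposes $v_1'-v_0'\in\RR(1,1)$. So it couples the \emph{amount} by which $v_1$ moves to that of $v_0$, not its \emph{direction}. If $v_0$ moves along $d_0$ and $v_1$ along $d_1$ with $d_0,d_1\notin\RR(1,1)$, then $s\,d_1-t\,d_0\in\RR(1,1)$ determines $s$ uniquely from $t$ whether or not $d_0$ and $d_1$ are parallel. The direction $d_1$ is dictated solely by the constraint at $v_1$.

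Several samples in the $(1,(iii))$-entry of \autoref{tab:combClassificationGenericPerDim} have non-parallel directions, for example:
\begin{itemize}
\item a (4b) tangency at $v_0$, sliding along an edge of slope $-1/3$, together with a (3c) tangency on the positive horizontal leg;
\item a horizontal (3a) or (3c) constraint at $v_0$ together with a (4a) diagonal tangency at $v_1$, sliding along an edge of slope $-1$;
\item a horizontal (3c) tangency at $v_0$, a (1a) tangency on the bounded edge, and a (3c) tangency on the positive vertical leg.
\end{itemize}
Your Cases A, B and C rely on ``parallel edge directions'', ``move compatibly in that same direction'' and ``they must be parallel''. They would therefore discard all of these, so your elimination cannot end with the table: it would wrongly exclude configurations that occur.

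Your other branch, ``one of them is diagonal'', is not a real option either. A diagonal displacement of one vertex forces the other onto the same slope-one line, where the constraints at the two vertices decouple and give movement (ii) or (iv). The paper's proof opens with this observation, and it yields a simplification you miss: $\Lambda$ carries \emph{no} non-transverse diagonal tangency. This removes your ``or on the bounded edge'' alternative and the (7), (3bb1), (3bb2) perturbations altogether.

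Your dimension count is also misstated. Types (3a), (3c), (4a) and (4b) each have $2$-dimensional type-preserving moves; they are in the $\dim\sC=2$ list of \autoref{rk:dimensionCellRestrictsTangencyTypes}. A $1$-dimensional cell arises from having one such constraint at \emph{each} vertex, not from a single ``genuinely one-dimensional'' tangency. The types with $1$-dimensional moves that you list, namely (3f), (5a), (5b), (6a) and (6b), pin a vertex, either by two non-transverse tangencies or at a vertex of $\Gamma$. They lead to movements (iv) or (v), so they do not belong here.

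What makes the enumeration tractable is the following reduction, which is absent from your plan.
\begin{enumerate}
\item Since $\sC\in\bclass$, each $v_i$ has an attached tangency $P_i$, either containing $v_i$ or on an adjacent leg.
\item Requiring $v_i$ to slide along a non-diagonal segment restricts $P_i$ to (4b), (3a), (3c), (4a) diagonal, or horizontal/vertical (3bb).
\item The (3bb) option is non-generic: shifting $v_0$ horizontally inside $\sC^{\circ}$ splits it into a (3c) and a (1a) tangency.
\item One then treats (4b), (3a), (3c), and (4a) at both vertices in turn. The remaining tangency is fixed by the bidegree of $\Gamma$, and each type is discarded once treated.
\end{enumerate}
Without this list, and with the parallelism constraint in place, your scheme cannot recover the $(1,(iii))$ samples.
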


            \begin{proof}
              Let $\Lambda$ be a generic trivalent member of $\sC$. Suppose that its unique edge has slope one and  that the local moves of both vertices are as in \eqref{moveiii}. For dimension reasons, the movement cannot be along a slope one edge. In particular, $\Lambda$ cannot contain a non-transverse diagonal tangency. Since $\Lambda\in \sC^{\circ}$, $\sC \in \bclass$, tangencies must occur within  one of the two legs adjacent to each  vertex of $\Lambda$. We call them $P_0$ and $P_1$. Furthermore, the dimension restrictions on the local moves of $v_0$ and $v_1$ restrict the tangency types of both $P_0$ and $P_1$ to one of the following: (4b), (3a), (3c), (4a) diagonal or (3bb) horizontal/vertical. 

              In what follows, we confirm that we can ignore the (3bb) tangency option for dimension reasons. \autoref{fig:classificationLocalTangencies} shows two possibilities for the star of $\Gamma$ at $v_0$ in the presence of a type (3bb)   tangency along the negative horizontal leg of $\Lambda$. For each of them, the condition $\sC \in \bclass$ fixes the  triangle $v_0^{\vee}$ in the Newton subdivision of $\sextic$ and the chamber of $\RR^2\smallsetminus \Gamma$ containing $v_1$ in its closure. Indeed,  $v_1$ would  lie in either $\overline{(2,2)^{\vee}}$ or $\overline{(1,3)^{\vee}}$, respectively.

              In the first case, $v_1$ would also belong to a component of $\Gamma \cap \Lambda$ containing a tangency of type (3a) horizontal or vertical, or (4a) diagonal. In the second situation, the horizontal leg of $\Lambda$ adjacent to $v_1$ would have a tangency point, necessarily of type (3c). In both cases, moving $v_0$  in the direction $(1,0)$ (and $v_1$ accordingly)  would turn the (3bb) tangency into a (3c) tangency on the negative horizontal leg of $\Lambda$ and a type (1a) tangency, either on its negative vertical leg or on its unique edge, respectively. Thus, the type (3bb) tangency need not be considered.

In what follows, we discuss the rest of the tangency types listed above. After each option is treated, we discard it for the remaining cases (for both vertices of $\Lambda$). We start with type (4b). The bidegree of $\Gamma$ and the fact that $\sC\in \bclass$ has dimension one, fix the location and type of the complementing tangency on $\Lambda$: it must be of type (3c) and lie on the positive horizontal leg of $\Lambda$. This corresponds to the first configuration seen in the second row of the table.

Next, suppose that $v_0$ lies in a connected component of $\Gamma \cap \Lambda$ with a  (3a) horizontal tangency. We claim that there are six configurations containing such tangency, all listed in the table. First, notice that there are two possibilities regarding the negative vertical leg of $\Lambda$: either it contains no tangencies or one of type (1a). In the latter case, the bidegree forces $v_1$ to belong to the chamber of $\RR^2\smallsetminus \Gamma$ dual to $(1,3)$, leading to a type (3c) tangency along the positive horizontal leg.

In the former case, we have two possibilities, either the negative horizontal leg of $\Lambda$ contains a second tangency, or not. If it does, then this tangency is necessarily of type (1a), (2a) or (3c), and is placed to the left of the type (3a) one containing $v_0$. In particular, the edge $v_0^{\vee}$  in the Newton subdivision of $\sextic$ has endpoints $(3,0)$ and $(3,1)$. Going through the list of three options for tangencies in the star of $\Lambda$ at $v_1$ provided above, we conclude that 
$v_1 \in (3,1)^{\vee}$ and that its type is (3c) vertical.

On the contrary, if our type (3a) tangency containing $v_0$ is the only one on a leg of $\Lambda$ adjacent to $v_0$, it follows that $v_0^{\vee}$ is the edge with endpoints $(1,0)$ and $(1,1)$. Our choice of movement limits any tangency in the interior of the edge of $\Lambda$ to be of type (1a). This fact and  the bidegree of $\Gamma$ restrict the location of $v_1$ to two possibilities: it can lie in $(1,1)^{\vee}$ or in the closure of $(2,2)^{\vee}$. In the first case,  $\Lambda$  has two tangencies on the legs adjacent to $v_1$: one of type (1a) and one of type (3c). For the second case, the edge of $\Lambda$ carries a type (1a) tangency. The remaining tangency will be at $v_1$ and be of type (4a) diagonal  or (3a) along a leg of $\Lambda$ adjacent to $v_1$. 

Third, suppose that $\Lambda$ has a  type (3c) tangency on its negative horizontal leg.  From our previous discussion, combined with symmetry, we may assume that $\Lambda$  only contains tangencies of types (3c), (4a) or (1a). Since $\Lambda\in \sC^{\circ}$, our  description of the movement of $v_0$ along a segment forces $v_0\notin \Gamma$. The bidegree of $\Gamma$ implies that $v_0 \in (2,2)^{\vee}$ or $v_0\in (2,0)^{\vee}$ and ensures the presence of at most one tangency on the negative vertical leg of $\Lambda$, which is necessarily of type (1a). These situations yield three of the last four configurations seen on the  (1, (iii))-entry  in the table, as we now explain.

In the first case, the remaining tangency is necessarily be of type (4a) diagonal. In the second case, we  have a diagonal tangency, either of type (1a) or (4a). If the former occurs along an edge $e$ of $\Gamma$ (with slope $-1$ or $1/3$), the remaining tangency is of type (3c), but its location depends on the slope of $e$. To conclude, we observe that the tangency (4a) case can be discarded, since this tritangent is in the boundary of a 1-dimensional cell of $\bclass$ by~\autoref{rm:finevCoarseStructure}. Indeed, in the presence of such type, the remaining one is of type (3c) along the positive vertical leg, for bidegree reasons. Thus, moving $v_1$ upwards and $v_0$ towards the left produces a type (1a) tangency along a slope -1 edge of $\Gamma$, which is a configuration that we have already encountered.

Finally, if both vertices of $\Lambda$ carry type (4a) diagonal tangencies, the bidegree of $\Gamma$ forces the remaining tangency to be diagonal, and of type (1a), which corresponds to the last tritangent listed on the $(1,(iii))$-entry  in the table. 
     \end{proof}

\begin{lemma}\label{lm:fixAVertex}
  Let $\sC$ be a cell of the complex $\bclass$   and let $\Lambda$ be  a general trivalent member in its relative interior. Assume that $\Lambda$ contains a vertex $v$ that cannot move locally while remaining in $\bclass$.
   Then, one of the following two conditions occur:
  \begin{enumerate}[(i)]
  \item there are two non-transverse tangencies in the interior of two of the rays in $\Star_{\Lambda}(v)$;
  \item 
 $v$ carries a tangency of  type (5a) or (4a)/(6a) diagonal on the boundary of an unbounded chamber of $\RR^2\smallsetminus \Gamma$, complemented by a non-transverse diagonal tangency not containing $v$.
  \end{enumerate}
\end{lemma}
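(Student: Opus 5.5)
We would argue by analyzing which tangencies can pin down the vertex $v$, using the local moves from \autoref{pr:localMovesTritangents} and \autoref{fig:PositionVerticesMixedDimension}. Up to $\Dn{4}$-symmetry we take $\Lambda$ trivalent with a slope one edge and $v=v_0$. Two kinds of constraints can freeze $v_0$. The first comes from tangencies \emph{adjacent} to $v_0$, meaning on the two legs or on the edge in $\Star_\Lambda(v_0)$. The second comes from the interaction between $v_0$ and $v_1$ through the edge, when $v_1$ is itself constrained. We split the analysis according to whether $v_0\in\Gamma$.

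Assume first $v_0\notin\Gamma$. Then a tangency on a ray of $\Star_\Lambda(v_0)$ restricts $v_0$ only when it is non-transverse. By the proof of \autoref{lm:OpenCellsTrivalent}, each such tangency cuts the motion of $v_0$ down by one dimension, along the direction of its ray. Transverse tangencies of types (1a)/(1b) and (2a)/(2b) leave an open, or half-open, $2$-dimensional set of positions for $v_0$. A single non-transverse tangency leaves a segment in the direction of the ray. Consequently, $v_0$ is fixed only if at least two of the rays in $\Star_\Lambda(v_0)$ carry non-transverse tangencies in their interiors, since any two of these directions are linearly independent. This is condition (i).

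If instead $v_0\in\Gamma$, there is a tangency $P$ in the component of $\Lambda\cap\Gamma$ containing $v_0$. We run through the types allowed by \autoref{thm:classificationRealizableLocalTangencies} that contain a vertex of $\Lambda$: (4a)/(4b), (5a)/(5b), (6a)/(6b), (3d), (3h) and (8). The types (3h), (3d) and (8) carry non-transverse tangencies on two rays, so they fall under (i). For the remaining types, \autoref{lm:Type4And4pCells} and \autoref{fig:PositionVerticesMixedDimension} show that $v_0$ can slide along the edge of $\Gamma$ through $P$, or leave $\Gamma$ into the adjacent chamber. Because $v_0$ is assumed fixed, these moves must either be blocked or exit $\bclass$.

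A slide of $v_0$ can be compensated only by a motion of $v_1$ along the slope one line. Such a compensation is impossible exactly when the third tangency is non-transverse and diagonal, lying on the edge of $\Lambda$ or through $v_1$, and does not contain $v_0$. The alternative escape, moving $v_0$ off $\Gamma$ in the $(-1,-1)$ direction, must leave $\bclass$. By \autoref{pr:unbounded} together with \autoref{rm:firstCellsToToss} and \autoref{rm:secondCellsToToss}, this forces $v_0$ to lie on the boundary of an unbounded chamber, namely $(0,0)^\vee$ or $(3,0)^\vee$. That chamber restriction rules out the horizontal and vertical variants and types (4b), (5b), (6b), which leaves (5a) or (4a)/(6a) diagonal. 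This is condition (ii).

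The main obstacle is this last case: showing that the escape into the adjacent chamber is the only remaining freedom, and that \autoref{lm:unboundedtranslations} excludes every configuration except those in (ii). We would settle this by a case check over the bidegree-compatible positions of the complementing tangencies, mirroring the proof of \autoref{lm:unboundedtranslations}.
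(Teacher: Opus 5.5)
There is a genuine gap, and it sits where the content of the lemma lies. You treat the freezing of $v=v_0$ as a local phenomenon. For $v_0\notin\Gamma$ you only count non-transverse tangencies on $\Star_\Lambda(v_0)$. For $v_0\in\Gamma$ you claim a slide of $v_0$ cannot be compensated exactly when ``the third tangency'' is non-transverse and diagonal. But $v_0$ and $v_1$ are coupled through the edge ($v_1-v_0\in\RR_{>0}(1,1)$), and every move must avoid the cells discarded in \autoref{def:boundedComplex}. So $v_0$ can be frozen simply because $v_1$ is.

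For instance, put $v_0$ in the interior of the slope $-1$ edge of $\Gamma$ dual to the segment joining $(0,0)$ and $(1,1)$ (a diagonal (4a) tangency). Put $v_1\in(1,1)^{\vee}$ with (3c) tangencies on both positive legs. The two (3c) tangencies pin $v_1$, so $v_0$ cannot slide along its edge. Moving $v_0$ towards $v_1$ splits its tangency into two multiplicity one points. Moving it away puts $v_0$ in $(0,0)^{\vee}$, i.e.\ in a cell of $\mathscr{U}_0^+$. Thus $v_0$ is frozen although $\Lambda$ has no non-transverse diagonal tangency. Your criterion fails, (ii) fails for $v_0$, and the configuration is covered only by (i) read at the other vertex. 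That is how the paper's proof disposes of the subcase with two (3c) tangencies on the legs of one vertex. Note also that a (4a), (5a) or (6a) tangency at $v_0$ has two complementing tangencies, not one.

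The same coupling leaves your case $v_0\notin\Gamma$ unproved. You must show that the constraints at $v_1$ can absorb a displacement of $v_0$, and that the displaced curves stay in $|\bclass|$. This is what the paper verifies configuration by configuration.

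The case $v_0\in\Gamma$ has further problems.
\begin{itemize}
\item Your list of tangency types containing a vertex of $\Lambda$ omits (3a), (3aa), (3ac), (3bb), (3bb1), (3bb2), (3f) and (7). Type (3a) at $v_0$ is one of the main subcases of the paper's argument.
\item Types (3d) and (3h) are not two non-transverse tangencies in the interiors of two rays of $\Star_\Lambda(v)$. Their non-transverse part runs along the edge of $\Lambda$ and contains its vertices (cf.\ the proof of \autoref{lm:dim2Parallelogram}), so they do not fall under (i). The paper instead uses \autoref{fig:PositionVerticesMixedDimension} to show that their local moves in $|\bclass|$ fix neither vertex, so they never meet the hypothesis.
\item The step you defer to ``a case check'' is the whole proof.
\end{itemize}

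The paper organizes that case check around a different split: whether $\Lambda$ carries a non-transverse diagonal tangency.
\begin{itemize}
\item If it does and (i) fails, then $v\in\Gamma$ and the tangency at $v$ lies in a different component, of type (5a) or (4a)/(6a) diagonal. Then \autoref{lm:unboundedtranslations} shows the only remaining move of $v$ runs away from the other vertex into an unbounded chamber, which gives (ii).
\item If it does not, the bidegree of $\Gamma$ puts both vertices in the closure of one chamber or of two adjacent ones. The subcases are: a (1a)/(2a) diagonal tangency on an edge of slope $-1$ or $1/3$; two or three components of $\Lambda\cap\Gamma$; $v_0\in\Gamma$ or not. In each, an explicit simultaneous move of both vertices inside $|\bclass|$ is exhibited unless (i) holds at one of them.
\end{itemize}
You would need to carry out that analysis, allowing (i) at either vertex.
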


\begin{proof} Without loss of generality, we assume that the unique edge of $\Lambda$ has slope one. 
  Since the vertex $v$ is not a priori determined, we can freely  pick representatives for local tangencies as in~\autoref{fig:classificationLocalTangencies} whenever convenient.

  By construction, configurations from case (i) will always fix the input vertex.   In what follows, we suppose that $\Lambda$ does not satisfy the conditions of case (i). We can discard the situations when $\Gamma \cap \Lambda$ has one component since type (7) does not fix any vertex of $\Lambda$, whereas (8)  satisfies (i). Similarly, \autoref{fig:PositionVerticesMixedDimension} and the bidegree of $\Gamma$ allows  local moves for types (3h) and (3d) within $|\bclass|$  that do not fix either vertex of $\Lambda$. In turn, type (3f) falls into case (i), so we can ignore it. 

  We distinguish two different scenarios, depending on whether or not $\Lambda$ carries a non-transverse diagonal tangency. 
  First, suppose it does, and let $P$ be the corresponding tangency in the interior of the edge of $\Lambda$.  Since case (i) cannot occur and $\sC\in \bclass$, it follows that $v\in \Gamma$. In addition,  the description of local movements for each possible tangency type at $P$ done in~\autoref{pr:localMovesTritangents}  ensures that  $v$ and $P$  lie in different connected components of $\Gamma \cap \Lambda$.
Moreover, a tangency must occur at $v$, and its type is either (5a) or diagonal of type (4a) or (6a). This fact
 combined with  \autoref{lm:unboundedtranslations} confirms that we can only move $v$  away from the other vertex of $\Lambda$, at which point it reaches an unbounded component of $\tclass$. Thus, $v$ lies in either $\overline{(3,3)^{\vee}}$ (if $v=v_1$) or $\overline{(0,0)^{\vee}}$ (if $v=v_0$). This confirms that the  conditions of case (ii) are satisfied.

  Next, assume that $\Lambda$ contains no non-transverse diagonal tangencies. Again, we have two possible situations imposed by the bidegree of $\Gamma$ and fact that  $\sC\in \bclass$: either both vertices of $\Lambda$ lie in the closure of the same chamber, or of two adjacent ones.

  If the latter holds,  we must then have a type (1a) or (2a) diagonal tangency at a point $P'$ in the edge of $\Lambda$, since type (1b) and (2b) tangencies correspond to cells of $\tclass$ outside $\bclass$ by~\autoref{pr:unbounded}. Let $e$ be the edge of $\Gamma$ responsible for this tangency. Its slope can be either -1 or 1/3. We claim that for both choices, the vertices of $\Lambda$ can always move while remaining in $\bclass$, contradicting our assumptions on $\Lambda$. We treat both cases separately.

First, assume that  $e$ has slope $1/3$. Then, the bidegree of $\Gamma$ ensures that $\Lambda$  has tangencies on both its horizontal legs. As a consequence, we can move both vertices in the horizontal direction (with appropriate sign) to turn $P'$ into a type (1a) tangency while preserving the other two tangencies. In particular, no vertex of $\Lambda$ is fixed.

On the contrary, suppose that $e$ has slope -1. Then, since we know from~\autoref{pr:unbounded} that $v_0\notin (0,0)^{\vee}$ and $v_1\notin (3,3)^{\vee}$, the bidegree of $\Gamma$ further restricts the locations of $v_0$ and $v_1$ relative to the Newton subdivision of $\sextic$ (up to symmetry). More precisely, 
there are two possibilities: either  $v_0\in\overline{(0,2)^{\vee}}$ and $v_1\in \overline{(1,3)^{\vee}}$,   or $v_0\in\overline{(1,1)^{\vee}}$ and $v_1\in \overline{(2,2)^{\vee}}$. 
  For the first combination, the remaining two tangency points lie on the negative vertical and positive horizontal leg of $\Lambda$ and their types are (1a), (2a) or (3c). In this situation,  we can move both $v_0$ and $v_1$ in the vertical and horizontal directions (either positive or negative, depending on the tangency type of $P'$) while remaining in $\bclass$. Similarly, for the second location of $v_0$ and $v_1$, the remaining tangency points will lie in the boundary of $(1,1)^{\vee}$ and $(2,2)^{\vee}$. Thus, we can  move both vertices along these two boundaries in a compatible way,   while staying in $\bclass$. Again, we reach a contradiction.

  It remains to treat the case when   $v_0$ and $v_1$ lie in the closure of the same chamber, which we fix to be $(i,j)^{\vee}$. In this situation,  $\Gamma \cap \Lambda$ has either two or three  connected components. We claim that the first situation will not fix either vertex of $\Lambda$. Indeed,  since $\Lambda$ has no non-transverse diagonal tangency, there are only four options for the type of a multiplicity four tangency, namely, (4b), (5b), (6b) or (3bb), all of which involves a vertex of $\Lambda$. By symmetry, we may assume it is $v_0$.

  For the first three types, the bidegree of $\Gamma$ forces the remaining tangency point to lie on the positive horizontal leg of $\Lambda$. Furthermore,  its type would be (1a), (2a), (3c), (4a) or (6a). In all cases, we can move $v_0$ along the slope $-1/3$ edge of $\Gamma$ containing it, and $v_1$ towards or away from $v_0$, while remaining in $\bclass$. In turn, if $v_0$ is part of a type (3bb) tangency (which we may assume to be horizontal), then the choice of $v_0^{\vee}$ determines the location of  the remaining tangency: it will be either in $\overline{(2,2)^{\vee}}$ or on the positive horizontal leg of $\Lambda$. In both situations, we can move $v_0$ horizontally, and shift $v_1$ accordingly, while remaining in $\bclass$.

To conclude, we must treat the case when $\Lambda\cap \Gamma$ has three connected components. 
  Note that the condition  $v_0, v_1\in \overline{(i,j)^{\vee}}$ forces one of the vertices to have two distinct tangencies  in the union of both legs adjacent to it. We remark that these tangencies can lie on the same leg. Up to symmetry, we may assume this occurs for $v_0$. We argue separately for the cases $v_0\notin \Gamma$ and $v_0\in \Gamma$ since the values of $i,j$ are different in each situation.

  If $v_0\notin \Gamma$, then $(i,j)=(2,2)$. The bidegree of $\Gamma$, \autoref{pr:unbounded} and~\autoref{lm:unboundedtranslations} combined ensure both that  $v_1\in \Gamma$,  a  that the legs of $\Lambda$ adjacent to $v_0$ each have a tangency of type (1a), (2a) or (3c). Unless these tangencies are both non-transverse (i.e., of type (3c)), both vertices of $\Lambda$ can move while remaining in $\bclass$. This situation corresponds to case (i), which we did not allow for $\Lambda$.

  Finally, if $v_0\in \Gamma$, then this vertex is part of a tangency, necessarily of types (3a), (4a) horizontal/vertical, (5a) or (4a)/(6a) diagonal, since type (6a) horizontal/vertical cannot occur at $v_0$ for bidegree reasons. For the last three options, symmetry and~\autoref{lm:unboundedtranslations} confirm that $(i,j)=(3,1)$, and the remaining tangencies lie on the negative horizontal and positive vertical legs of $\Lambda$. In addition, their types are (1a), (2a) or (3c). Thus, we can move $v_0$ horizontally and $v_1$ vertically (with appropriate signs) while remaining in $\bclass$. This contradicts the original movement assumptions.

  For the remaining two cases, we let $Q$ be the tangency in the component of $\Gamma \cap \Lambda$ containin $v_0$. We assume it is horizontal.  For type (4a), we have that $(i,j)=(2,2)$, the negative vertical leg contains a tangency point (of type (1a), (2a) or (3c)), and $v_1$ lies in the boundary of the chamber $(2,2)^{\vee}$. Moving $v_0$ downwards and $v_1$ along the boundary of $(2,2)^{\vee}$ produces new tritangents in $\bclass$, which cannot occur.

  To conclude, if $Q$ is of type (3a), we see that  $(i,j)=(1,3)$ or $(3,1)$ depending on whether or not, the negative vertical leg of $\Lambda$ carries a tangency point (called $Q'$) in its relative interior. The bidegree of $\Gamma$ restricts the location of the remaining tangency of $\Lambda$. In the second situation, $\Lambda$ has two tangencies on the negative horizontal leg and one on the positive vertical leg, allowing us to displace $v_0$ to the right and $v_1$ downwards while fixing two of the three tangencies. These movements produce new members in $\bclass$, which we did not allow.

  Similarly, if $(i,j)=(1,3)$, the remaining tangency of $\Lambda$  lies on the positive horizontal leg and we can move both vertices of $\Lambda$ in the horizontal direction unless  $Q'$ has type (3c). The latter cannot happen since it corresponds to case (i) from the statement.  Both options, contradict our earlier assumptions on $\Lambda$. This concludes our proof.
\end{proof}

\section{The non-special bounded complex $\bclassns$ and the refined lifting partition theorem} 
\label{sec:non-special-bounded}

The genericity assumptions on the polynomial $\sextic$ relative to $\Gamma$ discussed in~\autoref{def:fgenericRelToGamma} confirm that certain special distributions of tangency points on $\Lambda$ will not yield liftable tritangents. This section is devoted to studying these members and the local structure of cells containing them. We start with a definition:

 \begin{definition}\label{def:specialTritangents}
We say a curve $\Lambda$ tritangent  to $\Gamma$ is \emph{special} if two of its tangencies lie on the interior of the same leg of $\Lambda$. Similarly, a cell of $\bclass$ is \emph{special} if it contains a special tritangent in its relative interior.
\end{definition}

 As the next result shows, the bidegree of $\Gamma$ restricts the  location of all tangencies in a special tritangent in the support of $\bclass$:

 \begin{lemma}\label{lm:specialConfigPrecisions} Let  $\Lambda$ be a special tritangent in the support of $\bclass$. Then:
   \begin{enumerate}[(i)]
   \item all vertices of $\Lambda$ belong to the closure of the same unbounded chamber  in $\RR^2 \smallsetminus \Gamma$, dual to one of the eight boundary points $(i,j)$ in the square of side length three with $3\nmid (i+j)$;
   \item the location of the tangencies depend solely on the pair $(i,j)$;
   \item among the two tangencies in the interior of the same leg of $\Lambda$, the one closest to the attaching vertex in $\Lambda$ has type (3a) and lies on a leg of $\Gamma$.    
   \end{enumerate}
 \end{lemma}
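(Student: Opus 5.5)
Up to $\Dn{4}$-symmetry we may assume that $\Lambda$ has a slope one edge (or is $4$-valent) and that the two tangencies $Q,Q'$ sharing a leg lie in the relative interior of the negative horizontal leg of $\Lambda$, with $Q$ closer to the attaching vertex $v$ ($v=v_0$ in the trivalent case). The plan is to argue entirely through the bidegree of $\Gamma$ and the conditions defining $\bclass$, that is, through the exclusion of the sets in~\eqref{eq:unboundedCellsToToss} and~\eqref{eq:secondSetToRemove}.

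\emph{First step.} A horizontal leg of $\Lambda$ meets $\Gamma$ with total stable intersection multiplicity $3$. Two tangencies in its interior each contribute even multiplicity, so that leg accounts for $4$ units. This is only possible if a horizontal leg or edge of $\Gamma$ overlaps the leg of $\Lambda$ non-transversely, so that stable intersection shifts multiplicity. Hence at least one of $Q,Q'$ is a non-transverse horizontal tangency along a horizontal segment of $\Gamma$ lying on the line $y=y(v)$.

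The multiplicity count then pins down the rest. The horizontal line through $v$ meets $\Gamma$ in a configuration that can only absorb two tangencies if the leg of $\Lambda$ runs along an unbounded horizontal leg of $\Gamma$. Consequently $Q$ has type (3a) on a leg of $\Gamma$, which proves (iii). I will confirm this by checking the list in \autoref{fig:classificationLocalTangencies}: types (3c) and (3bb) sit on bounded edges, and placing them first would leave no room for $Q'$ further out without creating a transverse multiplicity-one point.

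\emph{Second step.} With $Q$ of type (3a) on a horizontal leg of $\Gamma$, the remaining tangency $P''$ must have total multiplicity $2$ and lie on the other three rays or edge of $\Lambda$. Here I use \autoref{pr:unbounded} together with the sets $\mathscr{U}_i^\pm$ and $\tR_i^\pm$. Since the cell lies in $\bclass$, $v_0$ cannot sit in a corner chamber $(0,0)^\vee$, $(3,0)^\vee$ (and similarly for $v_1$). The vertical leg through $v$ must avoid multiplicity-one crossings, and the line carrying $Q$ is one of the three horizontal legs of $\Gamma$.

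Enumerating these possibilities, the vertices of $\Lambda$ are forced into the closure of the unbounded chamber $(0,j)^\vee$ or $(3,j)^\vee$ adjacent to that leg. The condition $3\nmid(i+j)$ emerges because the chambers with $3\mid(i+j)$ are exactly the four corners, which were excluded, and $(i,j)$ ranges over the boundary lattice points $(0,1),(0,2),(3,1),(3,2)$ together with their $\Dn{4}$-images $(1,0),(2,0),(1,3),(2,3)$. This gives (i).

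Once $(i,j)$ is fixed, the leg carrying $Q$, the edge carrying $Q'$, and the position of $P''$ are read off uniquely from the bidegree, which gives (ii). The $4$-valent case is handled the same way using \autoref{lm:4valentSingleton} and \autoref{lm:trivalentsAreEnough}.

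\emph{Main obstacle.} The hardest part is the bookkeeping in the second step: ruling out configurations in which the vertex lies in a bounded chamber or on an edge of $\Gamma$ while two tangencies still fit on one leg. I would attack this first by writing the Newton subdivision constraints on the row $y=j$ of the $3\times3$ square. Two even-multiplicity tangencies on one leg force the triangle dual to $v$ to touch the boundary column $i=0$ or $3$, so the chamber is unbounded, and the corner cases are then killed by the definition of $\bclass$.
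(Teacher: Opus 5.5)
Your proposal follows essentially the same route as the paper. You reduce by $\Dn{4}$ to two tangencies on the negative horizontal leg, and use the bidegree count along that horizontal line to force the tangency nearest the vertex to be a (3a) overlap with a positive horizontal leg of $\Gamma$. You then use the parity of the vertical legs, together with the corner exclusions built into $\bclass$, to place the vertices in $\overline{(3,1)^{\vee}}$ or $\overline{(3,2)^{\vee}}$ and read off where the third tangency sits; the paper does the same.

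One small correction to your first step: a horizontal leg of $\Lambda$ meets $\Gamma$ with multiplicity at most $3$, since only the full horizontal line has exactly $3$. The surplus needed to fit two tangencies comes from the vertex of $\Lambda$ lying in the component of the nearest tangency, which is exactly why that tangency is of type (3a).
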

 \begin{proof}
   Up to $\Dn{4}$-symmetry, we can restrict to the case when   $\Lambda$  has two tangencies in the interior of its negative horizontal edge. Call them $P$ and $P'$ (viewed from left to write), and let $P''$ be the remaining tangency point between $\Lambda$ and $\Gamma$.
   The bidegree of $\Gamma$ and the presence of two tangencies on the same leg of $\Lambda$ force the tangency type of $P'$ to be (3a) and furthermore, it must be either on the top or bottom positive horizontal leg of $\Gamma$. Thus, claim (iii) holds.

   In turn, this last condition ensures that the vertices of $\Lambda$  lie on the closure of the same chamber of $\RR^2\smallsetminus \Gamma$, namely, $(3,i)^{\vee}$ with $i\in \{1, 2\}$. Note that $\Lambda$ can be $4$-valent. 
   If $i=1$,  $P''$ lies on the positive vertical leg of $\Lambda$, whereas if $i=2$,  $P''$ belongs to the negative vertical leg of $\Lambda$. The action of $\Dn{4}$ confirms claims (i) and (ii)  in the statement.
 \end{proof}

Our choice of structure for $\bclass$ is compatible with that induced on $\RR^2$ by $\Gamma$ (see~\autoref{rm:finevCoarseStructure}). This confirms that we can test whether a cell is special or not by picking any member in its relative interior. More precisely, we have:
\begin{lemma}\label{lm:allOrNoMemberOfACellIsSpecial}
  Given a cell of $\bclass$, either none or all points in its relative interior correspond to tritangents with special configurations of tangencies.
\end{lemma}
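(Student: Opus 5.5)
The argument rests on the convention of \autoref{rm:finevCoarseStructure}: for every cell $\sC$ of $\tclass$, and hence of $\bclass$, the vertices of any member in $\sC^{\circ}$ lie in a fixed cell of the polyhedral structure that $\Gamma$ induces on $\RR^2$. I would first extend this to the tangency points. Since every member of $\sC^{\circ}$ has the same combinatorial type, the same data determines which cell of $\Gamma$ (chamber, edge, leg or vertex) contains each tangency point, and on which leg or edge of $\Lambda$ it sits. Specialness is a condition on exactly this data.

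\textbf{Step 1: the combinatorial type is constant on $\sC^{\circ}$.} The plan is to show that this type is locally constant on $\sC^{\circ}$ and then use connectedness. Take $(v_0,\lenl)\in\sC^{\circ}$. The sign of $\lenl$ is constant on $\sC^{\circ}$: the locus $\lenl=0$ is a linear hyperplane, and the chosen structure refines along it, since $4$-valent members require the common vertex to lie on $\Gamma$. The cells of $\Gamma$ containing $v_0$ and $v_1=v_0\pm\lenl(1,\pm1)$ are also fixed by the convention. Consequently, in a small neighbourhood inside $\sC^{\circ}$, each leg and the edge of $\Lambda$ meets the same edges of $\Gamma$, and the intersection points move continuously. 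By \autoref{pr:localMovesTritangents}, the open local moves preserve every tangency type, and each tangency point stays in the relative interior of the same leg or edge of $\Lambda$ that contained it. The only way to break this would be for a tangency to slide onto a vertex of $\Lambda$, and that can only happen on the boundary of a cell of the local move $\Xi$, never in the relative interior of a cell of the refined structure.

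\textbf{Step 2: conclude.} If one member of $\sC^{\circ}$ has two tangencies in the interior of the same leg, then so does every nearby member. Because $\sC^{\circ}$ is connected (it is a relatively open convex set), the property holds on all of $\sC^{\circ}$ or on none of it. \autoref{lm:specialConfigPrecisions} makes this concrete. In a special member, the tangency closest to the vertex is of type (3a) on a leg of $\Gamma$, and all vertices lie in the closure of one unbounded chamber $(i,j)^{\vee}$. Both conditions are determined by the fixed cell data of the vertices. So I would also verify directly that if one member of $\sC^{\circ}$ has a vertex on a horizontal leg of $\Gamma$ carrying a (3a) tangency, then the whole open cell does.

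\textbf{Main obstacle.} The hardest part is justifying that tangency points cannot change which leg of $\Lambda$ they lie on inside an open cell. This needs care where the local moves of \autoref{pr:localMovesTritangents} are joins of polyhedra of mixed dimension, as for types (2'), (4a') and (6b'). For these I would argue that the component of $\Xi$ through a generic point already lies in one cell of the refined structure, since the refinement is taken along the lines through vertices of $\Gamma$ in the directions of the edges of $\Lambda$.
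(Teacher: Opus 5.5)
Your proposal is correct and is essentially the paper's argument. The paper likewise combines two facts: a special configuration is preserved by local moves, and the compatibility convention of \autoref{rm:finevCoarseStructure} then carries specialness across the relative interior of the cell. Your Step~1 (constancy of the sign of $\lenl$ and of the cells of $\Gamma$ containing the vertices, together with connectedness of $\sC^{\circ}$) just makes explicit what the paper leaves to ``by construction''.
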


\begin{proof} We let $\sC$ be a cell containing a special tritangent $\Lambda$ in its relative interior. By construction, 
  the property of having a special tangency configuration is preserved by local moves. The conditions on the polyhedral complex structure on $\tclass$ and $\bclass$ imposed in~\autoref{rm:finevCoarseStructure} ensure that all members of the relative interior of $\sC$ have this property if a single member has this feature.
\end{proof}

 We let  $\mathscr{S}$ be the collection of all special cells of $\bclass$. This set gives rise to a natural subcomplex of $\bclass$, which we now define:

 \begin{definition} The non-special bounded complex of $\tclass$ is the complex 
  \begin{equation}\label{eq:nonSpecialBoundedComplex}
  \bclassns:=\bclass\smallsetminus \mathscr{S}
  \end{equation}
  with polyhedral structure induced from $\bclass$.
 In analogy with~\eqref{eq:dimbclass}, the dimension of $\bclassns$ becomes
\begin{equation}\label{eq:dimbclassns}
  \dim \bclassns:=\max\{\dim \sC: \sC  \text{ a cell of } \bclassns\}. 
\end{equation}
 \end{definition}

 Our interest in  $\bclassns$ stems from its properties. First,  when $\sextic$ is generic relative to $\Gamma$, its support contains all liftable members of $|\tclass|$. Second,~\autoref{thm:pathConnectednessOfNSComplexes} below confirms that the complex $\bclassns$ is connected. In particular, the search for liftable members can be performed by local moves starting from any of the non-special generic members of $|\bclass|$ listed in~\autoref{tab:combClassificationGenericPerDim}.  
Note that the inclusion $\bclassns\subseteq \bclass$ yields the inequality
 $\dim \bclassns \leq \dim \bclass$.
As we will see in~\autoref{sec:comp-dimens-tclass}, equality need not hold. \autoref{thm:comparingDimensions} below ensures that we have equality whenever $\Gamma$ is generic.

 Most notably, it is the dimension of  $\bclassns$ rather than $\bclass$, which becomes the topological invariant  restricting the lifting partition of $\tclass$. This is the content of our next statement, which we refer to as the \emph{refined lifting partition theorem}.  \autoref{thm:partitionThm} follows as a direct corollary of this result.

            \begin{table}[tb]
              \begin{tabular}{|c|c||c|c|}
                \hline
                $\dim \bclassns$ & Lifting partition of $\tclass$ &
                $\dim \bclassns$ & Lifting partition of $\tclass$\\
               \hline\hline
                $3$ & $(8,0,0,0)$&
                $2$ & $(4,2,0,0)^*$ or $(0,4,0,0)$ \\\hline
                $1$ & $(0,2,1,0)^*$ or $(0,0,2,0)$ &
                $0$ & $(0,0,0,1)$\\
                \hline
              \end{tabular}
              \caption{Classification of possible lifting partitions of a tritangent class to a tropical smooth $(3,3)$-curve $\Gamma$. The values with $^*$ occur if, and only if, the support of $\bclassns$ with prescribed dimension contains a member with a type (4b) tangency.\label{tab:classificationPartitions}}
            \end{table}

 \begin{theorem}\label{thm:refinedLiftingPartition}
   There are six possible values for the lifting partition of a tritangent class to $\Gamma$. Furthermore, they are completely determined by the dimension of its associated non-special bounded complex and whether or not its support contains a member with a (4b) tangency (see~\autoref{tab:classificationPartitions}).
\end{theorem}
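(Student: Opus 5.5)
The plan is a case analysis on $d:=\dim\bclassns\in\{0,1,2,3\}$. For each value we show that the lifting partition is one of the entries in the corresponding cell of \autoref{tab:classificationPartitions}, and that the starred alternative occurs exactly when $|\bclassns|$ contains a member with a (4b) tangency. Two facts make this a local search. First, by \autoref{thm:countingTritangentClassesAndLifts} the weighted number of lifts in $\tclass$ is $8$. Second, by \autoref{thm:totalLifting} the lifting multiplicity of each member is the product of the local multiplicities in \autoref{tab:LiftingMultiplicities}. Hence it is enough to locate every member of positive multiplicity and compute its multiplicity. By \autoref{rm:firstCellsToToss}, \autoref{rm:secondCellsToToss}, and item (iii) of \autoref{def:fgenericRelToGamma} applied to special tritangents (\autoref{def:specialTritangents}, \autoref{lm:specialConfigPrecisions}), every liftable member lies in $|\bclassns|$.

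Before the case analysis I would establish path-connectedness of $\bclassns$ (\autoref{thm:pathConnectednessOfNSComplexes}), by retracting special cells onto non-special ones in the spirit of \autoref{cor:bcomplexDeformation}. I would also prove $\dim\bclassns=3$ iff $\dim\bclass=3$, using that $3$-dimensional cells carry only (1a)/(1b)/(1') tangencies (\autoref{pr:3dCells}) and hence are never special. With these in hand, the case $d=3$ is exactly \autoref{thm:partitionDim3}.

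For $d\le 2$ I would start from a generic member of a top-dimensional non-special cell. \autoref{thm:dim210} and \autoref{lm:4valentSingleton} list these in \autoref{tab:combClassificationGenericPerDim}, after discarding the special entries. For each sample I would move along the edges of $\bclassns$ in the directions allowed by \autoref{pr:localMovesTritangents}. Along the way I would record where a tangency becomes of a type with positive multiplicity: (2a), (4a), (6a), (5a), (5b), (6b), (3a), (3c), and so on. The multiplicities are read off \autoref{tab:LiftingMultiplicities}, using \autoref{rm:4a6a4ap6ap} to compute $\mu$ for the diagonal types.

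The dimension $0$ cases are quick. A type (8) member has multiplicity $8$, giving $(0,0,0,1)$. The $4$-valent singleton of \autoref{lm:4valentSingleton}, with one (4a') and two (3c') tangencies on the same side, has multiplicity $\mu\cdot 2\cdot 2=8$. The remaining $0$-dimensional samples of \autoref{lm:fixAVertex} must be shown either to also have multiplicity $8$ or to have non-special neighbours, which would contradict $d=0$.

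For $d=1$ the cells are segments. The expected outcome is two members of multiplicity $4$, coming from products such as $2\cdot 2$ with (3c)/(3f), or one member of multiplicity $4$ together with two of multiplicity $2$. The latter configuration arises exactly when a (4b) tangency is present: a (4b) member forces a (3c) tangency on a leg (\autoref{lm:movementiii}), and its endpoints produce a (3f)-type member and two multiplicity-$2$ members. For $d=2$, analogously, the $(4,2,0,0)$ pattern is tied to the first sample in \autoref{lm:movementi}, namely the one with (4b). Otherwise only (3a)/(3c)-type factors of $2$ appear, yielding four members of multiplicity $2$. Throughout, the total-count identity of $8$ serves as a consistency check and rules out missed members.

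The main obstacle is exhaustiveness in dimensions $1$ and $2$. I must show that the trajectories chosen from each generic sample reach \emph{all} positive-multiplicity members of $|\bclassns|$, and that (4b) appears precisely in the starred cases. My first attempt would be a sequence of combinatorial lemmas, one per sample of \autoref{tab:combClassificationGenericPerDim}. Each lemma would bound the convex pieces of $\bclassns$ by the lines through the relevant vertices of $\Gamma$, as in the parallelogram arguments of \autoref{lm:case1Dim3}, and enumerate the boundary points where local multiplicities become positive. The count of $8$ then closes each case.
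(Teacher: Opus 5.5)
Your plan follows the paper's proof. You split on $d=\dim\bclassns$. For $d=3$ you use \autoref{thm:partitionDim3} once you know $3$-cells are never special; your argument via \autoref{pr:3dCells} does the job of \autoref{pr:dimSpecialCells}. For $d\le 2$ you run through the non-special generic samples of \autoref{tab:combClassificationGenericPerDim} with the moving lemmas of \autoref{sec:technicalLemmasDim1-2}, as in \autoref{thm:partitionDim2}, \autoref{thm:partitionDim0} and \autoref{thm:partitionDim1}.

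Closing each case with the total count $8$ from \autoref{thm:countingTritangentClassesAndLifts} is legitimate and a bit more economical than the paper. The paper mostly certifies by hand that nothing else lifts, and uses the count only for types (3f) and (8) (see \autoref{rm:tropicalGenericity}). Your shortcut makes $d=0$ immediate once $\bclassns$ is a point. It also replaces the exhaustiveness you single out as the main obstacle with the easier task of exhibiting members whose multiplicities add up to $8$.

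One step of your sketch would fail as stated: the mechanism you give for $(0,2,1,0)$. This is exactly the half of the theorem that ties the starred partitions to (4b).
\begin{itemize}
\item The segment through a (4b)+(3c) member (\autoref{lm:type4b3c}) has only its two endpoints as liftable members, each of multiplicity $2$. Each is a (5b), (6b), (4b') or (6b') tangency of local multiplicity $1$ times the (3c).
\item No (3f) member appears. By \autoref{pr:type4b3c}, $|\bclassns|$ is this segment together with one adjacent segment, and the multiplicity-$4$ member lives on the latter.
\item To reach it, start at the endpoint with a (5b) tangency at $v_0$. Slide $v_0$ to the right along the horizontal edge of $\Gamma$, so that the negative horizontal leg of $\Lambda$ carries a (3a) tangency and the negative vertical leg a (1a) one. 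Stop when the latter becomes (2a) (\autoref{lm:MoveRightFrom3a1P}(i), \autoref{cor:leftAndRightmostLiftableMembers3aAnd1Vertical}(iii)). This member has multiplicity $2\cdot 1\cdot 2=4$.
\end{itemize}
So your trajectories must leave the (4b) cell. The same happens in dimension $2$ (\autoref{pr:type4b1}): the two multiplicity-$2$ members sit on the adjacent polygon of members with a (3a) tangency, not on $\cQ$.
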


 \begin{proof} \autoref{thm:partitionDim3},  combined with \autoref{pr:dimSpecialCells} below,  establishes the statement if $\dim \bclassns = 3$. The lower dimensional cases follow from 
   {Theorems}~\ref{thm:partitionDim2},~\ref{thm:partitionDim0} and~\ref{thm:partitionDim1}.
 \end{proof}

 Our next result determines the dimension of a maximal cell in $\mathscr{S}$. 

\begin{proposition}\label{pr:dimSpecialCells}
  A maximal cell in $\mathscr{S}$ has dimension two or one, depending on whether or not the complementing tangency to those in the interior of the same leg is transverse. Furthermore, such a cell only contains trivalent members in its relative interior.
\end{proposition}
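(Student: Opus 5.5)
By $\Dn{4}$-symmetry and \autoref{lm:specialConfigPrecisions}, I would fix a special tritangent $\Lambda$ in the relative interior of a maximal cell $\sC\in\mathscr{S}$ with two tangencies $P,P'$ on its negative horizontal leg, ordered from left to right. Then $P'$ has type (3a) along a positive horizontal leg of $\Gamma$ (the bottom or the top one), and all vertices of $\Lambda$ lie in $\overline{(3,i)^{\vee}}$ with $i\in\{1,2\}$. The complementing tangency $P''$ lies on the positive vertical leg if $i=1$ and on the negative vertical leg if $i=2$. The plan is to compute the local moves of $(v_0,\lenl)$ directly, using \autoref{pr:localMovesTritangents} and the lemmas of \autoref{sec:local-moves}, while keeping track of which constraints each tangency imposes on the vertex adjacent to the horizontal leg.

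The first step is the horizontal constraint. The (3a) tangency $P'$ forces the vertex $v$ of $\Lambda$ adjacent to the negative horizontal leg to stay on the horizontal line spanned by the relevant leg of $\Gamma$. This removes exactly one degree of freedom, so $v$ moves along a horizontal segment. The tangency $P$ lies further left on the same leg, and its type must be (1a), (2a) or (3c) by the bidegree. If $P$ is of type (3c), the leg of $\Lambda$ is glued along that edge of $\Gamma$ too, but the two horizontal lines coincide, so no new constraint is created. I will check this case by case against \autoref{fig:classificationLocalTangencies}.

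The second step concerns the remaining vertex and $P''$, which sits on a vertical leg. If $P''$ is transverse, of type (1a), it imposes no condition. The other vertex can then move freely, giving the length parameter, so the cell is 2-dimensional: one parameter from sliding $v$ horizontally, one from $\lenl$. If $P''$ is non-transverse, such as (3a)/(3c) vertical, (2a) or (4a), it fixes the vertical coordinate of its adjacent vertex, which is now linked to $v$ through the slope-one edge. This leaves a single parameter, so the cell is 1-dimensional. To be sure these are the dimensions of maximal cells rather than mere upper bounds, I would exhibit explicit open segments or parallelograms of special members, using the moves from \autoref{lm:OpenCellsTrivalent}. I would also invoke \autoref{lm:allOrNoMemberOfACellIsSpecial} so that these moves stay inside $\mathscr{S}$, and \autoref{rm:finevCoarseStructure} to ensure the cell is not subdivided further.

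The third step is trivalence. Suppose $\Lambda$ were $4$-valent in the relative interior of such a cell. Its vertex would lie on $\Gamma$ inside $\overline{(3,i)^{\vee}}$, and a small move splitting the vertex along a slope $\pm1$ edge would keep $P,P'$ on the same leg and $P''$ intact. The $4$-valent point would then be a boundary face of a special trivalent cell, following the argument of \autoref{lm:trivalentsAreEnough}. The main obstacle I expect is this step, together with ruling out dimension $0$ or $3$. A 3-dimensional cell is excluded by \autoref{pr:3dCells}, since $P'$ is of type (3a). A 0-dimensional cell would require two non-transverse constraints on distinct vertices that cannot be relaxed. I would attack this by listing the finitely many $\Gamma$-stars compatible with $(3,i)^{\vee}$ and showing that in each one some local move keeps the special configuration while staying in $\bclass$, in particular avoiding the sets $\mathscr{U}$ and $\tR$.
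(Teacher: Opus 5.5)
Most of your outline matches the paper's proof. You normalize via $\Dn{4}$ and \autoref{lm:specialConfigPrecisions}, and let the (3a) tangency $P'$ confine the vertex on the negative horizontal leg to a horizontal segment. A type (1a) tangency at $P''$ then leaves the other vertex free (dimension two), while a type (3c) one confines it to a vertical line (dimension one). Dimension three is excluded by \autoref{pr:3dCells}, and $4$-valent members are excluded by a splitting move that contradicts maximality.

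The gap is in your second step. You never use the maximality of $\sC$ to restrict the type of $P''$. Instead you list (2a) and (4a) as non-transverse tangencies that pin the adjacent vertex, and conclude that they give $1$-dimensional cells. In the paper's terminology these types are transverse. By \autoref{lm:OpenCellsTrivalent} and \autoref{lem:Type4And4pCells}, their local moves are $3$-dimensional, with the given member sitting on the single retained facet. Concretely, shifting the vertical leg of $\Lambda$ off the vertex of $\Gamma$ (for (2a)), or pushing $v_1$ off the edge of $\Gamma$ to the side where the vertical leg crosses it (for (4a)), turns $P''$ into a type (1a) tangency. This move leaves $P$ and $P'$ untouched, so the new members are still special and still in $\bclass$. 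Hence a special member with $P''$ of type (2a) or (4a) lies on a facet of a $2$-dimensional special cell and is never in the relative interior of a maximal cell of $\mathscr{S}$. The one-parameter family you compute is only the locus where $P''$ keeps its type. As written, your count would produce maximal $1$-dimensional special cells whose complementing tangency is transverse, which is exactly what the proposition rules out.

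The missing step is the paper's opening observation: by maximality of $\sC$, the complementing tangency must be of type (1a) or (3c). Only after this reduction does the local count give exactly two or one. The same reduction settles your worry about dimension $0$ without enumerating stars of $\Gamma$. In both remaining cases you can always slide the vertex along the open leg of $\Gamma$ carrying $P'$, adjusting the other vertex accordingly.

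Two smaller points:
\begin{enumerate}
\item A (3c) tangency on a vertical leg fixes the horizontal, not the vertical, coordinate of the adjacent vertex.
\item In the $4$-valent case only the split producing a slope one edge works. For $i=1$, move $v_0$ left along the leg of $\Gamma$ carrying $P'$ and $v_1$ upwards, as in the paper. A slope $-1$ split that keeps $P'$ forces the lowest vertex into the corner chamber $(3,0)^{\vee}$, i.e., into a cell of $\mathscr{U}_0^{-}$, which \autoref{def:boundedComplex} removes from $\bclass$.
\end{enumerate}
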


\begin{proof} Let $\Lambda$ be a member in the relative interior of a maximal cell of $\mathscr{S}$, say $\sC$. \autoref{lm:specialConfigPrecisions} (i) combined with the action of $\Dn{4}$ allow us to assume that the vertices of $\Lambda$ lie in $\overline{(3,1)^{\vee}}$, and that the tangencies are located on the positive vertical and negative horizontal legs of $\Lambda$. The latter leg contains two tangency points. 

  If $\Lambda$ is $4$-valent, we can move $v_0$ to the left and $v_1$ upwards and obtain a trivalent special member $\Lambda'$ of $\tclass$ that has $\sC$ in its boundary, contradicting the maximality of $\sC$. Thus, $\Lambda$ is necessarily trivalent.

  The statement regarding the dimension follows by~\autoref{pr:localMovesTritangents}. Indeed, by the maximality of $\sC$, the vertical tangency is of type (1a) or (3c). A type (3c) one forces both vertices of $\Lambda$ to move along segments of different directions, yielding a $1$-dimensional cell. On the contrary, a type (1a) tangency paves the way for a $3$-dimensional local movement of $v_1$, so the dimension of $\sC$ is determined by the local moves of $v_0$ restricted by the horizontal (3a) tangency. Hence, $\sC$ has  dimension $2$, as desired.
  \end{proof}

In the remainder of this section, we focus our attention on two properties of  $\bclassns$, namely, its path-connectedness and the estimation of its dimension when $\Gamma$ has generic edge lengths. Both will be inherited from the complex $\bclass$.

  \begin{theorem} \label{thm:pathConnectednessOfNSComplexes} Let $\tclass$ be a tritangent class to $\Gamma$. Then, the non-special bounded  polyhedral complex $\bclassns$ is path-connected.
  \end{theorem}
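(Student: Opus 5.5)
We derive path-connectedness of $\bclassns$ from that of $\bclass$ (\autoref{cor:bcomplexDeformation}). The plan is to show that the special cells $\mathscr{S}$ can be pushed off into $\bclassns$ by a deformation retraction, as in the proof of \autoref{cor:bcomplexDeformation}. It suffices to build a continuous retraction $r\colon |\bclass|\to|\bclassns|$, since the continuous image of a path-connected space is path-connected. We define $r$ to be the identity on $|\bclassns|$ and specify it on the relative interior of each special cell.

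By \autoref{lm:allOrNoMemberOfACellIsSpecial}, specialness is constant on relative interiors of cells. By \autoref{lm:specialConfigPrecisions} and $\Dn{4}$-symmetry, we may take a special member $\Lambda$ with vertices in $\overline{(3,1)^{\vee}}$, tangencies $P,P'$ on the negative horizontal leg, and $P''$ on the positive vertical leg. Here $P'$ is the one closer to $v_0$; it has type (3a) and lies on a positive horizontal leg $e$ of $\Gamma$.

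On such a cell, the natural move keeps $v_1$ (hence $P''$) and $P$ fixed and slides $v_0$ horizontally to the left along $e$. This preserves the horizontal tangency while $P'$ approaches $P$, and it also preserves tritangency. The motion stops when $v_0$ reaches the vertex of $\Gamma$ at the left endpoint of $e$, or when $P'$ merges with $P$.

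We then check case by case, using the types allowed for $P$ by \autoref{lm:specialConfigPrecisions}(iii) and \autoref{fig:classificationLocalTangencies}, that the stopping point is non-special. There, the two tangencies either collapse into a single component of $\Lambda\cap\Gamma$ containing $v_0$ (a type (3a)/(4a)/(5a) configuration at the vertex), or $v_0$ becomes part of a tangency at a vertex of $\Gamma$. In either case no two tangencies remain in the interior of the same leg. We must also verify that the endpoint lies in $\bclass$, i.e.\ in none of $\mathscr{U}_i^{\pm}$ or $\tR_i^{\pm}$. For this we use that $v_0$ stays in $\overline{(3,1)^\vee}$, away from the corner chambers, so \autoref{pr:unbounded} and \autoref{lm:unboundedtranslations} do not apply.

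Because the stopping time depends piecewise-linearly and continuously on the point, since the walls are hyperplanes determined by $\Gamma$ (\autoref{rm:finevCoarseStructure}), the assignment is continuous on each cell. The straight-line homotopy along the slide stays in $|\bclass|$. By \autoref{pr:dimSpecialCells}, maximal special cells are trivalent of dimension one or two, which limits the cases. If $\Lambda$ is $4$-valent, we first separate the vertices as in the proof of \autoref{pr:dimSpecialCells}, or treat it as a boundary point where the formulas already agree.

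The main obstacle will be compatibility on shared faces. We must show that the slide defined on a special cell agrees with the identity on its non-special boundary faces, and with the slides on adjacent special cells, so that $r$ is globally continuous. We also need the endpoint of a slide never to lie in the relative interior of another special cell. We would settle this by checking that along the slide only the horizontal coordinate of $v_0$ (and correspondingly $\lenl$) changes, and that the retraction fixes any face where the special configuration degenerates, i.e.\ where $P'$ already hits a vertex of $\Gamma$ or merges with $P$. If a direct global retraction proves delicate, the fallback is weaker but suffices. Each special cell has a non-special point in its closure reached by this slide, so any path in $|\bclass|$ between points of $|\bclassns|$ can be rerouted cell by cell through the boundary of the special cells it crosses, which yields path-connectedness.
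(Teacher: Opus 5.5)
\textbf{There is a genuine gap: the slide does not always land in $|\bclassns|$, and the fallback does not repair this.}

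Your main construction does not work as stated. Since $v_1=v_0+\lenl(1,1)$, you cannot keep $v_1$ fixed while sliding $v_0$ horizontally. Keeping $P''$ on the positive vertical leg forces $v_1$ to move upward by the same amount, with its $x$-coordinate fixed. This adds a stopping event you omitted: $v_1$ reaching the edge of $\Gamma$ carrying $P''$ (or the lower endpoint of the vertical edge when $P''$ has type (3c)). A short computation shows this happens before $v_0$ reaches the left endpoint $C$ of $e$ exactly when $P''$ lies below the slope-one line through $C$. At that moment $P$ and $P'$ are still in the interior of the negative horizontal leg, so the endpoint of your slide is special.

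This is not a degenerate situation. In cases (I)--(III) of \autoref{fig:specialConfigurations}, the closure of a $2$-dimensional special cell (the polygon $\cP$ of positions of $v_1$) contains exactly one non-special point. Your slides from points of $\cP$ with different $x$-coordinates end at different points, so most of them cannot end at that one point. Hence $r$ does not map into $|\bclassns|$. The face-compatibility problem you identify as the main obstacle is also left unresolved.

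The fallback does not close the gap. Having some non-special point in the closure of each special cell is not enough to reroute paths. A path in $|\bclass|$ may enter a cluster of special cells at one non-special point and leave at another, and nothing you prove joins these two points inside $|\bclassns|$. For example, a strip of special cells cutting across a disk satisfies your hypothesis but disconnects it.

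The missing idea is this: for each of the eight boundary lattice points $(i,j)$ with $3\nmid i+j$, the closure of the union of the associated special cells meets $|\bclassns|$ in a \emph{connected} set. The paper checks this case by case:
\begin{enumerate}[(i)]
\item a single vertex when $P''$ has type (3c), and in cases (I)--(III);
\item a single edge, namely the members with $v_0=C$, in case (IV).
\end{enumerate}
It then argues on $1$-skeleta. By \autoref{cor:bcomplexDeformation}, $(\bclass)^{(1)}$ is connected. It is obtained from $(\bclassns)^{(1)}$ by attaching chains of edges along one vertex or along two adjacent vertices, and such attachments cannot merge connected components. So $(\bclassns)^{(1)}$, and hence $\bclassns$, is connected.
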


  \begin{proof} Throughout, we denote by  $\Sigma^{(1)}$ the $1$-skeleton of a  polyhedral complex $\Sigma$. 
    A well-known fact in polyhedral geometry (inherited from convexity) states that a polyhedral complex is connected if, and only if, its $1$-skeleton has the same property.  Since polyhedral complexes are locally path-connected, the same equivalence holds for path-connectedness.     In particular, since the support of $\bclass$ is connected by~\autoref{cor:bcomplexDeformation}, we know that $(\bclass)^{(1)}$ is  path-connected.

    We prove the statement by showing that the $1$-skeletons of $\bclassns$ is path-connected. By construction  we have the natural inclusions of complexes
    \[(\bclassns)^{(1)}\subseteq (\bclass)^{(1)}.\]

We let $\mathscr{S}^{(1)}$ be the set of  all $1$-dimensional special cells of $\bclass$.  The complex   $(\bclassns)^{(1)}$ is  constructed by removing from $(\bclass)^{(1)}$ the relative interior of all elements of $\mathscr{S}^{(1)}$.  In order to confirm the path-connectedness of $(\bclassns)^{(1)}$, we must first describe $\mathscr{S}^{(1)}$ and the intersection of each of its members with  $(\bclassns)^{(1)}$.

By~\autoref{lm:specialConfigPrecisions}, there are eight possible distribution of tangencies on a special tritangent, corresponding to the  boundary points in  the Newton polytope of $\sextic$ excluding its four vertices. For example, those associated  to the point $(3,1)$  have two tangencies on the negative horizontal leg and one on the positive vertical leg. In addition, its unique edge has slope one. All remaining special configurations are obtained from this one by the action of $\Dn{4}$.

Two facts arise from this observation. First, we can write the support of $\mathscr{S}$ as a disjoint union of eight sets $\{|\mathscr{S}_{i}|\}_{i=1}^{8}$ corresponding to the cells associated to the same boundary point in the Newton polytope of $\sextic$. Secondly, that $(\bclass)^{(1)}$ is built from $(\bclassns)^{(1)}$ by attaching the cells of each collection $\mathscr{S}^{(1)}_{i}$
along eight  subsets of vertices in $(\bclassns)^{(1)}$.
Thus, we can build a decreasing sequence of graphs 
\[
G_9:=(\bclass)^{(1)}\supsetneq G_8\supsetneq \ldots \supsetneq G_2\supsetneq G_1:=(\bclassns)^{(1)}
\]
\noindent where each $G_{i+1}$ is obtained from $G_{i}$ by attaching the cells of $\mathscr{S}_i^{(1)}$ onto a subset of vertices in $G_{1}$.  

We argue by contradiction, and assume $G_1$ is not path-connected. Using induction we show that $G_9$ would not be path-connected, which violates our earlier findings.

\begin{figure}[t]
  \includegraphics[scale=0.35]{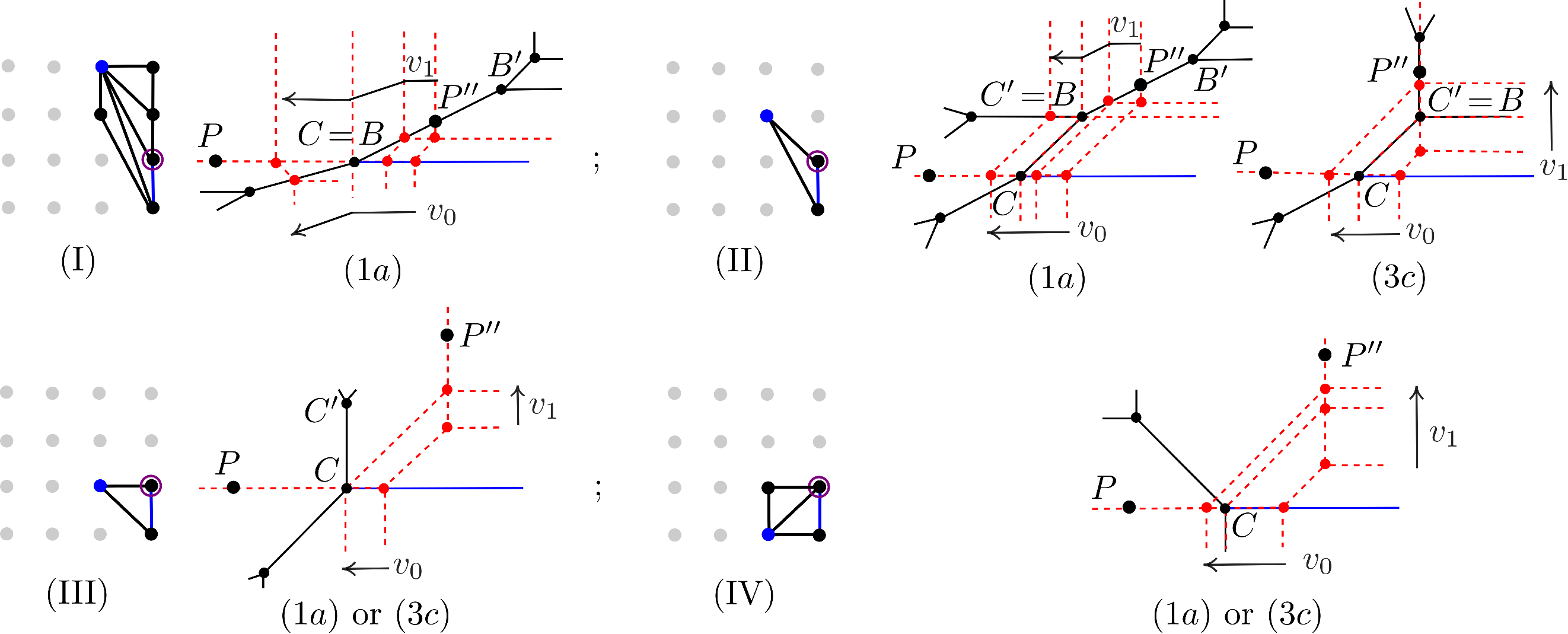} 
  \caption{Local moves for the vertices of a trivalent tritangent $\Lambda$ with a slope one edge for  configurations with two horizontal tangencies on its negative horizontal leg  (one of type (3a)) depending on the  type of the remaining tangency point $P''$, which lies on its positive vertical leg.\label{fig:specialConfigurations}}
\end{figure}

For the inductive step, we assume that $G_i$ is not path-connected. Exploiting symmetry, we may assume that $\mathscr{S}_i^{(1)}$ is associated to the point $(3,1)$. Fix a member $\Lambda$ in the relative interior of a top-dimensional cell $\sC$  of $\mathscr{S}$.
\autoref{fig:specialConfigurations} shows the four partial Newton subdivision of $\sextic$ imposed by the presence of such a special tritangent. We use the notation from the figure and let $P''$ be the tangency point complementing  those lying on the same leg of $\Lambda$.
The proof of~\autoref{pr:dimSpecialCells} confirms that  the dimension of $\sC$ is either one or two, depending on the tangency type of $P''$. In both cases, $\mathscr{S}_i$ is supported on $\sC$.

When $P''$ has type (3c), then $\sC$ is a segment. In addition, this cell is the only element in  $\mathscr{S}_i^{(1)}$. Since $\sC$, has only one vertex in $G_1 \subseteq G_i$, the non path-connectedness of $G_i$ implies  that of $G_{i+1}$.

On the contrary, if $P''$ has type (1a), we know that $\sC$ is $2$-dimensional. Note that we can identify this cell with a polygon $\cP$ in $\overline{(3,1)^{\vee}}$ recording the location of the vertex $v_1$ of each member. The set $\mathscr{S}_i^{(1)}$ then becomes a subset of the edges of $\cP$. Its precise description depends on the partial Newton subdivision of $\sextic$ seen in~\autoref{fig:specialConfigurations}.

For cases (I) through (III), $\mathscr{S}_i^{(1)}$ agrees with the boundary of $\cP$, which has up to five edges. Only one of its vertices corresponds to a non-special tritangent. 
In turn, for case (IV), $\mathscr{S}_i^{(1)}$ is a chain of edges obtained by removing   a single edge from  the boundary of $\cP$, namely, the one corresponding to tritangents with the vertex $v_0$ placed at the point $C$ seen in the figure. This segment yields an edge of $G_1$.

In all four cases, we see that $G_{i+1}$ is obtained from $G_i$ by attaching  a chain of edges along a subset of its leaves, corresponding to either one or two adjacent vertices of $G_1\subsetneq G_{i}$. Thus, the graph $G_{i+1}$ is not path-connected, as we wanted to show.
  \end{proof}

  In order to determine the dimension of $\bclassns$ from that of $\bclass$ it becomes essential to know which cells of $\bclassns$ are adjacent to special ones.  The next lemma accomplishes this task.

\begin{lemma}\label{lm:specialConfigurations} 
  Let $(\Lambda,P,P',P'')$ be a tritangent tuple to $\Gamma$ in the relative interior of a cell $\sC$ of $\mathscr{S}$. Assume that $\Lambda$ is trivalent with a  slope one edge, $P'$ is a (3a) tangency on the negative horizontal leg and $P$ is another horizontal tangency on the same leg further away from the vertex $v_0$ of $\Lambda$. 
  If $\sC$ is maximal, we can find a neighboring cell of $\bclass$ contained in $\bclassns$ and a member in its relative interior with a tangency at $P$ and one of the following  configurations of tangencies:
  \begin{center}
   \includegraphics[scale=0.35]{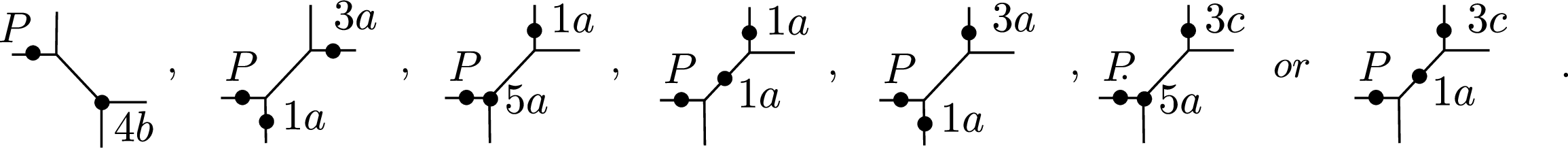}
  \end{center}
  The last three occur when $P''$ has type (3c), whereas the remaining four have type (1a).
\end{lemma}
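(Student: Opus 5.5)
We reduce to the normalized situation already used in the proofs of \autoref{pr:dimSpecialCells} and \autoref{thm:pathConnectednessOfNSComplexes}. By \autoref{lm:specialConfigPrecisions} and $\Dn{4}$-symmetry, both vertices of $\Lambda$ lie in $\overline{(3,1)^{\vee}}$, the tangencies $P$ and $P'$ sit on the negative horizontal leg, and $P''$ lies on the positive vertical leg. By the maximality of $\sC$ and the proof of \autoref{pr:dimSpecialCells}, $P''$ has type (1a) or (3c). Moreover, the partial Newton subdivision of $\sextic$ imposed by $\Lambda$ is one of the four options (I)--(IV) in~\autoref{fig:specialConfigurations}.

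The plan is to exhibit, for each of these cases, an explicit path inside $\sC$ followed by a local move that exits $\mathscr{S}$. Along this path we keep the tangency at $P$ (the one farther from $v_0$) fixed and destroy the (3a) tangency $P'$.

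\textbf{Destroying $P'$.} We slide $v_0$ along the negative horizontal leg direction until it reaches $P'$, or until $v_0$ hits the edge of $\Gamma$ through the endpoint of the horizontal leg of $\Gamma$ carrying $P'$. At that moment the tangency $P'$ either merges with $v_0$ into a vertex tangency of type (3a), (4a), (5a) or (6a), or it is replaced by a tangency on the negative vertical leg or on the bounded edge of $\Lambda$. Meanwhile $v_1$ moves compatibly so that $P''$ is preserved: for type (1a) it moves freely in its open ball, and for type (3c) it moves along the fixed segment. The resulting member has at most one tangency in the interior of the negative horizontal leg, so it is non-special. It still lies in $\bclass$, since both vertices stay away from the chambers of \autoref{pr:unbounded} and the cells $\tR_i^{\pm}$ of \autoref{rm:secondCellsToToss}. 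By \autoref{lm:allOrNoMemberOfACellIsSpecial}, the cell containing it in its relative interior lies in $\bclassns$, and $\sC$ is a face of it or shares a face with it.

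\textbf{Classifying the outcomes.} When $P''$ is (1a), the four Newton subdivisions (I)--(IV) each produce one of the first four pictured configurations. The key input is the relative $\prec_h$, $\prec_v$ and $\prec_d$ positions of the vertices $A, A', C$ from \autoref{fig:specialConfigurations}, which determine whether $v_0$ first meets a vertical, diagonal or slope $-1/3$ edge of $\Gamma$. When $P''$ is (3c), $v_1$ is constrained to a segment, so $\sC$ is one-dimensional. In that case the two directions of the segment (together with the bidegree restriction on $v_0^{\vee}$) yield exactly the remaining three configurations.

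\textbf{Main obstacle.} The hardest step is the exhaustiveness check in the (1a) case. We must verify that every endpoint of the admissible polygon $\cP$ recording $v_1$ is reached this way, and that no other tangency type (for instance (3bb) or a non-transverse diagonal tangency) appears at the exit point. For this we plan to use \autoref{rk:dimensionCellRestrictsTangencyTypes} and the bidegree of $\Gamma$, exactly as in the proof of \autoref{lm:movementiii}, and to check separately that the neighboring cell obtained is bounded, invoking \autoref{lm:unboundedtranslations}.
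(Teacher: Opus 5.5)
Your overall strategy matches the paper's: slide $v_0$ towards $P$ along the leg of $\Gamma$ carrying $P'$, keep $P$, and split by the four subdivisions and the type of $P''$. The gap is in the mechanism you assume along the slide. You claim that $v_1$ can always move so that $P''$ survives. You also claim that at the exit point $P'$ has either become a vertex tangency at $v_0$ or been replaced by a new tangency on the negative vertical leg or the bounded edge. This fails in two of the four cases.

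To see this, let $C$ be the left endpoint of the horizontal leg of $\Gamma$ carrying $P'$. Then $C^{\vee}=\{(3,0),(3,1),(2,j)\}$ with $j\in\{0,1,2,3\}$; these are the four subdivisions. The edges of $\Gamma$ at $C$ have directions $(1,0)$, $(j-1,1)$ and $(-j,-1)$. Hence near $C$ the chamber $(3,1)^{\vee}$ is the wedge between $(1,0)$ and $(j-1,1)$; by \autoref{lm:specialConfigPrecisions} it must contain both vertices of every special member. Write $v_0=C+(a,0)$ and $v_1=v_0+\ell(1,1)$.

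\emph{Case $j=3$.} Here $P''$ must be a (1a) tangency on the edge of direction $(2,1)$ leaving $C$, and $v_1$ must lie below that edge. This forces $\ell\le a$. So as $a\to 0$ the curve collapses to a $4$-valent curve with vertex $C$, where $P'$ and $P''$ merge into a multiplicity-four tangency (the (6b') member). By \autoref{rk:dimensionCellRestrictsTangencyTypes} this member lies in a $0$-dimensional cell. The positive-dimensional non-special neighbour is reached only by going past it: open an edge of slope $-1$ whose lower vertex runs along the edge of direction $(-3,-1)$ at $C$. This creates a (4b) tangency while $P$ stays on the negative horizontal leg.

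\emph{Case $j=2$.} The boundary of $(3,1)^{\vee}$ at $C$ is a slope one edge $[C,D]$ parallel to the edge of $\Lambda$. Keeping $P''$ forces $v_1\to D$ as $a\to 0$. Past this point, $P''$ becomes a (3a) tangency at $v_1$ on an edge leaving $D$, while $P'$ is replaced by a (1a) tangency on the negative vertical leg.

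So your invariant that $P''$ persists is false, and your list of exit types is incomplete. Your conclusion that the cell reached has $\sC$ as a face also needs revisiting in these cases. They are not cosmetic: they underlie the neighbouring-cell dimensions $2,2,1,3$ used in the proof of \autoref{thm:comparingDimensions}, and the (6b')/(4b) members discussed in \autoref{sec:comp-dimens-tclass}.

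Your explanation of the (3c) case via ``the two directions of the segment'' does not account for the count of three. That count comes from the same split on $C^{\vee}$. When $j=3$, the boundary of $(3,1)^{\vee}$ consists only of two horizontal legs and the edge of direction $(2,1)$. So the positive vertical leg cannot contain a vertical edge of $\Gamma$, and $P''$ cannot be of type (3c); this leaves $j\in\{0,1,2\}$. To close the gap, run the $C^{\vee}$ analysis for both types of $P''$, and handle the degenerate endpoints of the slide for $j=2,3$ explicitly.
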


\begin{proof} Since $\Lambda$ has an edge of slope one, the bidegree of $\Gamma$ forces  $P$ to lie on the lowest positive horizontal leg of $\Gamma$ and have tangency type (1a), (2a), or (3c), marked in blue in~\autoref{fig:specialConfigurations}. We let $C$ be the unique vertex of the lef of $\Lambda$ containing $P$.
  
  The possible non-special configuration of the  tritangent (labeled $\Lambda'$) that is equivalent  to $\Lambda$ and lies on a neighboring cell to $\sC$ is solely restricted by the Newton subdivision of $\sextic$. More precisely, it depends on the tangency type of $P''$ and the dual cell $C^{\vee}$. The figure shows how to obtain $\Lambda'$ from $\Lambda$ by moving $v_0$ towards $P$. The construction confirms that $\Lambda'$ lies in the relative interior of a cell neighboring $\sC$ and has the tangency types listed in the statement.
\end{proof}

Our last statement in this section provides explicit sufficient conditions ensuring equality between the dimensions of $\bclassns$ and  $\bclass$. When combined with~\autoref{thm:refinedLiftingPartition}, it confirms the data provided in~\autoref{tab:classificationPartitionsGeneric}.

\begin{theorem}\label{thm:comparingDimensions}
  Let $\tclass$ be a tritangent class to $\Gamma$. If $\Gamma$ has generic edge lengths, then  \[\dim \bclassns = \dim \bclass.\] 
 In addition, if $\dim \bclassns=2$ or $3$, the same equality holds even without any genericity assumption.
  \end{theorem}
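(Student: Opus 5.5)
Since $\bclassns\subseteq\bclass$ gives $\dim\bclassns\leq\dim\bclass$, the plan is to prove the reverse inequality by comparing the dimension of the special cells with the dimension of non-special cells adjacent to them. By \autoref{pr:dimSpecialCells}, every maximal cell of $\mathscr{S}$ has dimension one or two. So whenever $\dim\bclass=3$, a $3$-dimensional cell of $\bclass$ is automatically non-special, and equality holds with no genericity assumption. The same argument works when $\dim\bclass=2$ and some $2$-dimensional cell is non-special. The real work is therefore limited to two situations: (a) every top-dimensional cell of $\bclass$ is special, with $\dim\bclass\in\{1,2\}$; and (b) $\dim\bclass=0$, where the unique member might be special.

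For (a), I would take a maximal special cell $\sC$ and normalise it by $\Dn{4}$-symmetry as in \autoref{lm:specialConfigPrecisions}: vertices in $\overline{(3,1)^{\vee}}$, a (3a) tangency $P'$ on the negative horizontal leg, a further tangency $P$ on that leg, and $P''$ on the positive vertical leg. \autoref{lm:specialConfigurations} then supplies a neighbouring non-special cell $\sC'$ whose member is obtained by sliding $v_0$ towards $P$, and gives the list of its possible tangency configurations. For each configuration I would read off $\dim\sC'$ from the local moves in \autoref{pr:localMovesTritangents} (via \autoref{fig:localMoves}, \autoref{rk:dimensionCellRestrictsTangencyTypes} and \autoref{tab:combClassificationGenericPerDim}), and compare it with $\dim\sC$.
\begin{itemize}
\item When $P''$ has type (1a), $\sC$ is $2$-dimensional. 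The new tangency at $P$ becomes a (1a)/(2a)/(3c) tangency on a leg adjacent to the moved vertex, and I expect $\sC'$ to be $2$-dimensional as well. This would give equality whenever $\dim\bclassns\geq 2$ without any length assumption, which covers the second claim of the theorem. The obstruction to the non-special member must be checked not to collapse a dimension; this is where I would inspect the four partial subdivisions (I)--(IV) of \autoref{fig:specialConfigurations}.
\item When $P''$ has type (3c), $\sC$ is a segment. The neighbouring non-special member has a (3c) tangency at $P$ together with the (3a) at $P'$ turned into a vertex tangency, and $\sC'$ should be $1$-dimensional unless a coincidence of lengths occurs.
\end{itemize}

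The main obstacle is this last coincidence, together with the analogous situation in case (b). There, sliding $v_0$ towards $P$ may terminate immediately, for instance because $P$ lands exactly at the point $C$ or a vertex alignment $=_d$, $=_h$ or $=_v$ holds among the relevant vertices of $\Gamma$. The non-special neighbour is then only a point, so $\dim\bclassns$ could drop to $0$ while $\dim\bclass=1$. To handle this I would express each such degeneracy as a linear equation among edge lengths of $\Gamma$, exactly like the equalities $A'=_dC$ appearing in the proof of \autoref{lm:case1Dim3}. Generic edge lengths exclude these equations, which forces the slide to have positive length and so $\dim\sC'=\dim\sC$.

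For $\dim\bclass=0$, I would argue that the single member cannot be special under genericity. By \autoref{lm:4valentSingleton} and the list (v) in \autoref{tab:combClassificationGenericPerDim}, a special fixed member requires two non-transverse tangencies fixing both vertices, which again imposes a length equality. Finally, I would assemble these cases to conclude both assertions of the theorem.
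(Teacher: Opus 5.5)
Your overall plan (compare each maximal special cell with the non-special neighbour supplied by \autoref{lm:specialConfigurations}) is the paper's. However, both places where you predict the outcome of that comparison are wrong.

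\textbf{The second claim, and case (b).} The second claim needs no case analysis. Its hypothesis is on $\dim\bclassns$, so $\dim\bclassns=2$ gives $\dim\bclass\in\{2,3\}$. A $3$-cell of $\bclass$ cannot be special by \autoref{pr:dimSpecialCells}, so $\dim\bclass=3$ would force $\dim\bclassns=3$; your first paragraph already contains this. What your (1a) bullet sets out to prove instead is false: a $2$-dimensional special cell need not have a $2$-dimensional non-special neighbour when no length assumption is made. The tuple $(\dim\bclassns,\dim\bclass,\dim\tclass)=(1,2,2)$ occurs (class $\tclass_7$ in \autoref{ex:1etc}; compare \autoref{lm:ineqBoundedDimensions}). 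Likewise your case (b) needs no genericity. A special $0$-cell would lie in a maximal special cell, which has dimension at least one by \autoref{pr:dimSpecialCells}, so $\dim\bclass=0$ already forces $\bclassns=\bclass$.

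\textbf{The role of genericity.} You misidentify how generic lengths enter, and the step ``generic lengths make the slide have positive length, hence $\dim\sC'=\dim\sC$'' fails. Genericity is used to force the outer tangency $P$ on the shared leg to be of type (1a). Type (2a), resp.\ (3c), would need a vertex, resp.\ a horizontal edge, of $\Gamma$ at exactly the height of the leg of $\Gamma$ carrying $P'$.
\begin{enumerate}[(i)]
\item Case $\dim\bclass=2$. With $P$ and $P''$ both of type (1a), the four neighbours in \autoref{lm:specialConfigurations} have dimensions $2,2,1,3$. The third is $1$-dimensional even for generic lengths, so your comparison of $\dim\sC'$ with $\dim\sC$ does not close this case. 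The missing step is that from that member you can move $v_0$ further (upwards, or away from $v_1$) into a $2$-dimensional non-special cell.
\item Case $\dim\bclass=1$. Here $\bclassns$ is a point by \autoref{thm:pathConnectednessOfNSComplexes}, and the special $1$-cell has $P''$ of type (3c). Note that it is $P''$, not $P$, that is (3c); $P$ being (3c) is exactly the non-generic situation. What must be checked is that, again because $P$ is (1a), each of the last three configurations of \autoref{lm:specialConfigurations} moves in a positive-dimensional family inside $\bclassns$. When $P$ is (3c), the configuration with a (5a) tangency at $v_0$ is rigid; this is the configuration behind the tuple $(0,1,1)$ (cf.\ the proof of \autoref{lm:no012}).
\end{enumerate}
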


\begin{proof}   The second part of the statement is a direct consequence of~\autoref{pr:dimSpecialCells}. Thus, to prove the first part, it suffices to analyze the cases when $\bclassns \leq 1$. Since a special tritangent must contain a type (3a) tangency, combining~\autoref{pr:3dCells} with~\autoref{lm:allOrNoMemberOfACellIsSpecial} confirms that $\dim\bclass \leq 2$.

  We claim that $\dim \bclass <2$.  We argue by contradiction, and suppose  $\dim \bclass =2$.
Our dimension assumptions and~\autoref{pr:dimSpecialCells} ensure that $\bclass$ contains a $2$-dimensional special cell, which is necessarily maximal.
We let $(\Lambda,P,P',P'')$ be a tritangent tuple in the interior of such a cell. Up to symmetry, we suppose that the unique edge of $\Lambda$ has slope one. As in~\autoref{lm:specialConfigurations}, we may assume that both $P$ and  $P'$ lie on the negative horizontal leg of $\Lambda$ whereas $P''$ is on the positive vertical leg. Furthermore, $P''$ must be of type (1a) for dimension reasons.

The aforementioned lemma provides four options for a  tritangent member $\Lambda'$ in the interior of a neighboring non-special cell to $\sC$, which we label $\sC'$. Since $\Gamma$ has generic edge lengths, we know that $P$ is of type (1a). Thus, from left to right, the dimension of $\mathscr{C'}$ appearing in the lemma equal $2$, $2$, $1$ and $3$, respectively.  Our assumptions on $\dim \bclassns$ and $\dim \bclass$ discard all options, but the third one. However, in this situation, we can further move $v_0$ in two directions (either upwards or away from $v_1$), to lie in a two-dimensional non-special cell of $\bclass$. Again, we reach a contradiction.

Equipped with the conditions $\dim \bclassns \leq \dim \bclass \leq 1$, it suffices to show that $\dim \bclassns = 0$ forces $\dim \bclass=0$ when $\Gamma$ has generic edge lengths. The path-connectedness of $\bclassns$ shown in~\autoref{thm:pathConnectednessOfNSComplexes}, confirms that $\bclassns$ is a point whenever its dimension is zero. If $\dim \bclass =1$, this point will be in the boundary of a 1-dimensional special cell $\sC$ in $\bclass$. For dimension reasons, the corresponding tritangent $\Lambda'$ agrees with the last three listed in~\autoref{lm:specialConfigurations}. However, all of them admits positive dimensional movements within $\bclassns$.  This cannot happen by our earlier dimension assumption.\end{proof}

\section{Finding liftable tritangents through local moves}\label{sec:technicalLemmasDim1-2}

In this section, we present several technical lemmas that are used in~\autoref{sec:proof-lift-part-2d-or-less} to simplify the proof of the refined lifting partition theorem beyond dimension three. Our objective is to determine how many liftable members we can encounter in a tritangent class  through prescribed local moves starting from a fixed  trivalent tritangent $\Lambda$ in its support.

Our first three results discuss what happens in the presence of a non-transverse diagonal tangency on $\Lambda$. For our applications, it is convenient to assume that the unique edge of $\Lambda$ has a negative slope. We are interested in describing what happens when  moving the vertex $v_0$ both away and towards $v_1$. The outcome will vary with the type of the aforementioned diagonal tangency.

\begin{lemma}\label{lm:MoveDownUpRight3-cAnd1V} 
  Let  $(\Lambda,P,P',P'')$ be a tritangent tuple where $\Lambda$ is trivalent and has a slope -1 edge, that
  lies in the relative interior of a positive-dimensional cell of $\bclass$.
  Assume that $P'$ is a non-transverse diagonal tangency in the interior of the edge of $\Lambda$,  and $P''$ is a type (1a) tangency located on its negative vertical leg. Let $e'$ and $e''$ be the edge of $\Gamma$ containing $P'$ and $P''$, respectively. 
  If  $v_0 \in (3,2)^{\vee}$, and $P$ lies in the positive vertical or negative horizontal leg of $\Lambda$, then:
  \begin{enumerate}[(i)]
  \item Moving $v_0$ away from $v_1$ leads to at most one liftable tritangent $\Lambda'$ with a tangency at $P$, whose lifting multiplicity is $2\mult(\Lambda,P)$.
  \item If $e'$ and $e''$ are adjacent, moving $v_0$ towards $v_1$ produces a tritangent $\Lambda'$ with lower vertex in $\Gamma$, containing a non-transverse diagonal tangency and a type (1a) tangency on the positive horizontal leg of $\Lambda'$, obtained by translating $P'$ and $P''$, respectively.
  \item If $e'$ and $e''$ are not adjacent, moving $v_0$ towards $v_1$ leads to at most one liftable tritangent $\Lambda'$ with a tangency at $P$. Moreover,
its lifting multiplicity equals $2\mult(\Lambda, P)$.
  \end{enumerate}
  \end{lemma}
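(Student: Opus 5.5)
The plan is a direct analysis of the one-parameter deformation in which $v_0$ slides along the slope $-1$ line through the edge of $\Lambda$, while $v_1$ and the tangency $P$ stay fixed. I will track how the tangency types of the translates of $P'$ and $P''$ change along the way, and read off lifting multiplicities from \autoref{thm:totalLifting} and \autoref{tab:LiftingMultiplicities}.

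First I fix the local picture. Since $v_0\in(3,2)^{\vee}$, the bidegree of $\Gamma$ forces the following:
\begin{itemize}
\item $e''$ is a horizontal-type edge of $\Gamma$ bounding $(3,2)^{\vee}$ from below, so its dual edge contains $(3,2)$;
\item $e'$ is an edge of slope $-1$ bounding the same chamber, dual to $\overline{(2,1)(3,2)}$ or a similar diagonal segment;
\item $P'$ has one of the types (3aa), (3ac), (3cc), (3bb1), (3bb2) or (7).
\end{itemize}
Because $P$ lies on a leg adjacent to $v_1$, fixing $v_1$ keeps $P$ and its multiplicity unchanged during the move. Only the components of $\Lambda\cap\Gamma$ containing $P'$ and $P''$ deform. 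Moving $v_0$ along the diagonal preserves the non-transverse diagonal tangency $P'$ as long as $v_0$ stays on the line spanned by $e'$. It also keeps $P''$ a type (1a) tangency on the negative vertical leg until that leg hits an endpoint of $e''$.

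For (i), moving $v_0$ away from $v_1$ eventually pushes $v_0$ past an endpoint of $e'$, the lower-left vertex $C$ of $e'$. At that point the diagonal tangency becomes a tangency at a vertex of $\Gamma$ of type (3a)/(3c) diagonal, or (5a). Meanwhile $P''$ either stays type (1a) (multiplicity $0$ from the table) or becomes (2a). Genericity condition (iii) of \autoref{def:fgenericRelToGamma} rules out liftable members in the relative interior of the move, since the remaining tangencies there are type (1a). So the only candidate is the endpoint member $\Lambda'$. I expect its tangency at the translate of $P'$ to be of type (3a) or (3c), with local multiplicity $2$, and the translate of $P''$ to be of type (2a), with multiplicity $1$. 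The product formula then gives $2\mult(\Lambda,P)$. Uniqueness holds because the motion terminates there: any further motion leaves $\bclass$ by \autoref{pr:unbounded}, or breaks tritangency.

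For (ii) and (iii), moving $v_0$ toward $v_1$ shortens the edge. If $e'$ and $e''$ are adjacent, the vertical leg through $v_0$ reaches their common vertex of $\Gamma$, so $v_0$ enters $\Gamma$. Continuing in the natural way, $P''$ migrates to the positive horizontal leg of the new curve as a type (1a) tangency, while the diagonal tangency persists. This gives (ii) by inspection of the local moves in \autoref{fig:PositionVerticesMixedDimension}. If $e'$ and $e''$ are not adjacent, the first degeneration occurs at an endpoint of $e'$ or of $e''$. As in (i), the unique endpoint configuration yields one diagonal tangency of multiplicity $2$ and a (2a) tangency, so again the lifting multiplicity is $2\mult(\Lambda,P)$.

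The main obstacle is the case bookkeeping for the endpoint configurations. I must check, for each diagonal type and each admissible dual triangle near $(3,2)$, that the degenerate tangency really has local multiplicity $2$ and not $\mu$ or $0$, and that exactly one factor of $1$ comes from $P''$. For the $\mu$-cases I would invoke \autoref{rm:4a6a4ap6ap}, showing that the two complementary tangencies lie in the same halfspace, so that $\mu=2$. I would handle this by listing the few possible partial Newton subdivisions around $(3,2)$ up to the symmetry fixing it.
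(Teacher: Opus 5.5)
Your strategy is the paper's: slide $v_0$ along the line of the edge of $\Lambda$ with $v_1$ and $P$ fixed, then read multiplicities off \autoref{tab:LiftingMultiplicities}. Your account of (ii) also matches the paper. The problem is the local picture you fix, which makes your argument for (i) wrong and leaves (iii) unproved.

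\textbf{The local picture.} Since $v_0\notin\Gamma$, the overlap of the edge of $\Lambda$ with $e'$ ends, on the side of $v_0$, at a vertex $C\neq v_0$ of $\Gamma$. The rest of the segment from $C$ to $v_0$ lies in $(3,2)^{\vee}$. The direction $(1,-1)$, opposite to $e'$, points into $(3,2)^{\vee}$ at $C$, so $(3,2)$ is the vertex of $C^{\vee}$ \emph{not} on $(e')^{\vee}$. In fact $(e')^{\vee}$ is a unit segment on the main diagonal, not $\overline{(2,1)(3,2)}$. Thus $e'$ meets $\overline{(3,2)^{\vee}}$ only at $C$.

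\textbf{Part (i).} Moving $v_0$ away from $v_1$ never takes it past an endpoint of $e'$, so the diagonal tangency keeps its type. That type is (3ac) or (3cc), with local multiplicity $2$, or (3bb1), where both sides of the claim vanish. Types (3aa), (3bb2) and (7) are excluded because $v_0\notin\Gamma$. The motion is stopped by $e''$ instead, in one of three ways:
\begin{itemize}
\item $P''$ reaches the right endpoint $B'$ of $e''$, giving type (2a);
\item if $e''$ has slope $-1/2$ (dual to $\overline{(2,0)(3,2)}$), the vertex $v_0$ may land on $e''$ first, giving type (4a) vertical;
\item both happen at once, giving type (6a) vertical.
\end{itemize}
All three have local multiplicity $1$, and that is what yields $2\mult(\Lambda,P)$. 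The degeneration you predict, with the diagonal tangency turning into (3a)/(3c) or (5a) at a vertex of $\Gamma$, never happens. You also miss the (4a)/(6a) vertical stops. No diagonal (4a)/(6a) tangency arises, so the $\mu$-bookkeeping you plan is not needed.

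\textbf{Part (iii).} The missing ingredient is the paper's comparison between $C$ and the left endpoint $B$ of $e''$. When $e'$ and $e''$ are not adjacent, $B\prec_v C$, and by convexity of $(3,2)^{\vee}$ the point $B$ also lies strictly to the right of $C$. Hence, as $v_0$ moves toward $v_1$, $P''$ reaches $B$ and becomes a type (2a) tangency strictly before $v_0$ reaches $C$. The diagonal tangency is untouched, and the product formula gives $2\mult(\Lambda,P)$.

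Your sentence ``the first degeneration occurs at an endpoint of $e'$ or of $e''$'' leaves open the case where $v_0$ hits $C$ while $P''$ is still of type (1a). In that case your claimed endpoint configuration (a multiplicity-$2$ diagonal tangency plus a (2a)) would not hold. Without this ordering, (iii) is not proved.

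\textbf{Genericity.} Condition (iii) of \autoref{def:fgenericRelToGamma} plays no role here. Intermediate members fail to lift simply because type (1a) has local multiplicity $0$.
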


\begin{figure}
  \includegraphics[scale=0.35]{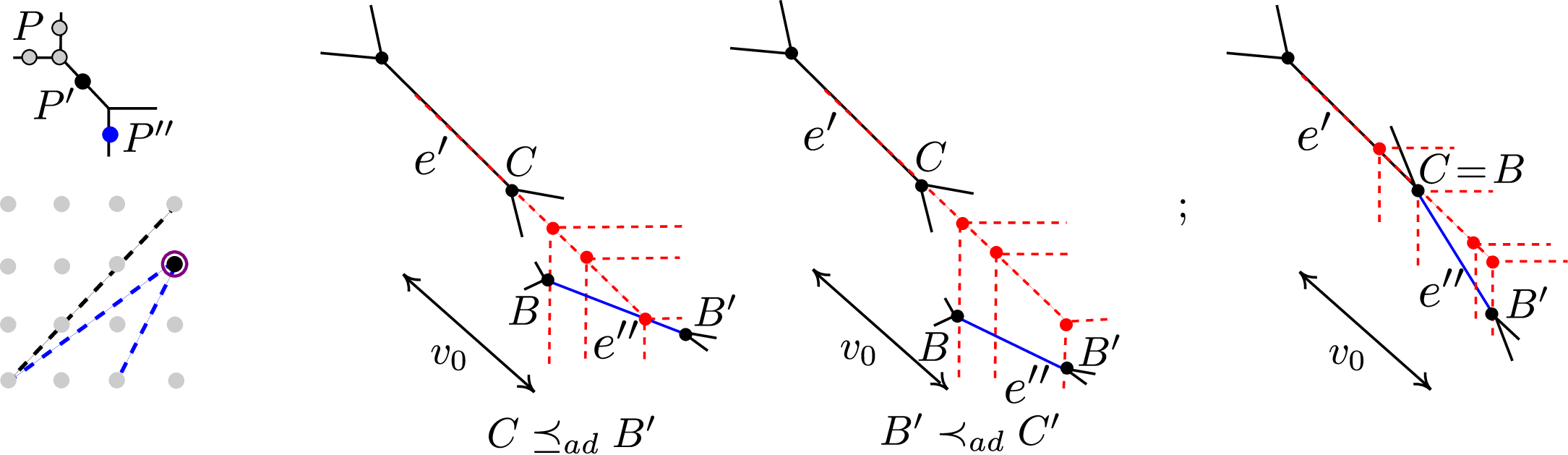}
  \caption{Movement of $v_0$ in the presence of a non-transverse diagonal tangency (labeled $P'$) and a type (1a) tangency along the negative vertical leg of $\Lambda$ (labeled $P''$), whenever $v_0\in (3,2)^{\vee}$, $\Lambda$ is trivalent and its unique edge has negative slope.\label{fig:3-cAnd1Vertical}}
  \end{figure}

\begin{proof} The configuration of tangency points on $\Lambda$ described in the statement agrees with the one seen on the left of~\autoref{fig:3-cAnd1Vertical}. Our assumptions that $v_0\notin \Gamma$ and $\Lambda' \in |\bclass|$ ensure that $P'$ has either a type (3bb1), (3ac) or (3cc) diagonal tangency. Thus, by \autoref{tab:LiftingMultiplicities}, we know that $\mult(\Lambda,P)=\mult(\Lambda,P,P')=0$ for type (3bb1), whereas $\mult(\Lambda,P, P')=2\mult(\Lambda,P)$ for the last two cases.

  To prove the three assertions, we analyze the movement constraints imposed by the tangency types of $P, P'$ and $P''$.  To simplify the exposition, we preserve the labeling of the tangency points as we perform our moves to obtain $\Lambda'$ from $\Lambda$. The required notation for the relevant endpoints $C, B, B'$ of the edges $e'$ and $e''$ is established in the figure.

  We start by proving the first claim.  As $B\prec_h B'$ and $B\preceq_v C$, we can move $v_0$ away from $v_1$ and obtain a new element of  $|\bclass|$. While the translated point $P''$ remains a type (1a) tangency, the resulting tritangent will not lift. Our movement is stopped by reaching the edge $e''$ with the lower vertex of $\Lambda'$ (if $C\prec_{ad} B'$) or $P''$ becomes the vertex $B'$ (if $C\preceq_{ad} B'$), leading to a type (4a), (2a) or (6a) vertical tangency at $P''$. Since $\mult(\Lambda', P'')=1$ by~\autoref{tab:LiftingMultiplicities} and the remaining two tangency points and their types are preserved, the lifting multiplicity of $\Lambda'$ is $2\mult(\Lambda, P)$.

  The proof of the other two items is similar.   By construction, we know that $B\prec_v C$ whenever $C\neq B$, or equivalent, when $e'$ and $e''$ are not adjacent. If $B \prec_v C$, moving $v_0$ towards $v_1$ will have a tangency (1a) vertical tangency at the translated point $P''$ until we reach a point $v_0'$ where $v_0'=_hB$. At that moment, $P''$ matches $B$ and becomes a type (2a) tangency. As in the first item, the lifting multiplicity of $\Lambda'$ is $2\mult(\Lambda, P)$. On the contrary, if $C=B$, we can move $v_0$ towards $v_1$ beyond $C$, obtaining a tritangent curve $\Lambda'$ with a non-transverse diagonal tangency along the edge of $\Lambda$ containing $P'$ and a type (1a) tangency  on its positive horizontal leg.
\end{proof}

\begin{lemma}\label{lm:MoveDown3-aAnd1H}
  Let  $(\Lambda,P,P',P'')$ be a tritangent tuple where $\Lambda$ is trivalent, with a slope -1 edge, that lies in the relative interior of a positive-dimensional cell of $\bclass$.
 Suppose that  $P'$ is a non-transverse diagonal tangency in the interior of the edge of $\Lambda$.   
  Assume that $v_0 \in (1,1)^{\vee}$ and  $P''$ is a type (1a) tangency on the positive horizontal leg of $\Lambda$. Then, 
  moving $v_0$ away from $v_1$ leads to at most one liftable tritangent containing $P$ as a tangency. Furthermore, its lifting multiplicity is $2\mult(\Lambda,P)$.
\end{lemma}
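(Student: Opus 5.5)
The argument will follow the proof of \autoref{lm:MoveDownUpRight3-cAnd1V}, with the roles of the horizontal and vertical directions exchanged and the chamber $(3,2)^{\vee}$ replaced by $(1,1)^{\vee}$. The first step is to pin down the configuration. Since $\Lambda$ has a slope $-1$ edge and $v_0\in(1,1)^{\vee}$ lies off $\Gamma$, \autoref{pr:unbounded} and the condition $\Lambda\in|\bclass|$ force a tangency on one of the legs adjacent to $v_0$. The bidegree of $\Gamma$ then places $P''$ on the positive horizontal leg along an edge $e''$ of $\Gamma$ bounding $(1,1)^{\vee}$. The diagonal tangency $P'$ lies on a slope $\pm1$ edge $e'$ of $\Gamma$ adjacent to the chamber containing $v_1$.

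Next I would restrict the type of $P'$ using \autoref{tab:LiftingMultiplicities}, as in the previous lemma. The non-transverse diagonal types compatible with $v_0\notin\Gamma$ and membership in $\bclass$ are (3bb1), (3ac), (3cc), and possibly (3aa).
\begin{itemize}
\item For (3bb1), $\mult(\Lambda,P,P')=0$.
\item For the other types, $\mult(\Lambda,P,P')=2\mult(\Lambda,P)$.
\end{itemize}
This factor is preserved by the move below, because moving $v_0$ away from $v_1$ along the slope $-1$ line through the edge of $\Lambda$ keeps $P'$ in the interior of that edge. Neither $P$ nor the type of $P'$ changes; only $\lenl$ varies.

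The core step is tracking $P''$ during the move. The point $P''$ slides along $e''$ and stays a type (1a) horizontal tangency, hence non-liftable, until one of two stopping events occurs.
\begin{itemize}
\item $P''$ reaches an endpoint of $e''$. Then it becomes a type (2a) tangency, or a (6a) horizontal tangency if $v_0$ arrives there simultaneously.
\item $v_0$ reaches $e''$ first. Then $v_0$ carries a type (4a) horizontal tangency.
\end{itemize}
Which event happens first is decided by the $\preceq_h$/$\preceq_{ad}$ order between the endpoints of $e''$ and the point of $e''$ hit by the diagonal through $v_0$, exactly as $C$ versus $B'$ was compared in \autoref{lm:MoveDownUpRight3-cAnd1V}(i). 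In each case the lifting multiplicity of $P''$ is $1$ by \autoref{tab:LiftingMultiplicities} and \autoref{rm:4a6a4ap6ap} (horizontal type), so \autoref{thm:totalLifting} gives total multiplicity $2\mult(\Lambda,P)$.

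I expect the main obstacle to be uniqueness: showing that no earlier liftable member appears, and that after the stopping point the move either leaves $\bclass$ or ceases to be "away from $v_1$". For this I would check that the whole segment before the stop keeps type (1a) at $P''$, and that the stop lies in the same cell closure by \autoref{rm:finevCoarseStructure}. I would also invoke \autoref{pr:unbounded} and \autoref{lm:unboundedtranslations} to exclude continuing into $(0,0)^{\vee}$-type unbounded cells, since $(1,1)^{\vee}$ is bounded and $e''$ blocks further motion without destroying tritangency.
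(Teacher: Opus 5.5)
\paragraph{Gap: where $v_0$ sits, and the case $C=B$.} Your handling of the case where the move simply runs until $P''$ reaches the lower endpoint of $e''$ agrees with the paper. However, you misplace $v_0$, and as a result you miss the one case in which the move does not stop.

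\emph{Where $v_0$ sits.} You take $v_0$ in the open chamber $(1,1)^{\vee}$. That chamber is bounded, so the negative vertical leg of $\Lambda$ would then meet $\Gamma$ in a component distinct from those of $P'$ and $P''$. This would force the third tangency $P$ onto a leg of $v_0$, where it is not preserved by the move. The intended configuration is different; see the caption of \autoref{fig:3-aAnd1Horizontal} and the way the lemma is used in \autoref{pr:dim2Parallelogram} and \autoref{pr:3c3acOr3cc13csameOrNot}. There $v_0$ lies on the boundary of $(1,1)^{\vee}$, in the relative interior of the slope $-1$ edge $e'$ carrying $P'$, and $e'$ is dual to $(0,0)$--$(1,1)$.

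\emph{The move and the missing comparison.} Moving $v_0$ away from $v_1$ slides it down $e'$ towards its lower endpoint $C$. Meanwhile $P''$ slides down $e''$ towards its lower endpoint $B$. The missing ingredient is the comparison of these two vertices. Convexity of $(1,1)^{\vee}$ and the bidegree of $\Gamma$ give $C\preceq_v B$, since the bottom edges of this chamber have slopes in $\{-1,0,1,2\}$. The inequality is strict unless $C=B$. When $C\neq B$, $P''$ reaches $B$ (type (2a)) while $v_0$ is still interior to $e'$. This is what actually guarantees that $P'$ keeps its type and local multiplicity $2$. It also shows that your ``(4a)'' stopping event never occurs, because $v_0$ never leaves $e'$ before the stop.

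\emph{The case $C=B$.} This happens exactly when the third vertex of $C^{\vee}$ is $(3,2)$ and $e''$ is dual to $(1,1)$--$(3,2)$. Then $v_0$ and $P''$ reach $C$ at the same time. At that moment $P'$ and $P''$ merge into a single component of multiplicity four, so this is not a (6a) tangency. Moreover, the move does not stop. Pushing $v_0$ past $C$ has three effects:
\begin{enumerate}
\item[(a)] $v_0$ enters $(3,2)^{\vee}$;
\item[(b)] the horizontal tangency is replaced by a type (1a) tangency on the negative vertical leg, along the edge dual to $(0,0)$--$(3,2)$;
\item[(c)] $P'$ becomes a non-transverse tangency whose lower end is the vertex $C$ of $\Gamma$.
\end{enumerate}
This is precisely the setting of \autoref{lm:MoveDownUpRight3-cAnd1V}(i). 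You need to invoke that lemma to locate the unique candidate liftable member in this case and to get multiplicity $2\mult(\Lambda,P)$. In this case your list of stopping events is wrong, and so is your claim that the type of $P'$ never changes. Appealing to \autoref{pr:unbounded} and \autoref{lm:unboundedtranslations} does not repair this, since the continuation is into a new chamber, not an unbounded cell.
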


\begin{proof} 
Let $e'$ and $e''$ be the edge of $\Gamma$ containing $P'$ and $P''$, respectively. 
The proof follows the same reasoning as that of~\autoref{lm:MoveDownUpRight3-cAnd1V}, using the notation established in~\autoref{fig:3-aAnd1Horizontal}.
Since $v_0\in (1,1)^{\vee}$, we know that  $C\preceq_v B$ and the inequality is strict whenever $C\neq B$.

\begin{figure}[t]
  \includegraphics[scale=0.35]{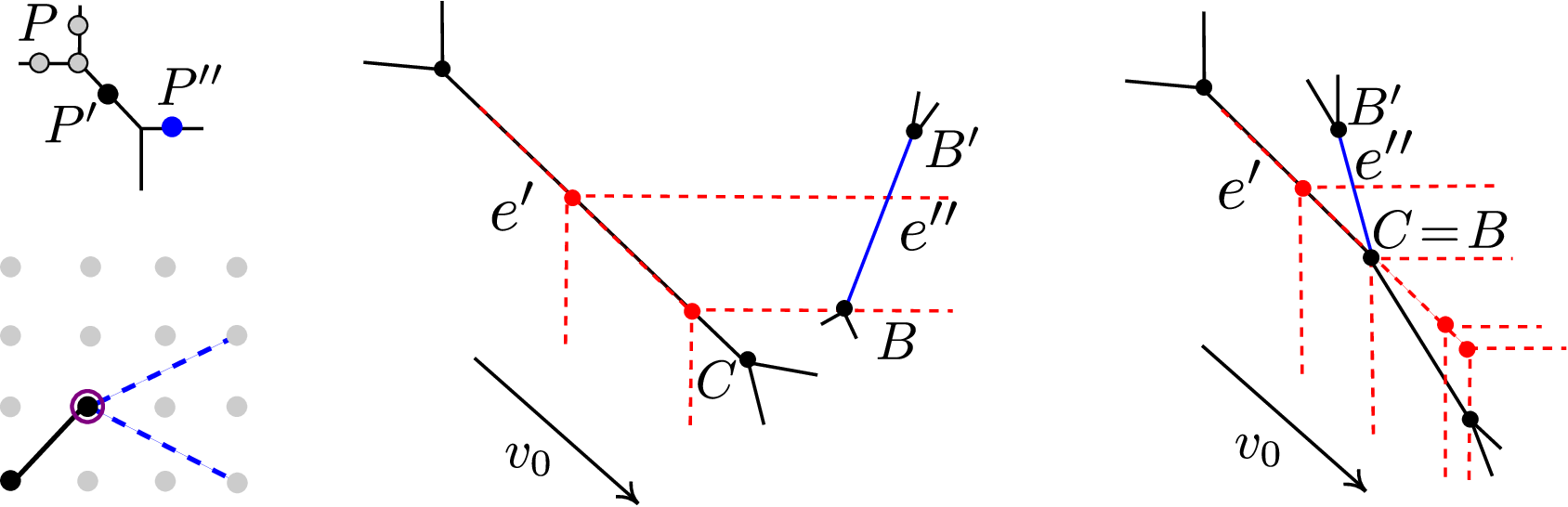}
  \caption{Movement of $v_0$ away from $v_1$ when $v_0$ lies in the boundary of $(1,1)^{\vee}$ and $\Lambda$ is trivalent with an edge of negative slope, having a non-transverse diagonal tangency $P'$ and a type (1a) tangency $P''$ along its positive horizontal leg.\label{fig:3-aAnd1Horizontal}}
  \end{figure}

If $C\neq B$, we can move $v_0$ away from $v_1$ while remaining in  $|\bclass|$. The movement stops when the translated point $P''$ reaches $B$ and the lifting multiplicity is $2\mult(\Lambda, P)$. In turn, when $C=B$ the movement continues beyond $C$ and transitions into the setting of~\autoref{lm:MoveDownUpRight3-cAnd1V} (i). Thus, the claim follows.
\end{proof}

\begin{figure}
  \includegraphics[scale=0.35]{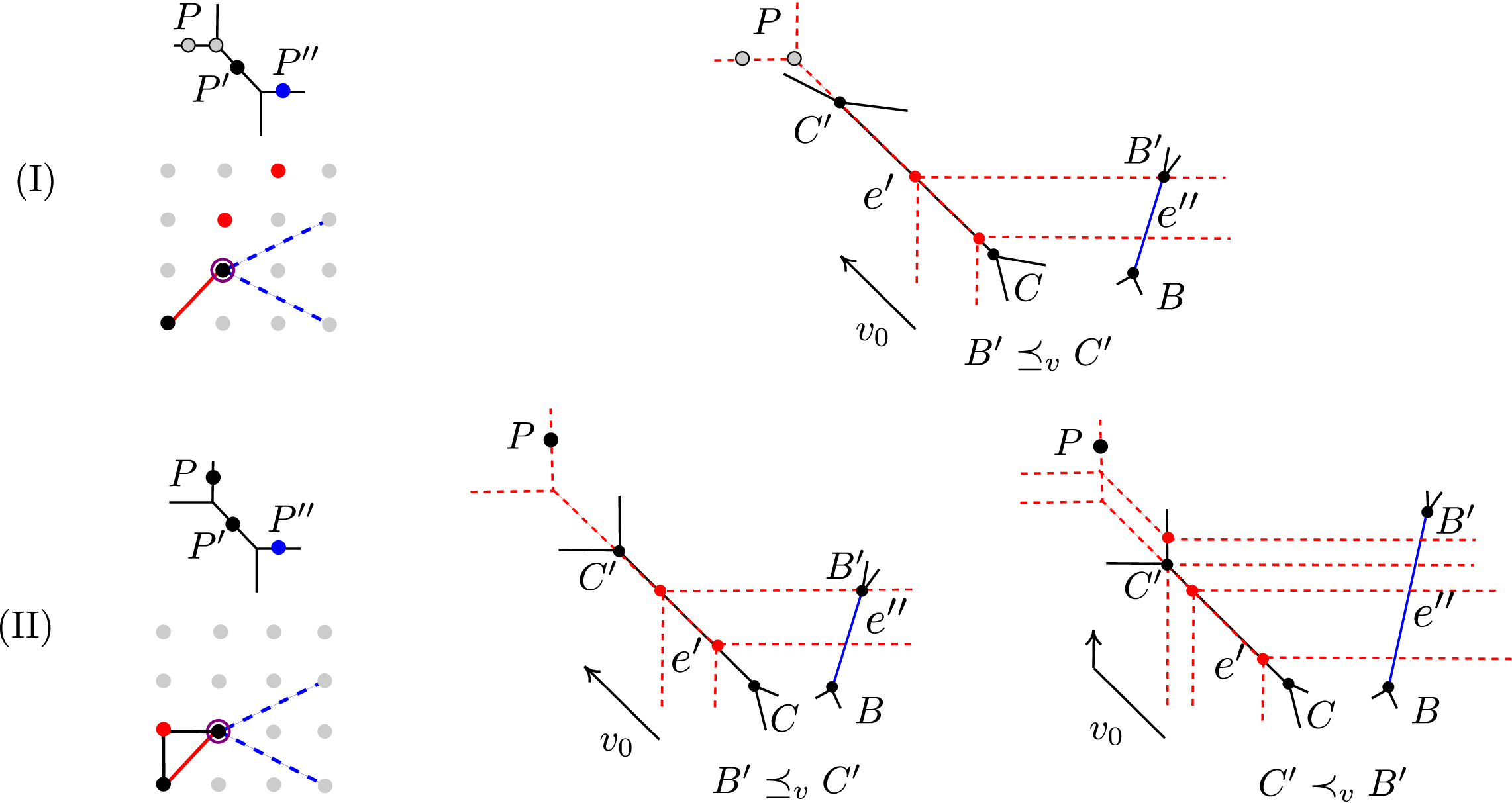}
  \caption{Movement of $v_0$ towards  $v_1$ when $v_0$ lies in the boundary of $(1,1)^{\vee}$ and $\Lambda$ is trivalent with an edge of negative slope, 
    in the presence of a (3ac) diagonal tangency $P'$ and a type (1a) tangency $P''$ along its positive horizontal leg.\label{fig:3caAnd1Horizontal}}
  \end{figure}

\begin{lemma}\label{lm:MoveUp3caAnd1H}
  Let  $(\Lambda,P,P',P'')$ be a tritangent tuple where $\Lambda$ is trivalent, with a slope -1 edge, that lies in the relative interior of a positive-dimensional cell of $\bclass$.
Assume that  $P'$ is a type (3ac) diagonal tangency  with $v_0\in \Gamma$, and  $P''$ is a type (1a) tangency  on the positive horizontal leg of $\Lambda$.
  Let $e'$ and $e''$ be the edges of $\Gamma$ containing $P'$ and $P''$, respectively, and denote by  $C'$ and $B'$ the top-most vertices of $e'$ and $e''$, respectively.
  Then, 
  moving $v_0$ towards $v_1$ leads to one of two possible scenarios:
  \begin{enumerate}[(i)]
  \item If $B'\preceq_v C'$, then 
    the movement produces at most one liftable tritangent with a tangency at $P$,  with lifting multiplicity $2\mult(\Lambda,P)$. 
    \item If $C'\prec_v B'$, then $P$ lies in the vertical leg of $\Lambda$, and  we can move $v_0$ upwards past $C'$ (and $v_1$, accordingly) to produce a tritangent with tangencies on its  negative vertical and positive horizontal legs,  of types (3a) and (1a), respectively.
  \end{enumerate}
\end{lemma}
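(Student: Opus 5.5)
We follow the template of the proofs of \autoref{lm:MoveDownUpRight3-cAnd1V} and \autoref{lm:MoveDown3-aAnd1H}: we track how the tangency types at $P'$ and $P''$ change as $v_0$ moves towards $v_1$, while the tangency at $P$ is kept fixed. First we record the local picture. Since $P'$ is of type (3ac) and $v_0\in\Gamma$, $v_0$ lies in the component of $\Lambda\cap\Gamma$ containing $P'$. So $v_0$ sits on the edge $e'$, and the diagonal tangency runs along the slope $-1$ edge of $\Lambda$ towards $C'$. By \autoref{tab:LiftingMultiplicities}, $\mult(\Lambda,P')=2$, while $\mult(\Lambda,P'')=0$ because $P''$ has type (1a). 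Hence $\Lambda$ itself does not lift, and any liftable member reached along the path must come from $P''$ degenerating into a tangency with multiplicity one. The bidegree of $\Gamma$, together with the fact that $P''$ lies on the positive horizontal leg, fixes the relevant part of the Newton subdivision. We will use it to compare $B'$ and $C'$ via $\prec_v$, with notation as in \autoref{fig:3caAnd1Horizontal}.

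Next we describe the move itself. We slide $v_0$ along $e'$ towards $v_1$, that is, upwards along the slope $-1$ line. Meanwhile $v_1$ is adjusted so that $P$ stays fixed; this is possible by the local moves of \autoref{pr:localMovesTritangents} and \autoref{lm:OpenCellsTrivalent} with $k_0=1$. Throughout the move $P'$ remains a non-transverse diagonal tangency of type (3ac) or (3cc). The positive horizontal leg of $\Lambda$ rises with $v_0$, so the translate of $P''$ moves upward along $e''$ while staying of type (1a). The move therefore stops at the first of two events: the horizontal leg reaches $B'$, or $v_0$ reaches $C'$.

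\emph{Case $B'\preceq_v C'$.} The horizontal leg hits $B'$ first, or at the same moment as $v_0$ reaches $C'$. At that moment $P''$ becomes a type (2a) tangency, or (4a)/(6a) horizontal if $v_0=C'$ simultaneously. Each of these has local multiplicity $1$ by \autoref{rm:4a6a4ap6ap}. If $B'\prec_v C'$, the tangency $P'$ keeps type (3ac)/(3cc), with multiplicity $2$. When equality holds, the coincidence must be checked against the table: we expect a (5a) or (6a) tangency at $v_0=C'$, and this is the delicate subcase. By \autoref{thm:totalLifting} the lifting multiplicity is $2\mult(\Lambda,P)$. Uniqueness follows because before the stop every member has a type (1a) tangency at $P''$, hence multiplicity $0$.

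\emph{Case $C'\prec_v B'$.} Now $v_0$ reaches $C'$ before $P''$ reaches $B'$. Past $C'$ the diagonal tangency is lost. We then use the bidegree, together with the fact that $P$ cannot sit on the negative horizontal leg (it would collide with the diagonal configuration and $e''$), to argue that $P$ lies on the negative vertical leg. Moving $v_0$ vertically upward beyond $C'$ turns the contact at $C'$ into a horizontal (3a)-type tangency on the adjacent leg of $\Gamma$ through $C'$, which is now met by the negative vertical leg. The leg through $P''$ still meets $e''$ transversally-tangent, so that tangency stays of type (1a). The main obstacle is precisely this case: checking the dual triangles at $C'$ in the Newton subdivision, so as to confirm the new contact is of type (3a) rather than (3c) or (4a), and that $P$ is forced onto the vertical leg. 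We would settle this by enumerating the triangles of the subdivision adjacent to $e'^{\vee}$ that are compatible with $\sC\in\bclass$ and with \autoref{pr:unbounded}.
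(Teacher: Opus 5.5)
Your plan matches the paper's. You slide $v_0$ up along $e'$, with $v_1$ held on the same slope $-1$ line by $P$, and stop at whichever of $B'$, $C'$ comes first; the strict case $B'\prec_v C'$ is handled correctly. The gap is the combinatorial fact the lemma rests on, which you defer as the ``main obstacle''. The cell $(C')^{\vee}$ is a unimodular triangle containing $\{(0,0),(1,1)\}$, on the same side of the diagonal as $(0,1)$. So $(C')^{\vee}=\{(0,0),(1,1),(i,i+1)\}$ with $i\in\{0,1,2\}$. For $i=1$ the boundary of $\overline{(1,1)^{\vee}}$ leaves $C'$ horizontally to the right; for $i=2$ it leaves in direction $(2,-1)$. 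In both cases convexity puts the whole chamber, and with it $B'$, weakly below $C'$. Hence $C'\preceq_v B'$ forces $i=0$. Then at $C'$ the curve $\Gamma$ has a horizontal leg going left and a vertical edge going up (dual to $\{(0,1),(1,1)\}$), and $v_1\in (0,1)^{\vee}$.

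Without this fact your case (ii) goes wrong in concrete ways. Since $(0,1)^{\vee}$ has recession cone $\RR_{\geq 0}(-1,0)$, the negative horizontal leg at $v_1$ misses $\Gamma$. So $P$ lies on the \emph{positive} vertical leg at $v_1$, not the negative vertical leg, which is attached to $v_0$ and is the leg that acquires the new tangency. That new tangency is a \emph{vertical} (3a): $v_0$ climbs the vertical edge above $C'$, and the negative vertical leg of $\Lambda$ overlaps $[C',v_0]$. It is not a horizontal (3a) on the leg of $\Gamma$ through $C'$. Sliding $v_0$ along that leg is the other option, and it only yields special tritangents with two tangencies on the positive horizontal leg. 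Meanwhile $v_1$ moves up the vertical line through $P$, or along the edge carrying $P$ if $P$ has type (4a)/(6a). The same fact resolves your ``delicate subcase'' in (i): $C'=_v B'$ also forces $i=0$. So when $v_0=C'$, two trivalent vertices meet transversally with stable multiplicity $2$, a type (5a) tangency of local multiplicity $2$, and $P''=B'$ has type (2a). The (4a)/(6a) options you mention at $P''$ do not arise, since $v_0$ only travels along $e'$ towards $C'$, which is not on $e''$.
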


\begin{proof} The proof is outlined in~\autoref{fig:3caAnd1Horizontal}. The approach depends on the combinatorics of the dual cell to $C'$. By construction, the dual edge to $e'$ has vertices $(0,0)$ and $(1,1)$. Thus, the third vertex of $(C')^{\vee}$ equals $(i,i+1)$ for some $i\in\{0,1,2\}$.

  Whenever $B'\prec_v C'$, we can continuously move $v_0$ towards $v_1$ while remaining in $|\bclass|$, where $P'' \in e''$ remains of type (1a). The movement stops when  $P''$ reaches $B'$, and we obtain a type (2a) tangency. Thus, only the last one can potentially lift. Furthermore, its  lifting multiplicity equals $2\mult(\Lambda, P)$, since the tangency type of $P'$ remains (3ac) because $C'\prec_v v_1$.

  On the contrary, when $C'\preceq_v B'$, it follows that $i=0$, so the tangency point $P$ lies in the vertical leg of $\Lambda$, as in the second row of the figure. If $C'=_vB'$, the movement stops when $v_0$ reaches $C'$. The resulting tritangent has a  type (5a) tangency at the point $P'$ and a type (2a) tangency at $P''$. We conclude from this that  we obtain at most one liftable member when moving $v_0$ towards $v_1$, with lifting multiplicity $2\mult(\Lambda, P)$ as per~\autoref{tab:LiftingMultiplicities}.

  Notice that the only  movement  of $v_0$ past $C'$  preserving $P$ as a tangency is horizontal, along the horizontalleg of $\Gamma$ containing $C'$. By construction, the resulting tritangents are special, with two tangencies along its positive horizontal leg, one of which is $B'$. Thus, none of these members lift.
  
  Finally, if $C'\prec_v B'$ we can move $v_0$ vertically past $C'$ while $P''\in e''$ remains a type (1a)  tangency and $P'$ becomes a type (3a) vertical tangency. The vertex $v_1$ will move upwards unless $P$ has tangency type (4a) or (6a), in which case it will move travel along the edge of $\Gamma$ that contains $P$ in the boundary of the chamber $(0,1)^{\vee}$ and is responsible for this vertical tangency.
  \end{proof}

Our next two lemmas focus on the case when the tritangent $\Lambda$ has a slope one edge and contains two tangency points, of types (3a) horizontal and (1a) vertical, on the boundary of the chamber $(1,2)^{\vee}$ of $\RR^2\smallsetminus \Gamma$. By construction, the remaining tangency point between $\Lambda$ and $\Gamma$ (which we call $P$) lies in the positive horizontal leg of $\Lambda$. Our objective is to determine which liftable members we encounter when performing any movement of $\Lambda$ that preserves $P$. Our first statement discusses the left-oriented movement, while~\autoref{lm:MoveRightFrom3a1P} treats  displacements towards the right.

  \begin{figure}[t]
  \includegraphics[scale=0.35]{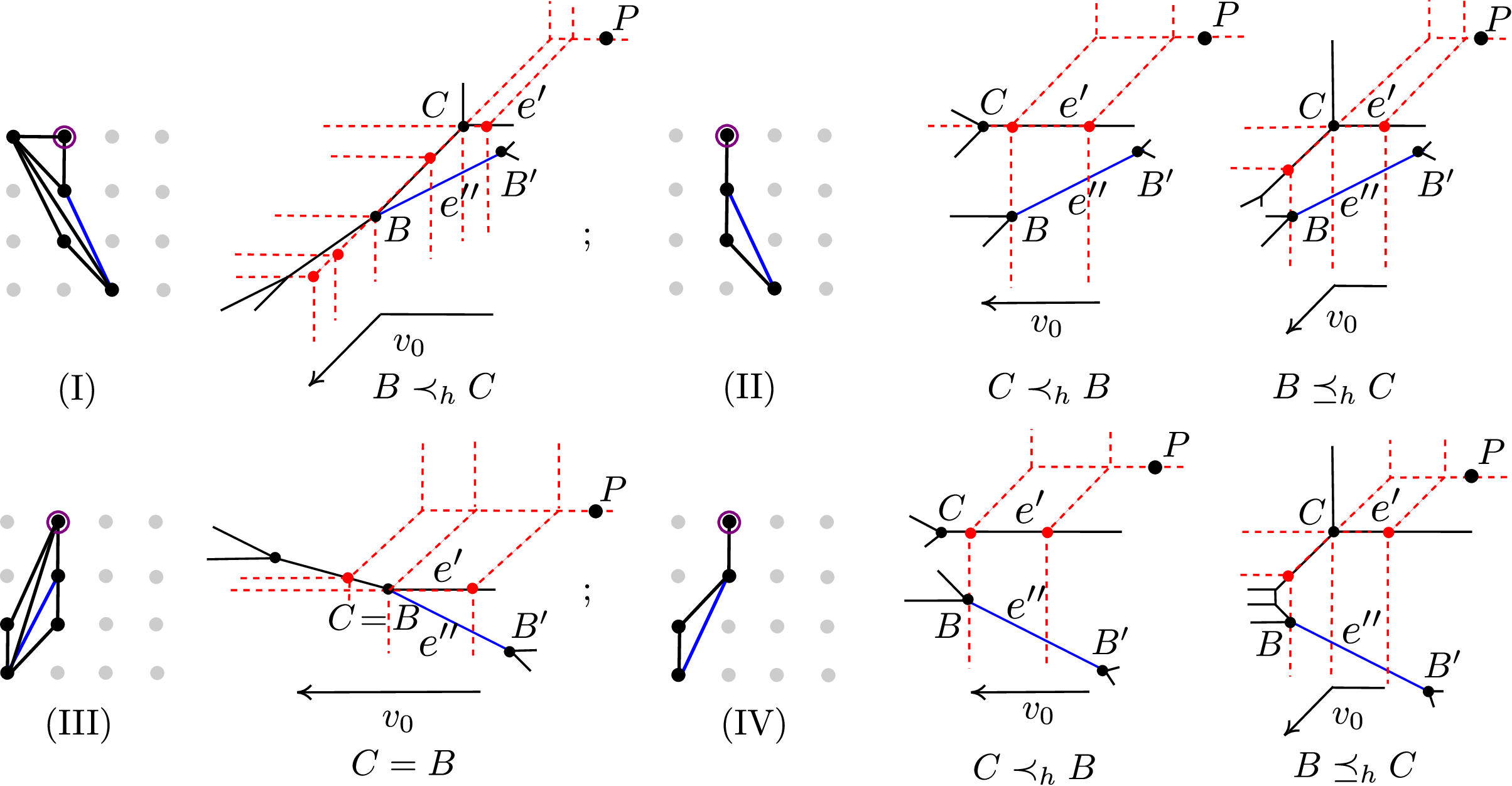}
  \caption{Left-oriented movement of $v_0$ in the presence of a (3a) tangency on the negative horizontal leg of $\Lambda$,  a type (1a) tangency along its negative vertical leg  and an extra tangency labeled $P$ on the positive horizontal leg that is fixed throughout the movement. Here,  $\Lambda$ is trivalent, its unique edge has slope one and $v_1\in \overline{(1,3)^{\vee}}$ .\label{fig:3aAnd1VerticalLeftMove}}
  \end{figure}

\begin{lemma}\label{lm:MoveLeftFrom3a1P}
  Let $(\Lambda,P,P',P'')$ be a tritangent tuple where $\Lambda$ is trivalent and has a slope one edge, which lies in the relative interior of a positive-dimensional cell of $\bclass$. Assume that $P'$ is a (3a) horizontal tangency and $P''$ is a (1a) vertical tangency, both in  $\overline{(1,2)^{\vee}}$, as in~\autoref{fig:3aAnd1VerticalLeftMove}.  Then, 
  moving $v_0$  in the left direction while preserving $P$ leads to one of two possible scenarios:
  \begin{enumerate}[(i)]
  \item If the edges of $\Gamma$ containing $P'$ and $P''$ have the same left-most vertex, we encounter a member of $|\bclass|$ with a type (4b) tangency at $v_0$.
    \item In all other cases,  we obtain at most one liftable member through this movement  and its lifting multiplicity equals $2\mult(\Lambda, P)$. Furthermore, the location of its lower vertex depends only on the position of $P'$ and $P''$ within this curve.
  \end{enumerate}
\end{lemma}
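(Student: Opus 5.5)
We will argue exactly as in the proofs of \autoref{lm:MoveDownUpRight3-cAnd1V} and \autoref{lm:MoveDown3-aAnd1H}: we follow the configuration through the left-oriented movement and track how the types of $P'$ and $P''$ change, using \autoref{tab:LiftingMultiplicities} to read off local lifting multiplicities. First we fix notation from the partial Newton subdivision forced by the configuration. Since $P'$ is a (3a) horizontal tangency on the negative horizontal leg and $P''$ a (1a) tangency on the negative vertical leg, both on the boundary of $(1,2)^{\vee}$, the vertex $v_0$ lies in $(1,2)^{\vee}$ (or on its boundary). We let $e'$ and $e''$ be the edges of $\Gamma$ containing $P'$ and $P''$. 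We write $A$ for the left-most vertex of $e''$ and $B$ for the left-most vertex of the horizontal edge $e'$ (the other endpoint of $e'$ being $v_0$-adjacent). Throughout the move the tangency $P$ on the positive horizontal leg stays fixed. This forces $v_1$ to slide horizontally, so $v_0$ and $v_1$ move together to the left along the line $\{y=\text{const}\}$ through $P'$. This preserves the (3a) type of $P'$ as long as $v_0$ stays in the relative interior of $e'$, and it translates $P''$ leftwards along $e''$, which it crosses transversally with multiplicity $2$.

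Next we determine when the movement stops, comparing $A$ and $B$ via $\prec_h$. If $A\neq B$, the bidegree of $\Gamma$ and the slopes of $e',e''$ (with $e''$ dual to an edge with endpoint $(1,2)$) give a strict order. Two cases arise:
\begin{enumerate}[(a)]
\item If $A\prec_h B$, then $v_0$ reaches $B$ first. The tangency at $v_0$ becomes of type (4a) or (6a) horizontal, or (5a), and in these cases we show it is a terminal position in $|\bclass|$.
\item If $B\prec_h A$, then $P''$ reaches the vertex $A$ first and becomes a type (2a) vertical tangency, as in the proof of \autoref{lm:MoveDown3-aAnd1H}.
\end{enumerate}
In each case only the terminal tritangent has all three local multiplicities nonzero. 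The (3a) tangency $P'$ and the new endpoint tangency then contribute multiplicities $2$ and $1$ by \autoref{tab:LiftingMultiplicities}, so the product formula of \autoref{thm:totalLifting} gives $2\mult(\Lambda,P)$. The intermediate members carry a (1a) tangency and have multiplicity $0$. The terminal location of $v_0$ is $B$, or the point on the horizontal through $P'$ diagonally aligned with $A$ via the fixed edge of $\Lambda$. This depends only on $P'$ and $P''$, which yields (ii).

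If $A=B$, the edges $e'$ and $e''$ share their left-most vertex. When $v_0$ reaches this common vertex, the components of $\Lambda\cap\Gamma$ through $P'$ and $P''$ merge at $v_0$ into a single multiplicity-four tangency. Checking against \autoref{fig:classificationLocalTangencies}, this is a (4b) tangency at $v_0$. We must verify that this member lies in $|\bclass|$ by checking it is not in any set of \eqref{eq:unboundedCellsToToss} or \eqref{eq:secondSetToRemove}. This holds because $v_0\in\Gamma$ is not in an unbounded chamber of the types listed in \autoref{pr:unbounded}, and the tangency is not of the form described in \autoref{lm:unboundedtranslations}(iii). This gives (i).

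The main obstacle is the case analysis at the stopping point. We need to confirm, using only the bidegree and the fact that $v_0^{\vee}\ni(1,2)$, that the only possible stopping events are the ones listed. We also need to confirm that the movement never exits $|\bclass|$ (for instance by $v_0$ entering $(0,2)^{\vee}$, which would lie in $\mathscr{U}$-type cells) before one of these events occurs. We would settle this by enumerating the possible triangles $B^{\vee}$ and $A^{\vee}$ adjacent to $(1,2)$, as in the figure for this lemma.
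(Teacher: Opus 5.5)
Your case (b), where the left endpoint $B$ of $e'$ lies strictly to the left of the left endpoint $A$ of $e''$, is fine. $P''$ reaches $A$ and becomes (2a), $P'$ is still (3a), and you get $2\mult(\Lambda,P)$. One correction: $v_0$ then sits \emph{vertically} above $A$, not diagonally aligned with it.

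The genuine gap is case (a), $A\prec_h B$. Here the boundary of $(1,2)^{\vee}$ leaves $B$ towards the left. By convexity of the chamber this forces $B^{\vee}=\{(1,2),(1,3),(0,3)\}$. So a slope one edge of $\Gamma$, dual to the segment joining $(1,2)$ and $(0,3)$, leaves $B$ in direction $(-1,-1)$.

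When $v_0$ reaches $B$ you get a vertex--vertex tangency of type (5a). But $P''$ is still in the interior of $e''$ and of type (1a), so this member has lifting multiplicity zero. Your count ``(3a) contributes $2$, the new endpoint tangency contributes $1$'' does not apply: the (3a) tangency has been absorbed into $v_0$, and no (2a) tangency has appeared.

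Nor is this position terminal. Keep $v_1$, and hence $P$, fixed and slide $v_0$ down-left along that slope one edge. The edge of $\Lambda$ overlaps it in a diagonal non-transverse tangency of type (3ac), and $P''$ keeps moving left along $e''$ as a (1a) tangency. None of the conditions defining $\mathscr{U}^{\pm}_i$ or $\tR^{\pm}_i$ holds, so these members stay in $|\bclass|$. The motion stops only when $P''$ reaches $A$ and becomes (2a). That member is the liftable one, with multiplicity $2\cdot 1\cdot\mult(\Lambda,P)$, since (3ac) has local multiplicity $2$. This is the (3ac)/(3cc) alternative in the paper's proof, symmetric to the right of \autoref{fig:3-aAnd1Horizontal}.

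You also dismiss the case $A=_hB$ with $A\neq B$ by asserting a strict order. It does occur for special edge lengths: the boundary of $(1,2)^{\vee}$ can go down-left, down, then down-right back to the same abscissa. Then $v_0$ hits $B$ (type (5a)) exactly when $P''$ hits $A$ (type (2a)). The correct uniform stopping rule is that $P''$ becomes (2a) at the left endpoint of $e''$. At that moment $P'$ is of type (3a), (5a) or (3ac)/(3cc), according to the $\prec_h$-comparison of the two left endpoints.

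Case (i) contains a similar misidentification. When $A=B$, the dual triangle is $\{(1,2),(1,3),(0,0)\}$. The multiplicity four tangency you get when $v_0$ arrives at the common vertex is a vertex--vertex tangency, i.e.\ type (5b) (compare the proof of \autoref{lm:type4b_1}), not (4b). A (4b) tangency needs $v_0$ in the relative interior of an edge of $\Gamma$. You reach a (4b) member only by continuing past the vertex along the slope $-1/3$ edge dual to the segment joining $(1,3)$ and $(0,0)$. That is the member whose membership in $|\bclass|$ you need to verify.

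Your closing paragraph flags exactly this stopping-point analysis as unfinished. Carrying out the enumeration of $B^{\vee}$ is what would have exposed both errors.
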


\begin{proof} We let $e'$ and $e''$ be  edges of $\Gamma$ containing $P'$ and $P''$, respectively. We let $C$ and $B$ be the left-most endpoints of $e'$ and $e''$, respectively. \autoref{fig:3aAnd1VerticalLeftMove} describes four different scenarios, depending on the possible combinations of dual cells $C^{\vee}$ and $B^{\vee}$, the relative $\preceq_h$ order between $B$ and $C$, and whether or not $C=B$. The last situation is the only one in which we encounter a type (4b) tangency during the left movement of $\Lambda$, and appears in the case labeled (III) in the figure.

  When $C\neq B$, the figure confirms that we reach at most one liftable tritangent as a result of this movement. Furthermore, the tangencies on this curve complementing $P$ have fixed types. Indeed, the curve has a type (2a) tangency at the translated point $P''$ and a non-transverse one at $P'$, which can be of type (3a) (if $C\prec_h B$),  type (5a)  (if  $C=_hB$), or a diagonal non-transverse one of types (3ac) or (3cc) (if $B\prec_h C$). The latter one is symmetric to the setting of the rightmost picture in~\autoref{fig:3-aAnd1Horizontal}.

  In all  three cases, the lifting multiplicity of the new tritangent equals $2\mult(\Lambda, P)$ and the position of its lower vertex  is independent of $P$. Any other member of $|\bclass|$ obtained along the way has a type (1a) or (3bb1) tangency and hence does not lift.
\end{proof}

\begin{figure}[t]
  \includegraphics[scale=0.35]{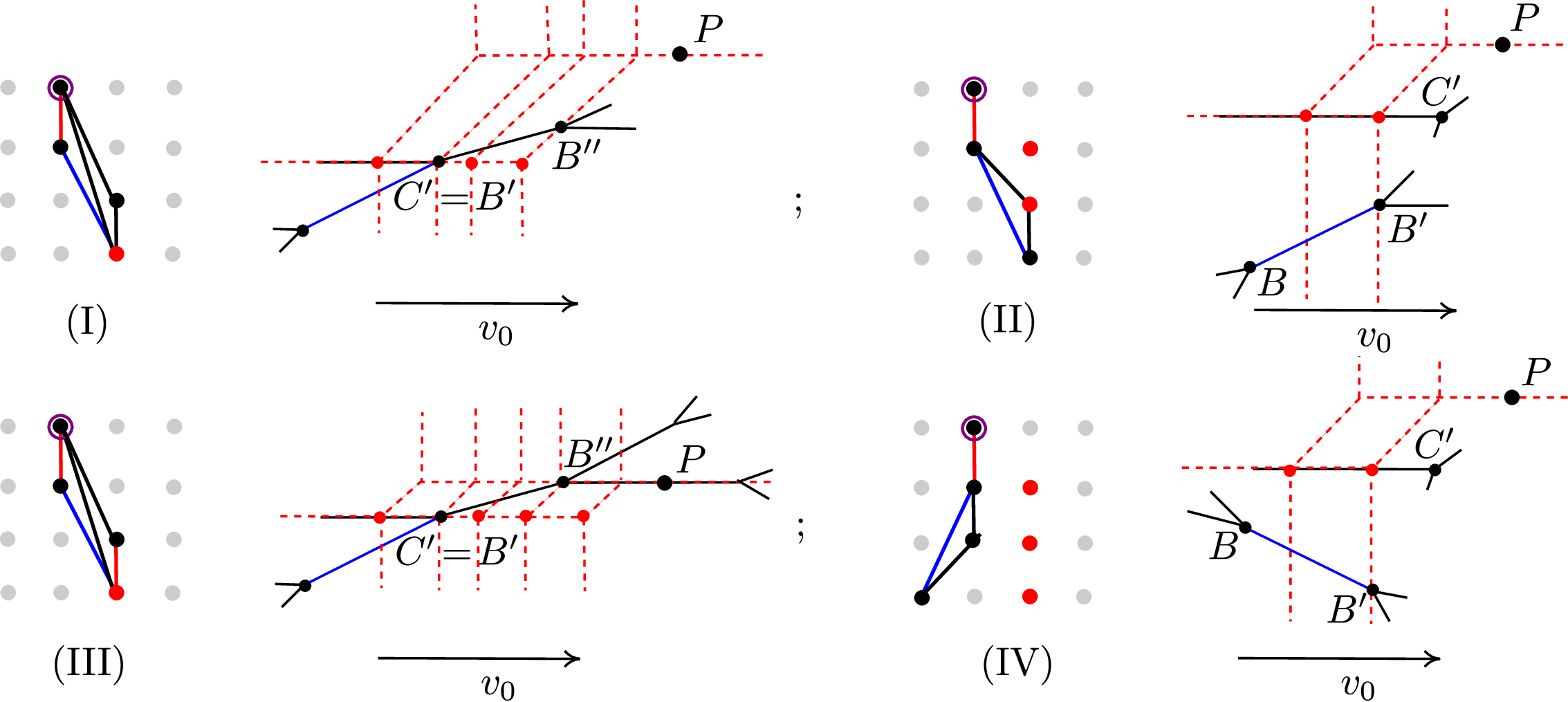}
  \caption{Right-oriented movement of $v_0$ in the presence of a (3a) tangency on the negative horizontal leg of $\Lambda$,  a type (1a) tangency along the negative vertical leg  and an extra tangency labeled $P$ on the positive horizontal leg that is fixed. Here,  $\Lambda$ is trivalent, its unique edge has slope one and  $v_1\in \overline{(1,3)^{\vee}}$ .\label{fig:3aAnd1VerticalRightMove}}
  \end{figure}

\begin{lemma}\label{lm:MoveRightFrom3a1P}
  Let $(\Lambda,P,P',P'')$ be a tritangent tuple where $\Lambda$ is trivalent and has a slope one edge, which lies in the relative interior of a positive-dimensional cell of $\bclass$. Suppose that  $P'$ is a (3a) horizontal tangency, $P''$ is a (1a) vertical tangency, both in  $\overline{(1,2)^{\vee}}$, and $P$ is a tangency on the positive horizontal leg of $\Lambda$, as in~\autoref{fig:3aAnd1VerticalRightMove}.  Then, 
  moving $v_0$ towards the right while preserving $P$ leads to one of two possible scenarios:
  \begin{enumerate}[(i)]
  \item If $P$ and $P''$ are not in the boundary of the same chamber of $\RR^2\smallsetminus \Gamma$, we obtain at most one liftable member with a tangency at $P$, and its lifting multiplicity equals $2\mult(\Lambda, P)$.
    \item In all other cases, we reach a type (3bb) horizontal tangency, containing $P$. Moving further to the right while preserving the type (3c) tangency at the translated point $P'$,  produces a member with a type (3a) horizontal tangency and a type (1a) vertical one, both in  $\overline{(2,1)^{\vee}}$.
  \end{enumerate}
\end{lemma}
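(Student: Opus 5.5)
The argument mirrors the proof of \autoref{lm:MoveLeftFrom3a1P}: a case analysis driven by the partial Newton subdivision of $\sextic$ that the configuration imposes. We let $e'$ and $e''$ be the edges of $\Gamma$ containing $P'$ and $P''$, and let $C$ and $B$ be their right-most endpoints. Since $P'$ is a (3a) horizontal tangency on the negative horizontal leg and $P''$ a (1a) vertical tangency on the negative vertical leg, both in $\overline{(1,2)^{\vee}}$, moving $v_0$ to the right translates $P'$ along $e'$ and $P''$ along $e''$. The vertex $v_1$ is adjusted along the slope one line so that $P$ stays fixed on the positive horizontal leg; in particular $v_1$ moves horizontally. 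The first step is to record the possible dual cells $C^{\vee}$, $B^{\vee}$ and the chamber containing $P$. Together with the bidegree of $\Gamma$, these give a finite list of pictures analogous to \autoref{fig:3aAnd1VerticalRightMove}.

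For case (i), assume $P$ and $P''$ do not bound a common chamber. As $v_0$ moves right, the type (3a) tangency at $P'$ persists, since $v_0$ travels along a horizontal leg of $\Gamma$, and $P''$ remains of type (1a). By \autoref{tab:LiftingMultiplicities}, none of these intermediate members lift. The motion stops when one of three events occurs. Either $P''$ reaches $B$, producing a type (2a) tangency; or $v_0$ reaches $C$, producing type (5a) or, after the turn, a diagonal (3ac)/(3cc); or $P''$ meets the vertex of $\Lambda$, producing a (4a)/(6a) tangency. Which event occurs first is decided by the $\preceq_h$/$\preceq_v$ comparison of $B$ and $C$. In each branch the resulting member has one complementing tangency of multiplicity $1$ and one of multiplicity $2$. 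Since $P$ is unchanged, \autoref{thm:totalLifting} gives lifting multiplicity $2\mult(\Lambda,P)$, and every other member met along the way carries a (1a) or (3bb1) tangency. We also check that the curve never leaves $|\bclass|$: \autoref{pr:unbounded} and \autoref{rm:secondCellsToToss} exclude this, because $v_0$ stays off $(3,0)^{\vee}$ and the leg containing $P$ prevents $v_1$ from entering $(3,3)^{\vee}$.

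For case (ii), assume $P$ and $P''$ lie on the boundary of the same chamber. The bidegree then forces that chamber to be $(2,1)^{\vee}$ or a neighbour of it, with $e''$ adjacent to the horizontal leg of $\Gamma$ carrying $P$. Moving right, the translated $P''$ eventually reaches this leg. At that moment $P''$ and $P$ merge into a single component of $\Lambda\cap\Gamma$ of multiplicity four along a horizontal segment, which is a type (3bb) tangency containing $P$. Continuing to the right, we keep $P'$ as a (3c) tangency on the horizontal edge it now occupies, which is allowed by the local moves of \autoref{pr:localMovesTritangents}. The component then splits again into a (3a) horizontal tangency and a (1a) vertical tangency, both now on $\overline{(2,1)^{\vee}}$, as claimed.

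The main obstacle is the completeness of the enumeration in case (i). We must show that no configuration produces two liftable stopping points, for instance by $P''$ passing $B$ and then $v_0$ also reaching $C$. This is ruled out by observing that the first stopping event changes the tangency type of $P''$ or $P'$ in a way that leaves $|\bclass|$ or makes the member special (\autoref{lm:specialConfigPrecisions}). To check this, we would first compare $B$ and $C$ under $\preceq_{ad}$, as in the proof of \autoref{lm:MoveDownUpRight3-cAnd1V}.
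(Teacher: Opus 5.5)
Your proposal has a genuine gap. The stopping events in case (i) are carried over from the left-moving lemma and do not describe the right-moving one, and the mechanism you give for case (ii) cannot occur.

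The missing first step is to compare the right-most endpoints $C$ of $e'$ and $B$ of $e''$: one has $B\preceq_h C$, with equality only when $B=C$. If $C\prec_h B$, the chamber $(1,2)^{\vee}$ would extend to the right below $C$, which forces $(2,3)\in C^{\vee}$. Then an unbounded vertical leg of $\Gamma$ leaves $C$, and $\Lambda$ must meet it in a component of odd multiplicity, which is impossible. Consequently your events ``$v_0$ reaches $C$ first'' and ``$P''$ meets the vertex of $\Lambda$'' can only happen when $B=C$, and then they happen simultaneously. If $B\prec_h C$, the motion stops when $P''$ reaches $B$, giving a type (2a) tangency there while $P'$ is still of type (3a). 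It stops because beyond $B$ the negative vertical leg would cross the next bottom edge of $(1,2)^{\vee}$ with odd multiplicity. It is not, as your last paragraph claims, because the curve leaves $|\bclass|$ or becomes special.

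If $B=C$, then $C^{\vee}$ is the triangle with vertices $(1,2)$, $(1,3)$, $(2,0)$. The member with $v_0=C$ has a (3bb) horizontal tangency, of local multiplicity zero, not a (4a)/(6a) one, and the motion does \emph{not} stop there. Moving past $C$ turns $P'$ into a (3c) tangency and $P''$ into a diagonal (1a) tangency. The latter lies on the slope $1/3$ edge of $\Gamma$ dual to $(1,3)$--$(2,0)$, which separates $(1,3)^{\vee}$ from $(2,0)^{\vee}$. One continues until this diagonal tangency reaches the other endpoint $B''$ of that edge, and there the two cases split.
\begin{itemize}
\item If $P\notin\overline{(2,0)^{\vee}}$, one obtains a (2a) diagonal tangency at $B''$. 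This is the only liftable candidate, with multiplicity $2\mult(\Lambda,P)$ coming from the (3c) and (2a) tangencies. Your case (i) therefore places it at the wrong point and with the wrong tangency types.
\item If $P\in\overline{(2,0)^{\vee}}$, then $P$ lies on the horizontal edge of $\Gamma$ issuing from $B''$. The diagonal tangency reaches $B''$ exactly when $v_1=B''$, and merges with $P$ into the (3bb) tangency of (ii). Continuing to the right yields the (3a) and (1a) tangencies in $\overline{(2,1)^{\vee}}$.
\end{itemize}

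Your case (ii) instead has $P''$ itself travel to the leg carrying $P$. This is impossible: $P''$ lies on an edge bounding $(1,2)^{\vee}$ from below, while $P$ lies on the positive horizontal leg above $e'$. Moreover, the chamber shared by $P$ and $P''$ is $(2,0)^{\vee}$, not $(2,1)^{\vee}$. So when $B=C$, the two scenarios share the same motion all the way to $B''$, and the dichotomy between (i) and (ii) is decided only there.
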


\begin{proof} The result follows by analyzing the possible combinatorial types of the rightmost vertices  of the edges of $\Gamma$ containing $P'$ and $P''$, which we label as $C'$ and $B'$ in the figure. Notice that the existence of the tangency point $P$ on the positive horizontal leg of $\Lambda$ hinders the inequality $C'\prec_h B'$. Thus, we have $B'\preceq_h C'$.  Furthermore, $B'=_hC'$ can only occur if $B'=C'$.

  The setting when  $B'\prec_h C'$ corresponds to  the Newton subdivisions labeled (II) and (IV). In these situations the right movement will be stopped by $B'$, since along the way $v_1$ stays in the chamber $(1,3)^{\vee}$ for convexity reasons. This establishes claim (i) for such $B'$ and $C'$.

  In turn, when $B'=C'$, the subdivisions that can occur are  (I) and (III). In both cases, we can move $v_0$ past $C'$ to obtain a diagonal type (1a) tangency along an edge of $\Gamma$ of slope $1/3$, while keeping $v_1\in (1,3)^{\vee}$. We let $B''$ be the rightmost endpoint of this edge as indicated in the figure.

  If $P\notin \overline{(2,0)^{\vee}}$, we are in case (I) and the movement  stops when we reach $B''$.  The point $B''$ carries a  type (2a) diagonal tangency, while $P'$ becomes a type (3c) horizontal tangency. The resulting tritangent $\Lambda'$ belongs to $|\bclass|$ and  has lifting multiplicity $2\mult(\Lambda, P)$, as stated in  claim (i).

  On the contrary, if $P\in \overline{(2,0)^{\vee}}$, we are in case (III). When $v_0$ reaches $B''$, we acquire a type (3bb) horizontal tangency at the pair $(P,B'')$, as predicted by claim (ii). Moving $v_0$ further to the right past $B''$ turns this (non-liftable) multiplicity four tangency into a combination of two tangencies: a horizontal (3a) one and a vertical (1a) one, both in $\overline{(2,1)^{\vee}}$.
  \end{proof}

The following result is a natural consequence of the previous two lemmas:

\begin{corollary}\label{cor:leftAndRightmostLiftableMembers3aAnd1Vertical}
  Let  $(\Lambda,P,P',P'')$ be a tritangent tuple contained in the interior of a positive-dimensional cell of $\bclass$, where $\Lambda$ has a slope one edge, $P'$ is a (3a) horizontal tangency, $P''$ is a (1a) vertical tangency, both in  $\overline{(1,2)^{\vee}}$, and $P$ lies in the positive horizontal leg of $\Gamma$. If no member of $|\bclass|$ has a type (4b) tangency, then:
    \begin{enumerate}[(i)]
    \item any member $\Lambda'$ of $|\bclass|$ with lower vertex $v_0'$ satisfying $v_0'\preceq_h v_0$ has a tangency  in the  edge of $\Gamma$ containing $P$;
    \item   the number of such members that lift  depends on the tangency type of $P$: there will be exactly one (with lifting multiplicity $4$) if $P$ has type (3c), and two  (with lifting multiplicity $2$) in all other cases;
\item if $P$ and $P''$ are not in the boundary of the same chamber of $\RR^2\smallsetminus \Gamma$, the conclusion of (i) applies to  members $\Lambda'$ with $v_0\prec_h v_0'$; furthermore, there are precisely either one or two such members that lift depending on the tangency type of $P$, and their lifting multiplicities match those of item (ii);
\item if $P$ and $P''$ are in boundary of the same chamber of
 $\RR^2\smallsetminus \Gamma$, there is precisely one liftable member $\Lambda'$ with $v_0\prec_h v_0'$, and its lifting multiplicity is four.
  \end{enumerate}
\end{corollary}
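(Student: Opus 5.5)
The plan is to obtain the corollary by combining \autoref{lm:MoveLeftFrom3a1P} and \autoref{lm:MoveRightFrom3a1P} with the local lifting multiplicities in \autoref{tab:LiftingMultiplicities}, treating left and right displacements of $v_0$ separately. Throughout, $P$ stays on the fixed edge of $\Gamma$ that carries it. Since no member of $|\bclass|$ has a (4b) tangency, alternative (i) of \autoref{lm:MoveLeftFrom3a1P} is excluded, so the left movement always lands in alternative (ii).

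\textbf{Item (i).} Starting from $\Lambda$, I would push $v_0$ leftward and keep track of the tangency on the positive horizontal leg. The key claim is that every member with $v_0'\preceq_h v_0$ is reached by such a leftward path inside $|\bclass|$. I expect this to follow from the local move descriptions in \autoref{pr:localMovesTritangents} and the Newton subdivisions displayed in \autoref{fig:3aAnd1VerticalLeftMove}. Along any such path, $v_1$ stays in $\overline{(1,3)^{\vee}}$, which forces the positive horizontal leg to meet the edge of $\Gamma$ containing $P$. This gives (i).

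\textbf{Item (ii).} The count comes from the local moves of the tangency $P$ itself, as $P$ slides along its edge $e$ of $\Gamma$.
\begin{itemize}
\item If $P$ has type (3c), then $P$ cannot slide. Its multiplicity is $2$, and \autoref{lm:MoveLeftFrom3a1P}(ii) yields exactly one liftable member, with multiplicity $2\cdot 2=4$.
\item Otherwise $P$ has type (1a), (2a), (4a) or (6a). Moving $P$ to each endpoint of $e$ produces a type (2a)/(4a)/(6a) tangency of multiplicity $1$; the horizontal/vertical cases give $1$ by \autoref{rm:4a6a4ap6ap}.
\item For each of the two endpoint positions of $P$, \autoref{lm:MoveLeftFrom3a1P}(ii) gives exactly one liftable member, with multiplicity $2$.
\item The lemma also says the lower vertex of that member is independent of $P$. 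So these are two distinct members (distinct $v_1$), and together they account for all liftable members on the left side.
\end{itemize}
Existence (``exactly'' rather than ``at most'') should follow because the stopping configuration described in that proof, a (2a) at $P''$ with a non-transverse tangency at $P'$, is always reached.

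\textbf{Items (iii) and (iv).} These follow the same pattern using \autoref{lm:MoveRightFrom3a1P}.
\begin{itemize}
\item In case (i) of that lemma, the same counting as in item (ii) goes through verbatim.
\item In case (ii) of that lemma, the rightward move passes through a (3bb) tangency, which has multiplicity $0$. The configuration then becomes the symmetric one in $\overline{(2,1)^{\vee}}$, with $P$ absorbed into a (3a) tangency. The single liftable member then comes from the (3c)-type endpoint configuration, with multiplicity $4$. I would check this last point by tracing case (III) of \autoref{fig:3aAnd1VerticalRightMove}.
\end{itemize}

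\textbf{Main obstacle.} The hard part is the exactness in the counts, not the upper bounds. One must show that no other liftable members with $v_0'\preceq_h v_0$ appear off these one-parameter paths, for example in lower-dimensional cells. My plan there is to invoke the genericity hypothesis together with the classification in \autoref{tab:LiftingMultiplicities}: every such member carries a (1a) or (3bb1) tangency at $P''$ or $P'$, and so has multiplicity zero.
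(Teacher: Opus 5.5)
Your treatment of (ii)--(iv) follows the paper's argument. You use the no-(4b) hypothesis to exclude alternative (i) of \autoref{lm:MoveLeftFrom3a1P}. A (3c) tangency at $P$ then gives one member of multiplicity $2\cdot 2=4$. For transverse $P$ you place $P$ at the two ends of its range on $e$ and run the leftward move from each, and the $P$-independence of the final lower vertex caps the count at two. Items (iii) and (iv) come from \autoref{lm:MoveRightFrom3a1P}, with (iv) obtained by crossing the (3bb) member into the configuration with the roles of $v_0$ and $v_1$ exchanged and reapplying the (3c) case of (ii). However, the upper bounds in (ii) and (iii) (``together they account for all liftable members'') rest on item (i), and there your proposal has a genuine gap.

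For (i) you state an unproved claim (every member with $v_0'\preceq_h v_0$ lies on a leftward path). You then support it with an inference that does not hold: $v_1\in\overline{(1,3)^{\vee}}$ does not force the positive horizontal leg onto $e$. The right-hand boundary of $(1,3)^{\vee}$ may contain several edges meeting horizontal lines with multiplicity two, e.g.\ those dual to $(1,3)$--$(3,2)$ and $(1,3)$--$(3,0)$, separated by the one dual to $(1,3)$--$(2,2)$. What keeps the tangency on $e$ is the even-multiplicity condition along an explicitly described connected family, and that description is what is missing. The paper gives it in three steps:
\begin{enumerate}[(a)]
\item For fixed $v_0$, \autoref{pr:unbounded} and \autoref{lm:unboundedtranslations} confine $v_1$ to a bounded slope-one segment along which $P$ stays on $e$.
\item Sliding $v_0$ left along the horizontal edge $e'$ carrying $P'$ does not change this.
\item If $v_0$ reaches the left endpoint $C$ of $e'$, the no-(4b) hypothesis rules out subdivision (III) of \autoref{fig:3aAnd1VerticalLeftMove}. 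So the star of $\Gamma$ at $C$ is a min-tropical line, and the only continuations are upward along the vertical leg of $\Gamma$ at $C$ (into special cells) or diagonally along its $(-1,-1)$ edge. In both, the extra tangency stays on $e$.
\end{enumerate}
Step (c) also exposes a problem with your ``main obstacle'' plan: the special members above $C$ need not carry a (1a) or (3bb1) tangency. If $B$ is the left endpoint of the edge carrying $P''$ and $C=_hB$, the translated $P''$ is a (2a) tangency at $B$, next to a vertical (3a) tangency at $v_0$. Such members are excluded from lifting only by condition (iii) of \autoref{def:fgenericRelToGamma}, not by a vanishing entry of \autoref{tab:LiftingMultiplicities}.
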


\begin{proof} The result follows by tracing the possible local moves at $\Lambda$. The necessary information to prove the first two items, including all relevant cells in the Newton subdivision of $\sextic$, is available in~\autoref{fig:3aAnd1VerticalLeftMove}. We let $e$ and $e'$ be the edges of $\Gamma$ containing $P$ and $P'$, respectively.

  If $v_0'=v_0$, the presence of the tangency $P$ together with \autoref{pr:unbounded} and~\autoref{lm:unboundedtranslations} imply that the only moves that produce members in $|\bclass|$ with the same lower vertex as $\Lambda$ correspond to a (possibly degenerate) segment recording the location of the top-vertex of any such tritangent. For each of them, the tangency point $P$  remains in the edge $e$. Moving $v_0$ towards the left along the horizontal edge $e'$ will not change this fact.

The figure shows the various options to move $v_0$ in the left direction beyond the leftmost vertex of $e$, which is labeled by $C$. If we encounter a member $\Lambda'$ with $v_0'=C$, our hypotheses on the tuple $(\Lambda, P,P',P'')$ and $\bclass$ confirm that case (III) in the figure does not occur, forcing the star  of $\Gamma$ at $C$ to be a min-tropical line with vertex $C$. Thus, we can move $v_0$ upwards and reach a special cell of $\bclass$ (in the sense of~\autoref{def:specialTritangents}) or diagonally away from $C$. In both cases, the extra tangency will remain at $e$, even if the vertex $v_1$ is also shifted towards or away from $v_0$. This proves the first claim of the statement.

The second part is established by combining our first claim and~\autoref{lm:MoveLeftFrom3a1P} (ii). Indeed, if $P$ is of type (3c), there is a unique liftable member $\Lambda'$ with $v_0'\prec_h v_0$ and its lifting multiplicity is 4. In turn, if the tangency at $P$ is transverse, any liftable member $\Lambda'$ with $v_0'\preceq_h v_0$ must have a tangency at $e$ with positive local lifting multiplicity, i.e., $P$ must be of type (2a), (4a) or (6a). By the lemma, the location of $v_0'$ is independent of $P$, so we have at most two such liftable members.

In order to construct them, we start by first shifting $v_1$ towards $v_0$ to turn our initial point $P$ into a type (2a) tangency on the lower vertex of $e$. Note that we can do so because the slope of $e$ is positive. After this step, the lemma produces a  single liftable member with a tangency at the aforementioned vertex of $e$, whose multiplicity equals 2. A second liftable member $\Lambda''$ will be obtained by moving $v_1$ away from $v_0$ until we reach a (2a), (4a) or (6a) horizontal tangency along the positive horizontal leg of $\Lambda''$. Its lifting multiplicity will also equal $2$. 

The proof of (iii), is similar to that of (ii), and it is obtained by following the movements described by~\autoref{lm:MoveRightFrom3a1P} (i). Finally, we establish  claim (iv). The hypothesis on $P$ and $P''$ implies that these two points are on the boundary of the chamber ${(2,0)^{\vee}}$, which corresponds to the subdivision seen in~\autoref{fig:3aAnd1VerticalRightMove} (III). Thus, the tangency point $P$ in  $\Lambda$ has type (3c). This ensures that the local movement at $v_1$ is also horizontal. A displacement  beyond the vertex $B''$ produces a tuple $(\Lambda',Q,Q',Q'')$ where $\Lambda$ is trivalent with a slope one edge, and the types of $Q, Q'$ and $Q''$ are (1a), (3a) and (3c), respectively. Reversing the roles of the vertices of $\Lambda$ and using item (ii) on this new tuple produces exactly one liftable member $\Lambda''$ with $v_0\prec_h v_0''$ whose lifting multiplicity is four.
\end{proof}

\section{Proof of the refined lifting partition theorem when $\dim \bclassns = 2$}\label{sec:proof-lift-part-2d-or-less}

In~\autoref{sec:comb-bound-compl} we established the lifting partition theorem for tritangent classes with full-dimensional non-special bounded complexes. The present section proves the statement in the next relevant case, namely, when  $\bclassns$ is 2-dimensional.  By~\autoref{thm:comparingDimensions} we know that $\dim \bclassns = \dim \bclass = 2$.
 Our main result, stated below, characterizes the possible lifting partitions of the corresponding tritangent class  under this restriction.  Its proof spans the remainder of this section.

 \begin{theorem}\label{thm:partitionDim2}
  Let $\tclass$ be a tritangent class of $\Gamma$ satisfying $\dim \bclassns = 2$. Then, the lifting partition of $\tclass$ equals $(4,2,0,0)$ or $(0,4,0,0)$. Furthermore, the first partition occurs if, and only if, $|\bclassns|$ contains a  member with a (4b) tangency point.
\end{theorem}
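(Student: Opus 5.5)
We plan to start from a generic member of a top-dimensional cell and move through the class using the technical lemmas of \autoref{sec:technicalLemmasDim1-2}. Since $\dim \bclassns = \dim\bclass = 2$ by \autoref{thm:comparingDimensions}, \autoref{thm:pathConnectednessOfNSComplexes} lets us pick a non-special $2$-dimensional cell $\sC$ of $\bclassns$. By \autoref{thm:dim210}, a generic member $\Lambda$ of $\sC^{\circ}$ is, up to $\Dn{4}$-symmetry, one of the configurations in the $(2,(i))$ and $(2,(ii))$ entries of \autoref{tab:combClassificationGenericPerDim}. We treat these case by case. In each case we fix one tangency point $P$ (the one not responsible for the two-dimensional freedom). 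We then sweep the two free parameters along prescribed trajectories, for instance moving $v_0$ horizontally or diagonally while preserving $P$, and then varying $P$ along its edge of $\Gamma$. By \autoref{thm:totalLifting}, each liftable member found this way has multiplicity equal to a product of local multiplicities read off from \autoref{tab:LiftingMultiplicities}.

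\textbf{Case (ii).} Both vertices slide on slope-one segments and there is a non-transverse diagonal tangency of type (3aa), (3ac) or (3cc). After rotating so that the edge has slope $-1$, we would apply \autoref{lm:MoveDownUpRight3-cAnd1V}, \autoref{lm:MoveDown3-aAnd1H} and \autoref{lm:MoveUp3caAnd1H}. These lemmas show that moving $v_0$ in either direction yields at most one liftable member, with multiplicity $2\mult(\Lambda,P)$. Letting $P$ range over the endpoints of its edge, which are the only positions with nonzero local multiplicity (types (2a), (4a) or (6a), multiplicity $1$), we expect exactly four members of multiplicity $2$. This gives the partition $(0,4,0,0)$, and these configurations admit no (4b) tangency.

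\textbf{Case (i).} One vertex moves freely and the other is constrained by a (3a), (4a)-diagonal or (4b) tangency, or by a (1a)-diagonal together with a (3c). Here the key tools are \autoref{lm:MoveLeftFrom3a1P}, \autoref{lm:MoveRightFrom3a1P} and \autoref{cor:leftAndRightmostLiftableMembers3aAnd1Vertical}, applied to the configuration with a (3a) horizontal and a (1a) vertical tangency on $\overline{(1,2)^{\vee}}$. If no member of $|\bclassns|$ carries a (4b) tangency, the corollary counts liftable members on each side of $v_0$: two of multiplicity $2$, or one of multiplicity $4$ when $P$ has type (3c). By the rigidity in \autoref{thm:countingTritangentClassesAndLifts}, each side must contribute lifting multiplicity exactly $4$, so the total over both sides is $8$. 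We then rule out the (3c)/multiplicity-$4$ subcase in dimension two, since such members lie on $1$-dimensional $\bclassns$ configurations. This again yields $(0,4,0,0)$.

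If a (4b) tangency occurs, as in \autoref{lm:MoveLeftFrom3a1P}(i) or the first $(2,(i))$ configuration, we move that vertex along the slope $-1/3$ edge. The (4b) tangency itself has multiplicity $1$. Its partner tangency sweeps an edge and gives multiplicity $1$ at each endpoint, and the free vertex's tangency contributes multiplicity $1$ or $2$. We expect four members of multiplicity $1$ and two members of multiplicity $2$, giving $(4,2,0,0)$. The converse direction of the ``if and only if'' follows because every (4b)-free configuration falls into the earlier counts.

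\textbf{Main obstacle.} The hardest step is completeness and non-double-counting. We must show that the chosen trajectories reach every liftable member of $\tclass$, that no member is counted twice across different starting cells, and that special cells hide no extra lifts. For this we would use the connectivity of $\bclassns$ together with \autoref{lm:specialConfigurations} to pass across special cells. We would then use the total of $8$ from \autoref{thm:countingTritangentClassesAndLifts} as a global check: once members with total multiplicity $8$ have been found, no further lifts can exist.
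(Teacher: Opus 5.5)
There is a genuine gap, and it sits in the case that is supposed to produce $(4,2,0,0)$. You give the (4b) tangency local multiplicity $1$, but \autoref{tab:LiftingMultiplicities} lists (4b) with multiplicity $0$, so no member carrying a (4b) tangency lifts. A multiplicity of $1$ would in fact be inconsistent: fixing the partner tangency at an endpoint of its edge and sliding $v_1$ horizontally would already give a continuum of liftable members.

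The actual mechanism is different.
\begin{itemize}
\item The vertex carrying the (4b) tangency has to be pushed along the slope $-1/3$ edge until the tangency degenerates into one of types (5b), (6b), (4b') or (6b'), each of multiplicity $1$. Pairing these with the liftable positions of the partner tangency gives the polygon $\cQ$ of \autoref{lm:type4b_1}, which has exactly four multiplicity-one vertices.
\item The two multiplicity-two members do not come from a ``free vertex contributing $1$ or $2$'' inside $\cQ$; they lie outside $\cQ$. You obtain them by moving $v_0$ past the (5b) vertex along the horizontal leg of $\Gamma$. This splits the (5b) tangency into a (3a) and a (1a) tangency. You then apply \autoref{lm:MoveRightFrom3a1P}(i) and slide $v_1$ both ways; the factor $2$ comes from the (3a) tangency.
\end{itemize}
Your ``we expect four members of multiplicity $1$ and two of multiplicity $2$'' states the answer without a derivation. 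The derivation you sketch would not produce it.

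The other cases are also incomplete.
\begin{itemize}
\item In case (ii), the moving lemmas do not always give at most one liftable member per direction. Item (ii) of \autoref{lm:MoveDownUpRight3-cAnd1V} and item (ii) of \autoref{lm:MoveUp3caAnd1H} are transitions into different configurations: a member with a (3aa) or (3ac) tangency, or a (3a)+(1a) member. These must be followed and settled via \autoref{pr:dim2Parallelogram} or \autoref{pr:3a11Dim2}.
\item The (3aa) configuration is settled through the parallelogram construction of \autoref{lm:dim2Parallelogram} and \autoref{cor:tritangentClasses}, not by sweeping each vertex. This includes the subcases where a vertex of $\cQ$ leaves $\overline{(1,1)^{\vee}}$ and acquires a non-liftable (3bb2) or (7) tangency.
\item In case (i), \autoref{cor:leftAndRightmostLiftableMembers3aAnd1Vertical} only covers the (3a)+(1a) configuration. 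You name two further configurations but never treat them. One has two (1a) tangencies on the positive legs at $v_1\in(1,1)^{\vee}$. The other has a (3c), a diagonal (1a) and a leg (1a) tangency; see \autoref{pr:13c1}, which needs the degenerate-parallelogram pairing of \autoref{lm:case1Dim3} and \autoref{lm:case2Dim3} and, in the adjacent subcase, a passage through a (3bb) tangency.
\item \autoref{thm:countingTritangentClassesAndLifts} cannot force ``each side contributes exactly $4$''. That split comes from items (ii)--(iii) of the corollary. The total of $8$ is a valid stopping criterion only once members with that total have actually been exhibited, which is precisely the missing part.
\end{itemize}
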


\begin{proof}
  By our earlier discussion, we know that $\dim \bclass=2$. The top-row of~\autoref{tab:combClassificationGenericPerDim} shows the distribution of tangency points on a generic member of a top-dimensional cell of $\bclass$. After discarding the special ones (i.e., the last member on the  $(2,(i))$-entry), we are left with 12 possibilities to inspect.   
  {Propositions}~\ref{pr:type4b1} through~\ref{pr:13acOr3cc1sameOrNot} below determine the lifting partitions for these 12 sample generic tritangents, conveniently grouped by common features. The statement is a direct consequence of those results.
\end{proof}

In what follows, we provide separate statements  corresponding to  the 12 non-special generic members of $\bclassns$ used in the proof of~\autoref{thm:partitionDim2}. Each of them involves tritangent curves that impose the  same restrictions on the Newton subdivision of $\sextic$. To simplify the arguments it is  convenient to pick alternative $\Dn{4}$-orbit representatives for some of the tritangents listed in  the $(2,(ii))$-entry of~\autoref{tab:combClassificationGenericPerDim}. The results presented in~\autoref{sec:technicalLemmasDim1-2}  greatly simplify our task.

Our first lemma discusses lifting conditions for  members of a cell of $\bclassns$ containing a member with a  type (4b) tangency in its relative interior.

\begin{lemma}\label{lm:type4b_1}
  Let $\Lambda$ be a trivalent tritangent containing two tangency points, one of type (4b) and one of type (1a).  Then, there exists a polyhedron $\cQ$ containing $\Lambda$, with $\cQ\subseteq |\bclass|$ that has exactly four liftable elements, all with multiplicity one. Furthermore,  each of these members is a vertex of $\cQ$, has a multiplicity four tangency point at a vertex, whose type is either (4b'), (6b'), (5b) or (6b). In addition, $\cQ$ contains an open segment corresponding to members with two tangency points, of types (5b) and (1a), respectively.
\end{lemma}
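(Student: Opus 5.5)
We would begin by fixing a convenient $\Dn{4}$-representative, following the first entry of the $(2,(i))$ row of \autoref{tab:combClassificationGenericPerDim} and the $(4b)$ picture in \autoref{fig:classificationLocalTangencies}. Take $\Lambda$ trivalent with a slope one edge, and place the type (4b) tangency at $v_0$, so that $v_0$ lies in the relative interior of an edge $e$ of $\Gamma$ of slope $-1/3$ (or $-3$) with endpoints $A,A'$. The bidegree of $\Gamma$ then forces the type (1a) tangency $P''$ onto the positive horizontal leg of $\Lambda$, on an edge $e''$ of $\Gamma$ with endpoints $B,B'$. By \autoref{lem:Type4And4pCells}(i) and \autoref{pr:localMovesTritangents}, $v_0$ can slide along $e$. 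For each position of $v_0$, the vertex $v_1$ can move along the slope one line through $v_0$, as long as the positive horizontal leg of $\Lambda$ still meets $e''$ in its relative interior or at an endpoint.

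The polyhedron $\cQ$ will be described explicitly in the coordinates $(v_0,\lenl)$. The point $v_0$ ranges over the segment $e$, possibly truncated where $v_0$ becomes $4$-valent, and $\lenl$ ranges over the interval cut out by the two horizontal lines through $B$ and $B'$. Both constraints are linear in $v_0$, so $\cQ$ is a (possibly degenerate) parallelogram, exactly as in the proof of \autoref{lm:case1Dim3}. We then check that $\cQ \subseteq |\bclass|$. No member of $\cQ$ lies in the sets $\mathscr U_i^\pm$ of \eqref{eq:unboundedCellsToToss}, because $v_0 \in \Gamma$ and $v_1$ stays in a bounded chamber adjacent to $e''$. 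The sets $\tR_i^\pm$ of \eqref{eq:secondSetToRemove} are excluded by the slope of $e$.

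To find the liftable elements we apply \autoref{thm:totalLifting} together with \autoref{tab:LiftingMultiplicities}. A member in the relative interior of $\cQ$ has a (4b) tangency, which has multiplicity $0$, so liftable members must sit where the tangency at $v_0$ degenerates. This happens when $v_0$ reaches an endpoint $A$ or $A'$ of $e$. There the tangency becomes of type (5b) or (6b) in the trivalent case, or of type (4b') or (6b') when $v_0=v_1$; each of these has multiplicity one. Simultaneously, $P''$ must become an endpoint of $e''$, where it is of type (2a), (4a) or (6a) horizontal, with local multiplicity $1$ by \autoref{rm:4a6a4ap6ap}; the interior of $e''$ gives type (1a), which has multiplicity $0$. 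This leaves the $2\times 2$ choices of endpoints, hence four vertices of $\cQ$, each of multiplicity one. In the same picture, fixing $v_0$ at an endpoint of $e$ while $P''$ stays in the interior of $e''$ traces the open segment of members with tangencies of types (5b) and (1a).

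We expect the main obstacle to be the case analysis at the endpoints of $e$. We must show that at each endpoint the resulting tangency is genuinely one of (5b), (6b), (4b') or (6b'), rather than something like (3bb) or a special configuration. We must also show that each of the four corners is actually attained, meaning the lower and upper bounds on $\lenl$ are compatible with each endpoint of $e$. Our first approach would be to enumerate the dual triangles at $A^\vee$ and $A'^\vee$ allowed by the bidegree, mirroring the five positions (1)--(5) in \autoref{fig:parallelogramOnlyHDim3}. Where $e''$ meets the slope one line through an endpoint of $e$, we would truncate $e''$, as was done with $C''$ in \autoref{lm:case2Dim3}.
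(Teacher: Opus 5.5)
Your setup and overall strategy match the paper's: build a parallelogram from the endpoints of the two edges, as in \autoref{lm:case1Dim3}, and look for liftable members on its boundary. There is a genuine gap, however, in identifying the liftable members, which is the heart of the lemma. You claim they are the $2\times 2$ choices ``$v_0$ at an endpoint of $e$'' and ``$P''$ at an endpoint of $e''$''. This holds only when neither edge enters the parallelogram $\cP$.

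On the $e$ side, the only vertical constraint is that the horizontal line through the lower endpoint of $e''$ lies above the lower-right endpoint of $e$. Nothing forces it to lie above the upper-left endpoint of $e$. If it lies below, the corner of your parallelogram with $v_0$ at that endpoint has $\lenl<0$, so it is not a member at all. The liftable member that replaces it is $4$-valent, with its vertex where that horizontal line crosses the relative interior of $e$ (a (4b') tangency plus a (2a') one). When the line passes through the endpoint itself, the type is (6b') instead. Your own list of degenerate types includes (4b'), which contradicts your claim that degeneration only happens when $v_0$ reaches an endpoint of $e$.

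On the $e''$ side, when $e''$ has slope $2$ and enters $\cP$ (positions (2)--(5)), the upper-right corner of $\cP$ lies outside the chamber containing $v_1$. The corresponding liftable member then has $v_1$ on $e''$ itself, with a (4a) or (6a) horizontal tangency, not $P''$ at an endpoint of $e''$. Truncating $e''$, as you propose, does not capture this.

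The missing ingredient is the correct region. $\cQ$ must be $\cP\cap\overline{(1,3)^{\vee}}$, with $\cP$ recording the top vertex $v_1$. This is a polygon that is in general not a parallelogram, and it is what actually lies in $|\bclass|$. Your parallelogram, cut out only by the two horizontal lines, violates your own requirement that the positive horizontal leg meet $e''$. The four liftable members are then read off the boundary of $\cQ$:
\begin{itemize}
\item the two ends of its rightmost slope-one edge, where $v_0$ is the lower-right endpoint of $e$ and the tangency is (5b);
\item the leftmost points of its two horizontal edges (the top one may degenerate to a point), which are (6b) members when $e$ avoids $\cP$ and may otherwise be $4$-valent.
\end{itemize}
Only the rightmost slope-one edge gives the open (5b)/(1a) segment; at the other endpoint of $e$ the type is (6b), not (5b).

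A smaller point: $(1,3)^{\vee}$ is not a bounded chamber. What excludes the sets $\mathscr{U}_i^{\pm}$ is that it is not one of the four corner chambers.
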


\begin{proof} We start by setting up the required notation. By symmetry, we may assume that the unique edge of $\Lambda$ as slope one, and the type (4b) tangency, called $P$, lies at the vertex $v_0$ of $\Lambda$. Since $\Lambda\in |\bclass|$, it follows that $v_0\in \overline{(1,3)^{\vee}}$. We let $P'$ be the complementing tangency, which must necessarily lie on the positive horizontal leg of $\Lambda$. We denote by $e$ and $e'$ the edges of $\Gamma$ containing $P$ and $P'$, respectively.    Note that the edge $e$ has slope $-1/3$, but the slope of  $e'$ can vary.

  We establish the statement by following the same strategy as the one from~\autoref{lm:case1Dim3}. The relevant data can be found in~\autoref{fig:parallelogramsDim2_4b}. We set $\cQ := \cP\cap \overline{(1,3)^{\vee}}$, where $\cP$ is a parallelogram constructed from the endpoints of $e$ and $e'$, namely $A,A',C$ and $C'$. This polytope  records the location of the top-vertex of relevant members of $|\bclass|$ obtained from local moves of $\Lambda$. 

\begin{figure}[t]
  \includegraphics[scale=0.3]{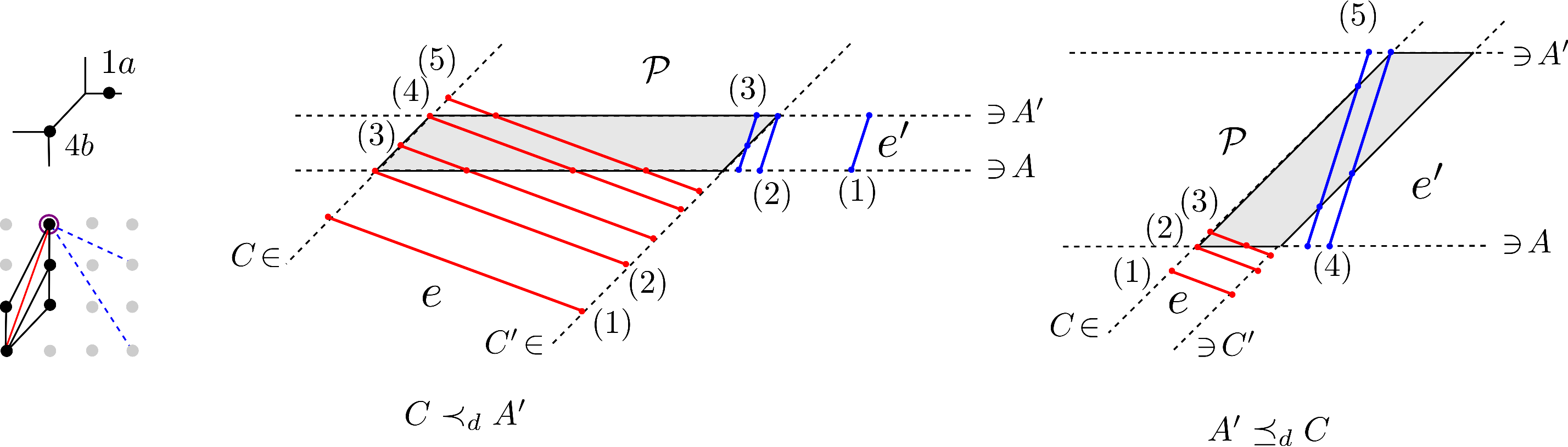}
  \caption{From left to right: partial Newton subdivision of $\sextic$ imposed by the existence of a tritangent triple $(\Lambda,P,P')$ to $\Gamma$, where $\Lambda$ has a slope one edge, $P=v_0$ has type (4b),  $P'$ has type (1a) and lies the positive horizontal leg of $\Lambda$; 
location and local moves  of the vertex $v_1$ of $\Lambda$; positions of the (red) edge $e:=\overline{CC'}$, and (blue) edge $e':=\overline{AA'}$ of $\Gamma$ containing $P$ and $P'$, respectively,  relative to the parallelogram  $\cP$ built from $A, A', C$ and $C'$. 
      \label{fig:parallelogramsDim2_4b}}
\end{figure}

  The leftmost part of the figure shows  partial information on the Newton subdivision of $\sextic$ that is imposed by the distribution of tangencies in $\Lambda$. The depicted  cells yield only  partial information regarding the relative orders between $A,A,C$ and $C'$. More precisely, we have:
  \begin{equation}\label{eq:inequalities4bDim2}
    C\prec_hC'\prec_h A \prec_h A'\quad,\quad
    C'\prec_vA\prec_v A'\quad and \quad
    C\prec_d C'\prec_d A.
  \end{equation}
In turn, this gives to up to five possible positions for the edges $e$ and $e'$ relative to the parallelogram $\cP$. Notice that positions (2) through (5) for $e'$ can only occur if its slope equals 2. For position (1), the slope of $e'$ can be either $2$ or $2/3$.

Starting from $\Lambda$ we can reach any member of  $\cQ$ by moving the vertex $v_0$ along the edge $e$. For each choice of $v_0$, the location of $v_1$ is either unique or a slope one segment within $\cQ$. It follows that any trivalent member of $\cQ$ has a tangency at $v_0$ of type (4b), (5b) or (6b). In the 4-valent case, its type will be either (4b') or (6b'). By construction, the remaining tangency point (called $P'$) must be horizontal and lie in $e'$. This guarantees that $\cQ\subseteq |\bclass|$. 
\autoref{tab:LiftingMultiplicities} confirms that a type (4b) tangency will not produce a liftable member, whereas the other ones lift  unless $P'$ has type (1a) or (1a'). Furthermore, the same table ensures their lifting multiplicities equal one.

To conclude our proof, we must to determine the location of the liftable members of $\cQ$. 
By construction, the right-most slope one edge of $\cQ$ corresponds to those members with a type (5b) tangency at $v_0=C'$. Its interior points have a type (1a) tangency, whereas its two endpoints yield liftable members, where the extra tangency is of type (2a), (4a) or (6a) depending on the position of $e'$ relative to $\cP$. 

The remaining liftable members of $\cQ$ are the left-most endpoints of the horizontal  edges in the boundary of $\cQ$ unless $e'$ is in position (5). This last case can be viewed in the same way, where the top horizontal edge of the cell degenerates to a point. The figure confirms that  the tangency types of $v_0$ for these two members is (6b) only if $e$ is in position (1). In all other cases, one of them has either a type (4b') or (6b') tangency at its lower vertex. 
\end{proof}

The next proposition follows naturally from the previous lemma:

\begin{proposition}\label{pr:type4b1} If $\dim \bclassns=2$ and contains a member with a (4b) and a (1a) tangency, then, the lifting partition of $\tclass$ equals $(4,2,0,0)$.
\end{proposition}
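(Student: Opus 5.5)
We will use \autoref{lm:type4b_1} as a local count and then account for the remaining four lifts of $\tclass$ by a global count. Let $\Lambda$ be a member of $|\bclassns|$ with a (4b) and a (1a) tangency. Up to $\Dn{4}$-symmetry, $\Lambda$ has a slope one edge, the (4b) tangency $P$ sits at $v_0\in\overline{(1,3)^{\vee}}$, and $P'$ lies on the positive horizontal leg. \autoref{lm:type4b_1} gives a polygon $\cQ\subseteq|\bclass|$ containing exactly four liftable members. Each of these has multiplicity one, and all are vertices of $\cQ$ carrying a (4b'), (6b'), (5b) or (6b) tangency. By \autoref{thm:countingTritangentClassesAndLifts}, $\tclass$ has eight lifts in total, counted with multiplicity. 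So it suffices to show that the members of $|\bclassns|\smallsetminus \cQ$ contribute exactly two further liftable members, each of multiplicity two.

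\textbf{Leaving $\cQ$.} The proof of \autoref{lm:type4b_1} shows that $\cQ$ is bounded by three kinds of edges: the slope one edge where $v_0=C'$ (type (5b)), horizontal edges coming from $e'$, and edges where $v_1$ meets the boundary of $\overline{(1,3)^{\vee}}$. I would go around $\partial\cQ$ and, at each boundary edge, apply \autoref{pr:localMovesTritangents} to find the cells of $\bclass$ attached there. The edges along $e$ and $e'$ should not lead outside $\cQ$ inside $\bclass$; otherwise we would land in one of the sets $\mathscr{U}$ or $\tR$ from \autoref{pr:unbounded} and \autoref{lm:unboundedtranslations}. The essential exit is through the (5b) edge: moving $v_0$ past $C'$ splits the multiplicity four tangency into two tangencies. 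Because $\dim\bclassns=2$ and \autoref{tab:combClassificationGenericPerDim} lists the possible generic members, the resulting tritangent should be the configuration with a (3a) horizontal tangency and a (1a) vertical tangency in the boundary of the chamber adjacent to $C'$, with $P'$ still on the positive horizontal leg. Alternatively it could be a non-transverse diagonal tangency together with a (1a) tangency, which is the setting of \autoref{lm:MoveDown3-aAnd1H} and \autoref{lm:MoveUp3caAnd1H}.

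\textbf{Counting outside $\cQ$.} Once in that configuration, I would apply \autoref{cor:leftAndRightmostLiftableMembers3aAnd1Vertical}, or the relevant lemmas among \autoref{lm:MoveDownUpRight3-cAnd1V}--\autoref{lm:MoveRightFrom3a1P}, restricted to the side facing away from $\cQ$. Movements towards $\cQ$ return to the (4b) cell and must not be counted twice. Along the two extreme positions of the tangency on $e'$, the (2a)/(4a)/(6a) endpoints give $\mult(\Lambda,P')=1$. The non-transverse tangency produced from the split contributes a factor of $2$. This yields exactly two new liftable members, each of multiplicity $2$, so the partition is $(4,2,0,0)$. As a consistency check, the total is $4\cdot 1+2\cdot 2=8$, which agrees with \autoref{thm:countingTritangentClassesAndLifts}. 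This also shows that no other cells contain lifts, since any further lift would push the count past eight.

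\textbf{Main obstacle.} The hardest step is the exit analysis. Depending on which of the five positions $e'$ occupies relative to $\cP$, the vertices of $\cQ$ carrying (4b') or (6b') tangencies may open onto further cells, including 4-valent cells and special cells. For each position I must show that these cells are either special (hence outside $\bclassns$ and non-liftable by \autoref{def:fgenericRelToGamma}(iii)) or lead back to the same two multiplicity two members. I would first try to use the eight-lift count to avoid a full case check, but I expect a direct inspection of the partial Newton subdivisions in \autoref{fig:parallelogramsDim2_4b} to be needed, case by case, to rule out lifts of multiplicity $4$.
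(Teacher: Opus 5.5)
Your proposal is correct and follows the paper's route: \autoref{lm:type4b_1} gives the four multiplicity-one lifts in $\cQ$, and sliding $v_0$ horizontally to the right past $C'$ turns the (5b) tangency into a (3a) horizontal and a (1a) vertical one. The rightward movement then stops at the unique position where the vertical tangency becomes (2a), and moving $v_1$ towards and away from $v_0$ until $P'$ reaches an endpoint of $e'$ gives the two multiplicity-two lifts; the count of eight in \autoref{thm:countingTritangentClassesAndLifts} then excludes any further lifts, so the case-by-case exit analysis you anticipate is not needed. One caution: \autoref{cor:leftAndRightmostLiftableMembers3aAnd1Vertical} assumes that no member of $|\bclass|$ has a (4b) tangency, which fails here, so you should cite \autoref{lm:MoveRightFrom3a1P}(i) directly, as the paper does; that lemma is also what supplies the factor $\mult=1$ from the vertical (2a) tangency, which your multiplicity count leaves implicit.
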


\begin{proof}   The result is a consequence of 
  {Lemmas}~\ref{lm:MoveRightFrom3a1P} and~\ref{lm:type4b_1}, as we now explain.  Fix a trivalent member $\Lambda$ with the prescribed tangencies. Without loss of generality, we assume the edge of $\Lambda$ has slope one and that the type (4b) tangency lies at $v_0$.   The partial Newton subdivision of $\sextic$ corresponds to the combination of those depicted in 
  {Figures}~\ref{fig:3aAnd1VerticalLeftMove} (III) and~\ref{fig:3aAnd1VerticalRightMove} (IV).

  By~\autoref{lm:type4b_1}, we can find a polygon $\cQ$ inside $|\bclass|$ containing $\Lambda$ in its relative interior. This polygon has four vertices corresponding to liftable tritangents, each of multiplicity one. All other points of $\cQ$ yield non-liftable tritangents. This fact explains the first entry of the  partition claimed in the statement. In what follows, we produce two extra tritangents in the set $|\bclass|\smallsetminus Q$, each with lifting multiplicity two.

  By the lemma, $\cQ$ contains a member with a type (5b) tangency at $v_0$, complemented by a (1a) one (which we call $P'$) along its positive horizontal leg. For this member, we can move $v_0$ towards the right along the unique horizontal leg of $\Gamma$  with left endpoint $v_0$, while preserving the tangency at $P'$. The newly encountered tritangent tuples $(\Lambda', P, P', P'')$ will have slope one edges, and  tangency points $P$ and $P''$ along the two negative legs of $\Lambda'$, of types (3a) horizontal and (1a) vertical, respectively.

    \autoref{lm:MoveRightFrom3a1P} (i) shows that this right-oriented movement produces at most one potentially liftable tritangent, namely the one where $P''$ becomes a type (2a) tangency. Even though such members do not lift since $P'$ has type (1a), they yield two liftable ones, with multiplicity two. We obtain each of them by moving $v_1$ both towards and away from $v_0$. We stop whenever $P$ becomes a type (2a), (4a) or (6a) horizontal tangency, respectively.
\end{proof}

\begin{remark} The proof of~\autoref{pr:type4b1} confirms that the presence of a member of $|\bclassns|$ with tangencies of type (4b) and (1a) determines the support of this complex. Indeed, it is a union of two adjacent coplanar polygons recording the location of the top-vertex of each member, namely, $\cQ$ and  an extra polygon for members with a type (3a) tangency.
  \end{remark}

The remainder of this section discusses the non-special tritangencies  from~\autoref{tab:combClassificationGenericPerDim} corresponding to tritangent classes with lifting partition $(0,4,0,0)$. 

\begin{proposition}\label{pr:3a11Dim2}
  Let $\Lambda$ be a generic member in the relative interior of a $2$-dimensional cell of $\bclassns$ with tangencies of  type (3a) and (1a)  on legs adjacent to the same vertex. Assume no member of $\bclassns$ has a tangency point of type (4b). Then, the lifting partition of $\tclass$ is $(0,4,0,0)$.
\end{proposition}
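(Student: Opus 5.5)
We identify $\Lambda$ with one of the members in the $(2,(i))$-entry of~\autoref{tab:combClassificationGenericPerDim} and set up notation as in~\autoref{sec:technicalLemmasDim1-2}. Up to $\Dn{4}$-symmetry, the unique edge of $\Lambda$ has slope one. The (3a) tangency $P'$ is horizontal and lies on the negative horizontal leg, and the (1a) tangency $P''$ lies on the negative vertical leg, both in $\overline{(1,2)^{\vee}}$. The third tangency $P$ sits on the positive horizontal leg; by the proof of~\autoref{lm:movementi} it is of type (1a), (2a) or (3c), lying on an edge $e$ of $\Gamma$ adjacent to the chamber containing $v_1$. This is exactly the setting of~\autoref{cor:leftAndRightmostLiftableMembers3aAnd1Vertical}, and the proof will be a bookkeeping argument built on that corollary. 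The other configurations of the $(2,(i))$-entry with $v_0\in\Gamma$ (the (3a) sample with a (1a) partner on the edge) are handled in the same manner after choosing a suitable representative.

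\textbf{Step 1 (left side).} Since no member of $|\bclassns|$ has a (4b) tangency, case (i) of~\autoref{lm:MoveLeftFrom3a1P} does not occur. \autoref{cor:leftAndRightmostLiftableMembers3aAnd1Vertical}(i)--(ii) then show that all liftable members $\Lambda'$ with $v_0'\preceq_h v_0$ keep a tangency on $e$. Their contribution is either two members of multiplicity $2$, or one of multiplicity $4$ if $P$ is of type (3c).

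\textbf{Step 2 (right side).} If $P$ and $P''$ do not bound a common chamber, \autoref{cor:leftAndRightmostLiftableMembers3aAnd1Vertical}(iii) gives the symmetric count on the right. Otherwise item (iv) gives a single member of multiplicity four. In that case $P$ must be of type (3c), as noted in its proof, so the right side contributes $4$ and the left side also contributes $4$.

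\textbf{Step 3 (exclusion of the (3c) case).} Here I would argue that the (3c)-type contributions are incompatible with the hypothesis $\dim\bclassns=2$. A member with a (3c) tangency on the positive leg and the given (3a)/(1a) pair fixes the lateral movement of $v_1$, so it cannot be the generic member of a $2$-dimensional cell: moving $v_1$ off $e$ would produce a genuine (1a) tangency. Concretely, I would use the genericity of $\Lambda$ in the $2$-dimensional cell to assume $P$ is of type (1a). The case in which the generic member has a (1a) tangency but some other member has a (3c) tangency is then controlled by the cell-compatibility of~\autoref{rm:finevCoarseStructure}. This yields two multiplicity-two members on each side, for a total of four members of multiplicity $2$ with total lifting multiplicity $8$.

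\textbf{Step 4 (completeness).} The final step shows that no further liftable members lie in $|\bclassns|$. For this I would use the path-connectedness of $\bclassns$ (\autoref{thm:pathConnectednessOfNSComplexes}) together with the fact that every class has exactly eight lifts (\autoref{thm:countingTritangentClassesAndLifts}). Since the four members found already account for total multiplicity $8$, any other member must have lifting multiplicity zero, and the partition is $(0,4,0,0)$.

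The main obstacle is Step 3: ensuring that the left and right moves do not produce a multiplicity-four member, and that the members found in Steps 1 and 2 are pairwise distinct. If the direct argument fails, I would fall back on a contradiction with the count of eight. Two multiplicity-four members would force partition $(0,0,2,0)$, and I would rule this out by showing that the (3c) configuration forces $\dim\bclassns\le 1$ via~\autoref{thm:dim210}.
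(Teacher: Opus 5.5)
Your proposal is correct and is essentially the paper's proof. Genericity forces $P$ to be of type (1a), which also rules out item (iv) of the corollary since that case requires $P$ of type (3c), and items (ii) and (iii) then give exactly two multiplicity-two lifts on each side of $\Lambda$; your Step 4 appeal to the count of eight lifts per class is valid but redundant, since the corollary already covers every member of the bounded complex on either side, and the hedging in Step 3 and the aside about other $(2,(i))$ configurations can be dropped.
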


\begin{proof} We pick the orbit representative for $\Lambda$ seen on the first row of~\autoref{tab:combClassificationGenericPerDim}. Let $P$ be the type (1a) tangency on the positive horizontal leg of $\Lambda$ complementing the two points mentioned in the statement. The bidegree of $\Gamma$ confirms that the tangencies on legs adjacent to $v_0$ lie in $\overline{(1,2)^{\vee}}$. The result then follows from items (ii) and  (iii) of \autoref{cor:leftAndRightmostLiftableMembers3aAnd1Vertical}: there are precisely two pairs of liftable members $\Lambda'$, one to the left and one to the right of $\Lambda$, each with the desired multiplicity.
\end{proof}

\begin{proposition}\label{pr:13c1}
  Let $\Lambda$ be a generic member in the relative interior of a $2$-dimensional cell of $\bclassns$ with one type (3c) and two type (1a) tangencies, one of which lies in the unique edge of $\Lambda$. Then, the lifting partition of $\tclass$ is $(0,4,0,0)$.\end{proposition}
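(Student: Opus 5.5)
We will pick the orbit representative of $\Lambda$ from the $(2,(i))$-entry of \autoref{tab:combClassificationGenericPerDim}: $\Lambda$ is trivalent with a slope one edge, $v_1\in (1,3)^{\vee}$, a type (1a) tangency $P''$ on the positive horizontal leg, a type (1a) diagonal tangency $P'$ on the edge of $\Lambda$ along an edge $e'$ of $\Gamma$ of slope $-1$ (or $1/3$), and a type (3c) tangency $P$ on a leg adjacent to $v_0$. We then record the partial Newton subdivision of $\sextic$ forced by these three tangencies, as in \autoref{fig:boundedDim3Cases}, together with the resulting $\prec_h,\prec_v,\prec_d$ inequalities among the endpoints of the three edges of $\Gamma$ involved. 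Since $P$ has type (3c) and local lifting multiplicity $2$ by \autoref{tab:LiftingMultiplicities}, any liftable member reached while $P$ persists has multiplicity $2\cdot\mult(P')\cdot\mult(P'')$. The goal is to show that exactly two members have $P'$ at an endpoint of $e'$, and that for each of them exactly one choice of $P''$ lifts. This yields two liftable members with multiplicity $2$ coming from this configuration.

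The key steps follow the template of \autoref{lm:case1Dim3}. First, we build a parallelogram $\cP_1$ through the endpoints of $e'$ and of the edge $e''$ carrying $P''$. Because $P$ is non-transverse, $v_0$ moves along a segment determined by $v_1$, so the $2$-dimensional cell is parametrized by $v_1\in \cP_1\cap\overline{(1,3)^{\vee}}$. Liftable members in this cell must have $P'$ at a vertex of $e'$, giving a type (2a), (4a) or (6a) diagonal tangency. They must also have $P''$ at a vertex of $e''$, or at a horizontal (4a) or (6a) configuration. This leaves the corners of the polygon, and a diagonal alignment count analogous to the one in \autoref{lm:case2Dim3} should produce exactly two such corners within the cell. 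Second, the cell does not exhaust $|\bclassns|$: sliding $v_0$ past the endpoints of the edge of $\Gamma$ carrying $P$ turns $P$ into a (3a) tangency, or produces configurations covered by \autoref{lm:MoveDownUpRight3-cAnd1V}, \autoref{lm:MoveDown3-aAnd1H} and \autoref{lm:MoveUp3caAnd1H} (after a $\Dn{4}$ change to a slope $-1$ representative). In the (3a) case, \autoref{cor:leftAndRightmostLiftableMembers3aAnd1Vertical} applies with the roles of the tangencies adjusted. We will check that the liftable members these lemmas produce on the adjacent cells are precisely two further members of multiplicity $2$, or that they coincide with members already counted.

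To exclude any other contributions, we argue that every other cell of $\bclassns$ reachable by these moves contains only members with a (1a), (1b), (3bb1) or special configuration, which have lifting multiplicity $0$. Path-connectedness of $\bclassns$ (\autoref{thm:pathConnectednessOfNSComplexes}) then guarantees that the search is exhaustive. As a consistency check, \autoref{thm:countingTritangentClassesAndLifts} forces the total to be $8=4\cdot 2$. So once we have exhibited four members of multiplicity $2$ and ruled out multiplicities $1$, $4$ and $8$, the partition $(0,4,0,0)$ follows.

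The main obstacle is the second step: the case analysis on the position of $P$'s edge relative to $e'$ and $e''$, splitting by slope $-1$ versus $1/3$ for $e'$ and by the possible dual cells at the endpoints. In particular, we must exclude the appearance of a (4b) tangency, which would change the partition to $(4,2,0,0)$. We plan to first show, using the bidegree and the inequalities above, that a type (3c) tangency together with a (1a) tangency on the edge of $\Lambda$ prevents any vertex of $\Gamma$ of the form required for (4b) in \autoref{lm:MoveLeftFrom3a1P}(i). Only after that will we run the two directional moves.
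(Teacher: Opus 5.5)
Your argument as planned has a genuine gap: the count in your first step is wrong, and your second step looks for the missing lifts in a place where, for most configurations, there are none.

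\textbf{The miscount.} When the diagonal (1a) tangency is moved to an endpoint of its edge, the vertex $v_0$ on the (3c) leg is indeed pinned. The parallelogram $\cP_0$ of \autoref{lm:case1Dim3} degenerates to a horizontal segment on the line of the (3c) edge, and the slope one line through that endpoint meets it in a single point. However, $v_1$ still slides along this slope one line. \emph{Both} ends of its range lift, each with multiplicity $2$: these are where the (1a) tangency on the positive horizontal leg becomes (2a), (4a) or (6a). This is just the $3$-dimensional count of \autoref{lm:case1Dim3} ($2$ diagonal endpoints, $2$ positions of $v_0$, $2$ of $v_1$, multiplicity one), with the $v_0$-factor collapsing and the (3c) tangency doubling the multiplicity.

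Two configurations behave this way: the one with a slope $-1$ diagonal edge, and the slope $1/3$ one whenever the diagonal edge and the (3c) edge are not adjacent. In both, the polygon $\cQ$ already contains all four liftable members, each of multiplicity $2$. Moreover, in these configurations the (3c) edge is in position (1) relative to $\cP_0$. So $v_0$ can never be slid onto it while the diagonal tangency survives. Your second step therefore produces nothing, and you would be left with four of the eight lifts.

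\textbf{The missing case split.} The missing idea is to split on whether the diagonal edge and the (3c) edge share a vertex $A=C$, which can only happen in the slope $1/3$ configuration. Only in that case does one end of $\cP_0$ collapse onto $A$, giving a non-liftable horizontal (3bb) tangency. The cell then contributes two liftable members, both from the \emph{other} endpoint of the diagonal edge, rather than one from each endpoint as you claim. Only in this case can $v_0$ be slid past $A$.

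The efficient finish in this case is the paper's. Past $A$ you obtain a member with a (3a) horizontal and a (1a) vertical tangency on $\overline{(1,2)^{\vee}}$. The induced subdivision is (I) or (II) of \autoref{fig:3aAnd1VerticalLeftMove}, which rules out (4b) members. Then \autoref{pr:3a11Dim2} gives $(0,4,0,0)$ directly, with no separate count inside the original cell.

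Your appeal to \autoref{thm:countingTritangentClassesAndLifts}, stopping once four distinct multiplicity-$2$ members are found, is legitimate. Path-connectedness alone, however, does not make the search exhaustive.
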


\begin{proof} We let $\sC$ be the unique cell of $\bclassns$ containing $\Lambda$ in its relative interior. Up to $\Dn{4}$-symmetry, we may assume that the edge of $\Lambda$ has slope one and that the type (3c) tangency (labeled $P'$) lies on the negative horizontal leg of $\Lambda$. We let $e'$ be the horizontal edge of $\Gamma$ containing $P'$. We let $P$ be the diagonal tangency. By construction, it lies on an edge of $\Gamma$, which we label as $e$. The remaining tangency (1a) point, which we call $P''$ must be in either of the legs adjacent to $v_1$.  Both possibilities are depicted in~\autoref{fig:boundedDim2Cases}.

  \begin{figure}
  \includegraphics[scale=0.35]{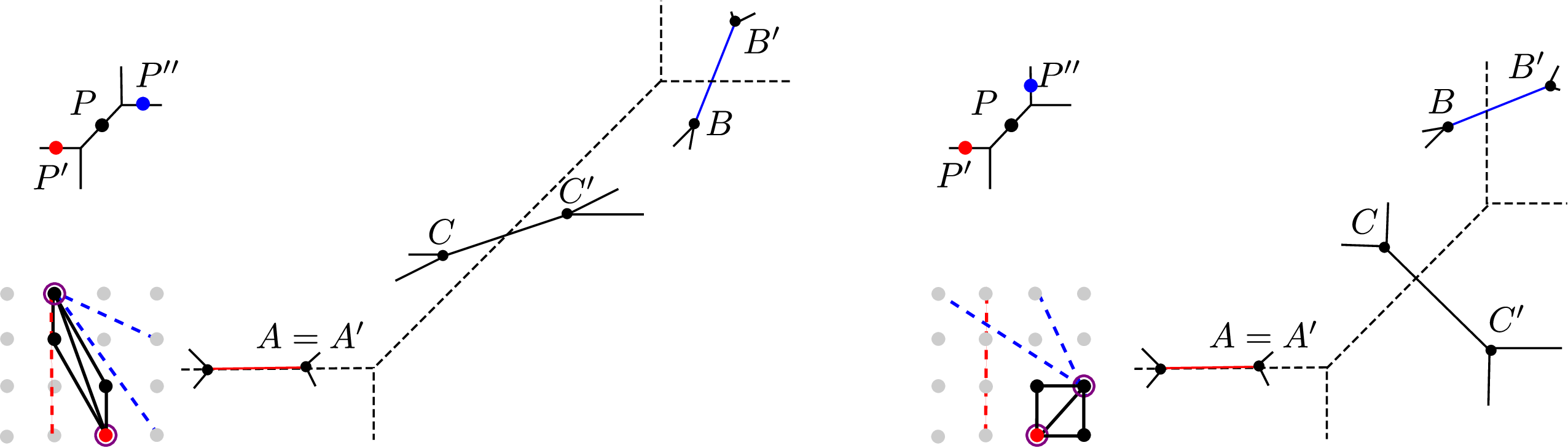}
  \caption{Possible distribution of tangencies between $\Lambda$ and $\Gamma$, the associated partial Newton subdivisions of $\sextic$, and relevant edges of $\Lambda$ and $\Gamma$, where $\Lambda$ is a generic member of a 2-dimensional cell of $\bclassns$ with a type (3c) horizontal tangency and two type (1a) tangencies, one of which is diagonal.\label{fig:boundedDim2Cases}}
\end{figure}

   The proof methods in both cases are similar to the ones used to establish 
   {Lemmas}~\ref{lm:case1Dim3} and~\ref{lm:case2Dim3}, except that in te present case the edge $e'$ is horizontal. 
   In order to carry out the construction of these lemmas in the present setting, we set the vertices $A$ and $A'$ to match  the rightmost endpoint of $e'$.
   We treat the horizontal and vertical nature of the tangency at $P''$ separately. In both cases, since $A=A'$ the parallelogram $\cP_0$ will be a degenerate: it becomes a horizontal segment in $\overline{(2,0)^{\vee}}$ aligned with $e'$.

   First, assume $P''$ lies on the positive vertical leg of $\Lambda$. Then, the slope of $e$ is $-1$. In what follows we construct a polygon $\cQ \subseteq |\bclassns|$ containing $\Lambda$ in its relative interior, with precisely four liftable members, all of which are vertices of $\cQ$ and have lifting  multiplicity two. We do so by using the same methods as in the proof of~\autoref{lm:case2Dim3} with some mild adjustments. In this situation, the inequalities from~\eqref{eq:inequalitiesCase2Dim3}    hold and the construction produces two parallelograms $\cP_0$ and $\cP_1$, analogous to those seen in~\autoref{fig:parallelogramHAndVDim3}, each recording the local moves of the corresponding vertex of $\Lambda$. In addition, there is only one possible relative position for the degenerate edge $e'$ with respect to $\cP_0$,  namely, the one labeled (1). Similarly, the edge $e$ can only have positions (1), (2) or (5) relative to $\cP_0$. As in the aforementioned lemma, the polygon $\cQ$ is obtained by pairing the points of $\cP_0$ and the polytope $\cP_1\cap \overline{(3,1)^{\vee}}$ that are diagonally aligned. By construction, $\cQ \subseteq |\bclassns|$ and contains $\Lambda$ in its relative interior.

In order to produce the four liftable members of $\cQ$, we proceed as in the lemma.  The four position for $v_0$ in $\cP_0$ that produced liftable members in the $3$-dimensional case now become two, and their local lifting multiplicity doubles since $P'$ has  type (3c). Each of these two choices has two possible locations to place the corresponding vertex $v_1$ on $\cP_1\cap \overline{(1,3)^{\vee}}$ and produce liftable members with total lifting multiplicity two. This yields the desired lifting partition for $\tclass$.

To conclude the proof, it remains to treat the case when  $P''$ lies in the positive horizontal leg of $\Lambda$. In this situation, the slope of $e$ becomes $-1/3$. Unlike the horizontal case analyzed earlier, we must argue in different ways depending on whether or not the edges $e$ and $e'$ are adjacent.

If they are not, the situation is similar to the one discussed in the proof of~\autoref{lm:case1Dim3} precisely because the corresponding inequalities listed in~\eqref{eq:orderingDim3} remain valid. Unlike what occured in that lemma,  the edge $e'$ can only be in position (1) relative to $\cP_0$. Since the lower vertex of any member of the polygon $\cQ$ lies in  $\cP_0$,  we can identify $\cQ$ with the location of the top-vertex $v_1'$ of the corresponding tritangents, yielding $\cQ:=\cP_1\cap \overline{(1,3)^{\vee}}$. For each $v_1'$,  the corresponding lower-vertex $v_0'$ is the unique point in $\cP_0$ diagonally aligned with it.

By construction,  we have that  $\cQ\subseteq |\bclassns|$ and its relative interior contains $\Lambda$. Furthermore, there are precisely   two locations for $v_0'$ in the  parallelogram $\cP_0$, namely, its endpoints, that produce liftable tritangents: each of them is paired with two possible points $v_1' \in \cQ$, both of which are vertices of $\cQ$.  The lifting multiplicity of each of these four tritangents equals  two due to the common  type (3c) tangency point $P$ and the type (2a), (4a) or (6a) nature of the remaining two tangencies, dependent on the  position of $e''$ relative to  $\cP_1$.

  On the contrary, if $e$ and $e'$ are adjacent, we have $A=C$. The conditions $C\prec_d P$ and $C\prec_v P$ imposed by $\Lambda$ ensure that we can move $v_0$ to the left preserving  $P''$ as a tangency point until we reach the point $A$. At this point, we acquire a type horizontal (3bb) tangency.  Moving $v_0$ past $A$, we obtain a trivalent member $\Lambda'$ on a $2$-dimensional cell of $\bclassns$ with a slope one edge, the same horizontal tangency at $P''$, a horizontal type (3a) tangency and a vertical type (1a) one on the boundary of $(1,2)^{\vee}$.
  The partial Newton subdivision of $\sextic$ imposed by $\Lambda'$ matches the one labeled by (I) or (II) in~\autoref{fig:3aAnd1VerticalLeftMove}. Both guarantee that we will not have a member of $\bclassns$ with a type (4b) tangency. \autoref{pr:3a11Dim2} confirms that $\tclass$ has the desired lifting partition.
\end{proof}

\begin{remark} In the absence of members with a type (4b) tangency, the proof of~\autoref{pr:13c1} confirms that if $|\bclassns|$ contains a tritangent with a type (3c) and two type (1a) tangencies, one of which is diagonal, the support of $\bclassns$ is obtained by attaching a (potentially empty) chain of segments to a $2$-dimensional polygon (namely, $\cQ$) along a vertes. Thus, such complexes admit a coarsest polyhedral complex structure.
\end{remark}

In the remainder, we discuss the three members in the first row of~\autoref{tab:combClassificationGenericPerDim} that have not yet been analyzed, namely,  the first one in entry $(2,(i))$ and those in entry $(2,(ii))$. To simplify the exposition, we treat together the first one and those among the second group that have a diagonal (3aa) tangency. For convenience, we choose alternative $\Dn{4}$-representatives to those in the table.
The next combinatorial lemma will be central to certify that the lifting partition of any
tritangent class containing such members is  $(0,4,0,0)$.

\begin{lemma}\label{lm:dim2Parallelogram}
  Assume that the boundary of the chamber $(1,1)^{\vee}$ in $\RR^2\smallsetminus \Gamma$  has two edges of slopes $\pm 2$ and $\pm 1/2$, labeled $e'$ and $e''$ respectively, as seen in~\autoref{fig:type1-1Parallelogram}. Then, for every pair of points $(P',P'')$ with $P'\in e'$, $P''\in e''$ satisfying  $P''\preceq_h P'$ and $P'\preceq_v P''$ we can find a unique $(1,1)$-curve $\Lambda$ with vertices in  $\overline{(1,1)^{\vee}}$ and containing the points $P'$ and $P''$ on its positive horizontal and vertical legs that is tritangent to $\Gamma$. 
  Furthermore,  $\Lambda$ has lifting multiplicity  $2\mult(\Lambda, P')\mult(\Lambda, P'')$.
\end{lemma}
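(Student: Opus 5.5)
The plan is to construct $\Lambda$ explicitly from $P'$ and $P''$, check tritangency through the bidegree of $\Gamma$, and then read off the lifting multiplicity from \autoref{thm:totalLifting}. Up to $\Dn{4}$-symmetry we fix the representative in \autoref{fig:type1-1Parallelogram}: the positive horizontal and positive vertical legs of any candidate $\Lambda$ leave $\overline{(1,1)^{\vee}}$ through $e'$ and $e''$, respectively.

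\emph{Construction and uniqueness.} A $(1,1)$-curve whose positive horizontal leg contains $P'$ and whose positive vertical leg contains $P''$ has its vertex $v_{\mathrm{h}}$ carrying that horizontal leg on the horizontal line through $P'$, with $v_{\mathrm{h}}\preceq_h P'$. Likewise, the vertex $v_{\mathrm{v}}$ carrying the vertical leg lies on the vertical line through $P''$, with $v_{\mathrm{v}}\preceq_v P''$. The remaining freedom is the edge joining them, of slope $\pm1$.

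The hypotheses $P''\preceq_h P'$ and $P'\preceq_v P''$ say that the point $Q=(P''_x,P'_y)$ lies weakly left of $P'$ and weakly below $P''$. The $4$-valent curve with vertex $Q$ is therefore one candidate. We will argue that it is the only one whose vertices stay in $\overline{(1,1)^{\vee}}$ and that is tritangent. Any other choice has an edge of slope $-1$ or $1$ joining the two lines. In the slope $-1$ case, one of the legs ($v_{\mathrm{h}}$'s vertical or $v_{\mathrm{v}}$'s horizontal) becomes a positive leg. Since $v_{\mathrm{h}}$ and $v_{\mathrm{v}}$ lie on the lines through $P'$ and $P''$, the extra positive leg would meet $e'$ or $e''$ a second time or exit $(1,1)^{\vee}$, so the local intersection multiplicities cannot be even.

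Here we use that the slopes $\pm2$ and $\pm1/2$ of $e',e''$ make each leg meet its edge with multiplicity $2$ (a type (1a)/(2a) tangency). The negative legs of the candidate must exit the chamber through edges dual to $(0,1)$–$(1,1)$ and $(1,0)$–$(1,1)$. By the bidegree count (total stable intersection $6$), the remaining contribution of $2$ then forces a third tangency on the negative legs or at the vertex.

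I expect this to be the main obstacle. Conjecturally, the configuration of \autoref{fig:type1-1Parallelogram} forces the lower-left corner of $(1,1)^{\vee}$ to be a vertex $R$ of $\Gamma$ such that exactly one position of $v_0$ on the diagonal through $R$ gives the third tangency. This is the position where $v_0$ is diagonally aligned with $R$. Uniqueness then follows because $v_0$ is pinned by $R$ and $v_1$ by $P',P''$. I would check this by listing the possible triangles of the Newton subdivision adjacent to $(1,1)$ compatible with the dual edges of $e',e''$ and verifying each case.

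\emph{Lifting multiplicity.} By \autoref{thm:totalLifting}, the multiplicity is the product of the local lifting multiplicities. The factors at $P'$ and $P''$ are $\mult(\Lambda,P')$ and $\mult(\Lambda,P'')$. The third tangency is a diagonal one through the vertex of $\Gamma$, of type (4a)/(6a) diagonal or (4a'). By \autoref{rm:4a6a4ap6ap} its factor is $\mu$, and since $P'$ and $P''$ both lie in the halfspace $\{\text{above-right}\}$ of the diagonal line through it, $\mu=2$. This yields $2\mult(\Lambda,P')\mult(\Lambda,P'')$.
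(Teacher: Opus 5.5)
There is a genuine gap. Your construction picks the wrong curve, and it discards configurations that actually occur, so neither existence nor uniqueness is established.

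\emph{The generic case.} Take $Q\in(1,1)^{\vee}$ in the open chamber. Then the $4$-valent curve with vertex $Q$ is \emph{not} tritangent. Its two negative legs leave the chamber at two distinct points of $\Gamma$. Since $P'$ and $P''$ already account for $4$ of the total stable intersection $6$, these exits are two separate components of multiplicity one.

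This parity count is the idea you are missing. The tritangent is trivalent with a slope one edge and $v_1=Q$. The vertex $v_0$ cannot lie in the open chamber, by the same parity problem. It cannot lie beyond the boundary either, since the vertices must stay in $\overline{(1,1)^{\vee}}$. Hence $v_0$ is the unique point where the ray $Q+\RR_{\geq 0}(-1,-1)$ meets the boundary of the convex chamber $(1,1)^{\vee}$. The third tangency is then of type (3a), (5a) or (4a)/(6a) diagonal.

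This also shows your conjectured corner vertex $R$ cannot be what fixes $v_0$. The vertex $v_0$ lies on the diagonal through $v_1=Q$, and that diagonal moves with $(P',P'')$.

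\emph{The remaining positions of $Q$.} The $4$-valent curve at $Q$ only occurs when $Q\in\partial(1,1)^{\vee}\smallsetminus(e'\cup e'')$, with a (4a') or (6a') tangency at $Q$. The aligned case $Q\in e'\cup e''$ needs separate treatment. In particular, when $P''$ is the endpoint of $e''$ adjacent to a slope one edge of $\Gamma$, $v_0$ is forced to the other endpoint of that edge, giving a (3d) or (3h) tangency.

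\emph{Slope $-1$ edges.} Your dismissal of these rests on a misreading of the leg structure of a $(1,1)$-curve. With a slope $-1$ edge, the bottom-right vertex carries the positive horizontal and negative vertical legs. The top-left vertex carries the positive vertical and negative horizontal legs. So no extra positive leg appears.

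These curves genuinely occur. When $Q\notin\overline{(1,1)^{\vee}}$, convexity forces the boundary of the chamber to contain an edge $e$ of $\Gamma$ dual to the segment joining $(0,0)$ and $(1,1)$. The only tritangent then places its vertices at $e\cap(P'+\RR(1,0))$ and $e\cap(P''+\RR(0,1))$. This produces a (3aa) tangency along $e$, or a (3bb2)/(7) tangency when a vertex lands on $e'\cup e''$.

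\emph{Multiplicity.} Your argument via $\mu=2$ only covers the (4a)/(6a) diagonal and (4a')/(6a') cases. For all the other third-tangency types listed above, you must read the local factor off \autoref{tab:LiftingMultiplicities} case by case.
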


\begin{proof} We consider the horizontal and vertical lines through $P'$ and $P''$, respectively, and let $Q$ be the unique intersection point between them. The construction of $\Lambda$ depends solely on the location of $Q$ relative to $\Gamma$. Uniqueness will follow by construction and the combinatorics of the chamber $(1,1)^{\vee}$.   The claim regarding the lifting multiplicity of $\Lambda$ will be settled with the aid of~\autoref{tab:LiftingMultiplicities} once the type of the tangency point in $\Lambda$ complementing $P'$ and $P''$ is determined.

  First, assume that $Q$ lies in the aforementioned chamber. The condition that $P'$ and $P''$ belong on the positive legs of $\Lambda$ forces the vertex $v_1$ to agree with $Q$ and $\Lambda$ to be trivalent with a slope one edge. In turn, the vertex $v_0$ of $\Lambda$ must be placed at   the intersection of the boundary of $(1,1)^{\vee}$  with the  ray $R$ with direction  $(-1,-1)$ emanating from the vertex $Q$. The bidegree of $\Gamma$ restricts the type of the remaining  tangency point of $\Lambda$ to (5a), (4a) or (6a) diagonal, or (3a).

  Next, suppose that $Q$ lies in the boundary of $(1,1)^{\vee}$ but outside the edges $e'$ and $e''$. Once again, the required location of $P'$ and $P''$ within $\Lambda$  ensures that  $Q$  agrees with the vertex $v_1$ of $\Lambda$. The condition that $v_0, v_1\in \overline{(1,1)^{\vee}}$ implies that $\Lambda$ is  $4$-valent. Indeed, our hypothesis on the slopes of the edges $e'$ and $e''$ combined with the bidegree of $\Gamma$ ensures that the stable intersection multiplicity  at the component of $\Lambda\cap \Gamma$ containing  $Q$ equals two. Thus, the extra tangency between $\Lambda$ and $\Gamma$ has  type (4a') or (6a'). A type (3a') tangecy is not allowed for convexity reasons.

  On the contrary, if $Q\in e\cup e'$, then $P'$ and $P''$ are either vertically or horizontally aligned. Since $e'$ and $e''$ are not adjacent in $\Gamma$ by~\autoref{fig:type1-1Parallelogram}, we know that $P'\neq P''$. Thus, up to  $\Dn{4}$-symmetry, we may assume that $P''=_v P'$ and $P''\prec_h P'$. In such situations, we have $v_1 = Q=P''$. In addition,   the edge $e''$ must have slope $1/2$ and, hence, the triangle $B^{\vee}$ has vertices $(1,1)$, $(1,2)$ and  $(0,3)$. From this if follows that  $A\prec_v B$, so  we know that $P''\neq B$.

  \begin{figure}
  \includegraphics[scale=0.3]{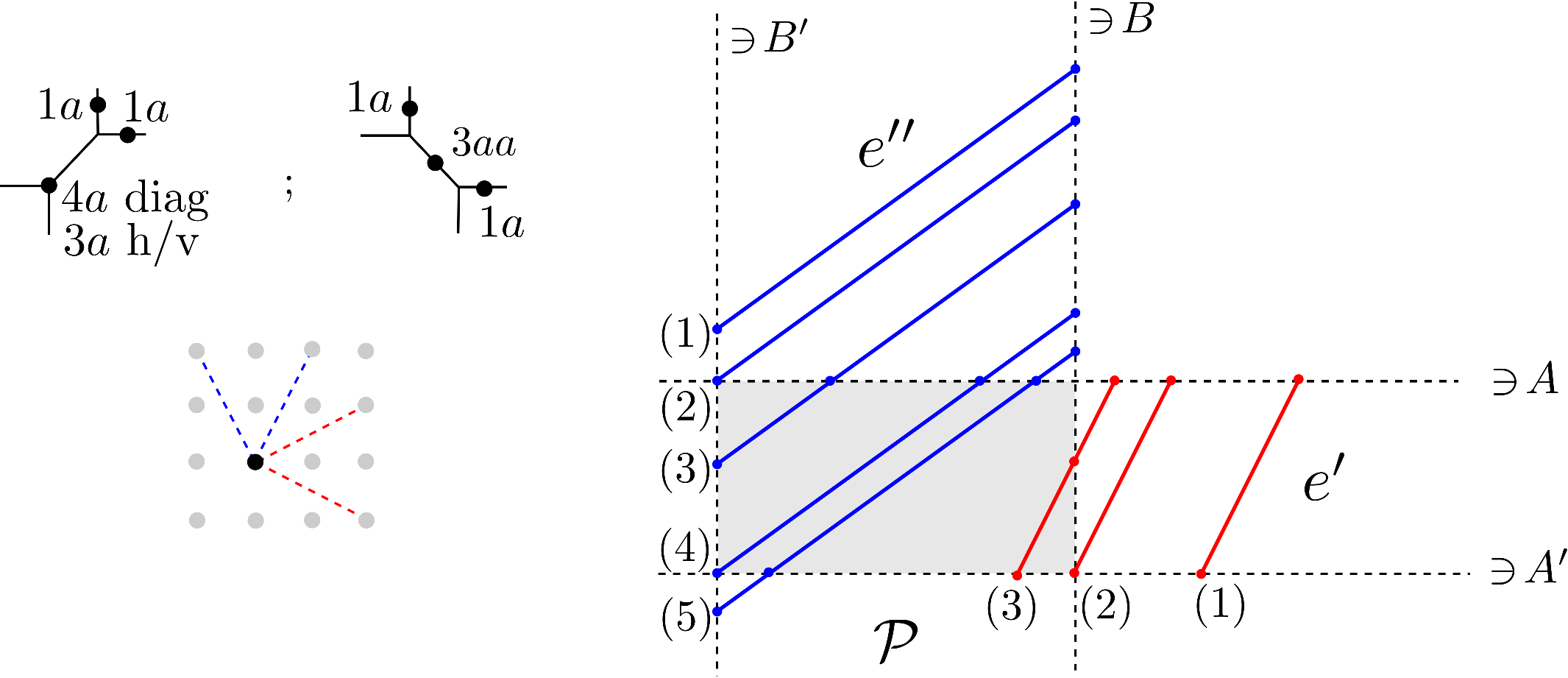}
\caption{From left to right: partial Newton subdivision of $f$ for two possible distribution of tangency points on a trivalent $\Lambda$ with $v_1\in \overline{(1,1)^{\vee}}$ and two type (1a) tangencies on its positive legs,  and positions of the (red) edge $e':=\overline{AA'}$ and (blue) edge $e'':=\overline{BB'}$ relative to the parallelogram  $\cP$ determined by horizontal and vertical lines through the endpoints of these edges.\label{fig:type1-1Parallelogram}}
  \end{figure}
  
  Two situations arise. If  $P''$ lies in the interior of $e''$, then $\Lambda$ will necessarily be  trivalent, with a slope one edge, and  the location of $v_0$ in  $\overline{(1,1)^{\vee}}$ is  uniquely determined as in the case when $Q\in (1,1)^{\vee}$.
  On the contrary, if $P''=B'$, then, the characterization of $B^{\vee}$ given above combined with the inequality  $A'\preceq_v P'$ and the fact that $A'$ is not adjacent to a horizontal edge, ensure that $(1,0)$ is not a vertex of the cell $(B')^{\vee}$.  Hence, the vertex $B'$ is adjacent to an edge of $\Gamma$ with slope one.
We let $B''$ to be other endpoint of this edge. Since $P''=B'$, there is only one option to obtain a valid tritangent with the restrictions in the statement, namely, to place $v_0$ at $B''$. The resulting third tangency point between $\Lambda$ and $\Gamma$ together with $P''$ will constitute a tangency of type (3d) or (3h), depending on whether the triangle $(B'')^{\vee}$ has $(0,1)$ or $(1,0)$ among its vertices.

Finally, assume that  $Q$ lies outside  $\overline{(1,1)^{\vee}}$. The convexity of $(1,1)^{\vee}$ and the bidegree of $\Gamma$ force the Newton subdivision of $\sextic$ to have an edge join $(1,1)$ and $(0,0)$. Its dual edge in $\Gamma$, which we label by $e$,  intersects both the vertical line through $P''$ and the horizontal one through $P'$. Since the intersection multiplicity at these two points equals one, there is only one possible location for the vertices of $\Lambda$ that makes this curve tritangent, namely, we must place them precisely at these two intersection points. The type of the remaining tangency point depends on whether or not any of its vertices lies in $e'\cup e''$.

If none of them is in this union, then this point has type (3aa) along the edge $e$. If only one of them does, say $v_0$, it follows that $v_0=P'=A'$ and $\Lambda$ has a type (3bb2) tangency containing  $P'$ but not $P''$. Finally, if both vertices are in $e'\cup e''$, we have that $v_0=A'=P'$, $v_1=B'=P''$ and all three tangency points on $\Lambda$ are part of a type (7) tangency.
\end{proof}

\begin{remark} The proof of the lemma shows that for generic choices of pairs $(P', P'')$ subject to the hypothesis required in the statement, $\Lambda$ is trivalent and the input tangency points  have type (1a). However, the slope of the unique edge of $\Lambda$ and the type of the complementing tangency varies. If $\Lambda$ has slope one, then $v_0 \in \Gamma$, $v_1\notin \Gamma$,  and we obtain the rightmost element in the $(2,(i))$-entry of~\autoref{tab:combClassificationGenericPerDim}. If $\Lambda$ has slope $-1$, then  $v_0,v_1\in \Gamma$ and we have a type (3aa) tangency. This distribution of tangencies is symmetric to the  one featured in the $(2,(ii))$-entry of the  table.
\end{remark}

The following result proves a partial description of the complexes $\bclassns$ with members satisfying the conditions of~\autoref{lm:dim2Parallelogram}.

\begin{corollary}\label{cor:tritangentClasses}
  All tritangents obtained in~\autoref{lm:dim2Parallelogram} belong to the same tritangent class of $\Gamma$.  Furthermore, the union of the supports of the cells of $\bclassns$ containing these members is a polytope in $\RR^3$,  isomorphic to the intersection of the rectangle $\cP$ seen in~\autoref{fig:type1-1Parallelogram}
and the halfspaces determined by the lines spanned by the edges $e$ and $e'$, respectively, that  contain the top-right vertex of $\cP$.
\end{corollary}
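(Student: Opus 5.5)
The plan is to parametrize the tritangents of \autoref{lm:dim2Parallelogram} by the pair $(P',P'')\in e'\times e''$. Under the constraints $P''\preceq_h P'$ and $P'\preceq_v P''$, the set of admissible pairs is a convex polygon: write $P'$ as a point moving along $e'$ and $P''$ as a point moving along $e''$, so each is an affine function of a single parameter. Both $e'$ and $e''$ are non-axis-parallel segments (slopes $\pm 2$ and $\pm 1/2$), so the horizontal coordinate of $P'$ and the vertical coordinate of $P''$ parametrize them bijectively. The map $(P',P'')\mapsto Q$, where $Q$ is the intersection of the horizontal line through $P'$ with the vertical line through $P''$, is therefore an affine bijection onto its image. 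The two inequalities in the lemma translate into $Q$ lying in the rectangle $\cP$ cut out by horizontal and vertical lines through the endpoints of $e'$ and $e''$. Within $\cP$, they further require $Q$ to lie on the halfplanes bounded by the lines spanned by $e'$ and $e''$ (what the statement calls $e$ and $e'$) that contain the top-right vertex of $\cP$. This describes the claimed polytope as a region for $Q$.

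Next, I will show that the assignment $Q\mapsto (v_0,\lenl)$ built case by case in the proof of \autoref{lm:dim2Parallelogram} is continuous and piecewise linear, and injective by the uniqueness part of that lemma. The cases to glue are: $Q$ inside the chamber $(1,1)^{\vee}$, giving a slope one edge with $v_1=Q$ and $v_0$ the point where the ray in direction $(-1,-1)$ exits the chamber; $Q$ on the chamber boundary, giving a $4$-valent member; $Q$ on $e'\cup e''$; and $Q$ outside the closed chamber, giving a slope $-1$ edge with vertices at the intersections with the edge joining the duals of $(0,0)$ and $(1,1)$. On each piece the formula is affine in $Q$. Continuity across the pieces follows because the degenerate configurations ($4$-valent, or type (3bb2)/(7)) are exactly the common limits: as $Q$ approaches the chamber boundary from either side, the length $\lenl$ tends to $0$. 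Since the domain is convex, hence path-connected, and the map is continuous, all these tritangents lie in a single connected component of the set of tritangents, that is, in one class $\tclass$.

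It remains to check two things: that these members lie in $|\bclassns|$, and that the image is the whole union of the cells containing them, so that it is a polytope isomorphic to the domain. Membership in $\bclass$ follows from \autoref{pr:unbounded} and the definitions of $\mathscr{U}_i^{\pm}$ and $\tR_i^{\pm}$: both vertices lie in $\overline{(1,1)^{\vee}}$ or on $\Gamma$ away from the corner chambers, and each vertex has a tangency on an adjacent leg or edge. Non-specialness follows because the two positive legs carry one tangency each and the third tangency sits on the edge or at a vertex. For the converse, I will argue that any local move (by \autoref{pr:localMovesTritangents}) from such a member that keeps $P',P''$ on $e',e''$ stays in this family, while moves that leave the family either break the tangencies or exit $\bclassns$.

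The main obstacle is the claim that the image is a genuine convex polytope, since the map from $Q$ is only piecewise linear, with a kink where the slope of $\Lambda$'s edge changes sign. Here I will use the tropical Segre coordinates from \eqref{eq:TropSegre}. In the coordinates $(v_0,v_1)$ or $(x,y,z)$ the parametrization should become affine across the sign change, because the vertex of the plane moves linearly with $Q$. Pulling back through $\psi$ should then give the isomorphism with the polygon. If that fails, I will instead state the isomorphism at the level of supports, as a piecewise-linear homeomorphism.
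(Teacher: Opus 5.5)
Your main line matches the paper's proof. You identify each tritangent of \autoref{lm:dim2Parallelogram} with $Q=(P''_x,P'_y)$ and read $P''\preceq_h P'$, $P'\preceq_v P''$ as the two halfplane conditions; the rectangle $\cP$ itself comes from $P'\in e'$, $P''\in e''$. You then get a single class from the continuous motion of $Q$ and check that every $Q'$ in the region gives a member. Injectivity of $Q\mapsto\Lambda$ holds because $\Lambda$ recovers $P'$ and $P''$ as the intersections of its positive legs with $e'$ and $e''$; the uniqueness clause of the lemma only gives well-definedness.

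Your planned converse, that moves leaving the family break the tangencies or exit $\bclassns$, cannot be what you prove. The proof of \autoref{pr:dim2Parallelogram} exhibits cells of $\bclassns$ attached to $\cQ$ whose members are genuine tritangents outside the family. So the statement has to be read as concerning the cells whose relative interiors meet the family, and the only converse needed is the paper's.

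The step you single out as the main obstacle fails as planned. In Segre coordinates the projection to $Q$ is indeed linear, $Q=(z-y,\,z-x)$, and $(x,y,z)=(Q_x-\lenl,\,Q_y-\lenl,\,Q_x+Q_y-\lenl)$. But $\lenl$ is only piecewise linear in $Q$. If $e$ lies on the line $\{s+t=c\}$, then $\lenl=(Q_x+Q_y-c)/2$ when $Q\in(1,1)^{\vee}$ and $v_0\in e$, whereas $\lenl=Q_x+Q_y-c$ when $Q$ lies beyond $e$. Equivalently, the first piece lies in the plane $\{x+y=c\}$ and the second in $\{z=c\}$. So Segre coordinates create a fold exactly along the $4$-valent members, which is the opposite of what you expected.

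In $(v_0,\lenl)$-coordinates those two pieces happen to be coplanar, but other breaks appear there. When the exit point of $Q+\RR_{\geq 0}(-1,-1)$ switches from $e$ to a horizontal or vertical edge of $\partial(1,1)^{\vee}$ (the (3a) case of the lemma), $v_0$ moves on a different line. Thus neither coordinate system makes the family affine in $Q$. In general the union is only the graph of a piecewise linear function over the polygon, not a convex polytope.

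Your fallback is the correct conclusion: projection to $Q$ is a piecewise linear homeomorphism onto $\cP$ cut by the two halfplanes. That identification is exactly what the paper's proof delivers, consistent with how it elsewhere identifies a cell with the polygon recording $v_1$.
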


\begin{proof}
    Each tritangent $\Lambda$ obtained in the lemma  is uniquely determined by the point $Q$ defined in the proof of that result. The relative order restrictions between $P''$ and $P'$ fix the two halfspaces indicated in the statement.
  
  Moving the points $P'$ and $P''$ along the edges $e'$ and $e''$ subject to the  prescribed relative orders between them is equivalent to moving the point $Q$ continuously within $\cP$ and these two halfspaces. Thus, all tritangents built in this way lie in the same tritangent class of $\Gamma$ and yield elements of $|\bclassns|$. The point $v_1$ is in  $\cP$, and must be placed   below (or on) $e''$ and to the left of (or on) $e'$. 

  Conversely, any point $Q'$ in this intersection produces a valid tritangent in this class with $v_1=Q'$. Here, the points $P'$ and $P''$ are the intersections of $e'$ and $e''$ with the horizontal and vertical lines through $Q'$. By construction, both $P$ and $P''$  satisfy the required relative order restrictions. 
\end{proof}

\begin{remark} Unlike what happened in the presence of a type (4b) tangency, which was covered in~\autoref{lm:type4b_1}, the region $\cP\cap \overline{(1,1)^{\vee}}$ does not determine the subset of $|\bclassns|$ containing any of the tritangents $\Lambda$ described in~\autoref{cor:tritangentClasses}. The issue arises whenever the boundary of ${(1,1)^{\vee}}$ has a slope one edge intersecting the interior of $\cP$: the bottom left vertex of $\cP$ is not part of $\overline{(1,1)^{\vee}}$ but determines a tritangent in $|\bclassns|$ with a non-transverse diagonal tangency.
\end{remark}

Equipped with the findings of~\autoref{lm:dim2Parallelogram} and~\autoref{cor:tritangentClasses}, we determine the lifting partition for the corresponding tritangent classes:

\begin{proposition}\label{pr:dim2Parallelogram}
Let $\Lambda$ be a generic member in the relative interior of a $2$-dimensional cell of $\bclassns$ with either a type (3aa) diagonal tangency or with two type (1a) tangencies on different legs adjacent to the same vertex of $\Lambda$. Then, the lifting partition of $\tclass$ equals $(0,4,0,0)$. 
\end{proposition}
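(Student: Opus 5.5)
Up to $\Dn{4}$-symmetry, I will first bring $\Lambda$ into the normal form of \autoref{lm:dim2Parallelogram}. In the first case ($\Lambda$ has two type (1a) tangencies on the positive legs adjacent to $v_1$) the bidegree of $\Gamma$ and \autoref{pr:unbounded} force $v_1\in(1,1)^{\vee}$. The edges $e'$ and $e''$ of $\Gamma$ carrying these tangencies lie on the boundary of this chamber and have slopes $\pm 2$ and $\pm1/2$; slopes $\pm 1$ are ruled out because $\Lambda$ is generic in a $2$-dimensional cell. In the (3aa) case the Remark following \autoref{lm:dim2Parallelogram} lets me pick a representative with a slope $-1$ edge, with both vertices on the boundary of $(1,1)^{\vee}$, and with the two type (1a) tangencies on the positive legs. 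Either way, $\Lambda$ is one of the curves produced by \autoref{lm:dim2Parallelogram}. Then \autoref{cor:tritangentClasses} says the cells of $\bclassns$ through $\Lambda$ form a polytope $\cQ$, parametrized by the point $Q$ ranging over $\cP$ cut by the two halfspaces.

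Next I will count the liftable members of $\cQ$ using the lifting formula $2\mult(\Lambda,P')\mult(\Lambda,P'')$ from \autoref{lm:dim2Parallelogram}. A member lifts only if both $P'$ and $P''$ have positive local multiplicity. Since $P'\in e'$ and $P''\in e''$ with slopes $\pm2,\pm1/2$, this happens exactly when each of $P'$ and $P''$ is an endpoint of its edge, or an endpoint of $e'\cap\cP$ or $e''\cap\cP$ where the horizontal or vertical line through $A,A'$ or $B,B'$ cuts the edge. There the tangency becomes (2a), (4a) or (6a) horizontal or vertical, each of multiplicity one. The ordering constraints $P''\preceq_h P'$ and $P'\preceq_v P''$ should leave exactly two admissible positions for each of $P'$ and $P''$, and all four combinations are admissible. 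This gives four liftable members, each of multiplicity $2$, matching the partition $(0,4,0,0)$ and the total of eight required by \autoref{thm:countingTritangentClassesAndLifts}. To check this I will go through the positions of $e'$ and $e''$ relative to $\cP$, as in \autoref{fig:type1-1Parallelogram} and the analysis of \autoref{lm:case1Dim3}. I also have to confirm that the complementing tangency never has multiplicity zero: types (3d), (3h), (3bb2) and (7) occur only at degenerate corners, and \autoref{lm:dim2Parallelogram} already asserts the product formula including the factor $2$. I must also check that a $4$-valent corner with a (4a') or (6a') tangency gets $\mu=2$, since $P'$ and $P''$ then lie on the same side.

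Finally, I will argue that $|\bclassns|\smallsetminus\cQ$ contains no further liftable members. The shortcut is that $\cQ$ already accounts for all eight lifts guaranteed by \autoref{thm:countingTritangentClassesAndLifts}. So it suffices to show the count in $\cQ$ is exactly $8$, and no leaving-$\cQ$ analysis is needed. The main obstacle is the degenerate configurations. These include the case where $e'$ or $e''$ only partially meets $\cP$, and the case in the Remark after \autoref{cor:tritangentClasses} where the bottom-left vertex of $\cP$ lies outside $\overline{(1,1)^{\vee}}$ and the liftable corner acquires a non-transverse diagonal tangency. In these cases I would first try to show, using \autoref{lm:MoveDown3-aAnd1H} and \autoref{lm:MoveUp3caAnd1H}, that the corner member still has multiplicity $2\cdot1\cdot1$ and not $0$. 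Failing that, I would trace the diagonal-tangency members outside $\cQ$ with those lemmas to locate the missing lift of multiplicity $2$.
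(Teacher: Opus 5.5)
Your architecture matches the paper's. You normalize $\Lambda$ to the setting of \autoref{lm:dim2Parallelogram}, use \autoref{cor:tritangentClasses} to get the polygon $\cQ$, and observe that only the vertices of $\cQ$ can lift. These are the top-right vertex, its two neighbours and the bottom-left vertex; the extra vertex $A'$ or $B'$ that appears in position (3) does not lift, because the other tangency there stays of type (1a).

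The gap is your closing shortcut, that $\cQ$ by itself carries all eight lifts. This is false in exactly the degenerate configurations you flag. Suppose a candidate vertex of $\cQ$ other than the top-right one lies outside $\overline{(1,1)^{\vee}}$, for instance the bottom-left corner in the Remark after \autoref{cor:tritangentClasses}. Then $\Gamma$ has a slope $-1$ edge $e$ dual to $(0,0)$--$(1,1)$, and the corresponding member $\Lambda'$ has both vertices on $e$. If moreover $A'$ or $B'$ lies on $e$, then $\Lambda'$ acquires a (3bb2) or a (7) tangency. Both types have local lifting multiplicity $0$ in \autoref{tab:LiftingMultiplicities}. So your first fallback, showing the corner still has multiplicity $2\cdot 1\cdot 1$, cannot succeed. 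The product formula of \autoref{lm:dim2Parallelogram} does not rescue it either, because the complementing tangency has merged with $P'$ or $P''$. In these cases $\cQ$ contains only three liftable members, i.e. six lifts. The fourth multiplicity-two member genuinely lies in $|\bclassns|\smallsetminus\cQ$, so the analysis outside $\cQ$ that you hoped to skip is unavoidable.

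What is needed is your second fallback, actually carried out, and \autoref{lm:MoveUp3caAnd1H} plays no role in it.
\begin{itemize}
\item If only $P'=A'$ lies on $e$, then $P''$ has type (2a). Move $v_0'$ towards $v_1'$ while fixing $P''$. This produces a (3aa) tangency along $e$ together with a (1a) tangency on the positive horizontal leg. Then \autoref{lm:MoveDown3-aAnd1H} gives the missing member, of multiplicity $2\mult(\Lambda',P'')=2$, on a segment of $|\bclassns|$ attached to $\cQ$ at that corner.
\item If $P'=A'$ and $P''=B'$ both lie on $e$ (type (7)), separate the two vertices along $e$ as on the right of \autoref{fig:3-aAnd1Horizontal}. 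This gives a member with a (3cc) tangency and two (2a) tangencies, of multiplicity $2$, in a $2$-dimensional polytope of $|\bclassns|$ meeting $\cQ$ at that corner.
\end{itemize}
Only once these four multiplicity-two members are exhibited does your appeal to \autoref{thm:countingTritangentClassesAndLifts} legitimately exclude further lifts.

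A minor point: slopes $\pm1$ for $e'$ and $e''$ are ruled out because a horizontal or vertical leg meets such an edge with intersection multiplicity one, so no tangency can occur there. Genericity of $\Lambda$ in its cell is not the reason.
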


\begin{proof} We pick an orbit representative for $\Lambda$ that agrees with the ones seen on the left of~\autoref{fig:type1-1Parallelogram}. By \autoref{cor:tritangentClasses} we know that the union of the supporting cells of $\bclassns$ such tritangents in their relative interiors is (isomorphic to) the polygon $\cQ$  determined by  $\cP$ and the halfspaces induced by the edges $e'$ and $e''$ carrying the two type (1a) tangencies.

  By construction, any point of $\cQ$ which is not a vertex determines a member with a type (1a) tangency, which hence cannot lift.  This leaves us with between four and six potential liftable members in $\cQ$, namely, a subset of its vertices. The exact number depending on the relative position of $e'$ and $e''$ with respect to $\cP$. Up to symmetry, there are precisely three options for $e'$ and five for $e''$, leaving a total of 15 combinations to consider. Note that the slopes of these edges are uniquely determined when the positions are not (1): it will be $2$ for $e'$ and $1/2$ for $e''$.

  The polygon will have four vertices unless one of these edges is in position (3). An extra vertex (either $A'$ or $B'$) will be added for each vertex in position (3). We claim that none of the latter yields a liftable tritangent.  Indeed,   if $e'$ is in position (3), the point $A' \in \cQ$ will not produce a liftable member since the corresponding point $P''$ will have type (1a). The argument for the edge $e''$ is symmetric. Thus, we conclude that  at most four vertices of $\cQ$ will yield liftable members for all 15 combinations of relative positions. They correspond to the top-right vertex of $\cQ$, its two adjacent vertices in the boundary of the polygon, and  the bottom-left one.

  In what follows, we analyze each of these vertices and determine whether or not they lift. The proof of~\autoref{lm:dim2Parallelogram} confirms that the  lifting multiplicity of the corresponding tritangent equals 2 whenever the vertex lies in $\overline{(1,1)^{\vee}}$. In particular, we conclude that the top-right vertex of $\cQ$ always lift (and with the desired multiplicity), namely the top-right of $\cQ$  (corresponding to the choice $P'=A$ and $P''=B$ in the lemma).
  
  To finish our proof, we must analyze what happens with the remaining three. Fix any of these points, corresponding to picking $(P', P'') = (A',B), (A, B')$ or $(A',B')$.
  We let $\Lambda'$ be the corresponding tritangent. If this point lies in $\overline{(1,1)^{\vee}}$ it will also lift (and with multiplicity two) since $P'$ and $P''$ will have tangency types (2a), (4a), (6a) or be part of a type (3d) or (3h) one.

  On the contrary, if the chosen vertex of $\cQ$
  does not belong to $\overline{(1,1)^{\vee}}$, then the proof of the lemma confirms that the boundary of this set contains a slope -1 edge (labeled $e$) and both vertices of $\Lambda'$ must lie in $e$. Furthermore, $\Lambda'$ lifts (and with multiplicity 2) unless one of the points $P'$ or $P''$ lies in $e$, since $\Lambda'$ would acquire either a type (3bb2) diagonal or (7) tangency. In these cases, we can use the moving results from~\autoref{sec:technicalLemmasDim1-2} 
  to find an extra point in $|\bclassns|$ that produces a liftable tritangent, and with the desired multiplicity. By symmetry, it suffices to treat two cases.

  First, assume that only one of these vertices lies in $e$, say $P'$. By construction, we must have $P'=A'$. Furthermore, $A'$ is a vertex of $\Lambda'$ and it must be  part of a type (3bb2) tangency in $\Lambda'$ along $e$. Since $P'' \notin e$, we conclude that its tangency type is (2a). Then, moving $v_0'$ towards  $v_1'$ while fixing $P''$ produces a member of $\tclass$ with a (3aa) tangency along $e$, with a type (1a) tangency on the horizontal positive leg. \autoref{lm:MoveDown3-aAnd1H} then ensures that moving the vertex $v_0'$  of $\Lambda'$ away from $v_1'$ yields at most one liftable member, with multiplicity $2\mult(\Lambda', P'')=2$, as desired. Such members lie in a segment contained in  $|\bclassns|$ and adjacent to $\cQ$ at the starting vertex.

  Finally, assume both $P',P''\in e$. In this situation, $P'=A'$, $P''=B'$ and both vertices  of $\Lambda'$ are part of a type (7) tangency. Moving them  away from each other (as in the right of~\autoref{fig:3-aAnd1Horizontal}) produces exact one liftable tritangent $\Lambda''$ with a type (3cc) diagonal tangency, and two type (2a) ones, on its negative horizontal and vertical legs. Thus, its lifting multiplicity equals 2. By construction, the tritangents obtained in this way lie in a $2$-dimensional polytope in $|\bclassns|$ that meets $\cQ$ along the input vertex. 
\end{proof}

\begin{remark} The proof of~\autoref{pr:dim2Parallelogram} confirms that in the presence of tritangents with the configurations depicted in~\autoref{fig:type1-1Parallelogram}, the corresponding complex $\bclassns$ is supported in the union of at most three polygons, one of which is $\cQ$. The remaining ones occur only when $(0,0)$ and $(1,1)$ are adjacent in the Newton subdivision of $\sextic$, yielding a slope -1 edge $e$ in $\Gamma$. The extra polygons will be present whenever the set $e\cap \{A',B'\}$ is non-empty. The corresponding tritangents will be  trivalent with a slope $-1$ edge intersecting non-trivially with $e$ but having at least one vertex outside $e$. Each of them is adjacent to $\cQ$ along a fixed edge, namely, its left vertical or bottom horizontal one.
\end{remark}

Our last statement addresses the remaining $2$-dimensional tritangent classes, namely, those having a generic trivalent member in $|\bclassns|$ with a non-transverse tangency whose type is not (3aa):

\begin{proposition}\label{pr:13acOr3cc1sameOrNot} Let $\Lambda$ be a generic member in the relative interior of a $2$-dimensional cell of $\bclassns$ with a non-transverse diagonal tangency. Suppose at least one of the vertices of $\Lambda$ is not in $\Gamma$. Then, the lifting partition of $\tclass$ equals $(0,4,0,0)$.
\end{proposition}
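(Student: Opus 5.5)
We first identify which generic members of \autoref{tab:combClassificationGenericPerDim} the statement covers. In the $(2,(ii))$-entry, the members whose diagonal tangency is of type (3aa) have both vertices on $\Gamma$ and were handled in \autoref{pr:dim2Parallelogram}. So the members to treat are those whose non-transverse diagonal tangency $P'$ has type (3ac) or (3cc), complemented by two type (1a) tangencies. One lies on a leg adjacent to $v_0$ and one on a leg adjacent to $v_1$. The hypothesis that some vertex lies outside $\Gamma$ matches exactly these configurations. Since local moves are of type \eqref{moveii}, both vertices slide independently along the slope one line. We pick $\Dn{4}$-representatives with a slope $-1$ edge, so that the lemmas of \autoref{sec:technicalLemmasDim1-2} apply directly. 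Up to symmetry, there are then two cases. Either both (1a) tangencies lie on legs of the same orientation, or one is vertical at $v_0\in(3,2)^{\vee}$ and the other horizontal at $v_1$. We also use that no member of $\bclassns$ has a (4b) tangency. This follows because a (4b) member would force $\dim\bclassns=2$ with $(4,2,0,0)$ via \autoref{pr:type4b1}, and we would check that the Newton subdivision imposed by a (3ac)/(3cc) diagonal tangency is incompatible with it.

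The key observation is that the two vertices move independently, so the search for liftable members factorizes. Moving $v_0$ while fixing the tangencies near $v_1$ (playing the role of $P$) produces liftable members with multiplicity $2\mult(\Lambda,P)$ each. This is governed by \autoref{lm:MoveDownUpRight3-cAnd1V} when $v_0\in(3,2)^{\vee}$ carries a vertical (1a) tangency, and by \autoref{lm:MoveDown3-aAnd1H} together with \autoref{lm:MoveUp3caAnd1H} when $v_0\in\overline{(1,1)^{\vee}}$ carries a horizontal one. In each direction (towards and away from $v_1$), the relevant lemma gives at most one stopping position where the (1a) tangency becomes (2a), (4a) or (6a). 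The exception is the adjacent-edge cases, \autoref{lm:MoveDownUpRight3-cAnd1V}(ii) and \autoref{lm:MoveUp3caAnd1H}(ii). There the movement transitions to a configuration already treated: either the (3a)+(1a) setting of \autoref{pr:3a11Dim2}, via \autoref{cor:leftAndRightmostLiftableMembers3aAnd1Vertical}, or a new slope-one member covered by \autoref{pr:dim2Parallelogram}. The outcome is two positions for $v_0$, and symmetrically two positions for $v_1$. The non-transverse diagonal tangency remains of type (3ac) or (3cc) throughout, with local multiplicity $2$, because $v_0$, $v_1$ never cross the endpoints of its edge without stopping. Combining the $2\times2$ choices yields a parallelogram $\cQ\subseteq|\bclassns|$, parametrized by the two vertex positions along the slope-one line. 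Its four corners lift with multiplicity $2\cdot1\cdot1=2$. Every other point of $\cQ$ carries a (1a) tangency and so does not lift.

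To conclude, we count via \autoref{thm:countingTritangentClassesAndLifts}. Four members of multiplicity two already give $8$, so no further liftable members can exist elsewhere in $\tclass$, and the partition is $(0,4,0,0)$. Alternatively, one could argue directly using connectedness from \autoref{thm:pathConnectednessOfNSComplexes}. The main obstacle is the transition cases, where a vertex hits the endpoint of the diagonal edge or an adjacent edge. The concern there is that liftable members found through the transition could be double counted, or could appear with multiplicity $4$ (a (3c) tangency paired with (3ac)). We would handle this by checking, for each partial Newton subdivision in Figures~\ref{fig:3-cAnd1Vertical}--\ref{fig:3caAnd1Horizontal}, that at most one stopping point arises per direction. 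Where a transition to another generic type occurs, we would invoke the already proved proposition for that type rather than continuing the count by hand.
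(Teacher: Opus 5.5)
Your proposal is correct and follows essentially the paper's route. The paper uses the same movement lemmas from \autoref{sec:technicalLemmasDim1-2} to get two liftable positions per vertex, hence four members of multiplicity two. It hands the adjacent-edge cases to \autoref{pr:3a11Dim2} and \autoref{pr:dim2Parallelogram}, and rules out (4b) members only in the $C'\prec_v B'$ subcase, which is the only place it is needed. Your closing appeal to \autoref{thm:countingTritangentClassesAndLifts} makes explicit what the paper leaves implicit.

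Two small corrections. First, in the (3cc) case an adjacency sends you via \autoref{lm:MoveDownUpRight3-cAnd1V}(ii) to a (3ac) member, not to an earlier proposition, so you should treat (3ac) first, as the paper does. Second, the diagonal tangency does not always stay (3ac)/(3cc): it can become type (5a) at a stopping point, and a vertex can cross an endpoint of its edge (the case $C=B$ in \autoref{lm:MoveDown3-aAnd1H}). Neither affects your count, since the local multiplicity stays two.
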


\begin{proof} Looking at the sample generic members of a $2$-dimensional cell of $\bclass$ listed in~\autoref{tab:combClassificationGenericPerDim}, we know that the two remaining tangencies of $\Lambda$ are of type (1a) and each of them lies in a leg adjacent to a different vertex of $\Lambda$. We let $P'$ be the tangency point along the diagonal.
  
  In order to use the results of~\autoref{sec:technicalLemmasDim1-2}, we choose a representative of $\Lambda$ that has an edge of slope $-1$, a tangency on the positive horizontal leg (labeled $P''$), and  $v_1\notin \Gamma$. The remaining tangency, which we call $P$, can be either horizontal or vertical, but it must lie in  a leg  of $\Lambda$  adjacent to $v_1$.

  The assumptions on $\Lambda$ ensure that  $P$ has type either (3ac) or (3cc). We prove the statement by discussing both cases separately, starting with (3ac).   The partial Newton subdivision of $\sextic$ imposed by the tuple $(\Lambda, P, P', P'')$ matches those from~\autoref{fig:3caAnd1Horizontal}, with the caveat that if  subdivision  (I) occurs,  then $(2,3)$ is a vertex of the  triangle $(C')^{\vee}$, because the  tangency at $P''$ is  horizontal.

  Using the notations established in the figure, we see that if $C'\prec_v B'$ this subdivision is incompatible with the existence of a trivalent tritangent equivalent to $\Lambda$ with a slope -1 edge and a type (4b) tangency. In particular, this ensures that  \autoref{lm:MoveUp3caAnd1H} (ii) produces a member of $|\bclassns|$ satisfying the conditions of~\autoref{pr:3a11Dim2}, thus confirming the expected lifting partition for $\tclass$.

  On the contrary, assume  $B'\preceq_v C'$. We argue in two ways, depending on the nature of $P$. If the tangency at $P$ is vertical,~\autoref{fig:3caAnd1Horizontal} (II)  confirms that the edges of $\Gamma$ containing $P$ and $P'$ are not adjacent. Since $P$ has type (1a), it follows from this observation  that we can move $v_0$ and $v_1$  independently away or towards each other to produce four liftable members, each with multiplicity two. Each vertex has two  valid positions,  determined by 
  {Lemmas}~\ref{lm:MoveDownUpRight3-cAnd1V} (i) and (iii), \ref{lm:MoveDown3-aAnd1H} and~\ref{lm:MoveUp3caAnd1H} (i).

  Next, assume the tangency at $P$ is horizontal, as in~\autoref{fig:3caAnd1Horizontal} (I). If the edges of $\Gamma$ containing $P$ and $P'$ are adjacent,~\autoref{lm:MoveDownUpRight3-cAnd1V} (ii) applied to $v_1$ and $P$  ensures we can move $v_1$ towards $v_0$ to find a generic member of a $2$-dimensional cell of $\bclass$ with type (3aa) tangency. Thus, \autoref{pr:dim2Parallelogram} confirms that we obtain the desired lifting partition.

  On the contrary, if the edges are not adjacent,~\autoref{lm:MoveDownUpRight3-cAnd1V} (i) and (iii) will ensure precisely two positions for $v_1$ (both in $(2,3)^{\vee}$) that can produce potentially liftable tritangents (where the local lifting multiplicity of $P$ is one). To guarantee they lift, we must determine valid positions for $v_0$, either moving it towards or away from $v_1$. The information collected so far on the Newton subdivision of $\sextic$ ensures that there are two such choices for $v_0$ thanks to 
  {Lemmas}~\ref{lm:MoveDown3-aAnd1H} and~\ref{lm:MoveUp3caAnd1H} (i). Thus, the lifting partition of $\tclass$ is as expected.

  It remains to treat the case when $P'$ has type (3cc). Once again, we invoke~\autoref{lm:MoveDownUpRight3-cAnd1V}. First, we assume that the edge of $\Gamma$ containing $P'$ is adjacent to at least one of the edges containing $P$ or $P''$.  In this situation,   the corresponding vertex of $\Lambda$ can be moved toward the other one (as in the right of~\autoref{fig:3-cAnd1Vertical}) following item (ii) of the lemma, to produce a member in a $2$-dimensional cell of $\bclassns$ with a type (3ac) tangency. Our earlier discussion then confirms the statement.

 Finally, if no adjacency occurs, item (i) and (iii) of the same lemma ensures  that we can move $v_0$ and $v_1$ independently towards and away from each other to produce precisely four pairs of locations for the pair $(v_0,v_1)$ that yield liftable members. In all cases, $P'$ remains a type (3cc) tangency, hence confirming that they all have lifting  multiplicity two. This concludes our proof.
\end{proof}

\section{Proof of the refined lifting partition theorem when $\dim \bclassns \leq 1$}\label{sec:proof-lift-part-1d-or-less}

In this section we complete the proof of~\autoref{thm:refinedLiftingPartition} by discussing the lower dimensional cases. We start with the simplest setting, namely, when $\bclassns$ is discrete. In these situations, the path-connectivity established in~\autoref{thm:pathConnectednessOfNSComplexes} ensures that this set is  a point.

\begin{theorem}\label{thm:partitionDim0}
  If $\dim \bclassns = 0$, then the lifting partition of $\tclass$ equals $(0,0,0,1)$.
\end{theorem}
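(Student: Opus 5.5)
By \autoref{thm:pathConnectednessOfNSComplexes}, $\bclassns$ is a single point, so $\tclass$ has exactly one member $\Lambda$ that can lift. \autoref{thm:countingTritangentClassesAndLifts} says $\tclass$ carries exactly eight lifts in total, and the generic-relative-to-$\Gamma$ hypothesis places every liftable member inside $|\bclassns|$. Therefore $\Lambda$ must have lifting multiplicity $8$, and the partition is $(0,0,0,1)$. There is a catch: \autoref{thm:totalLifting} requires the mild edge-length genericity of \autoref{rm:tropicalGenericity} for types (3f) and (8), and that remark cites this very theorem to remove the requirement. So the counting shortcut alone is circular, and I would instead identify $\Lambda$ combinatorially and check directly that its local multiplicities multiply to $8$.

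First I would show that $\dim\bclassns=0$ forces $\dim\bclass=0$, or else reduce to the point of $\bclassns$ directly. If $\dim\bclass=1$, the point of $\bclassns$ lies on the boundary of a one-dimensional special cell. \autoref{lm:specialConfigurations} then shows that the neighbouring non-special configuration admits positive-dimensional moves inside $\bclassns$, unless $P$ is non-transverse in a way that pins the vertex. I would reuse the argument from the proof of \autoref{thm:comparingDimensions}, checking that the same contradiction holds without the edge-length genericity. If that fails, I would simply classify $\Lambda$ as a non-special point of $\bclass$ that is isolated within $\bclassns$.

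Next I would split by valency. If $\Lambda$ is $4$-valent, \autoref{lm:4valentSingleton} gives one (4a') tangency plus two (3c') tangencies on the same halfspace. By \autoref{rm:4a6a4ap6ap} this gives $\mu=2$, so the multiplicity is $2\cdot 2\cdot 2=8$ from \autoref{tab:LiftingMultiplicities}. If $\Lambda$ is trivalent, \autoref{thm:dim210} together with the $(0,(v))$-entry of \autoref{tab:combClassificationGenericPerDim} and \autoref{lm:fixAVertex} lists the possible configurations. These are a type (8) tangency, multiplicity $8$; a type (3f), multiplicity $4$, together with a (3c)-type companion of multiplicity $2$; or configurations in which both vertices are fixed by pairs of non-transverse tangencies. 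The last case gives three tangencies of type (3a), (3c), (3aa), (3ac) or (3cc), each of multiplicity $2$, for a product of $8$. A case (ii) fixing from \autoref{lm:fixAVertex} would involve (5a) or (4a)/(6a) diagonal tangencies of smaller multiplicity. I would rule these out using \autoref{lm:unboundedtranslations} and the construction of $\tR_i^{\pm}$: such members either lie in removed cells, or admit moves along which further members of $\bclassns$ appear, contradicting $\dim\bclassns=0$.

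The main obstacle is the types (3f) and (8), where \autoref{tab:LiftingMultiplicities} was only established under edge-length genericity. For these I would argue as follows. Generic edge lengths can be reached by a small deformation of $\Gamma$ that keeps its combinatorial type; for such a $\Gamma$ the table applies and the product is $8$. The count of $8$ lifts per class from \autoref{thm:countingTritangentClassesAndLifts} holds for every $\Gamma$, and all other members of $\tclass$ have multiplicity $0$ by the analysis above. Hence the unique liftable member $\Lambda$ carries all $8$ lifts regardless of the edge lengths, which also justifies \autoref{rm:tropicalGenericity}.
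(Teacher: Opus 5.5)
Your proof as written has a genuine gap: the step that fails is your treatment of case (ii) of \autoref{lm:fixAVertex}.

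\textbf{Case (ii) cannot be ruled out.} You plan to exclude isolated members whose fixed vertex $v$ carries a (5a) or a (4a)/(6a) diagonal tangency on the boundary of an unbounded chamber, on the grounds that they have smaller multiplicity. Both parts of that plan fail. First, these members survive into $\bclass$. The sets $\mathscr{U}_i^{\pm}$ need a vertex in an open unbounded chamber, and $\tR_i^{\pm}$ capture non-transverse diagonal tangencies at the vertex, so neither removes them. \autoref{lm:fixAVertex} says precisely that such a vertex cannot move inside $\bclass$, and these configurations belong to the $(0,(v))$-entry of \autoref{tab:combClassificationGenericPerDim}. Singletons with a vertex pinned on the boundary of an unbounded chamber genuinely occur; compare $\tclass_2$ and $\tclass_3$ in \autoref{ex:0etc}. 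So the exclusion you propose would be proving something false.

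\textbf{Their multiplicity is not smaller.} (5a) sits in the multiplicity-$2$ row of \autoref{tab:LiftingMultiplicities}. For a (4a)/(6a) diagonal tangency at $v$, the two legs of $\Lambda$ at $v$ carry no tangency points. Hence the two complementary tangencies lie in the same halfspace determined by the edge of $\Gamma$ through $v$, and $\mu=2$ in \eqref{eq:mu}. This is the missing idea, and it is exactly what the paper's proof uses. With it, every tangency on a trivalent isolated member has local multiplicity $2^{r/2}$, so the product is $8$. Together with your correct $4$-valent computation via \autoref{lm:4valentSingleton}, that would complete the argument.

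\textbf{The reduction to $\dim\bclass=0$ also fails.} The tuple $(0,1,1)$ is realized (\autoref{thm:dimComparison}, and $\tclass_4$ in \autoref{ex:0etc}). In that case the point of $\bclassns$ is an endpoint of a one-dimensional special cell, not a top-dimensional cell of $\bclass$. So \autoref{thm:dim210} does not describe it, and your fallback is left unspecified. By the proof of \autoref{lm:no012}, this member carries a (5a) tangency and two (3c) tangencies, so it also has multiplicity $8$, but you need to say so explicitly.

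\textbf{Your opening argument was not circular.} The counting argument never uses the (3f)/(8) entries of \autoref{tab:LiftingMultiplicities}. It needs only two things:
\begin{enumerate}[(a)]
\item no member outside $|\bclassns|$ lifts, which follows from \autoref{def:fgenericRelToGamma}(iii)--(iv) and zero entries such as (3ab) and (3cb), valid for every smooth $\Gamma$;
\item \autoref{thm:countingTritangentClassesAndLifts}, which holds for arbitrary edge lengths.
\end{enumerate}
Since $\bclassns$ is a single point by \autoref{thm:pathConnectednessOfNSComplexes}, that point carries all eight lifts. This is precisely how \autoref{rm:tropicalGenericity} invokes the present theorem. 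So your first paragraph is already a complete proof of the statement, and it sidesteps both the case analysis and the $(0,1,1)$ subtlety. Your final paragraph falls back on this same count, and the deformation to generic edge lengths there is unnecessary. The paper instead does the direct local computation. Any circularity concern attaches to that table-based computation for (3f) and (8), not to the count.
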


\begin{proof}  We let $\Lambda$ be the  tritangent corresponding to the single point supporting $\bclassns$. Note that $\Lambda$ can be trivalent or $4$-valent, so we must analyze both cases separately.

  \autoref{tab:combClassificationGenericPerDim} lists all options, up to $\Dn{4}$-symmetry. Note that  if  $\Lambda$ is trivalent with a (4a)/(6a) diagonal tangency $P$ at a vertex, the two adjacent legs contain no  tangency points, so the quantity $\mu$ from \eqref{eq:mu} becomes 2. Hence, we have  $\mult(\Lambda,P)=2$ by~\autoref{tab:LiftingMultiplicities}. In turn, the same table confirms that  the remaining tangency types present in a trivalent $\Lambda$ have local lifting multiplicity $2^{r/2}$, where $r$ denotes the (stable) intersection multiplicity of the corresponding connected component of $\Lambda\cap \Gamma$. Combining these two facts confirms that in the trivalent case, $\Lambda$ lifts with  multiplicity eight.

On the contrary, if $\Lambda$ is $4$-valent,~\autoref{lm:4valentSingleton} yields only one  possible distribution of tangencies up to $\Dn{4}$-symmetry, depicted as the last entry in~\autoref{thm:dim210}. Using~\autoref{tab:LiftingMultiplicities} we conclude that the lifting multiplicity for this member  is also eight since the quantity $\mu$ has value 2 as well.
\end{proof}

We devote the rest of this section to discuss the one-dimensional case. As it occured in the $2$-dimensional case, if $\dim \bclassns = 1$, the corresponding tritangent classes have two possible lifting partitions. This is the content of the following statement:

 \begin{theorem}\label{thm:partitionDim1}
  If $\dim \bclassns = 1$, then the lifting partition of $\tclass$ equals $(0,2,1,0)$ or $(0,0,2,0)$. Furthermore, the first partition occurs if, and only if, $|\bclassns|$ has a member with a type (4b) tangency.
\end{theorem}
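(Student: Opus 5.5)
We follow the same pattern as the proof of~\autoref{thm:partitionDim2}. By~\autoref{thm:comparingDimensions} we do not get $\dim\bclass=1$ for free, so we first pass to a generic non-special member $\Lambda$ in the relative interior of a $1$-dimensional cell of $\bclassns$. If $\dim\bclass=2$, the proof of~\autoref{thm:comparingDimensions} shows that the only way a $2$-dimensional special cell can sit next to a lower-dimensional $\bclassns$ is via the third configuration of~\autoref{lm:specialConfigurations}. In that case the non-special neighbour is $1$-dimensional and appears in the $(1,(iii))$ or $(1,(iv))$ entries of~\autoref{tab:combClassificationGenericPerDim}. In all cases, therefore, the list of generic members of top-dimensional cells in the second row of that table, minus the special ones, exhausts the starting configurations. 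We group them by shared features: the member with a (4b) tangency complemented by a (3c) one (first entry of $(1,(iii))$); the configurations with a (3a) horizontal tangency; those with a (3c) on a leg and a (4a) or (1a) diagonal; the one with two (4a) diagonal tangencies at both vertices; and the $(1,(iv))$ configurations, where one vertex is fixed by~\autoref{lm:fixAVertex}.

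For each group we trace the $1$-dimensional movement in both directions until it stops, as in~\autoref{sec:technicalLemmasDim1-2}. The key bookkeeping fact is that every generic member of a $1$-dimensional cell carries at least one non-transverse tangency, typically (3c) with local multiplicity $2$. Liftable members therefore acquire an extra factor $2$ compared with the $2$-dimensional analysis. Concretely, we adapt~\autoref{lm:MoveDownUpRight3-cAnd1V}, \autoref{lm:MoveDown3-aAnd1H}, \autoref{lm:MoveUp3caAnd1H} and \autoref{cor:leftAndRightmostLiftableMembers3aAnd1Vertical}, with the (1a) tangency replaced by a (3c) one, via~\autoref{tab:LiftingMultiplicities}. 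Each endpoint of the movement then yields exactly one liftable member of multiplicity $4$, namely where the remaining transverse tangency becomes (2a), (4a) or (6a) with multiplicity one, or where a (3f) tangency appears. Two such endpoints give $(0,0,2,0)$.

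For the (4b)+(3c) case, we mimic~\autoref{lm:type4b_1} and~\autoref{pr:type4b1}. The segment analogous to the polygon $\cQ$ contains two liftable members, with tangencies of type (5b)/(6b)/(4b')/(6b') at the vertex, each of multiplicity $1\cdot 2=2$. Moving past the (5b) endpoint, as in~\autoref{lm:MoveRightFrom3a1P}, produces a single member of multiplicity $4$, giving $(0,2,1,0)$. To prove the converse (the ``only if'' direction), we show that none of the other configurations can reach a (4b) tangency inside $|\bclassns|$. This follows from the Newton-subdivision constraints: for instance, case (III) of~\autoref{fig:3aAnd1VerticalLeftMove} is excluded.

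The main obstacle is completeness. We must verify, for each of the roughly nine starting configurations, that the movement stops at exactly two liftable members and that every branch point encountered (a (3bb) tangency, (3d)/(3h), transitions to special cells, or to (3aa)/(7)-type configurations) either leaves $\bclassns$ or returns to a configuration already treated. Here we would rely on~\autoref{thm:pathConnectednessOfNSComplexes} to argue that any liftable member elsewhere in $\bclassns$ is reached along these paths. We would handle the branch points first via the (3c)-analogues of the moving lemmas, and invoke~\autoref{thm:countingTritangentClassesAndLifts} (total $8$) as a consistency check to rule out missed members.
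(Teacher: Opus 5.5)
Your overall route is the paper's. You cluster the non-special generic members of maximal $1$-cells from \autoref{tab:combClassificationGenericPerDim}, and you treat the (4b)+(3c) member via the segment analogue of \autoref{lm:type4b_1} plus a move past the (5b) endpoint; that part matches \autoref{lm:type4b3c} and \autoref{pr:type4b3c}. Everything else you push through the moving lemmas of \autoref{sec:technicalLemmasDim1-2}. The gap is your second paragraph. Its uniform mechanism (one movement, two liftable ends, each of multiplicity $4$ thanks to a shared non-transverse tangency) is false, and it fails exactly where the paper needs new structural lemmas.

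First, the generic member with diagonal (4a) tangencies at both vertices and a (1a) on its edge has no non-transverse tangency at all. The factor $4$ at its liftable members comes from the vertex tangencies, with $\mu=2$ in \eqref{eq:mu}.

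More seriously, slide its diagonal tangency along the slope $-1$ edge $e$ dual to the segment joining $(1,1)$ and $(2,2)$. An endpoint of $e$ whose dual triangle is not a right triangle gives a member with a (3bb) tangency, which does not lift (\autoref{lm:1aAnd4aetcx2}(iii)). There $|\bclassns|$ continues into a segment with (3c) and (1a) tangencies on adjacent legs, and possibly one more segment after a non-liftable (3bb2) member (\autoref{lm:dim1Parallelogram}(v)--(vii)). The liftable member sits at the far end of this chain. Your fallback that branch points ``return to a configuration already treated'' is circular here. The (1a)-diagonal configuration runs into the (3c)+(1a) one at a (3bb) end, and the (3c)+(1a) one runs back into the (1a)-diagonal one when $A\in e''$ (notation of \autoref{lm:dim1Parallelogram}). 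The paper breaks the cycle in \autoref{cor:1And4ax2Class}: it proves directly that $|\bclassns|$ is a chain of segments whose only liftable members are its two $1$-valent ends, and that the chains attached at the two endpoints of $e$ are disjoint. You need this or an equivalent argument.

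Your ``only if'' step is also wrong as stated, because other configurations \emph{can} reach a (4b) member. Take a generic member with (3a) horizontal and (1a) vertical tangencies in $\overline{(1,2)^{\vee}}$ and a (3c) tangency on its positive horizontal leg. Moving left in subdivision (III) of \autoref{fig:3aAnd1VerticalLeftMove}, it reaches a (4b) member (\autoref{lm:MoveLeftFrom3a1P}(i)). It is exactly the second segment of the class in \autoref{pr:type4b3c}, so your claim that this group always gives $(0,0,2,0)$ fails there.

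Organize instead by first splitting on whether $|\bclassns|$ contains a (4b) member. If it does, that member's cell has a (4b)+(3c) generic member, and \autoref{pr:type4b3c} settles the class whatever other configurations occur. If not, analyze each remaining configuration under this standing hypothesis, as in \autoref{pr:3a13cDim1}; it is the hypothesis, not the Newton subdivision alone, that excludes case (III).

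Finally, \autoref{thm:pathConnectednessOfNSComplexes} does not show that your traced movements exhaust the liftable members. \autoref{thm:countingTritangentClassesAndLifts} does: once you exhibit members whose multiplicities sum to $8$, no other member can lift. Use it as the completeness argument rather than a consistency check; it spares you checks like \autoref{lm:dim1Parallelogram}(viii). You still have to locate the liftable members, which is where the chain analysis is unavoidable. Also, the proof of \autoref{thm:comparingDimensions} you cite for $\dim\bclass=2$ assumes generic edge lengths and shows that case cannot arise there. It tells you nothing about the non-generic tuple $(1,2,2)$.
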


\begin{proof}
The middle row of~\autoref{tab:combClassificationGenericPerDim} shows the distribution of tangencies on a generic member of a maximal  cell of $\bclassns$. There are precisely 28 possible non-special representatives. We group these options into clusters, as in the proof of~\autoref{thm:partitionDim2}.  The result is a direct consequence of 
{Propositions}~\ref{pr:3fOr3a3c}  through~\ref{pr:3c3acOr3cc13csameOrNot} below.
\end{proof}

Several of the sample members seen in the first and middle row of~\autoref{tab:combClassificationGenericPerDim} impose similar partial Newton subdivisions for $\sextic$, allowing us to adapt the arguments from various Propositions presented in~\autoref{sec:proof-lift-part-2d-or-less} to the $1$-dimensional case. Before we do so, we treat those with different behavior, namely, the first two members in the $(1,(iv))$-entry of the table:

\begin{proposition}\label{pr:3fOr3a3c} Let $\Lambda$ be a generic member in the relative interior of a $1$-dimensional cell of $\bclassns$ with either a type (3f) tangency or a type (3c) and a type (3a) tangency on different legs of $\Lambda$ in the boundary of the same chamber of  $\RR^2\smallsetminus \Gamma$. Then, the lifting partition of $\tclass$ is $(0,0,2,0)$.
\end{proposition}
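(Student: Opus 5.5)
We will argue as in \autoref{lm:case1Dim3} and \autoref{pr:13c1}: fix a convenient $\Dn{4}$-representative of $\Lambda$, describe explicitly the segment(s) of $|\bclassns|$ reachable from $\Lambda$ by local moves, and locate the liftable members on them. By \autoref{thm:pathConnectednessOfNSComplexes}, $\bclassns$ is path-connected. \autoref{thm:countingTritangentClassesAndLifts} says the class carries eight lifts in total, counted with multiplicity. So it is enough to exhibit two liftable members, each of lifting multiplicity four. Every other member of $|\bclassns|$ must then have multiplicity zero, since all multiplicities are positive and already sum to eight; no separate exclusion argument is needed.

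\emph{Type (3f).} The tangency is a star-shaped component $\Upsilon_P$ containing one vertex of $\Lambda$, say $v_0$. By \autoref{lm:OpenCellsTrivalent} with $k_0=2$, this vertex is fixed and only $\lenl$ varies. The generic member therefore lies on a segment where $v_0$ is fixed and $v_1$ slides along the slope-one line, so the cell is of type (iv). The third tangency, of multiplicity two, lies on a leg adjacent to $v_1$ and is of type (1a) in the interior of the segment. Following the tables in the proof of \autoref{thm:dim210}, we move $v_1$ in both directions along the slope-one line until the remaining tangency hits an endpoint of its edge of $\Gamma$. There it becomes (2a), (4a) or (6a) horizontal or vertical, or passes into a (3c)-type configuration. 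The movement is blocked in each direction: \autoref{pr:unbounded} and \autoref{lm:unboundedtranslations} rule out escaping into an unbounded chamber inside $\bclass$. This produces exactly two endpoints, each with local multiplicity one at the moving tangency. By \autoref{tab:LiftingMultiplicities}, the (3f) tangency contributes $4$. The main subtlety is checking that the (3f) component keeps its type along the whole segment, i.e.\ $v_1$ never enters $\Upsilon_P$. I expect this to follow from the bidegree of $\Gamma$, and I will verify it using the figure for (3f) in \autoref{fig:classificationLocalTangencies}. As noted in \autoref{rm:tropicalGenericity}, the value $4$ may then be confirmed a posteriori by this count.

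\emph{Type (3c)+(3a).} Take $\Lambda$ with a slope-one edge, a (3c) tangency on one leg, and a (3a) tangency on another leg at the same vertex $v_0$, both on the boundary of the same chamber. The (3c) tangency fixes $v_0$ in one direction. The (3a) tangency forces $v_0$ to slide along a leg of $\Gamma$, which, combined with the (3c) condition, leaves only motion of $v_1$. The third tangency is (1a) on a leg at $v_1$. As before, the moves of $v_1$ stop at two positions where the third tangency becomes (2a)/(4a)/(6a), so its local multiplicity is $1$. Alternatively, we slide $v_0$ along the (3a) leg until the (3a) tangency turns into a (3c) or (5a) one, reducing to the previous configuration via \autoref{lm:MoveRightFrom3a1P}(ii) and \autoref{cor:leftAndRightmostLiftableMembers3aAnd1Vertical}(iv), which already produce liftable members of multiplicity four. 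The step I expect to be the obstacle is showing that both stopping positions have the (3a) tangency converted into a multiplicity-two non-liftable-free type, so that the product is $2\cdot2\cdot1=4$ rather than $0$ (recall (3a') has multiplicity $0$). I will first try to rule out the $4$-valent degeneration using \autoref{lm:4valentSingleton} and the requirement $\dim\bclassns=1$, and to treat the remaining Newton-subdivision cases as in \autoref{fig:3aAnd1VerticalRightMove}.
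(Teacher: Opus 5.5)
Your overall plan is the same as the paper's. The vertex $v_0$ is pinned, $v_1$ slides along the slope-one line, each direction of motion should end at a member of multiplicity $4\cdot 1$, and the eight lifts of \autoref{thm:countingTritangentClassesAndLifts} are then used up. The outward motion is handled correctly.

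\textbf{The gap is the inward motion}, which is the one step with real content. You leave it as an expectation in the (3f) case and misdiagnose it in the (3a)+(3c) case. You must show that, as $v_1$ moves towards $v_0$, the (1a) tangency $P$ on the positive horizontal leg reaches the lower endpoint $B$ of its edge $e$ of $\Gamma$ (slope $2$ or $2/3$) and becomes (2a). This must happen \emph{before} $v_1$ runs into the component at $v_0$:
\begin{itemize}
\item in the (3f) case, before $v_1$ reaches the vertex $C$ ending the diagonal branch of the star;
\item in the (3a)+(3c) case, before $\lenl=0$. Otherwise you land on a $4$-valent member with a (3a') tangency, of multiplicity $0$, and get no second liftable member.
\end{itemize}
The paper supplies this step as the inequalities $C\prec_v B$ and $C\preceq_d B$; in the second case $C$ is the right endpoint of the horizontal edge of $\Gamma$ containing $v_0$. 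These inequalities come from the Newton subdivision. Normalize so that $P$ lies on the positive horizontal leg; then $v_1\in(1,3)^{\vee}$. In both cases $C$ is the right endpoint of the boundary edge of this chamber dual to $(1,2)$--$(1,3)$. By convexity, the boundary edges of $(1,3)^{\vee}$ between $C$ and $e$ all point up and to the right with slope at most one. Unimodularity forbids $e$ from being adjacent to $C$. Without this step you have exhibited only one member of multiplicity four, and the counting shortcut cannot finish the argument.

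\textbf{The tools you propose for this obstacle do not work.}
\begin{itemize}
\item \autoref{lm:4valentSingleton} assumes $\dim\bclass=0$, so it says nothing here. The condition $\dim\bclassns=1$ does not prevent a $1$-cell from ending at a $4$-valent member.
\item Your worry that the (3a) tangency must be ``converted'' is a red herring. Since $v_0$ is fixed, the (3a) and (3c) tangencies are unchanged as long as $\lenl>0$. They contribute $2\cdot 2$ directly by \autoref{tab:LiftingMultiplicities}.
\item The ``alternative'' of sliding $v_0$ along the (3a) edge contradicts the fact that the (3c) tangency on the vertical leg pins $v_0$; this is exactly case (i) of \autoref{lm:fixAVertex}. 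Moreover, \autoref{lm:MoveRightFrom3a1P} and \autoref{cor:leftAndRightmostLiftableMembers3aAnd1Vertical} concern (3a)+(1a) configurations, not this one.
\end{itemize}
Finally, you cannot use \autoref{thm:countingTritangentClassesAndLifts} both to skip excluding other liftable members and to confirm the (3f) multiplicity $4$ a posteriori. Take the table value for (3f), and let the count do the exclusion.
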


\begin{proof} As usual, we assume that $\Lambda$ has a slope one edge and $v_0$ is part of either the type (3f) tangency or the (3a) one, which we place on the negative horizontal leg of $\Lambda$. Accordingly, we let $C$ be either the vertex of $\Gamma$ adjacent to $v_0$ along a slope one edge or the rightmost endpoint of the horizontal edge of $\Gamma$ containing $v_0$.

In both cases, the tangency point of $\Lambda$ complementing those in the component of $\Gamma\cap \Lambda$ containing $v_0$ is necessarily of type (1a). We label it $P$. The bidegree of $\Gamma$ restricts the location of $P$ to  the positive horizontal leg of $\Lambda$. We let $e$ be the edge of $\Lambda$ containing $P$. Its slope has two possible values, namely, 2 or 2/3.  We let $B$ and $B'$ be the bottom and top vertices of $e$.

The remaining tangencies on $\Lambda$ fix the position of the vertex $v_0$. Moving $v_1$ away from $v_0$ produces a unique liftable member in $|\bclassns|$, where $P$ becomes a tangency of type (2a) (if $v_0\prec_d B'$), (6a) (if $v_0=_d B'$) or (4a) (if $B'\prec_d v_0$). In all cases, its lifting multiplicity is $4$. Similarly, since  $C\prec_v B$ and $C\preceq_d B$, moving $v_1$ towards $v_0$ produces a unique liftable member,  with a type (2a) tangency at $B$. Once more, the lifting multiplicity of this member equals $4$. 
\end{proof}

Our first lemma is analogous to~\autoref{lm:type4b_1} and determines lifting conditions for  members of a cell of $\bclassns$  with a  type (4b) tangency in its relative interior.

\begin{lemma}\label{lm:type4b3c}
  Let $\Lambda$ be a trivalent tritangent containing two tangency points, one of type (4b) and one of type (3c).  Then, there exists a unique segment in $|\bclassns|$, whose relative interior contains $\Lambda$, with only two liftable members, namely, its endpoints. Furthermore, these tritangents have two lifts each and a vertex carrying a tangency of  type (4b'), (6b'), (5b) or (6b).
\end{lemma}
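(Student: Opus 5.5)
We follow the strategy of \autoref{lm:type4b_1}, replacing the transverse companion by the non-transverse one. Up to $\Dn{4}$-symmetry, we take the unique edge of $\Lambda$ to have slope one and place the type (4b) tangency $P$ at $v_0$. Membership in $|\bclass|$ then forces $v_0\in\overline{(1,3)^{\vee}}$, with $P$ on a slope $-1/3$ edge $e=\overline{CC'}$ of $\Gamma$. The bidegree of $\Gamma$ places the remaining tangency $P'$ on the positive horizontal leg of $\Lambda$. Since $P'$ has type (3c), it lies on a horizontal edge $e'$ of $\Gamma$ with endpoints $A$ and $A'$, and the partial Newton subdivision is the one from \autoref{fig:parallelogramsDim2_4b} with $e'$ horizontal.

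Following the proof of \autoref{pr:13c1}, the parallelogram $\cP$ built from $A,A',C,C'$ degenerates: the horizontal lines through $A$ and $A'$ coincide. The (3c) tangency pins $v_1$ to the horizontal line through $e'$, so $\cQ:=\cP\cap\overline{(1,3)^{\vee}}$ is a segment. Moving $v_0$ along $e$ determines $v_1$ uniquely as the point on that horizontal line diagonally aligned with $v_0$, which gives a one-dimensional family. We then check three things. First, every point of this segment lies in $|\bclass|$, because the reasoning of \autoref{pr:unbounded} and \autoref{rm:secondCellsToToss} excludes none of its points. Second, no point is special, since the two tangencies sit on different legs. Third, the segment is the full cell through $\Lambda$, which follows from uniqueness of the diagonal alignment. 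Uniqueness of the segment then follows from \autoref{pr:localMovesTritangents}, since local moves of type (4b) are confined to $e$.

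Next we identify the two ends. At one end $v_0$ reaches $C'$; the tangency becomes type (5b), or type (6b) if the horizontal edge at $C'$ also bounds $(1,3)^{\vee}$. At the other end one of two things happens. Either $v_0$ reaches $C$ and we get (5b) or (6b) again, or $v_1$ hits the endpoint of $e'$ first. In the latter case $v_0$ and $v_1$ collapse onto $e$ and give a $4$-valent member of type (4b') or (6b'); this is the degenerate analogue of positions (2)–(5) in \autoref{lm:type4b_1}. The relative orders from~\eqref{eq:inequalities4bDim2} should decide which case occurs.

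Finally, \autoref{tab:LiftingMultiplicities} and \autoref{thm:totalLifting} give the multiplicities. Interior points carry a (4b) tangency and have multiplicity $0$. Each endpoint has a multiplicity-one tangency of type (5b), (6b), (4b') or (6b'), times $2$ from the (3c) tangency, hence two lifts. The main obstacle is the endpoint analysis: we must make sure that the horizontal constraint from $e'$ never stops the motion at a point where the tangency at $v_0$ is still (4b). We expect to rule this out using the fact that $e'$ is horizontal and bounds $(1,3)^{\vee}$ from below, so that the endpoint $A'$ of $e'$ lies on $e$ or beyond it.
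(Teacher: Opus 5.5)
Your route is the paper's. Its proof reruns \autoref{lm:type4b_1} with $A$ and $A'$ both replaced by the left endpoint $A$ of the horizontal edge $e'$. Then $\cP$ collapses to a segment on the line through $e'$, and the four liftable vertices of $\cQ$ pair up into two endpoints of multiplicity $1\cdot 2$.

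The gap is exactly the step you flag as the main obstacle, and the fix you propose rests on a wrong picture.
\begin{itemize}
\item $\Lambda\cap\Gamma$ has no intersection beyond the (4b) and (3c) components. So the edge of $\Lambda$ stays in $(1,3)^{\vee}$, and $v_1$ lies in this open chamber.
\item The positive horizontal leg of $\Lambda$ leaves $\overline{(1,3)^{\vee}}$ at $A$, and only after that runs along $e'$. Hence $e'$ does not bound $(1,3)^{\vee}$, and no endpoint of $e'$ lies on $e$.
\item Consequently $(e')^{\vee}$ is a vertical segment $\{(2,b),(2,b+1)\}$ with $A^{\vee}=\{(1,3),(2,b),(2,b+1)\}$. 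The subdivision is therefore not that of \autoref{fig:parallelogramsDim2_4b} with $e'$ made horizontal.
\item The horizontal edge that bounds $(1,3)^{\vee}$ from below is a different one: it is dual to $(1,2)$--$(1,3)$, starts at $C'$, and lies strictly below $v_1$.
\item ``$v_1$ hits the endpoint of $e'$'' cannot end the segment on the side of $C$. With $v_1=v_0+(A_y-(v_0)_y)(1,1)$, the vertex $v_1$ moves left, away from $e'$, as $v_0$ moves towards $C$. What can stop the motion there is $\lenl\to 0$.
\end{itemize}

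The missing ingredient is the pair of inequalities $C'\prec_d A$ and $C'\prec_v A$, the analogue of~\eqref{eq:inequalities4bDim2} for this $A$. Unimodularity forces $C^{\vee}=\{(0,0),(0,1),(1,3)\}$ and $(C')^{\vee}=\{(0,0),(1,2),(1,3)\}$. So $\partial(1,3)^{\vee}$ runs from $C'$ horizontally, then upwards along edges of slopes $1/3$, $1/2$ or $1$ until it reaches $A$, and both inequalities follow.

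Since $v_1=_v A$ and $v_1=_d v_0$, we get $(v_1)_x-A_x=((v_0)_x-(v_0)_y)-(A_x-A_y)<0$ for all $v_0\in e$. Hence:
\begin{itemize}
\item $v_1$ stays strictly left of $A$, inside $(1,3)^{\vee}$, and the (3c) tangency never degenerates.
\item The end at $v_0=C'$ is trivalent (since $C'\prec_v A$) and of type (5b), as in \autoref{lm:type4b_1}. The horizontal edge at $C'$ always bounds $(1,3)^{\vee}$, so your criterion for (6b) there misfires. This is harmless for the count.
\item At the other end you get $v_0=C$, of type (6b), if $C\prec_v A$. Otherwise you get the $4$-valent member at the point of $e$ on the horizontal line through $A$, of type (4b'), or (6b') if that point is $C$.
\end{itemize}
This last dichotomy depends on edge lengths and is not decided by~\eqref{eq:inequalities4bDim2}, contrary to your remark, so both cases must be kept. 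With these facts in place, the rest of your argument goes through: membership in $\bclassns$, uniqueness of the cell, and the product $1\cdot 2$ at each endpoint.
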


\begin{proof} The result follows by applying the same strategy as in the proof of~\autoref{lm:type4b_1}, where the role of the points $A$ and $A'$ is replaced by the leftmost vertex of the horizontal edge of $\Gamma$ carrying the type (3c) tangency.  Thus, the parallelogram $\cP$ degenerates to a segment, and the four liftable members are paired to become  the two endpoints of the segment. Their lifting multiplicity is two, due to the presence of a common  type (3c) tangency. The  type for the tangency at $v_0$ of each such member agrees with those predicted by the aforementioned result in the $2$-dimensional setting.
  \end{proof}

The next proposition follows naturally from the previous lemma.

\begin{proposition}\label{pr:type4b3c} If $\dim \bclassns=1$ and contains a member with a (4b) and a (3c) tangency, then, the lifting partition of $\tclass$ equals $(0,2,1,0)$.
\end{proposition}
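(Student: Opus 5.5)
The plan mirrors the proof of \autoref{pr:type4b1}, with the $2$-dimensional polygon $\cQ$ replaced by the segment from \autoref{lm:type4b3c}. I first fix a representative. Up to $\Dn{4}$-symmetry, the unique edge of $\Lambda$ has slope one and the type (4b) tangency sits at $v_0\in\overline{(1,3)^{\vee}}$. The complementing type (3c) tangency $P'$ then lies on the positive horizontal leg of $\Lambda$, along a horizontal edge of $\Gamma$.

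\autoref{lm:type4b3c} gives a segment $\sigma\subseteq|\bclassns|$ containing $\Lambda$ in its relative interior, and only its two endpoints lift. Each endpoint contributes $2$: its vertex tangency has type (4b'), (6b'), (5b) or (6b), with local multiplicity one, and the type (3c) point contributes $2$. This accounts for the entry $2$ in the second slot of the partition. The remaining four lifts should come from a single further member of multiplicity $4$.

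To locate it, I follow the endpoint of $\sigma$ whose vertex tangency is of type (5b) at $v_0=C'$. From there I move $v_0$ to the right along the horizontal leg of $\Gamma$ with left endpoint $C'$, keeping $P'$ fixed. Since $P'$ is of type (3c), the vertex $v_1$ can only slide horizontally, so the motion stays one-dimensional. The new members have a type (3a) horizontal tangency on the negative horizontal leg and a vertical tangency on the negative vertical leg, both in $\overline{(1,2)^{\vee}}$, and $P'$ stays on the positive horizontal leg. This is precisely the setting of \autoref{lm:MoveRightFrom3a1P}. The Newton subdivision forced by the (4b) tangency is the one in \autoref{fig:3aAnd1VerticalRightMove}~(IV), so item~(i) applies. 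It produces at most one potentially liftable member, namely the one where $P''$ becomes a type (2a) tangency, and it lifts with multiplicity $2\mult(\Lambda,P')=4$.

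Two points then remain. First, no further liftable members exist, so the total is exactly $8$. I would check this either from \autoref{thm:countingTritangentClassesAndLifts} (since $2+2+4=8$) or directly by tracing $|\bclassns|$: the left end of $\sigma$ (the (4b') or (6b') side) should admit no further moves inside $\bclassns$, while beyond the (2a) member the horizontal motion exits through special or unbounded cells. Second, I must confirm that the member found via \autoref{lm:MoveRightFrom3a1P} really lies in $|\bclassns|$ and not in a special cell. The main obstacle is exactly this bookkeeping in the degenerate one-dimensional setting. Here $P'$ is of type (3c) rather than (1a), so the cell from the $2$-dimensional argument collapses to a segment, and I need that the (3a) tangency at $v_0$ together with the moving $P''$ never yields two tangencies on one leg. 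I would handle it by reading off the partial Newton subdivision: (4b) at $v_0$ together with (3c) on the positive horizontal leg fixes the cells dual to $C'$ and to the endpoint of $P'$'s edge.
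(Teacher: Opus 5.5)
Your proposal is correct and follows essentially the same route as the paper. \autoref{lm:type4b3c} supplies the segment whose two endpoints lift with multiplicity two, and the rightward move from its type (5b) endpoint yields the single member of multiplicity $2\cdot 2=4$; the paper cites \autoref{cor:leftAndRightmostLiftableMembers3aAnd1Vertical}~(iii) for this step, which itself rests on \autoref{lm:MoveRightFrom3a1P}~(i), and your direct appeal to the lemma plus the count of eight from \autoref{thm:countingTritangentClassesAndLifts} to exclude further lifts is a harmless tightening (it also sidesteps the fact that the corollary is formally stated under a ``no (4b) member'' hypothesis).
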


\begin{proof} The proof is similar to that of~\autoref{pr:type4b1}. We pick the orbit representative for $\Lambda$ with a slope one edge, and the type $(4b)$ tangency at $v_0$. The relevant cells in the Newton subdivision of $\sextic$ are collected in 
  {Figures}~\ref{fig:3aAnd1VerticalLeftMove} (III) and~\ref{fig:3aAnd1VerticalRightMove} (IV).

Combining~\autoref{cor:leftAndRightmostLiftableMembers3aAnd1Vertical} (iii) and~\autoref{lm:type4b3c}  we see that  $|\bclassns|$ is a union of two adjacent segments recording the location of $v_0$. Furthermore, $|\bclassns|$ is contained in the  union of  two adjacent segments of $\Gamma$: namely the ones with slopes $-1/3$ and $0$ in the boundary of $(1,3)^{\vee}$. The tangency point $P$ appears in all members of $|\bclassns|$ and its type is always (3c). Thus, the lifting partition $(4,2,0,0)$ from~\autoref{pr:type4b1} turns into $(0,2,1,0)$.
\end{proof}

In what follows, we discuss the remaining non-special tritangencies seen in the middle row of~\autoref{tab:combClassificationGenericPerDim}. All of them will yield tritangent classes with lifting partition $(0,0,2,0)$.

\begin{proposition}\label{pr:3a13cDim1}
  Let $\Lambda$ be a generic member in the relative interior of a $1$-dimensional cell of $\bclassns$ with tangencies of  type (3a) and  (1a)  on legs adjacent to the same vertex.  Assume no member of $\bclassns$ has a type (4b) tangency. Then, the
  the lifting partition of $\tclass$ is $(0,0,2,0)$.
\end{proposition}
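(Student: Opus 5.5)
We mimic the proof of \autoref{pr:3a11Dim2}, adapting it to the situation where the third tangency point is non-transverse. First we fix an orbit representative for $\Lambda$ among the $(1,(iii))$-entries of \autoref{tab:combClassificationGenericPerDim} with a (3a) horizontal tangency $P'$ and a (1a) vertical tangency $P''$ on the two legs adjacent to $v_0$, and with $\Lambda$ having a slope one edge. Bidegree considerations, as in the proof of \autoref{lm:movementiii}, force $P',P''\in\overline{(1,2)^{\vee}}$ and $v_1\in\overline{(1,3)^{\vee}}$. They also force the remaining tangency $P$ to lie on the positive horizontal leg of $\Lambda$. Since $\dim\sC=1$, the local moves of $v_1$ are restricted to a segment. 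Inspecting \autoref{rk:dimensionCellRestrictsTangencyTypes} together with the list in the table then shows that $P$ must have type (3c). So $\mult(\Lambda,P)=2$, and $P$ stays attached to the same horizontal edge of $\Gamma$ throughout any move of $\Lambda$ preserving it.

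Next we apply \autoref{cor:leftAndRightmostLiftableMembers3aAnd1Vertical}. The hypothesis excluding (4b) tangencies in $\bclassns$ rules out case (i) of \autoref{lm:MoveLeftFrom3a1P}. Item (i) of the corollary then shows that every member with $v_0'\preceq_h v_0$ still has a tangency on the edge containing $P$. Item (ii) then yields exactly one liftable member to the left, and since $P$ is of type (3c), its lifting multiplicity is $4=2\mult(\Lambda,P)$. For members to the right of $\Lambda$, we split into two cases according to whether $P$ and $P''$ lie on the boundary of the same chamber. If they do not, item (iii) produces exactly one liftable member, again of multiplicity four. If they do, item (iv) produces exactly one liftable member of multiplicity four. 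This second case is reached after passing through the (3bb) member of \autoref{lm:MoveRightFrom3a1P}(ii) and reversing the roles of the vertices.

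Finally, we must check that no other points of $|\bclassns|$ lift. By \autoref{thm:pathConnectednessOfNSComplexes}, $|\bclassns|$ is connected, and by assumption it is one-dimensional. Every move away from $\Lambda$ within $\bclassns$ is either the leftward or the rightward displacement of $v_0$ considered above, possibly combined with the compatible displacement of $v_1$ along the horizontal edge carrying $P$. Therefore the two members found exhaust the liftable members. \autoref{thm:countingTritangentClassesAndLifts} then gives the consistency check $4+4=8$, which confirms the partition $(0,0,2,0)$.

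The main obstacle is this exhaustiveness step, specifically controlling the branches of $\bclassns$ that arise at the turning points of the moves: the special cells reached by moving $v_0$ upward, as in the proof of the corollary, and the (3bb) transition. Our first approach would be to verify, using \autoref{lm:specialConfigurations} and \autoref{pr:dimSpecialCells}, that any such branch lies either in $\mathscr{S}$ or in cells whose members retain a type (1a) or (3bb1) tangency, and hence carries no liftable member.
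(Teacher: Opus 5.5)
Your proposal is correct and follows the paper's route. Like the paper, you take the representative with $P',P''\in\overline{(1,2)^{\vee}}$, identify $P$ as the (3c) tangency on the positive horizontal leg, and read off exactly one liftable member of multiplicity four on each side of $v_0$ from \autoref{cor:leftAndRightmostLiftableMembers3aAnd1Vertical}. You additionally keep case (iv), which the paper rules out directly; this is harmless, since it gives the same count. The exhaustiveness step you flag as the main obstacle is not actually an obstacle: items (ii)--(iv) of that corollary already count all liftable members of $|\bclass|$ with $v_0'\preceq_h v_0$ and with $v_0\prec_h v_0'$, and in any case \autoref{thm:countingTritangentClassesAndLifts} turns your $4+4=8$ ``consistency check'' into a proof that nothing else in $\tclass$ lifts.
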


\begin{proof} As in the proof of~\autoref{pr:3a11Dim2}, we pick the second tritangent in the $(1,(iii))$-entry of the table as our orbit representative for $\Lambda$. Let $P$ be the type (3c) tangency on the positive horizontal leg of $\Lambda$ complementing the two tangencies from the statement. 
  {Figures}~\ref{fig:3aAnd1VerticalLeftMove} (excluding (III)) and~\ref{fig:3aAnd1VerticalRightMove} depicts all possible  partial Newton subdivisions for $\sextic$ imposed by the existence of such $\Lambda$. The proof is a direct consequence of~\autoref{cor:leftAndRightmostLiftableMembers3aAnd1Vertical} (ii) and (iii) since the type (3a) and (3c) tangencies are not on the boundary of the same chamber of $\RR^2\smallsetminus \Gamma$. Indeed, there are precisely two liftable members on $|\bclassns|$, i.e., one to the left and one to the right of $\Lambda$. Each of them has four lifts.
\end{proof}

\begin{proposition}\label{pr:3c3c1}
  Let $\Lambda$ be a generic member in the relative interior of a $1$-dimensional cell of $\bclassns$ with two type (3c) tangencies on legs of $\Lambda$ and one of type (1a) in the unique edge of $\Lambda$. Then, the lifting partition of $\tclass$ is $(0,0,2,0)$.\end{proposition}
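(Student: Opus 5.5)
The plan is to mimic the degenerate version of the proof of \autoref{pr:13c1}, exactly as \autoref{lm:type4b3c} degenerated \autoref{lm:type4b_1}. Up to $\Dn{4}$-symmetry, I take the edge of $\Lambda$ to have slope one, the diagonal type (1a) tangency $P$ to lie on an edge $e$ of $\Gamma$, and the two type (3c) tangencies $P'$, $P''$ to lie on legs adjacent to $v_0$ and $v_1$, respectively. Here $P'$ sits on the negative horizontal leg along a horizontal edge $e'$. The point $P''$ sits on an edge $e''$ that is horizontal or vertical, according to whether it lies on the positive horizontal or the positive vertical leg of $\Lambda$. The two placements give the two partial Newton subdivisions of \autoref{fig:boundedDim2Cases}, and I would treat them separately.

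In both cases I would run the parallelogram construction of \autoref{lm:case1Dim3} or \autoref{lm:case2Dim3}. Now both pairs $A,A'$ and $B,B'$ collapse to the relevant endpoints of $e'$ and $e''$, so $\cP_0$ and $\cP_1$ degenerate to segments aligned with $e'$ and $e''$. Pairing diagonally aligned points of these two segments yields a segment $\cQ\subseteq|\bclassns|$ containing $\Lambda$ in its relative interior. Along $\cQ$ both vertices move, which is movement \eqref{moveiii}.

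Next I would locate the liftable members of $\cQ$. Interior points carry a type (1a) diagonal tangency, so they do not lift. The endpoints of $\cQ$ are the positions where $P$ reaches $C$ or $C'$, or where a vertex reaches the end of its degenerate parallelogram. At each such endpoint the diagonal tangency becomes type (2a), (4a) or (6a), with local multiplicity $1$ or $\mu$. The two (3c) tangencies persist, contributing a factor $2\cdot 2$. Hence each endpoint lifts with multiplicity $4$, as long as $\mu=1$ at (4a)/(6a) diagonal vertex tangencies. That holds because $P'$ and $P''$ lie on opposite sides of the diagonal line, by \autoref{rm:4a6a4ap6ap}. This gives two members of multiplicity four, so the partition is $(0,0,2,0)$.

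The main obstacle is the case where an endpoint of $\cQ$ falls outside the relevant chambers, or where $e$ is adjacent to $e'$ or $e''$. These are the analogue of position (5) in \autoref{lm:case2Dim3}, and of the adjacent case $A=C$ in \autoref{pr:13c1}. There the endpoint acquires a (3bb) horizontal tangency, so it does not lift. I would then continue moving past it, as in \autoref{pr:13c1}, to reach a member with a (3a) horizontal tangency and a (1a) vertical tangency in $\overline{(1,2)^{\vee}}$ together with a (3c) tangency. I would then invoke \autoref{pr:3a13cDim1}, after checking from the forced subdivision (of type (I) or (II) in \autoref{fig:3aAnd1VerticalLeftMove}) that no (4b) tangency occurs. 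Finally, I would use path-connectedness (\autoref{thm:pathConnectednessOfNSComplexes}) and \autoref{thm:countingTritangentClassesAndLifts} to confirm that no other liftable members exist, since the total count of eight is already reached.
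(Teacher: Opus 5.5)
Your proposal is correct and follows essentially the paper's proof. Both arguments degenerate the parallelogram construction of \autoref{pr:13c1}, so that $\cP_0$ and $\cP_1$ collapse to segments, and obtain two endpoint members of multiplicity four in the non-adjacent case. In the adjacent case, both move past the common vertex to land in the setting of \autoref{pr:3a13cDim1}, after excluding (4b) tangencies via the forced Newton subdivision.

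The differences are minor. You close with the global count of \autoref{thm:countingTritangentClassesAndLifts}, whereas the paper identifies $|\bclassns|$ with $\cP_0$. Your grouping of ``position (5)'' with the (3bb) scenario is also loose. Truncation-type endpoints give (4a)/(6a) diagonal tangencies, which your $\mu=1$ check already handles. The (3bb) tangency arises only from adjacency, and adjacency forces the diagonal edge to have slope $1/3$.
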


\begin{proof} 
  We pick the two possible orbit representatives from $\Lambda$ seen in the bottom row of the entry labeled $(1,(iii))$ in~\autoref{tab:combClassificationGenericPerDim}. The result follows by using the same strategy as in the proof of~\autoref{pr:13c1}, setting $B=B'$ in~\autoref{fig:boundedDim2Cases}. In this case, both parallelograms $\cP_0$ and $\cP_1$ will be degenerate. Whenever the edge of $\Gamma$ containing the type (1a) tangency is not adjacent to any of the edges containing the type (3c) ones, we conclude that $|\bclassns|$ can be identified with $\cP_0$, via the location of the vertex $v_0$ of each member.  Thus, the four liftable members from the $2$-dimensional case become two liftable ones, each with  multiplicity four. They are located at the endpoints of $\cP_0$.

  On the contrary, if some adjacency occurs, then the edge containing the diagonal tangency has slope $1/3$. Moving the vertices of $\Lambda$ to the left or  right past the common vertex of the adjacent edges produces a member satisfying the same conditions as in~\autoref{pr:3a13cDim1}. Note that  the absence of a member of $|\bclassns|$ with a type (4b) tangency is ensured because the partial Newton subdivision of $\sextic$ obtained by modifying the one in the left of~\autoref{fig:boundedDim2Cases}
  is incompatible with those in the $\Dn{4}$-orbit of the subdivision from~\autoref{fig:3aAnd1VerticalLeftMove} (III).
Thus, we obtain the desired lifting partition.
\end{proof}

The next two lemmas are central to determine the lifting partitions of tritangent classes with $\dim \bclassns = 1$ containing any of the remaining members from  the middle row of~\autoref{tab:combClassificationGenericPerDim}.  The first one is needed to treat the last tritangent listed in entry $(1,(iii))$ of the table. The second one includes an analog of~\autoref{lm:dim2Parallelogram} in dimension one, and reflects the increased combinatorial complexity of the underlying tritangent classes in low dimension.

\begin{lemma}\label{lm:1aAnd4aetcx2} 
  Assume that the points $(1,1)$ and $(2,2)$ are joined by an edge in the Newton subdivision of $\sextic$, and let $e$ be the corresponding dual edge in $\Gamma$. Then:
  \begin{enumerate}[(i)]
  \item Each point $P$ in  $e$ is contained in a unique trivalent tritangent with a slope one edge,  having a diagonal tangency at $P$, whose lower and upper vertex lie in the boundaries of the chambers $(1,1)^{\vee}$ and $(2,2)^{\vee}$, respectively, and are outside the relative interior of $e$.
\item All such tritangents belong to  the same tritangent class of $\Gamma$. Furthermore, they all lie in a segment contained in the support of $\bclassns$, which is isomorphic to $e$. No member corresponding to a point in the interior of $e$ can lift.
\item When $P$ is an endpoint of $e$, the corresponding tritangent from (i) lifts (and with  multiplicity 4) if, and only if, the corresponding dual cell is a right triangle. In this situation, $P$ is contained in a single $1$-dimensional cell of $\bclassns$.
  \item If a given endpoint $P$ of $e$ corresponds to a non-liftable tritangent, then $P$ lies in the boundary of precisely one more $1$-dimensional cell of $\bclassns$. Furthermore, the latter contains a trivalent member with two tangencies on adjacent legs of $\Lambda$  and of types (3c) and (1a).
  \end{enumerate}
\end{lemma}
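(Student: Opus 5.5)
The plan is to follow the strategy of \autoref{lm:dim2Parallelogram} and \autoref{cor:tritangentClasses}, replacing the two transverse edges $e',e''$ by the single slope one edge $e$ dual to $\overline{(1,1)(2,2)}$.

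\emph{Step 1: item (i).} Fix $P\in e$ and consider the slope one line through $P$. Because $e$ has slope one and is dual to $\overline{(1,1)(2,2)}$, the triangles dual to its endpoints $E_0$ (bottom) and $E_1$ (top) each have one further vertex of the form $(i,i\pm1)$. A diagonal tangency at $P$ forces the bounded edge of $\Lambda$ to lie on this line. The bidegree of $\Gamma$ then determines the vertices: the lower vertex $v_0$ must sit where the line leaves $\overline{(1,1)^{\vee}}$ through $E_0$, and $v_1$ where it leaves $\overline{(2,2)^{\vee}}$ through $E_1$. Concretely, I expect $v_0=E_0$ and $v_1=E_1$, so that $\Lambda\cap\Gamma\supseteq e$. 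The remaining multiplicity is counted using the legs of $\Lambda$ at $E_0$ and $E_1$. For each of the two possible dual triangles at each endpoint, I will check that exactly one placement of $v_0,v_1$ yields total stable intersection multiplicity $6$ split as required by \autoref{def:tropicalTritangents}; this gives uniqueness. The position of $P$ inside $e$ only moves the tangency point, so all such $\Lambda$ lie in one component. This gives the first half of (ii).

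\emph{Step 2: item (ii).} To realise the members as a segment isomorphic to $e$, I plan to parametrise by $P\mapsto(v_0,\lenl)$, which is affine on $e$. I then need to check that none of the sets in~\eqref{eq:unboundedCellsToToss}, \eqref{eq:secondSetToRemove} or $\mathscr S$ contains these cells, using \autoref{pr:unbounded}, \autoref{lm:unboundedtranslations} and \autoref{lm:specialConfigPrecisions}; this places the segment in $|\bclassns|$. For interior points, $P$ is a type (1a)-like diagonal tangency, or the component is non-transverse. The values in \autoref{tab:LiftingMultiplicities}, together with genericity (iii) and (iv) of \autoref{def:fgenericRelToGamma}, should then give multiplicity $0$.

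\emph{Steps 3 and 4: items (iii) and (iv).} At an endpoint, say $E_1$, the tangency becomes of type (2a), (4a) or (6a) diagonal, or (3d)/(3h), depending on the third vertex of $E_1^{\vee}$.
\begin{itemize}
\item When $E_1^{\vee}$ is a right triangle, i.e.\ $(1,2)$ or $(2,1)$, the tangency at $E_1$ is a diagonal (4a)/(6a) whose adjacent legs carry no other tangency. By \eqref{eq:mu}, $\mu=2$, and multiplying local multiplicities via \autoref{thm:totalLifting} gives $4$. Local moves from \autoref{pr:localMovesTritangents} show that no other $1$-dimensional cell of $\bclassns$ emanates from this point.
\item In the non-right case, the third vertex is $(3,1)$ or $(1,3)$ and the local factor vanishes. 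Moving $v_1$ off $e$ along the slope $\pm1/3$ edge of $\Gamma$ creates a (1a) tangency on one leg and a (3c) tangency on the adjacent leg. This produces exactly one further $1$-dimensional cell, as in \autoref{lm:MoveRightFrom3a1P}.
\end{itemize}

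The main obstacle is the case analysis at the endpoints in Steps 1 and 4. This means enumerating the dual triangles at $E_0$ and $E_1$ and confirming both the uniqueness of the tritangent and that exactly one extra cell exists. I would organise this by a $\Dn{4}$-reduction to the top endpoint and a $2\times2$ table of possible third vertices.
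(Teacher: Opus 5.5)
Your proposal rests on a misidentification of $e$. Since $e$ is dual to the segment joining $(1,1)$ and $(2,2)$, it is where the terms with exponents $(1,1)$ and $(2,2)$ tie. So it lies on a line $x+y=c$ and has slope $-1$, not slope one.

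\emph{Consequences for (i) and (ii).} The slope one line $L$ through $P$ meets $e$ only at $P$, transversally, with multiplicity $|\det((1,1),(1,-1))|=2$. For $P$ in the relative interior of $e$ this is a type (1a) diagonal tangency, whose local lifting multiplicity is $0$ by \autoref{tab:LiftingMultiplicities}. That alone gives the last claim of (ii); genericity (iii)--(iv) plays no role.

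In particular $\Lambda\cap\Gamma\supseteq e$ is impossible, and so is your choice $v_0=E_0$, $v_1=E_1$. For interior $P$ neither endpoint of $e$ lies on $L$. Moreover, a curve with fixed vertices would not depend on $P$, contradicting the uniqueness in (i) and the parametrization by $e$ in (ii).

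Your idea that the vertices sit where $L$ leaves the two chambers is right, but the exit points are elsewhere. Going from $P$ in direction $(-1,-1)$, $L$ leaves $(1,1)^{\vee}$ at a point $Q$ on an edge dual to a segment from $(1,1)$ to $(0,0)$, $(1,0)$ or $(0,1)$; such an edge has slope $-1$, or is horizontal or vertical. Symmetrically, $L$ leaves $(2,2)^{\vee}$ at a point $Q'$. Setting $v_0=Q$ and $v_1=Q'$ gives the unique tritangent, with tangencies of type (3a), (5a) or (4a)/(6a) diagonal at $Q$ and $Q'$. As $P$ runs along $e$, these two points move continuously along the chamber boundaries (piecewise linearly, since they may cross vertices of $\Gamma$), and this is what yields (ii).

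\emph{Consequences for (iii) and (iv).} A unimodular triangle containing the segment from $(1,1)$ to $(2,2)$ has third vertex $(a,a\pm1)$. So the non-right options are $(0,1)$, $(2,3)$, $(1,0)$ and $(3,2)$; your $(3,1)$ and $(1,3)$ give triangles of area $1$.

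In the right-triangle case, $L$ passes from $(1,1)^{\vee}$ into $(2,2)^{\vee}$ through the vertex $P$ of $\Gamma$. So $P$ is a type (2a) tangency with local multiplicity $1$, and the value $4$ is the product of the factors $2$ at $Q$ and $Q'$. If one of these is a diagonal (4a)/(6a), then $\mu=2$ because the other two tangencies lie on the same side of its edge. The $4$ does not come from a (4a)/(6a) tangency at $P$.

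In the non-right case, take third vertex $(2,3)$ at the upper-left endpoint $C$ of $e$. The direction $(1,1)$ at $C$ points into $(2,3)^{\vee}$, which forces $v_1=Q'=P=C$. The positive horizontal leg of $\Lambda$ then runs along the horizontal edge of $\Gamma$ dual to the segment from $(2,2)$ to $(2,3)$, giving a horizontal (3bb) tangency of multiplicity $0$.

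The extra cell in (iv) comes from pushing $v_1$ horizontally to the left of $C$, with $v_0$ sliding along the boundary of $(1,1)^{\vee}$. The (3bb) tangency then splits into a (3c) tangency on the positive horizontal leg and a (1a) tangency where the positive vertical leg crosses the slope $-1/2$ edge dual to the segment from $(1,1)$ to $(2,3)$. No slope $\pm1/3$ edge is involved, and \autoref{lm:MoveRightFrom3a1P} is not the relevant mechanism.
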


\begin{proof} We let $C$ and $C'$ be the left- and rightmost endpoints of $e$, respectively. In order to prove (i), we fix a point $P\in e$ and consider the slope one line $L$ through $P$. This line meets the union of the boundaries  of $(1,1)^{\vee}$ and $(2,2)^{\vee}$ at three points, counted with multiplicity, namely, the point $P$, and two extra points $Q, Q'$ outside the interior of $e$, with  $Q$ in $\overline{(1,1)^{\vee}}$ and  $Q'\in  \overline{(2,2)^{\vee}}$. 

  The convexity of the chambers $(1,1)^{\vee}$ and $(2,2)^{\vee}$ and the bidegree of $\Gamma$ imply that the points $Q$ and $Q'$ must lie on an edge that is either horizontal, vertical or has slope $-1$. Thus, setting $v_0=Q$ and $v_1=Q'$ will produce the desired tritangent member of $|\bclassns|$, establishing the existence statement in (i). Uniqueness follows by construction. 

  Next, we prove (ii). Moving the point $P$ along $e$ will shift both points $Q$ and $Q'$ in a continuous fashion along the cycles of the graph $\Gamma$ containing them. Thus, all members obtained in this way lie in the same tritangent class. This family of members is parameterized by $e$ and is contained in the support of $\bclassns$.
  
  Note that, when $P$ lies in the interior of $e$, the points $P,Q, Q'$ are all distinct and
  the tangencies complementing $P$ will be of types (5a), (4a)/(6a) diagonal or (3a). Furthermore, the tangency at $P$ will be of type (1a). In particular, none of these members lift.

  To conclude, we address (iii) and (iv) together. We let $\Lambda$ be the tritangent corresponding to $P$. Note that  if $P$ is an endpoint of $e$ we may have either $P=Q$ or $P=Q'$ (but not both) depending on the nature of the dual cell to $P$ in the Newton subdivision of $\sextic$, as we now explain. Exploiting the action of $\Dn{4}$,  it suffices to discuss the case when $P=C$ and the cell $C^{\vee}$ is a triangle with vertices $(1,1)$, $(2,2)$ and $(i,i+1)$ for some  $i\in \{1,2\}$.

  If $i=1$ then $C^{\vee}$ is a right triangle. By construction, the points $P,Q$ and $Q'$ are all distinct, $P$ is a tangency of type (2a) and the ones at $Q, Q'$ are as in the case when $P\in e^{\circ}$. Thus, $\Lambda$ has  lifting multiplicity 4. In turn, the uniqueness of the $\Lambda$  guaranteed by (i), and the local movements of type (2a) tangencies seen in~\autoref{fig:localMoves},  confirm that $P$ lies in a single $1$-dimensional cell of $\bclassns$.

  On the contrary, if $i=2$, then we have $P=Q'$ and we obtain a type (3bb)  horizontal tangency containing  $P$, whereas $Q\in \overline{(1,1)^{\vee}}$ and $Q\notin e$.  This tritangent does not lift, per our findings from~\autoref{tab:LiftingMultiplicities}. Thus, our claim (iii) follows.

  Notice that the type (3bb) tangency  forces $v_1$ to move to the left (and $v_0$ accordingly along the boundary of $(1,1)^{\vee}$) if we were to obtain a member of $|\bclassns|$ outside $e$. This new tritangent, and the $1$-cell containing it  satisfies the properties listed in (iv).
\end{proof}

\begin{lemma}\label{lm:dim1Parallelogram}
  Suppose that  the vertex $(1,1)$ in the Newton subdivision of $\sextic$ is adjacent to either $(0,3)$ or $(2,3)$, and that it forms a triangle with two vertices of the form $(2,i)$ and $(2,i+1)$ for some $i\in \{0,1,2\}$, as in~\autoref{fig:type1-3cParallelogramDim1}. Let $e''$ and $A$ be the edge and vertex of $\Gamma$ dual to these two cells. Let $e'$ be the unique horizontal edge of $\Gamma$ adjacent to $A$ and fix $P'$ as the midpoint of $e'$.  Then:

  \begin{enumerate}[(i)]
    \item For every point $P'' \in e''$   satisfying  $P''\preceq_h A$ and $A\preceq_v P''$ we can find a unique tritangent $\Lambda$ to $\Gamma$ with vertices in  $\overline{(1,1)^{\vee}}$,  having $P''$ and $P'$ as tangencies points, located on the positive vertical and horizontal legs of $\Lambda$, respectively. Furthermore,  $\Lambda$ has lifting multiplicity  $4\mult(\Lambda, P'')$.
    \item   All these members belong to the same tritangent class. Moreover, they lie in a unique segment $\cQ$ contained in the support of  $\bclassns$ which is isomorphic to a horizontal segment.
    \item No interior point of $\cQ$ corresponds to a liftable tritangent.
    \item The rightmost vertex of $\cQ$  lifts (and with multiplicity four) whenever $A\notin e''$. 
      \item Whenever the rightmost vertex of $\cQ$ does not lift, then this point lies in another segment contained in $|\bclassns|$, adjacent to $\cQ$ along this vertex, whose relative interior contains a trivalent member with a type (1a) diagonal tangency and two extra tangencies (one of which has type (3a)) on legs  adjacent to different vertices. In addition, $v_0$ lies in the boundary of the chamber $(1,1)^{\vee}$.
      \item The leftmost vertex of $\cQ$ does not lift if, and only if, the edge $(e'')^{\vee}$ forms a triangle with the vertex $(0,0)$ in the Newton subdivision of $\sextic$.
      \item Whenever the leftmost vertex of $\cQ$ does not lift, we can find precisely one more segment in the support of $\bclassns$ that is 
adjacent to $\cQ$ along this vertex. Furthermore, this segment has exactly one liftable member (and with  multiplicity four), namely its second endpoint. 
      \item Whenever a vertex of $\cQ$ corresponds to a liftable member, then this point lies in precisely one top-dimensional cell of  $\bclassns$.
  \end{enumerate}
\end{lemma}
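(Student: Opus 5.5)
The plan is to mimic the proofs of \autoref{lm:dim2Parallelogram} and \autoref{cor:tritangentClasses}, with the horizontal edge $e'$ playing the role of a degenerate edge. The two tangencies are then a type (3c) horizontal one along $e'$ and a vertical one at $P''\in e''$. The first step is to fix the combinatorics. The hypotheses say that $(1,1)$ is adjacent to $(0,3)$ or $(2,3)$, and that $A^\vee$ is the triangle with vertices $(1,1),(2,i),(2,i+1)$. Together with the bidegree of $\Gamma$, these determine the relevant part of $\overline{(1,1)^{\vee}}$: the vertex $A$ is the right endpoint of the top edge $e''$, the horizontal edge $e'$ leaves $A$ to the right, and $e''$ has slope $-1/2$ or $1/2$. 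Any $\Lambda$ with vertices in $\overline{(1,1)^\vee}$ and a tangency on its positive horizontal leg along $e'$ must have that leg containing $e'$. Hence $v_1$ lies on the horizontal line through $A$, and this tangency is of type (3c), with local multiplicity $2$ by \autoref{tab:LiftingMultiplicities}. This already accounts for the factor $4=2\cdot 2$ in (i), provided the third tangency always contributes $2$.

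For (i), given $P''$, I would let $Q$ be the intersection of the vertical line through $P''$ with the horizontal line through $A$. As in \autoref{lm:dim2Parallelogram}, I would split according to the position of $Q$: in the open chamber, on its boundary, on $e''$, or outside $\overline{(1,1)^\vee}$. In each case I would place $v_1=Q$ and locate $v_0$ by shooting a ray of direction $(-1,-1)$, or I would take the two intersection points with a slope $-1$ edge, and then read off the type of the complementing tangency. The possibilities are (5a), (4a)/(6a) diagonal, (3a), (4a')/(6a'), (3d)/(3h), or (3aa)/(3bb2)/(7). In the generic cases the tables give a local multiplicity $2$ (the $\mu$-value is forced to be $2$, since both other tangencies lie on one side). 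The degenerate cases are exactly the ones isolated later in (iv)--(vii). Uniqueness follows because $Q$ determines $\Lambda$.

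Items (ii) and (iii) then follow as in \autoref{cor:tritangentClasses}. Moving $P''$ along $e''$ within the constraints $P''\preceq_h A$, $A\preceq_v P''$ moves $Q$ along a horizontal segment $\cQ$, so all these members lie in one class and in $|\bclassns|$; they are non-special since each leg carries at most one tangency. For interior points of $\cQ$, the point $P''$ is interior to $e''$, so it has type (1a) and multiplicity $0$. For the endpoints, the right vertex corresponds to $P''=A$: if $A\notin e''$ one gets a (2a)/(4a)/(6a) tangency and multiplicity $4$, proving (iv). Otherwise $P''$ and $P'$ merge into a non-liftable configuration, and I would deform as in \autoref{lm:1aAnd4aetcx2}(iv), moving $v_0$ along $\partial(1,1)^\vee$, to obtain the adjacent segment in (v). The left vertex corresponds to the other endpoint of $e''$. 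Here I would check, using the dual triangle of that endpoint, that non-liftability occurs precisely when the edge $(e'')^\vee$ forms a triangle with $(0,0)$, which forces a slope $-1$ edge and a (3bb2) or (7) configuration, giving (vi). For (vii) I would invoke \autoref{lm:MoveDown3-aAnd1H}, or its (3c) analogue, to move $v_0$ away from $v_1$ and find exactly one liftable member of multiplicity $2\cdot 2=4$. Item (viii) comes from the local moves in \autoref{fig:localMoves}: a liftable endpoint has only $1$-dimensional local moves preserving the (3c) tangency, so it lies in a single $1$-cell.

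I expect the main obstacle to be the case analysis for the left endpoint in (vi)--(vii), namely confirming that the listed configurations exhaust the Newton subdivisions near $(0,0),(1,1)$ and that the extra segment has exactly one liftable member.
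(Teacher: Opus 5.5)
Your overall route is the paper's: collapse the parallelogram of \autoref{lm:dim2Parallelogram} to a horizontal segment at the height of $A$, read off the multiplicity from the (3c) tangency at $P'$ and the complementary one, and handle the endpoints via (3bb), (3bb2) and \autoref{lm:MoveDown3-aAnd1H}. However, your setup contains a concrete error that breaks (iv)--(v) as written: $A$ is \emph{not} in general an endpoint of $e''$. Since $A^\vee=\{(1,1),(2,i),(2,i+1)\}$ and $(e'')^\vee$ joins $(1,1)$ to $(0,3)$ or $(2,3)$, one has $A\in e''$ only when $i=2$ and $(e'')^\vee$ is the segment joining $(1,1)$ and $(2,3)$. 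That is exactly the exceptional case singled out in (iv) and (v).

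Your sentence ``the right vertex corresponds to $P''=A$: if $A\notin e''$ \ldots'' is therefore self-contradictory, because $P''\in e''$. The right endpoint of $\cQ$ corresponds instead to $P''=B$, the right endpoint of $e''$, which satisfies $B\prec_h A$ and $A\prec_v B$ when $A\neq B$. The vertex of $\Lambda$ below $B$ then lies strictly to the left of $A$ on the horizontal line through $A$. So $P'$ remains of type (3c), $P''=B$ is a type (2a) vertical tangency, and the multiplicity is $1\cdot 2\cdot 2=4$. Only when $A=B$ does this vertex land on $A$, producing the non-liftable (3bb) horizontal tangency that you then deform as in \autoref{lm:1aAnd4aetcx2}(iv).

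Two further steps need repair. First, the left endpoint of $\cQ$ need not correspond to $P''=B'$. If $e''$ has slope $1/2$ and $B'$ lies below the horizontal line through $A$, the endpoint is cut out by the halfspace bounded by the line spanned by $e''$. The corresponding member then has its vertex in the interior of $e''$, a type (4a) vertical tangency, which lifts with multiplicity four. If $B'$ lies exactly on that line, the vertex is $B'$ itself and you get a (3d) or (3h) diagonal tangency. Your verification of (vi) must cover these cases too.

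Second, for (viii), having $1$-dimensional local moves does not by itself imply lying in a single $1$-cell. For the (2a) and (4a) endpoints you need that these local moves are one-sided: a parallelepiped with all but one facet removed, as in \autoref{lm:OpenCellsTrivalent} and \autoref{lem:Type4And4pCells}. In the (3d)/(3h) case the point genuinely has local moves leaving $\cQ$. You must check that these produce special tritangents (two tangencies on the positive horizontal leg) and hence leave $|\bclassns|$.
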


\begin{proof}
  We let $B'$ and $B$ be the left- and rightmost endpoints of $e''$ and set $A=A'$ as the leftmost vertex of the horizontal edge $e'$.     We must consider two situations, depending on whether or not $A$ and $B$ agree. Note that $B\preceq_h A$, $A\preceq_v B$, and both inequalities are strict if $A\neq B$.

  To establish (i) and (ii), we proceed as in the proofs of~\autoref{lm:dim2Parallelogram} and~\autoref{cor:tritangentClasses}, where the midpoint of $e'$ (called $P'$) plays the role that the top-vertex $A$ of $e'$ had in the aformentioned statements. We building a degenerate parallelogram $\cP$ from the points $B, B'$ and $A=A'$ as in~\autoref{fig:type1-3cParallelogramDim1}. Let $\cQ$ be the segment obtained as the intersection of  $\cP$ and the halfspace defined by the affine span of $e''$ that intersects nontriviality with the chamber $(1,1)^{\vee}$. Each point of $\cQ$ determines a unique member $\Lambda$ in $|\bclassns|$  subject to the imposed restrictions on its tangencies and vertices. Since $\mult(\Lambda,P')=2$, and the tangency point in $\Lambda$ complementing  $P'$ and $P''$ has local lifting multiplicity 2, $\Lambda$ has the expected lifting multiplicity.

\begin{figure}[t]
  \includegraphics[scale=0.3]{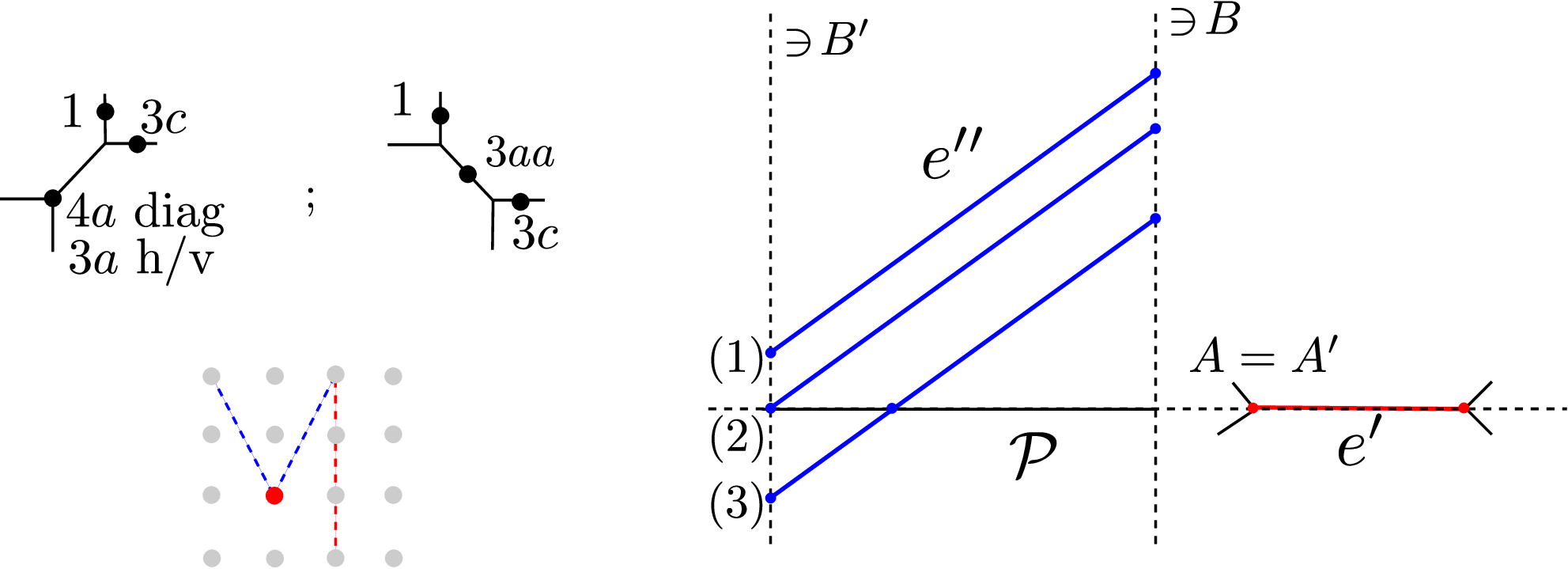}
  \caption{From left to right: partial Newton subdivision of $\sextic$ for two possible distribution of tangency points on a trivalent $\Lambda$ with $v_0,v_1\in \overline{(1,1)^{\vee}}$, having a type (1a) vertical tangency and a type (3c) horizontal one on its positive legs,  and positions of the (red) edge $e'$ and (blue) edge $e'':=\overline{BB'}$ relative to the segment  $\cP$ determined by horizontal and vertical lines through the endpoints of these edges.\label{fig:type1-3cParallelogramDim1}}\end{figure}

The same construction confirms (iii) since for any interior point of $\cQ$, $P''$ will be a type (1a) tangency.
The proof of both (iv) and (v) follows from our previous discussion. Indeed, if $A\neq B$, the rightmost vertex of $\cP$  is contained in $\cQ$ and produces a liftable member with  multiplicity 4 since $P''=B$ becomes a type (2a) tangency, $P'$  remains of type (3c), and the complementing tangency has local lifting multiplicity two. In turn, if $A=B$, we will obtain a type (3bb) horizontal tangency along $e$, which cannot lift. In addition, the point $v_0$ will remain in the boundary of the chamber $(1,1)^{\vee}$. Moving $v_1$ horizontally past $A$ (while keeping $v_0$ in the same cycle of $\Gamma$ dual to $(1,1)$) we obtain a member in a unique top-dimensional cell of $\bclassns$ with the desired distribution of tangencies.

Next, we verify (vi) and (vii). The arguments used in the proof of~\autoref{pr:dim2Parallelogram} confirm that the leftmost vertex of $\cQ$ lifts (and with multiplicity four)  unless  $e''$ is in position (1) relative to the segment $\cP$, its slope equals  $-1/2$ and $e''$ is adjacent to a slope $-1$ edge in the boundary of $(1,1)^{\vee}$. In turn, for this exceptional case, we have that $\cP=\cQ$ and the member corresponding to its leftmost vertex acquires a diagonal type (3bb2) tangency containing $v_1=B'$.~\autoref{tab:LiftingMultiplicities} confirms this tritangent will not lift.

However, by~\autoref{lm:MoveDown3-aAnd1H}, moving $v_1$ away from $v_0$ while preserving the  tangency at $P'$ produces a segment in $|\bclassns|$ adjacent to $\cQ$ along $B'$ and  with only one liftable member (with multiplicity four). Indeed, for all members of this segment, the original (3bb2) tangency becomes two tangencies: one of type (1a) or (2a)  on the negative horizontal leg, and one  of type (3ac) along an edge of $\Gamma$ of slope $-1$ in the boundary of $(1,1)^{\vee}$ with $v_1\notin \Gamma$. The type (2a) tangency corresponds to the second endpoint  of this segment. Note that, by construction, this point does not belong to any other top-dimensional cell of $\bclassns$.

To conclude, we establish item (viii). The claim will follow after determining the possible local movements of liftable members of $\cQ$. We treat both endpoints of $\cQ$ separately.

If the member associated to the rightmost vertex of $\cQ$ lifts then $P''=B$ is a type (2a) vertical tangency. The local moves of $v_1$ imposed by this tangency (seen in~\autoref{fig:localMoves}) and $P'$ prevent this vertex from lying in any other $1$-dimensional cell of $\bclassns$.

Finally if the tritangent $\Lambda'$ associated to the leftmost vertex of $\cQ$ lifts, we can adapt the proof of~\autoref{lm:dim2Parallelogram} to determine  the distribution of tangencies on $\Lambda'$. In this situation,~\autoref{fig:type1-3cParallelogramDim1} confirms that the edge $e''$ has only three possible positions relative to the degenerate parallelogram $\cP$, namely (1), (2) or (3). In cases (1) or (3), then $\Lambda'$ acquires a vertical tangency of  type (2a) or (4a) along $e''$. Once again, the local movements for these two types imply the desired claim.

On the contrary, for position (2), we know that $\Lambda'$ has a type (3c) tangency on its positive  horizontal leg and a diagonal tangency of type (3h) or (3d). The local movements imposed by these two possible diagonal tangencies will bring $\Lambda'$ back to $\cQ$ or turn $\Lambda'$ into a special tangency (with two tangencies on its positive horizontal leg). Thus there is precisely one cell of $\bclassns$ containing the leftmost vertex of $\cQ$ whenever the latter lifts.
\end{proof}

The previous two lemmas allow us to characterize the support of  $\bclassns$ when the partial Newton subdivision of $\sextic$ is the one prescribed by these two statements:

\begin{corollary}\label{cor:1And4ax2Class}  Let $\bclassns$ be 1-dimensional and let $\Lambda$ be a generic member in the relative interior of a top-dimensional cell of $\bclassns$ with a type (1a) diagonal tangency and no type (3c) ones.  Then, $\bclassns$ is supported on  a chain of segments, and its   only liftable members are its two unique $1$-valent vertices. Furthermore, their lifting multiplicities equal four.
\end{corollary}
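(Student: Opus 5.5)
The plan is to identify which generic members of $(1,(iii))$ and $(1,(iv))$ in \autoref{tab:combClassificationGenericPerDim} have a type (1a) diagonal tangency and no type (3c) one. Then I will show that the partial Newton subdivision they impose is exactly the one in \autoref{lm:1aAnd4aetcx2} or \autoref{lm:dim1Parallelogram}. Going through the table, a diagonal (1a) tangency with no (3c) companion forces both vertices of $\Lambda$ onto $\Gamma$, each with a tangency of type (3a), (5a) or (4a)/(6a) diagonal. In the representative with a slope one edge, the diagonal tangency then lies on an edge $e$ dual to $\overline{(1,1)(2,2)}$. Indeed, the vertices lie in $\overline{(1,1)^{\vee}}$ and $\overline{(2,2)^{\vee}}$ by the bidegree of $\Gamma$ and \autoref{pr:unbounded}. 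So $\Lambda$ is one of the tritangents produced in \autoref{lm:1aAnd4aetcx2}(i), and by (ii) the cell containing it is the segment $\cQ_0\cong e$ in $|\bclassns|$, whose interior points do not lift.

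Next I analyse the two endpoints $C, C'$ of $e$ separately; by $\Dn{4}$-symmetry it suffices to treat one of them, say $C$. If $C^{\vee}$ is a right triangle, \autoref{lm:1aAnd4aetcx2}(iii) says the endpoint of $\cQ_0$ lifts with multiplicity four and lies in no other $1$-cell of $\bclassns$, so it is a $1$-valent vertex. Otherwise (iv) yields exactly one adjacent segment, whose generic member has a (3c) and a (1a) tangency on adjacent legs. I will check that this member matches the hypotheses of \autoref{lm:dim1Parallelogram}, after the $\Dn{4}$-change sending $(2,2)$ to $(1,1)$: the triangle $C^{\vee}$ with vertices $(1,1),(2,2),(1,2)$ supplies the vertex $A$, and the slope of $e$ supplies $e''$. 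This new segment is the $\cQ$ of that lemma, and it meets $\cQ_0$ at its rightmost vertex via item (v). The leftmost vertex of $\cQ$ then either lifts with multiplicity four, or, by items (vi) and (vii), is followed by exactly one more segment whose far endpoint lifts with multiplicity four. By (viii), every liftable vertex lies in a single top-dimensional cell, so it is $1$-valent.

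Chaining these statements from both ends of $\cQ_0$ produces a path of segments: at most three on each side of $\cQ_0$. Interior vertices have valency exactly two by the ``precisely one more cell'' clauses, and the two terminal vertices are $1$-valent and liftable with multiplicity four. Interior points of every segment fail to lift, by \autoref{lm:1aAnd4aetcx2}(ii), \autoref{lm:dim1Parallelogram}(iii) and the single-liftable-member clause of (vii). Internal vertices are non-liftable by construction. Since $\bclassns$ is path-connected (\autoref{thm:pathConnectednessOfNSComplexes}) and we have found a closed union of cells with no further branching, this chain is all of $|\bclassns|$. Finally, \autoref{thm:countingTritangentClassesAndLifts} confirms the count $4+4=8$, giving partition $(0,0,2,0)$.

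The main obstacle is the matching step: verifying that the cell given by \autoref{lm:1aAnd4aetcx2}(iv) satisfies exactly the hypotheses of \autoref{lm:dim1Parallelogram}. That is, the vertex $(1,1)$ (after symmetry) must be adjacent to $(0,3)$ or $(2,3)$, and the triangle must have the prescribed form. I also need that no branching arises at the vertex where the two lemmas are glued. I would settle this by listing the few triangles containing $\overline{(1,1)(2,2)}$ and the adjacent cells forced by convexity of $(1,1)^{\vee}$, comparing each case with \autoref{fig:type1-3cParallelogramDim1}.
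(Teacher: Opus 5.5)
Your route is the paper's. \autoref{lm:1aAnd4aetcx2} gives the central segment $\cQ_0\cong e$, dual to $\overline{(1,1)(2,2)}$. Items (iii) and (iv) of that lemma split each endpoint into two cases: the right-triangle case, which is a $1$-valent liftable vertex of multiplicity four, and the non-right case, which has exactly one further segment. \autoref{lm:dim1Parallelogram} then controls the chain hanging off the endpoint. Your conclusion that the new segment $\cQ$ meets $\cQ_0$ at its rightmost vertex via item (v) is also correct.

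The data you feed into \autoref{lm:dim1Parallelogram} at the matching step are wrong, though.

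\begin{itemize}
\item The triangle with vertices $(1,1),(2,2),(1,2)$ is the right triangle. That is exactly the item (iii) case, where no further segment exists.
\item In the non-right case, the third vertex of $C^{\vee}$ is $(2,3)$. It could also be $(0,1)$, but $(x,y)\mapsto(3-y,3-x)$ exchanges $(0,1)$ and $(2,3)$. At $C'$ the corresponding options are $(1,0)$ and $(3,2)$.
\item For the triangle $(1,1),(2,2),(2,3)$, the hypotheses of \autoref{lm:dim1Parallelogram} hold verbatim, with no $\Dn{4}$-change. Here $A=C$, and $e'$ is the horizontal edge dual to $\overline{(2,2)(2,3)}$, which carries the (3c) tangency and has left endpoint $C$.
\item The edge $e''$ is the slope $-1/2$ edge dual to $\overline{(1,1)(2,3)}$, carrying the vertical (1a) tangency. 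It is not $e$, and it is not determined by the slope of $e$.
\item Since $A\in e''$, item (v) identifies the rightmost vertex of $\cQ$ with the (3bb) member at $C$ produced by \autoref{lm:1aAnd4aetcx2}(iv).
\end{itemize}

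Two smaller points. Each side of $\cQ_0$ carries at most two segments ($\cQ$ and possibly the item (vii) segment), not three. The $1$-valency of the far endpoint of the item (vii) segment comes from the proof of (vii), not from (viii), which only concerns vertices of $\cQ$.

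You genuinely differ from the paper in the closing step. The paper proves directly that the chains attached at $C$ and $C'$ are disjoint. Every member of the chain at $C$, other than $C$ itself, has the same (3c) tangency along the horizontal edge of $\Gamma$ at $C$, and no member of the chain at $C'$ can have it.

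You instead combine the exact valency counts in the lemmas with \autoref{thm:pathConnectednessOfNSComplexes}. The union of the cells you found is closed, and it is open in $|\bclassns|$ because every cell at each of its vertices is accounted for. Hence it is all of $|\bclassns|$. This also gives disjointness of the two chains: a connected graph with all degrees at most two and a leaf is a path, so the two directions out of $\cQ_0$ end at its two distinct leaves. You should state this explicitly, since ``no further branching'' alone does not say it.

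Your version makes the paper's implicit use of connectedness explicit. The paper's version gives a concrete geometric reason why the two chains cannot meet.
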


\begin{proof}
    By symmetry, we may assume the unique edge of $\Lambda$ has slope one. We let $P$ be the type (1a) tangency from the statement and let $e$ be the edge of $\Gamma$ containing $P$. In this situation, the hypotheses on $\Lambda$ ensure the presence of an edge joining $(1,1)$ and $(2,2)$ in the Newton subdivision of $\sextic$ dual to $e$. By construction, the vertices $v_0$ and $v_1$  lie in the boundary of the chambers $(1,1)^{\vee}$ and $(2,2)^{\vee}$, respectively. This is precisely the setting of~\autoref{lm:1aAnd4aetcx2}.

    By item (ii) of the lemma, the support of $\bclassns$ contains a segment that is  isomorphic to $e$, and whose relative interior contains $\Lambda$.
  We let $C$ and $C'$ be its two endpoints, seen from left to right.
  By items (iii) and (iv) of the aforementioned lemma, the structure of $\bclassns$ depends on the nature of the  dual cells $C^{\vee}$ and $(C')^{\vee}$  in the Newton subdivision of $\sextic$.

  In what follows we show that the support of $\bclassns$ is obtained  by attaching to $e$ a (possible empty) chain of segments  along each vertex.
  Exploiting the action of $\Dn{4}$, we need only treat  one of these vertices (say $C$) and two possibilities for its dual cell. 
  If $C^{\vee}$ is a right triangle, then item (iii) of the lemma confirms, both,  that $C$ is a $1$-valent vertex of $|\bclassns|$, and that the corresponding tritangent  has  lifting multiplicity one. 

  On the contrary, if $C^{\vee}$ has no right-angle, we may assume  that $(2,3)$ is one of its vertices.
  Then, by item (iv) of the lemma, we can find a unique  $1$-cell $\sC$  intersecting $e$ at $C$ with a trivalent member $\Lambda'$ of slope one with $v_0$ contained in the boundary of $(1,1)^{\vee}$, having a type (1a) tangency on its positive vertical leg and a type (3c) one on its positive horizontal leg, whose connected in $\Gamma\cap \Lambda$ includes $C$. In this situation,~\autoref{lm:dim1Parallelogram} (vi)-(vii) confirms both, the existence of a chain of  segments of $\bclassns$ that is attached to $e$ along $C$, and that contains exactly one liftable member (with multiplicity four), namely, the  $1$-valent vertex of the chain complementing $C$.

  Note that all tritangents corresponding to elements of this chain other than $C$,  share the same type (3c) tangency along  the horizontal edge of $\Gamma$ containing $C$. Furthermore, this tangency lies in the positive horizontal leg of all its members. In particular, this chain of cells intersect $e$ only at $C$.

  To conclude, we must show that the two (possible empty) chains attached to $e$ along $C$ and $C'$ do not intersect. If $C^{\vee}$ or $(C')^{\vee}$ are right triangles, the corresponding chain is empty and the statement becomes tautological. In turn, when both triangles are obtuse, we may fix $C^{\vee}$ to be the triangle with vertices $(1,1)$, $(2,2)$ and  $(2,3)$ chosen earlier. We have two options for $(C')^{\vee}$ since its undetermined vertex can be $(1,0)$ or $(3,2)$.   In both cases, no member of the chain can have a type (3c) tangency along the horizontal edge of $\Gamma$ adjacent to $C$.  This confirms that the chains attached to $C$ and $C'$ cannot intersect.
\end{proof}

The next result follows directly from~\autoref{cor:1And4ax2Class}:

\begin{proposition}\label{pr:1aAnd4aetcx2} 
  Let $\Lambda$ be a generic member in the relative interior of a $1$-dimensional cell of $\bclassns$ with a type (1a) diagonal tangency and no type (3c) ones.  Then, the lifting partition of $\tclass$ equals $(0,0,2,0)$. 
\end{proposition}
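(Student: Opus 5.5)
The plan is to read this off \autoref{cor:1And4ax2Class}. We need three things: a description of all of $|\bclassns|$, the fact that every liftable member of $\tclass$ lies in $|\bclassns|$, and the lifting multiplicities of those members. The partition $(0,0,2,0)$ means exactly two liftable members, each of multiplicity four. This matches the total of eight from \autoref{thm:countingTritangentClassesAndLifts}.

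First, $\sextic$ is generic relative to $\Gamma$. By \autoref{rm:firstCellsToToss}, \autoref{rm:secondCellsToToss} and the genericity condition (iii) in \autoref{def:fgenericRelToGamma}, every liftable member of $\tclass$ lies in $|\bclassns|$. So we only need to count the liftable points of $|\bclassns|$, where the product formula of \autoref{thm:totalLifting} applies.

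Next, fix $\Lambda$ as in the statement. Using the $\Dn{4}$-action, assume its edge has slope one. As in the proof of \autoref{cor:1And4ax2Class}, the diagonal (1a) tangency forces an edge joining $(1,1)$ and $(2,2)$ in the Newton subdivision of $\sextic$, so we are in the setting of \autoref{lm:1aAnd4aetcx2}. That corollary says $|\bclassns|$ is a chain of segments. It consists of the segment isomorphic to $e$, together with possibly empty chains attached at $C$ and at $C'$, coming from \autoref{lm:1aAnd4aetcx2}(iv) and \autoref{lm:dim1Parallelogram}(vi)--(vii). Its only liftable members are the two $1$-valent endpoints, each of multiplicity four.

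The step needing care is whether this chain is all of $|\bclassns|$ and not just one piece. By \autoref{thm:pathConnectednessOfNSComplexes}, $\bclassns$ is path-connected, so it is enough to show that no further $1$-cell of $\bclassns$ meets the chain. At interior points this follows from the uniqueness statements in \autoref{lm:1aAnd4aetcx2}(i) and \autoref{lm:dim1Parallelogram}(i)--(ii). At the vertices that do not lift, the adjacency is controlled by \autoref{lm:1aAnd4aetcx2}(iv) and \autoref{lm:dim1Parallelogram}(v),(vii). At the liftable endpoints, \autoref{lm:1aAnd4aetcx2}(iii) and \autoref{lm:dim1Parallelogram}(viii) say the point lies in a single top-dimensional cell. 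Possible special cells attached at these vertices do not matter, since they are excluded from $\bclassns$.

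Finally, we check that no intermediate vertex of the chain lifts, using \autoref{lm:1aAnd4aetcx2}(iii) and \autoref{lm:dim1Parallelogram}(iii)--(vii). Hence the liftable members of $\tclass$ are exactly the two $1$-valent endpoints, each with four lifts, and the lifting partition is $(0,0,2,0)$. The hypothesis excluding (3c) tangencies on $\Lambda$ itself ensures we are not instead in the settings of \autoref{pr:type4b3c} or \autoref{pr:3c3c1}. Members of the attached chains do carry a (3c) tangency, but no (4b) one, so \autoref{pr:type4b3c} does not apply.
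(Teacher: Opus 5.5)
Your proposal is correct and follows the paper's route: the paper derives this proposition directly from \autoref{cor:1And4ax2Class}, together with the standing fact that all liftable members of $\tclass$ lie in $|\bclassns|$. Your extra checks that the chain exhausts $|\bclassns|$ and that no intermediate vertex lifts are already part of the statement of that corollary, so they are redundant rather than needed.
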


Our next two statements are analogous to 
{Propositions}~\ref{pr:dim2Parallelogram} and~\ref{pr:13acOr3cc1sameOrNot}, respectively.

\begin{proposition}\label{pr:dim1Parallelogram}
  Let $\Lambda$ be a generic member in the relative interior of a $1$-dimensional cell of $\bclassns$ with either a type (3aa) diagonal tangency or two tangencies of types (1a) and (3c), respectively, on adjacent legs  of $\Lambda$. Then, the lifting partition of $\tclass$ equals $(0,0,2,0)$. 
\end{proposition}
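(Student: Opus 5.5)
The plan is to mirror the proof of \autoref{pr:dim2Parallelogram}, with \autoref{lm:dim1Parallelogram} playing the role that \autoref{lm:dim2Parallelogram} and \autoref{cor:tritangentClasses} played there. First I pick convenient $\Dn{4}$-representatives for $\Lambda$. In the case of two tangencies of types (1a) and (3c) on adjacent legs, I take both tangencies on the positive legs of $\Lambda$ with $v_0,v_1\in\overline{(1,1)^{\vee}}$, the (3c) tangency horizontal along $e'$ and the (1a) tangency vertical along $e''$. This is exactly the partial Newton subdivision of \autoref{fig:type1-3cParallelogramDim1}, so the hypotheses of \autoref{lm:dim1Parallelogram} hold.

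In the (3aa) case, the relevant edge $e$ dual to $\overline{(0,0)(1,1)}$ carries the diagonal tangency and the edge of $\Lambda$ has slope $-1$. The bidegree of $\Gamma$, together with $\dim\bclassns=1$ and $\Lambda\in\bclassns$, should force the two remaining tangencies onto the positive horizontal and vertical legs, and one of them must be of type (3c), since otherwise \autoref{pr:dim2Parallelogram} would give a $2$-dimensional cell. I then move the vertex adjacent to the leg carrying the (1a) tangency along $e$, as in the proof of \autoref{lm:dim2Parallelogram} with $Q\notin\overline{(1,1)^{\vee}}$. This lands in the segment $\cQ$ of \autoref{lm:dim1Parallelogram}, reducing the (3aa) case to the first one.

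With $\cQ$ in hand, I count lifts. By \autoref{lm:dim1Parallelogram}(iii), interior points of $\cQ$ do not lift. At the rightmost vertex, item (iv) gives a lift of multiplicity four when $A\notin e''$. Otherwise, item (v) produces an adjacent segment whose members have a (1a) diagonal tangency, a (3a) tangency, and the (3c) tangency at $P'$. Along this segment I track liftable members using \autoref{lm:MoveDown3-aAnd1H} and \autoref{lm:MoveUp3caAnd1H}(i), or \autoref{lm:1aAnd4aetcx2}, to find exactly one liftable member. Its multiplicity is $2\mult(\Lambda,P')=4$, because the (3c) tangency contributes $2$.

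At the leftmost vertex, items (vi) and (vii) give either a direct lift of multiplicity four or an adjacent segment whose far endpoint is the unique lift, again with multiplicity four. Item (viii) guarantees that liftable vertices are $1$-valent in $\bclassns$. By \autoref{thm:pathConnectednessOfNSComplexes}, $\bclassns$ is therefore the chain obtained this way, and together with \autoref{thm:countingTritangentClassesAndLifts} (total $8$) this yields exactly two lifts of multiplicity four, i.e.\ the partition $(0,0,2,0)$. A (4b) tangency cannot appear, since its partial subdivision (\autoref{fig:3aAnd1VerticalLeftMove}(III)) is incompatible with the one imposed here, so $(0,2,1,0)$ is excluded.

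The main obstacle is the rightmost branch when $A\in e''$. There one must check that the adjacent segment from item (v) contains exactly one lift and does not branch further within $\bclassns$. I would handle this by a case split on the dual triangle $A^{\vee}$, using the (3bb) analysis in \autoref{lm:1aAnd4aetcx2}(iv). As a fallback, the count of eight forces uniqueness once two lifts of multiplicity four have been exhibited.
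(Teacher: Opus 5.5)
Most of your outline matches the paper's proof. You use the same $\Dn{4}$-representatives, with the (3c) and (1a) tangencies on the positive horizontal and vertical legs. The (3aa) members are absorbed into the segment $\cQ$ of \autoref{lm:dim1Parallelogram}; in fact they already are points of $\cQ$, as in the case $Q\notin\overline{(1,1)^{\vee}}$ of \autoref{lm:dim2Parallelogram}, so no auxiliary move is needed. When $A\notin e''$, you correctly use items (iv), (viii), (vi) and (vii) of that lemma. Closing with \autoref{thm:countingTritangentClassesAndLifts} is a valid shortcut: two lifts of multiplicity four exhaust the eight, which also makes your (4b) remark unnecessary.

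The gap is in the branch $A\in e''$, i.e.\ when $A$ is the common endpoint of $e'$ and $e''$. The member supplied by item (v) is obtained by pushing $v_1$ horizontally past $A$ into the interior of $e'$, so the (3c) tangency at $P'$ does not survive.
\begin{itemize}
\item The (3c) tangency at $P'$ becomes the (3a) tangency at $v_1$.
\item The vertical tangency at $P''$ is replaced by a diagonal (1a) tangency on the edge of $\Gamma$ dual to the segment joining $(1,1)$ and $(2,2)$.
\item The third tangency is the one attached to $v_0$.
\end{itemize}
Your count ``$2\mult(\Lambda,P')=4$ because the (3c) tangency contributes $2$'' therefore rests on a wrong configuration. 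Also, \autoref{lm:MoveDown3-aAnd1H} and \autoref{lm:MoveUp3caAnd1H} do not apply here, since both require a slope $-1$ edge carrying a non-transverse diagonal tangency.

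Moreover, the new segment is the one of \autoref{lm:1aAnd4aetcx2} isomorphic to $e$. If the dual triangle at its far endpoint is not a right triangle, that endpoint does not lift either. The lift then sits on a further chain, given by \autoref{lm:1aAnd4aetcx2}(iv) followed by \autoref{lm:dim1Parallelogram}(vi)--(vii). So ``exactly one liftable member along this segment'' is false as stated.

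The missing step is short: the item-(v) member has a diagonal (1a) tangency and no (3c) tangency. Hence \autoref{pr:1aAnd4aetcx2} (via \autoref{cor:1And4ax2Class}) applies directly and gives $(0,0,2,0)$ for the whole class. This is how the paper handles this case, and it also supplies the second multiplicity-four lift your counting argument needs.
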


\begin{proof} From~\autoref{tab:combClassificationGenericPerDim} we know that if $\Lambda$ has a type (3aa) tangency, its complementary tangencies are of types (1a) and (3c). In particular, we can treat both possibilities listed in the statement simultaneously if we pick alternative $\Dn{4}$-representatives to those listed in the table, namely, where the
  type (3c) and (1a) tangencies (labeled $P'$ and $P''$) lie on the  positive  horizontal and vertical legs of $\Lambda$, respectively.

  We write $e'$ and $e''$ for the edges of $\Gamma$ containing $P'$ and $P''$ in their relative interior, and let $A$ be the leftmost endpoint of $e'$. This is precisely the setting of~\autoref{lm:dim1Parallelogram} and~\autoref{fig:type1-3cParallelogramDim1}. By items (i) and (ii) of that lemma, $\Lambda$ belongs to a segment in the support of  $\bclassns$, which has $A$ as an endpoint.

  We analyze two different situations, depending on the nature of $A$.   If $A\in e''$, by item (v) of the aforementioned lemma, $|\bclassns|$ contains a member satisfying the conditions of~\autoref{pr:1aAnd4aetcx2}. Thus, the lifting partition of $\tclass$ is as expected.

  On the contrary, when $A\notin e''$, the  tritangent associated to $A$ lifts (and with multiplicity four) by item (iv) of the lemma. In turn, item (viii) confirms that $A$ is a $1$-valent vertex of $|\bclassns|$.
  Finally, items (vi) and (vii) ensure that $|\bclassns|$ is obtained from $\sC$ by attaching to it a (possibly empty) chain of segments at its leftmost vertex. If the chain is empty, the leftmost vertex of $\sC$ lifts. In all other cases, this vertex does not lift, but the chain has precisely one liftable member, namely its complementing $1$-valent vertex. In both cases, we obtain the expected lifting multiplicity.
\end{proof}

\begin{proposition}\label{pr:3c3acOr3cc13csameOrNot}
Let $\Lambda$ be a generic member in the relative interior of a $1$-dimensional cell of $\bclassns$ with a non-transverse diagonal tangency. Suppose at least one of the vertices of $\Lambda$ is not in $\Gamma$. Then, the lifting partition of $\tclass$ equals $(0,0,2,0)$.
\end{proposition}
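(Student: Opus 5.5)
The argument will be the one-dimensional analogue of the proof of \autoref{pr:13acOr3cc1sameOrNot}. The idea is that the type (3c) tangency in the $(1,(iii))$ and $(1,(iv))$ entries of \autoref{tab:combClassificationGenericPerDim} freezes one of the two directions of motion that existed in dimension two. By \autoref{tab:combClassificationGenericPerDim}, a generic member of a $1$-dimensional cell with a non-transverse diagonal tangency $P'$ and a vertex outside $\Gamma$ has complementing tangencies of types (1a) and (3c), lying on legs adjacent to different vertices. I will fix the $\Dn{4}$-representative with a slope $-1$ edge, $v_1\notin\Gamma$, and the transverse tangency placed so that \autoref{lm:MoveDownUpRight3-cAnd1V}, \autoref{lm:MoveDown3-aAnd1H} and \autoref{lm:MoveUp3caAnd1H} apply. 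The type (3c) tangency then fixes the vertex adjacent to its leg, up to motion along a horizontal or vertical segment. As a result, only the vertex carrying the (1a) tangency moves freely, and the other one follows. Since $\mult(\Lambda,P)=2$ for the (3c) point $P$, every multiplicity-two member produced by those lemmas becomes a multiplicity-four member.

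The cases will follow the split used in \autoref{pr:13acOr3cc1sameOrNot}: first by whether $P'$ has type (3ac) or (3cc), then by the relevant $\prec_v$ order ($B'\preceq_v C'$ versus $C'\prec_v B'$), and finally by adjacency of the edges of $\Gamma$ carrying the tangencies.

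In the non-degenerate subcases, the plan is to move the free vertex towards and away from the fixed one.
\begin{itemize}
\item Moving \emph{away} uses \autoref{lm:MoveDownUpRight3-cAnd1V}~(i) or \autoref{lm:MoveDown3-aAnd1H}.
\item Moving \emph{towards} uses \autoref{lm:MoveDownUpRight3-cAnd1V}~(iii) or \autoref{lm:MoveUp3caAnd1H}~(i).
\end{itemize}
Each direction should produce exactly one liftable member, of multiplicity $2\cdot\mult(\Lambda,P)=4$. Interior members keep a (1a) tangency and do not lift. This would give the partition $(0,0,2,0)$ on a segment whose two endpoints are the liftable members.

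In the degenerate subcases the plan is to reduce to propositions already proved.
\begin{itemize}
\item If $C'\prec_v B'$, then \autoref{lm:MoveUp3caAnd1H}~(ii) moves $\Lambda$ to a member with a (3a) tangency and a (1a) tangency on legs adjacent to the same vertex. There I invoke \autoref{pr:3a13cDim1}, after checking from the partial Newton subdivision that no (4b) member can occur, exactly as in \autoref{pr:13acOr3cc1sameOrNot}.
\item If the edges of the diagonal and transverse tangencies are adjacent, then \autoref{lm:MoveDownUpRight3-cAnd1V}~(ii) produces either a (3aa) member, handled by \autoref{pr:dim1Parallelogram}, or a (3cc)$\to$(3ac) transition, which sends us back to the first case.
\end{itemize}

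The main obstacle is making sure that the moves in these lemmas remain inside $|\bclassns|$ in the one-dimensional setting. These lemmas were stated for positive-dimensional cells, but they assumed the displaced vertex had a type (1a) companion. Here the (3c) tangency may force the fixed vertex onto $\Gamma$ and create (3bb) or special configurations at an endpoint. I would first try to rule these out using the bidegree of $\Gamma$ and \autoref{lm:specialConfigPrecisions}. If that fails, I would reroute the path through \autoref{lm:MoveRightFrom3a1P}~(ii) and \autoref{cor:leftAndRightmostLiftableMembers3aAnd1Vertical}~(iv), which also produce a single multiplicity-four member.
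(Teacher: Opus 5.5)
Your proposal is essentially the paper's proof: the same lemmas from \autoref{sec:technicalLemmasDim1-2} yield one multiplicity-four member in each direction of motion, the $C'\prec_v B'$ branch is reduced to \autoref{pr:3a13cDim1} after ruling out (4b) members, and the adjacent-edges branch of \autoref{lm:MoveDownUpRight3-cAnd1V}(ii) is reduced either to the (3ac) case or to \autoref{pr:dim1Parallelogram}. The one correction concerns your normalization and case split. The vertex adjacent to the (3c) leg stays fixed rather than ``following'', so instead of imposing $v_1\notin\Gamma$ you should put the (1a) tangency on a leg adjacent to the movable vertex $v_0$ and split on whether $v_0\in\Gamma$ (which forces (3ac) with $v_1\notin\Gamma$) or $v_0\notin\Gamma$ (where $v_1$ may lie on $\Gamma$, which is exactly where the (3aa) outcome comes from). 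With that choice the lemmas apply directly, since they are stated for arbitrary positive-dimensional cells of $\bclass$ and keep $P$ in place, so the obstacle in your last paragraph does not arise.
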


\begin{proof} We let $P'$ be the non-transverse diagonal tangency of $\Lambda$. Looking at the sample members listed in the $(1,(iv))$-entry of~\autoref{tab:combClassificationGenericPerDim}, we know that one of the two remaining tangencies of $\Lambda$ has type (1a). We label it by $P''$. There are two options for the remaining tangency, which we call $P$, depending on whether or not the vertex of $\Lambda$ adjacent to the leg containing $P''$ belongs  to $\Gamma$. In all cases, this is the sole vertex of $\Lambda$ that can move. In addition, a simple inspection confirms that $\mult(\Lambda, P)=2$ in all cases.

  As in the proof of~\autoref{pr:13acOr3cc1sameOrNot} we use the auxiliary results from~\autoref{sec:technicalLemmasDim1-2}. To do so, we assume that the  edge of  $\Lambda$ has slope $-1$ and that $P''$ lies in a  leg adjacent to  the vertex $v_0$. 

  First, assume $v_0\in \Gamma$, and that the tangency at $P''$ is horizontal. Our hypothesis ensures that $P'$ has type (3ac) with $v_1\notin \Gamma$.
  \autoref{lm:MoveDown3-aAnd1H} confirms that moving $v_0$ away from $v_1$ leads to precisely  one liftable member, of multiplicity $4$.

  To treat the movement of $v_0$ towards $v_1$, we use~\autoref{lm:MoveUp3caAnd1H} and the notation established in~\autoref{fig:3caAnd1Horizontal}. Indeed, whenever $B'\preceq_v C'$, item (i) of the lemma confirms that the movement produces exactly one liftable member, with multiplicity $4$. On the contrary, if $C'\prec_v B'$, $P$ is necessarily a type (3c) vertical tangency, as indicated in the subdivision labeled (II) in the figure. Moving $v_0$ past $C'$ and $v_1$ upwards within the chamber $(0,1)^{\vee}$, item (ii) of the lemma produces a member $\Lambda'$ with a type (3a) vertical tangency, and a type (1a) horizontal one in the boundary of  $(1,1)^{\vee}$. Note that the vertical edge containing $C$ and the edge of $\Gamma$ containing $P''$ are not adjacent, so we cannot encounter a subdivision symmetric that of \autoref{fig:3aAnd1VerticalLeftMove} (III). This fact ensures no member of $|\bclassns|$ has a type (4b) tangency. The existence of $\Lambda'$ falls into the framework of~\autoref{pr:3a13cDim1} after acting by $\Dn{4}$. Thus,  $\tclass$ has the expected lifting partition.

  To conclude, we treat the case when $v_0\notin \Gamma$. Then, up to symmetry we may assume that $P''$ belongs to the negative vertical leg of $\Lambda$, so $v_0\in (3,2)^{\vee}$.  We prove the statement by exploiting our findings from~\autoref{lm:MoveDownUpRight3-cAnd1V}.  First, item (i) of the aforementioned lemma ensures that when moving $v_0$ away from $v_1$ we encounter a unique liftable member, with multiplicity four. In turn, moving $v_0$ towards $v_1$ yields two possible situations.

  If the edges of $\Gamma$ containing $P'$ and $P''$ are not adjacent, item (iii) of the lemma produces a single liftable member, again with multiplicity four. On the contrary, if these two edges are adjacent, we encounter a member $\Lambda'$ with its lower vertex in $\Gamma$, a type (1a) tangency on its positive horizontal leg, the same tangency at $P$ and either a type (3ca) or (3aa) along the edge of $\Lambda'$. In the first case, the result follows from our previous arguments, whereas the second one is a consequence of~\autoref{pr:dim1Parallelogram}.
\end{proof}

  \section{The dimension of the complexes $\tclass$, $\bclass$ and $\bclassns$}\label{sec:comp-dimens-tclass}

  In 
  {Sections}~\ref{sec:polyhedra-bounded-complex} and~\ref{sec:non-special-bounded} we introduced two subcomplexes for each tritangent class $\tclass$ to $\Gamma$, namely $\bclass$ and $\bclassns$, by successively peeling off cells that did not contribute to the lifting partition of $\tclass$ while remaining connected.  The natural inclusions $\bclassns\subseteq \bclass \subseteq \tclass$ yield the inequalities
  \begin{equation}\label{eq:dimComparison}
    \dim \bclassns \leq \dim \bclass \leq \dim \tclass \leq 3
  \end{equation}
  \autoref{thm:comparingDimensions} provides instances where equalities between these quantities are attained. It is natural to asks which triples of values for the dimension of each complex can occur. Our main theorem in this section provides an answer and confirms that the invariant restricting the lifting partition of a tritangent class is the dimension of its associated bounded non-special complex.

  \begin{theorem}\label{thm:dimComparison} The tuple recording the dimensions of the complexes $\bclassns$, $\bclass$ and $\tclass$, respectively, has one of the following 11 possible values:
    \begin{equation*}\label{eq:tupleDimValues}   (0,0,0), (0,0,1), (0,0,2), (0,1,1), (1,1,1), (1,1,2), (1,1,3),  (1,2,2), (2,2,2), (2,2,3) \text{ or }(3,3,3).
    \end{equation*}
    All tuples are realized for appropriate choices of the curve $\Gamma$. 
  \end{theorem}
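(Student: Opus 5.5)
The proof splits into two parts: first cut the $20$ triples allowed by \eqref{eq:dimComparison} down to the $11$ listed ones, then exhibit a curve $\Gamma$ realizing each of them.

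\textbf{Exclusion step.} A priori \eqref{eq:dimComparison} allows every triple $a\le b\le c\le 3$; there are $20$ of them, and we must rule out $9$.
\begin{enumerate}[(a)]
\item Whenever $b=3$ we get $a=3$, by the second part of \autoref{thm:comparingDimensions}: special cells have dimension at most two by \autoref{pr:dimSpecialCells}. This excludes $(0,3,3)$, $(1,3,3)$ and $(2,3,3)$.
\item If $b=2$, then $a\ge 1$, which excludes $(0,2,2)$ and $(0,2,3)$. Here I would reuse the argument in the proof of \autoref{thm:comparingDimensions}. A maximal $2$-dimensional special cell has a neighbouring non-special cell, given by one of the configurations of \autoref{lm:specialConfigurations}, and each of these has positive-dimensional moves inside $\bclassns$.
\item If $a=0$, then $b\le 1$, as already used in (b).
\item If $a=0$ or $b\le 1$, then $c\le 2$, which leaves $(1,2,3)$ and $(0,1,2)$ still to discuss. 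A $3$-dimensional cell of $\tclass$ has generic members with only (1a)/(1b)/(1') tangencies by \autoref{pr:3dCells}. One then tracks its retraction onto $\bclass$ via the maps $\rho,\rho'$ of \autoref{cor:bcomplexDeformation}, which only shrink edges along slope one lines. The image should land either in a $3$-dimensional bounded cell, forcing $b=3$, or in a cell with (5a)/(6a)-type tangencies reached from a free three-parameter family, forcing $b\ge 1$ and $a\ge 1$. This excludes $(0,0,3)$, $(0,1,3)$ and $(0,1,2)$.
\item Finally, $(1,2,3)$ is excluded by combining the $3$-dimensional cell analysis of (d) with \autoref{pr:dimSpecialCells}. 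A special $2$-cell adjacent to a non-special $2$-cell forces the unbounded $3$-cell to retract onto a $2$-dimensional non-special cell.
\end{enumerate}
This accounts for all $9$ excluded triples: $(0,3,3)$, $(1,3,3)$, $(2,3,3)$, $(0,2,2)$, $(0,2,3)$, $(0,0,3)$, $(0,1,3)$, $(0,1,2)$ and $(1,2,3)$.

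\textbf{Realization step.} For each of the $11$ surviving tuples I would produce an explicit $\Gamma$, starting from the examples in \autoref{ex:realizableLifts}, where $\bclassns$ equals the union of bounded cells.
\begin{itemize}
\item The tuples with $a=b$ come from classes as in \autoref{fig:examplesLift}. The dimension of $\tclass$ is then raised by choosing the Newton subdivision so that one vertex carries a (5a)/(6a) tangency on the boundary of a corner chamber. By \autoref{pr:unbounded} and \autoref{lm:unboundedtranslations}, this adds unbounded cells of dimension $1$, $2$ or $3$.
\item The tuples with $a<b$, namely $(0,1,1)$ and $(1,2,2)$, require non-generic edge lengths by \autoref{thm:comparingDimensions}. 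For these I would use the special configurations of \autoref{fig:specialConfigurations}. A special segment is attached to a point class (the (3c) case), and a special polygon is attached to a segment class (the (1a) case with $P$ of type (3c)). Each attachment happens along one non-special vertex, as in the proof of \autoref{thm:pathConnectednessOfNSComplexes}.
\end{itemize}

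\textbf{Expected obstacle.} The hardest part is the exclusions in (d) and (e), since $\tclass$ has no coarse structure (\autoref{rem:coarsestStructure}). I would handle them by listing, via \autoref{pr:unbounded}, the possible recession directions of unbounded cells. For each direction I would then check directly which bounded generic members from \autoref{tab:combClassificationGenericPerDim} they retract to.
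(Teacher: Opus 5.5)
Your decomposition broadly matches the paper's lemmas, but two of the nine exclusions are not actually proved.

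\textbf{The tuple $(0,1,2)$ is never handled.} Everything in step (d) concerns a $3$-dimensional cell of $\tclass$ and its image under $\rho'\circ\rho$. A class with tuple $(0,1,2)$ has no such cell, so (d) says nothing about it.

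The implication you open (d) with is also false. $b\le 1$ does not force $c\le 2$: $(1,1,3)$ is one of the tuples to be realized (e.g.\ $\tclass_6$ in \autoref{ex:1etc}), and your own realization step needs it. A $3$-cell analysis can only give ``$c=3$ and $b<3$ imply $a\ge 1$'', which is \autoref{lm:topDimensionalBoundedProper}. Even that needs a reduction, via \autoref{pr:3dCells}, \autoref{pr:unbounded} and the bidegree of $\Gamma$, to the three configurations of \autoref{fig:3optionsNonGeneric}, each checked separately. A ``free three-parameter family'' can a priori collapse under retraction, just as the $2$-dimensional cone $\tclass_3$ of \autoref{ex:0etc} retracts to a point.

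Excluding $(0,1,2)$ needs the separate argument of \autoref{lm:no012}:
\begin{enumerate}[(1)]
\item $a<b=1$ gives a maximal special $1$-cell, so $P''$ has type (3c) by \autoref{pr:dimSpecialCells}.
\item Of the three corresponding neighbours in \autoref{lm:specialConfigurations}, only the one with a (5a) tangency at $v_0$ and a (3c) tangency at $P$ (subdivision (III) of \autoref{fig:specialConfigurations}) is compatible with $\dim\bclassns=0$.
\item In that case $\bclass$ is a segment, and $\tclass$ is this segment plus one ray of direction $(-1,-1,-1)$ at its $4$-valent endpoint. Hence $\dim\tclass=1$, not $2$.
\end{enumerate}

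\textbf{Step (e) is only an assertion.} With $a=1$ there is no non-special $2$-cell in $\bclass$, and you give no mechanism forcing a $3$-cell of $\tclass$ onto a $2$-dimensional non-special cell. The missing idea is an incompatibility of Newton subdivisions:
\begin{enumerate}[(1)]
\item In the equality case $\dim\bclassns=1$ of \autoref{lm:topDimensionalBoundedProper}, the first two configurations of \autoref{fig:3optionsNonGeneric} already give $\dim\bclassns=2$. So $\bclassns$ contains a member with three diagonal tangencies, one of type (1a) and two of type (4a). This forces the three edges of a main diagonal of the Newton square into the subdivision.
\item The special $2$-cell has $P''$ of type (1a), and $\dim\bclassns=1$ forces $P$ to be of type (3c) in \autoref{lm:specialConfigurations}.
\item Together these leave only the subdivisions of \autoref{fig:123NewtonSubdivision}, in each of which one checks directly that $\dim\tclass=2$, contradicting $c=3$.
\end{enumerate}

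\textbf{Smaller points.} In (b) you cannot simply reuse the proof of \autoref{thm:comparingDimensions}, since it uses generic edge lengths to make $P$ of type (1a). For arbitrary lengths the conclusion comes from \autoref{lm:ineqBoundedDimensions}: there the (1a) tangency $P''$ lets $v_1$ slide once $v_0$ is fixed at $C$.

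Your realization step is only a plan. The theorem needs explicit curves, as in Examples~\ref{ex:0etc}--\ref{ex:23etc}. You would also have to check that changing the subdivision to add unbounded cells leaves $\bclass$ and $\bclassns$ unchanged.
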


    \begin{figure}[t]
      \includegraphics[scale=0.85]{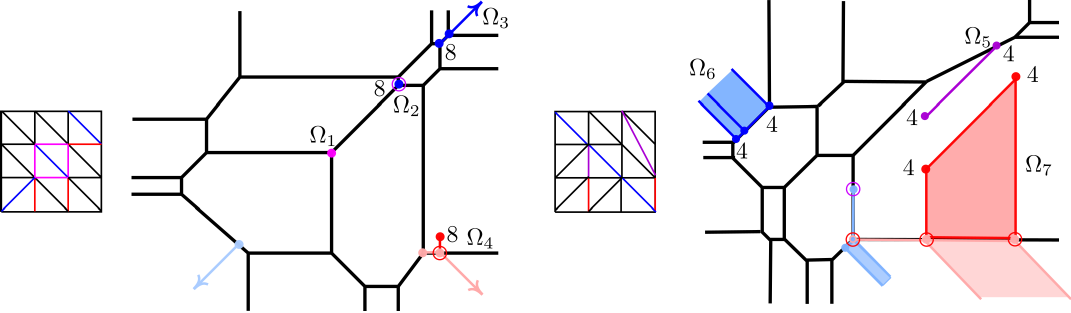}
      \caption{Examples realizing the tuples of dimensions with first entry either 0 or 1.\label{fig:dim01Examples}}
    \end{figure}
    
  The remainder of this section is devoted to the proof of~\autoref{thm:dimComparison}. We start by providing examples of tritangents with tuples of dimensions  $(\dim \bclassns, \dim \bclass, \dim \tclass)$ realizing all values listed in the statement, grouped according to the value of $\dim \bclassns$. We use the same conventions from~\autoref{ex:realizableLifts} to record the relevant tritangent classes associated in all four examples below.
  
  \begin{example}\label{ex:0etc}
    Consider the curve $\Gamma$ and the four tritangent classes $\tclass_1$, $\tclass_2$, $\tclass_3$ and $\tclass_4$ depicted in the left of~\autoref{fig:dim01Examples}. The associated bounded non-special complexes are $0$-dimensional, so~\autoref{thm:partitionDim0} ensures the lifting partition of all of them  equals $(0,0,0,1)$.

    The combinatorial type of $\Gamma$ and the lengths of all edges in its skeleton ensures that, $(\tclass_i)^b_{ns} = \bclass_i$  for $i\in \{1,2,3\}$, whereas $\bclass_4$ is supported on a segment with endpoint $|(\Omega_4)^b_{ns}|$. We identify $|\bclass_4|$ with the vertical red segment recording the location of the vertex $v_1$ paired with the pink horizontal one adjacent to it recording the position of $v_0$). In turn, $\tclass_1=\bclass_1$, whereas $\tclass_2$ is obtained from $\bclass_2$ by adding a ray with endpoint $|\bclass_2|$  recording the position of the vertex $v_0$, seen as a light-blue slope one ray in the figure. Similarly,  the complex $\tclass_3$ is 2-dimensional, and its support is  the cone with apex $|\bclass_3|$, generated by two rays with 
    directions $(-1,-1,0)$ and $(0,0,1)$, respectively.
    
Finally, $\tclass_4$ is obtained from $\bclass_4$ by attaching a ray with direction $(-1,-1,-1)$ along a vertex of this complex. 
It follows from this analysis that the tuple of dimensions for each $\tclass_i$ equals $(0,0,0)$, $(0,0,1)$, $(0,0,2)$  and $(0,1,1)$,  for $i=1,2,3,4$, respectively.
    \end{example}

  \begin{example}\label{ex:1etc} We consider the curve $\Gamma$ seen on the left of~\autoref{fig:examplesLift} and the tritangent $\tclass_1$. We have $(\Omega_1)^b_{ns}=\bclass_1$, and $|\tclass_1|$ is obtained from $|\bclass_1|$ by attaching a flap with direction $(-1,-1,1)$ along the edge of $\bclass_1$ recording those members with lower vertex $v_0$ lying in the unique slope $-1/3$ edge of $\Gamma$. Thus, the tuple of dimensions for this class equals $(1,1,2)$.

    Examples realizing the remaining tuples of dimensions with first entry $1$ can be found on the right of~\autoref{fig:dim01Examples}: they correspond to the tritangent classes $\tclass_5$, $\tclass_6$ and $\tclass_7$ to a tropical curve, which we label $\Gamma'$. In what follows, we show that the tuple of dimensions for each of them equals $(1,1,1)$, $(1,1,3)$ and $(1,2,2)$, respectively.

    By construction, we have $(\Omega_5)^b_{ns} = \bclass_5= \tclass_5$. Similarly we have that $(\Omega_6)^b_{ns} = \bclass_6$. All members in the support of $\bclass_6$ are trivalent, with an edge of slope $-1$. The position of the top-vertex of all its members is restricted to the slope 1 edge in the boundary of $(0,3)^{\vee}$, subdivided into two consecutive segments, while the  lower vertices lie in adjacent segments in the boundary of $(2,1)^{\vee}$ that are adjacent to a positive horizontal leg of $\Gamma'$. The support of the complex $\tclass_6$ is obtained from this by adding  a flap with direction $(0,0,-1)$ attached along one of the two segments in $\bclass_6$ and a cone with directions $(0,0,-1)$ and $(-1,-1,0)$ along the remaining one.

Finally, we describe $\tclass_7$ and its two subcomplexes. The set $(\Omega_7)^b_{ns}$ is a segment. All members in its support are trivalent, with a slope one edge. In addition, the  lower vertex $v_0$ of each of them agrees with the vertex of $\Gamma'$ dual to the triangle with vertices $(3,0)$, $(2,1)$ and $(3,1)$ in the Newton subdivision of $\sextic$.  In turn, the support of the complex $\bclass_7$ can be identified with the trapezoid seen in the picture, which records the location of the top vertex $v_1$ of each member. The segment supporting $(\Omega_7)^b_{ns}$ becomes the slope one edge in the boundary of this polytope. Finally, the support of the complex $\tclass_7$ is obtained by attaching to $|\tclass_7|$ a flap with direction $(1,-1,-1)$ along the unique edge of corresponding to the $4$-valent members in its support.
  \end{example}

      \begin{example}\label{ex:23etc} Consider the two tropical $(3,3)$-curves seen on each side of~\autoref{fig:examplesLift} and the tritangent classes labeled $\tclass_2$, $\tclass_3$ and $\tclass_4$. In all three cases, $(\Omega_i)^b_{ns} = \bclass_i$ for $i=2,3,4$,  whereas $\tclass_i=\bclass_i$ for $i=3,4$ and $|\tclass_2|$ is obtained from $|\bclass_2|$ by attaching a flap with direction $(-1,-1,1)$ along the edge of $|\bclass_2|$ corresponding to those trivalent members of with slope one edge and lower vertex  $v_0$ located on the unique edge of $\Gamma$ with slope $-3$. Thus, the tuple of dimensions for each $\tclass_i$ equal $(2,2,3)$, $(2,2,2)$ and $(3,3,3)$, for $i=2,3,4$, respectively.
  \end{example}

      \begin{remark} The examples seen on the left~\autoref{fig:examplesLift} witness a more general fact. Let $\Lambda$ be  tritangent containing a  (4b) tangency and let $\eta$ be the tuple of  tuple of dimensions for the tritangent class $\tclass$ containing $\Lambda$. If $\Lambda$ contains a (1a) or a (2a) tangency, then $\eta =(2,2,3)$. In turn, if it contains a (3c) tangency, then $\eta=(1,1,2)$ or $(1,2,2)$. As~\autoref{fig:specialConfigurations} (I) confirms, the latter occurs if, and only if,  $|\tclass|$ has a member with a type  (6b') tangency complemented by a (3c) one.
      \end{remark}
      
      \begin{proof}[Proof of~\autoref{thm:dimComparison}] We fix a curve $\Gamma$ and a tritangent class $\tclass$ on it. We let $\eta = (\eta_1,\eta_2, \eta_3)$ be the tuple of dimensions associated to $\tclass$. We argue by contradiction, and assume $\eta$ is outside the set of 11 tuples listed in the statement.
        This fact combined with~\eqref{eq:dimComparison} forces  $\eta$ to be on the set
        \[
        \{ (0,0,3), (0,1,3), (0,2,3), (0,2,2), (0,1,2), (1,2,3), (2,3,3)\}.
        \]
          {Lemmas}~\ref{lm:topDimensionalBoundedProper} and~\ref{lm:ineqBoundedDimensions}   below discard the first four elements of this set. The next two cannot occur by 
          {Lemmas}~\ref{lm:no012} and~\ref{lm:no123}. The last case is voided by~\autoref{pr:dimSpecialCells}.
      \end{proof}
      
      The following result allows us to prove the first part of~\autoref{thm:dimComparison} by imposing restrictions on the tuples of dimensions with last entry value 3.

      \begin{lemma}\label{lm:topDimensionalBoundedProper} Let  $\tclass$ be a full-dimensional tritangent class of $\Gamma$. If $\dim \bclass <\dim \tclass$, then $\dim \bclassns\geq 1$. Furthermore, if equality holds, then the support of $\tclass$ contains a trivalent member with three type (1a) tangencies on its unique edge.
      \end{lemma}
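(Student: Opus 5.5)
Since $\dim\tclass=3$, I start from a $3$-dimensional cell $\sC$ of $\tclass$. By \autoref{pr:3dCells}, a general member $\Lambda\in\sC^{\circ}$ only has tangencies of types (1a), (1b) or (1'). After a small local move inside $\sC$ (\autoref{pr:localMovesTritangents}) I may take $\Lambda$ trivalent with a slope one edge, so all three tangencies are transverse multiplicity-two intersections in the relative interiors of edges or legs of $\Gamma$. Because $\dim\bclass<3$, $\sC$ is not a cell of $\bclass$. By \autoref{def:boundedComplex}, and since the genericity of $\Lambda$ excludes the non-transverse conditions defining $\tR^{\pm}_i$, $\sC$ must lie in one of the sets $\mathscr{U}_i^{\pm}$. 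Up to $\Dn{4}$-symmetry I take $\sC\in\mathscr{U}_1^{+}$, so $v_1\in(3,3)^{\vee}$. No tangency can then lie on the positive legs of $\Lambda$, since those legs meet $\Gamma$ only transversally with multiplicity one. All three (1a)/(1b) tangencies therefore sit on the negative legs or on the bounded edge of $\Lambda$.

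The next step is a bidegree count that splits these configurations into two cases. In the first, $v_0\notin(0,0)^{\vee}$ and some leg adjacent to $v_0$ carries a tangency. In the second, all three tangencies lie on the bounded edge of $\Lambda$, which is the configuration named in the statement. In both cases I apply the retraction $\rho$ from the proof of \autoref{cor:bcomplexDeformation}: I slide $v_1$ in the $(-1,-1)$ direction, preserving every tangency not on the edge, until $v_1$ meets $\partial(3,3)^{\vee}$. By \autoref{lm:unboundedtranslations} this movement stops at a tritangent whose top vertex carries a type (5a), (4a) or (6a) tangency, or a non-transverse diagonal one, together with the surviving (1a) tangencies. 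I then use \autoref{lm:unboundedtranslations}(iii) and the sets $\tR_1^{+}$ to continue along a further canonical path into $|\bclass|$.

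The core of the argument is to show that this path ends in a cell of $\bclassns$ of positive dimension, which gives $\dim\bclassns\ge 1$. In the first case at least one transverse (1a) tangency survives and is never forced to become special, because the two tangencies on a negative leg cannot both be of type (3a) (\autoref{lm:specialConfigPrecisions}). Keeping that (1a) tangency lets $v_0$ move along a segment within $|\bclass|$ while avoiding $\mathscr{S}$. Comparing with \autoref{tab:combClassificationGenericPerDim}, the resulting members then have $\dim\ge 2$, so in fact $\dim\bclassns\ge 2$. For the equality case $\dim\bclassns=1$, I argue the contrapositive: whenever some negative leg carries a tangency, the previous construction gives $\dim\bclassns\ge 2$. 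Hence equality forces all three (1a) tangencies onto the unique edge of $\Lambda$.

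I expect the main obstacle to be the second case, with three diagonal tangencies on the edge. Here the slides may create (3bb1)/(3bb2)- or (7)-type degenerations and reach $\bclass$ only at special or $0$-dimensional cells. I would treat it with \autoref{lm:1aAnd4aetcx2}, which places these members on a segment of $|\bclassns|$ isomorphic to an edge of $\Gamma$ dual to $\overline{(1,1)(2,2)}$. That yields $\dim\bclassns\ge1$ directly.
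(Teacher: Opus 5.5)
Your strategy matches the paper's. You take a generic member of a $3$-cell outside $\bclass$ and place it, up to $\Dn{4}$, with $v_1\in(3,3)^{\vee}$. You then slide the vertices toward each other, read off a positive-dimensional cell of $\bclassns$, and get the equality statement by contraposition. Your second case is fine. Three diagonal (1a) tangencies force the edges $\overline{(0,0)(1,1)}$, $\overline{(1,1)(2,2)}$ and $\overline{(2,2)(3,3)}$ into the subdivision. The slid member is one of the tritangents of \autoref{lm:1aAnd4aetcx2}(i), and part (ii) of that lemma gives the segment. The paper reaches the same conclusion from the local moves of the resulting (4a)--(1a)--(4a) member.

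The gap is in your first case. This is where the second assertion of the lemma is decided, because there you must prove $\dim\bclassns\geq 2$, not just $\geq 1$.
\begin{itemize}
\item The mechanism you give, that a surviving (1a) tangency ``lets $v_0$ move along a segment'', only produces a $1$-cell.
\item ``Comparing with \autoref{tab:combClassificationGenericPerDim}'' does not upgrade this. That table lists what generic members of top-dimensional cells must look like; it does not certify that a given member lies in a $2$-cell.
\item You skip the bidegree count that identifies the configurations. The negative legs at $v_0\in(i,j)^{\vee}$ meet $\Gamma$ with total multiplicities $i$ and $j$. Hence $(i,j)\in\{(2,0),(0,2),(2,2)\}$, with a single (1a) on each non-empty leg.
\item For $(2,0)$ and $(0,2)$, the edge of $\Lambda$ enters $(3,3)^{\vee}$ through the edge of $\Gamma$ dual to $\overline{(2,0)(3,3)}$ or $\overline{(0,2)(3,3)}$, with a single (1b) tangency. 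So your claim that all three tangencies have multiplicity two is false there. Sliding $v_1$ then ends at a (4b) tangency, which is not among the types you quote from \autoref{lm:unboundedtranslations}.
\item For $(2,2)$, the slide ends at a (4a) diagonal tangency on the edge dual to $\overline{(2,2)(3,3)}$.
\item The edge dual to $\overline{(2,3)(3,2)}$ is parallel to the edge of $\Lambda$, so your detour through $\tR_1^{+}$ never occurs.
\end{itemize}

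With these configurations in hand, the missing step is the local-move computation. By \autoref{lem:Type4And4pCells}(i), the (4a) or (4b) tangency at $v_1$ leaves a $2$-dimensional facet: $v_0$ is free in an open parallelogram and $v_1$ is determined on the edge. The (1a) leg tangencies impose no constraint, by \autoref{lm:OpenCellsTrivalent} with $k_0=0$. The resulting $2$-cell avoids $\mathscr{U}_i^{\pm}$, $\tR_i^{\pm}$ and $\mathscr{S}$: $v_0\notin\Gamma$, $v_1$ lies on an edge of slope $-1/3$, $-3$ or $-1$, and each leg carries at most one tangency. This is exactly how the paper treats its members $\Lambda_1$ and $\Lambda_2$; for $\Lambda_2$ it alternatively cites \autoref{cor:tritangentClasses}.
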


      \begin{proof} The bidegree of $\Gamma$ combined with 
        {Propositions}~\ref{pr:3dCells} and~\ref{pr:unbounded} ensure that, up to $\Dn{4}$-symmetry, $|\tclass|$ admits a trivalent member $\Lambda$ with one of three configurations depicted in~\autoref{fig:3optionsNonGeneric}. For each of them, we show that local moves of $\Lambda$ produce a member in a cell of $\bclassns$ of dimension at least one. We analyze each case separately, which we label from left to right as $\Lambda_1$, $\Lambda_2$ and $\Lambda_3$, respectively.

      \begin{figure}[t]
        \includegraphics[scale=0.35]{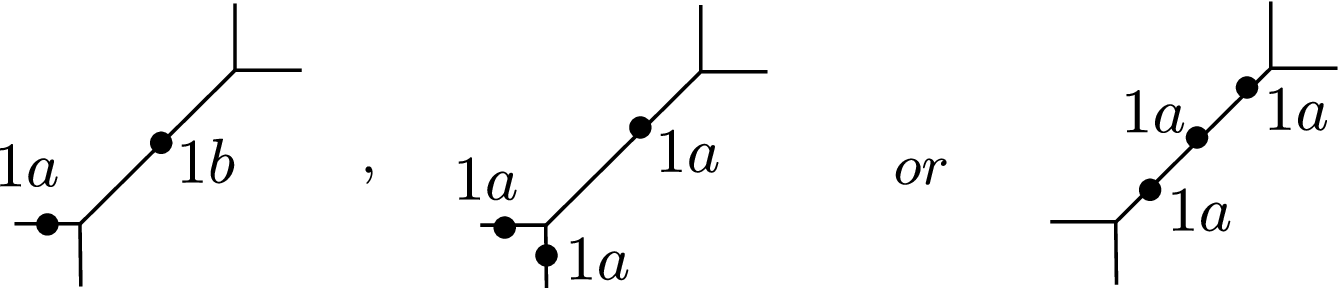}
        \caption{Possible members featured on $|\tclass|$ with $\dim \bclass < \dim \tclass = 3$.\label{fig:3optionsNonGeneric}}
      \end{figure}

        We start with $\Lambda_1$. By moving the vertex $v_1$ towards $v_0$, the type (1b) tangency becomes a (4b) one at $v_1$. The resulting tritangent lies in the support of a cell of $\bclassns$ of dimension two by 
        {Lemmas}~\ref{lm:OpenCellsTrivalent} and~\ref{lem:Type4And4pCells}, so $\dim \bclassns= 2$.

        In turn, moving the vertex $v_1$ of $\Lambda_2$ towards $v_0$ replaces the (1a) diagonal tangency by a (4a) one at $v_1$. The partial Newton subdivision of $\sextic$ imposed by this member is symmetric to the one seen in~\autoref{fig:type1-1Parallelogram}. \autoref{cor:tritangentClasses} confirms that $\bclassns$ contains a 2-dimensional polygon, so $\dim \bclassns =2$.

        Finally, moving both vertices of $\Lambda_3$ toward each other, produces a member with three diagonal tangencies: one of type (1a) and two of type (4a). Thus, $\dim \bclassns \geq 1$ by the same two lemmas used for $\Lambda_1$. Comparing this lower bound with the value of $\dim \bclassns$ obtained for $\Lambda_1$ and $\Lambda_2$ confirms the second part of the statement.        
      \end{proof}

      Our next lemma restricts the dimension of $\bclassns$ and $\bclass$ whenever their dimensions do not agree:

      \begin{lemma}\label{lm:ineqBoundedDimensions} 
        Let $\tclass$ be a tritangent class of $\Gamma$ with $\dim \bclassns <\dim \bclass$. Then,  $\dim \bclassns \leq 1$ and $\dim \bclass = \dim \bclassns +1$.
      \end{lemma}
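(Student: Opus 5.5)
Since $\bclassns\subseteq\bclass$ differ exactly by the special cells in $\mathscr{S}$, the hypothesis $\dim\bclassns<\dim\bclass$ forces every top-dimensional cell of $\bclass$ to be special. So the plan is to read off both inequalities from \autoref{pr:dimSpecialCells}, and then use the adjacency information of \autoref{lm:specialConfigurations} to bound the gap between the two dimensions.

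First, I will pick a cell $\sC$ of $\bclass$ with $\dim\sC=\dim\bclass$. Since $\dim\sC>\dim\bclassns$, the cell $\sC$ lies in $\mathscr{S}$ and is maximal there. \autoref{pr:dimSpecialCells} then gives $\dim\sC\in\{1,2\}$, so $\dim\bclass\leq 2$ and $\dim\bclassns\leq 1$. This settles the first claim.

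For the second claim, I need to exclude $\dim\bclass=2$ with $\dim\bclassns=0$. Suppose this occurs. By \autoref{thm:pathConnectednessOfNSComplexes}, $\bclassns$ is a single point. Let $(\Lambda,P,P',P'')$ be a member of a $2$-dimensional special cell, normalized as in \autoref{lm:specialConfigurations}. By \autoref{pr:dimSpecialCells} the complementary tangency $P''$ has type (1a). \autoref{lm:specialConfigurations} then supplies a neighbouring non-special cell $\sC'$ of $\bclass$ containing a member $\Lambda'$ in one of the four listed configurations with $P''$ of type (1a).

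The key step is to show that each of these four configurations admits a positive-dimensional local move inside $\bclassns$, which contradicts $\dim\bclassns=0$. I will check this using the local move descriptions in \autoref{pr:localMovesTritangents}, \autoref{lm:OpenCellsTrivalent} and \autoref{lem:Type4And4pCells}. In each configuration $P''$ is a transverse tangency on a leg, and the tangency at $P$ is of type (1a), (2a) or (3c). Hence $\Lambda'$ has at most one non-transverse tangency not at a vertex, so its cell has dimension at least one. The least flexible case is when $P$ has type (3c). This is exactly the third picture in the proof of \autoref{thm:comparingDimensions}, where one can still move $v_0$ upwards or away from $v_1$.

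This case-by-case verification is the main obstacle, because the argument in \autoref{thm:comparingDimensions} used generic edge lengths to force $P$ to be of type (1a). Here I would have to handle types (2a) and (3c) for $P$ directly. I would first try to show that the resulting cells are non-special by \autoref{lm:allOrNoMemberOfACellIsSpecial}, and that they lie in $\bclass$ by checking that none of the conditions in \autoref{pr:unbounded} and \autoref{lm:unboundedtranslations} apply.

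Finally, the remaining cases follow from the first claim. If $\dim\bclassns=1$, then $\dim\bclass\leq 2$ forces $\dim\bclass=2$. If $\dim\bclassns=0$, the previous step rules out $\dim\bclass=2$, so $\dim\bclass=1$. In both cases $\dim\bclass=\dim\bclassns+1$.
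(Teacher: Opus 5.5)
Your first step is correct and matches the paper. A top-dimensional cell of $\bclass$ must be special and hence maximal in $\mathscr{S}$, so \autoref{pr:dimSpecialCells} gives $\dim\bclass\leq 2$ and $\dim\bclassns\leq 1$. Everything then reduces to excluding $(\dim\bclassns,\dim\bclass)=(0,2)$.

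The gap is that this exclusion, which you yourself call the key step, is never carried out, and the justification you sketch does not work. Having at most one non-transverse tangency away from the vertices of $\Lambda'$ does not force a positive-dimensional cell. The local moves are cut out jointly by all three tangencies, and tangencies sitting at a vertex can pin it: by \autoref{lm:fixAVertex}(ii), a tangency of type (5a) or (4a)/(6a) diagonal at a vertex, together with a single non-transverse diagonal tangency, already fixes that vertex.

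Your appeal to the proof of \autoref{thm:comparingDimensions} does not help either. The third picture there was analysed with $P$ of type (1a). The moves quoted there, moving $v_0$ upwards or away from $v_1$, shift the negative horizontal leg off its horizontal line. They therefore destroy the tangency at $P$ exactly when $P$ has type (3c), which is the case you flag as the obstacle.

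The missing idea is to keep $v_0$ fixed and move $v_1$ instead. In a $2$-dimensional special cell, $P''$ has type (1a) and lies in the interior of the positive vertical leg, a leg adjacent to $v_1$. The other two tangencies are attached to $v_0$: on its legs, at $v_0$ itself, or on the edge near $v_0$. This persists for the non-special neighbours supplied by \autoref{lm:specialConfigurations}, since these are obtained by moving $v_0$ towards $P$.

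Now slide $v_1$ along the slope-one line through $v_0$, towards and away from $v_0$. This leaves the supporting line of the edge and everything attached to $v_0$ unchanged, and keeps $P''$ of type (1a), whatever the type of $P$. For small displacements the chambers containing the vertices and the tangency types do not change. So these members stay non-special and avoid the cells in \eqref{eq:unboundedCellsToToss} and \eqref{eq:secondSetToRemove}. This yields a segment in $|\bclassns|$, contradicting $\dim\bclassns=0$, and makes your case distinction on the type of $P$ unnecessary.

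This is exactly the paper's argument. It reads off from \autoref{fig:specialConfigurations} that $\dim\bclassns=0$ forces subdivision (III), fixes $v_0$ at $C$, and moves $v_1$ towards and away from $v_0$ inside $|\bclassns|$.
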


      \begin{proof} \autoref{pr:dimSpecialCells} combined with the dimension assumptions ensures that  $\dim \bclassns < \dim \bclass < 3$, so $\dim \bclassns \leq 1$. Thus, to confirm the second identity in the statement we must show that $(\dim \bclassns, \dim \bclass)= (0,2)$ cannot occur. We argue by contradiction.

        Let  $(\Lambda, P,P,P'')$ be a tritangent tuple corresponding to a generic member $\Lambda$ in the relative interior of a two-dimensional special cell in $\bclass$. Up to $\Dn{4}$-symmetry, its local moves within $|\bclass|$ are described in~\autoref{fig:specialConfigurations}.
        Following the notation given there, we see that the vertical tangency $P''$ must be of type (1a). In turn, the dimension of $\bclassns$ confirms that the partial Newton subdivision of $\sextic$ must be the one labeled (III) in the figure. However, the type of $P''$ confirms that $\dim \bclassns \geq 1$, since we can fix $v_0$ at $C$ and move $v_1$ towards and away from $v_0$ while remaining in $|\bclassns|$.         
      \end{proof}
      
      Our last  lemmas discard  $(0,1,2)$ and $(1,2,3)$ as valid tuple of dimensions of tritangent classes:
      
\begin{lemma}\label{lm:no012} No tritangent class can have $(0,1,2)$ as a tuple of dimensions.
      \end{lemma}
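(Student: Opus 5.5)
We argue by contradiction and suppose $\tclass$ has dimension tuple $(0,1,2)$. By \autoref{thm:pathConnectednessOfNSComplexes}, $|\bclassns|$ is then a single point, corresponding to a tritangent $\Lambda_0$. Since $\dim\bclass=1>\dim\bclassns$, some $1$-dimensional special cell $\sC$ of $\bclass$ contains $\Lambda_0$ in its boundary. \autoref{pr:dimSpecialCells} tells us that the complementing tangency of a generic member of $\sC$ is non-transverse, namely of type (3c). Up to $\Dn{4}$-symmetry we place a generic member $\Lambda$ of $\sC$ as in \autoref{fig:specialConfigurations}. Thus $\Lambda$ has a slope one edge, two horizontal tangencies $P,P'$ on its negative horizontal leg, with $P'$ of type (3a), and a type (3c) tangency $P''$ on its positive vertical leg. \autoref{lm:specialConfigurations} then says that the non-special neighbour $\Lambda'$ of $\sC$ is one of the last three configurations listed there. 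The task is to show that each of these either moves inside $|\bclassns|$, contradicting $\dim\bclassns=0$, or forces $\dim\tclass\neq 2$.

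\textbf{Step 1.} We list the tangency type of $P$ (either (1a), (2a) or (3c)) for each of the three possible $\Lambda'$. If $P$ has type (1a), then $\Lambda'$ inherits a free direction by \autoref{pr:localMovesTritangents}. The two types to track are a transverse point inside an edge and the (3c) tangency at $P''$ fixing only one direction. This free direction moves $\Lambda'$ within $|\bclassns|$, so $\dim\bclassns\ge 1$, a contradiction. This is the argument already used in the proof of \autoref{thm:comparingDimensions}.

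\textbf{Step 2.} This leaves the non-generic edge-length cases, where $P$ sits at a vertex of $\Gamma$ (type (2a)) or is itself of type (3c). In these cases $\Lambda'=\Lambda_0$ is rigid in $\bclassns$, and \autoref{lm:fixAVertex} classifies which configurations fix both vertices. We then bound $\dim\tclass$ by examining the unbounded directions at $\Lambda_0$ and along $\sC$ via \autoref{pr:unbounded} and \autoref{lm:unboundedtranslations}. Any cell of $\tclass\smallsetminus\bclass$ must contain a member with a vertex in a corner chamber, or a member as in \autoref{lm:unboundedtranslations}(iii). By \autoref{pr:localMovesTritangents}, the three fixing tangencies ((3c) at $P''$, (3a) at $P'$, and the rigid $P$) leave at most one unbounded direction. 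Such a direction is a ray from $\Lambda_0$, as in $\tclass_4$ of \autoref{ex:0etc}, or a flap along the edge $\sC$. The flap is only possible if members of $\sC^\circ$ admit a retraction direction of the form in \eqref{eq:unboundedCellsToToss} or \eqref{eq:secondSetToRemove}.

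\textbf{Step 3, the main obstacle.} We must rule out this flap. Every member of $\sC^\circ$ has all its vertices in the closure of the unbounded chamber $(3,1)^{\vee}$, which is not a corner chamber, by \autoref{lm:specialConfigPrecisions}(i). The type (3c) tangency on its positive vertical leg blocks vertical motion of $v_1$, and the (3a) tangency blocks vertical motion of $v_0$. Hence the only local moves are the horizontal ones that define $\sC$, and there is no $2$-dimensional cell of $\tclass$ containing $\sC$. At the point $\Lambda_0$, a $2$-dimensional unbounded cell would also need two independent unbounded directions, and \autoref{pr:unbounded} prevents this: with only one tangency free to move, at most one vertex can escape to a corner chamber. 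Therefore $\dim\tclass\le 1$, contradicting $\dim\tclass=2$. I expect the careful case-check of $\Lambda_0$'s configurations in Step 2 against \autoref{fig:specialConfigurations} (I)--(IV) to be the delicate part.
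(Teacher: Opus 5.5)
There is a genuine gap in Steps 2 and 3. You announce the case check on $\Lambda_0$ but never carry it out, and your Step 3 bound is not a valid substitute. The paper does carry it out. Among the last three configurations of \autoref{lm:specialConfigurations}, only one is compatible with $\dim\bclassns=0$: the neighbour with a type (5a) tangency at $v_0$ and $P$ of type (3c), i.e.\ subdivision (III) of \autoref{fig:specialConfigurations}. In that case $\bclass$ is just the segment $\sC$. Moreover, $|\tclass|$ is this segment plus a single ray of direction $(-1,-1,-1)$, attached at the endpoint of $\sC$ \emph{other} than $\Lambda_0$, which records the unique $4$-valent member of $|\bclass|$. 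This gives $\dim\tclass=1$ directly. Because you skip this step, you never know what $\Lambda_0$ looks like. The tangencies you count in Step 3 ((3c) at $P''$, (3a) at $P'$, rigid $P$) belong to members of $\sC^\circ$, not to $\Lambda_0$.

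Step 3 itself fails for the following reasons.
\begin{enumerate}[(a)]
\item It only inspects cells incident to $\Lambda_0$ and to $\sC^\circ$, so it misses the far endpoint of $\sC$, which is exactly where $\tclass$ leaves $\bclass$.
\item This is partly due to a misreading of the motion along $\sC$. A (3c) tangency on the positive vertical leg pins the $x$-coordinate of $v_1$, not its $y$-coordinate. So along $\sC$, $v_0$ slides horizontally while $v_1$ slides vertically, as in the proof of \autoref{pr:dimSpecialCells}; the moves are not ``only horizontal''. Moving $v_0$ away from $P$ shrinks the edge to length $0$, producing the $4$-valent endpoint where the ray starts.
\item The principle you use to exclude $2$-cells at $\Lambda_0$ is false. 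A $2$-dimensional cell outside $\bclass$ does not need two independent unbounded directions: flaps (a bounded edge times a ray) occur, e.g.\ in \autoref{ex:1etc} and \autoref{ex:23etc}. Cells removed via $\tR_i^{\pm}$ need not be unbounded at all.
\item Nothing in your argument excludes $2$-cells of $\tclass\smallsetminus\bclass$ attached to $1$-cells of $\tclass\smallsetminus\bclass$ rather than to $\bclass$ itself.
\end{enumerate}
So $\dim\tclass\le 1$ does not follow from your local considerations. What is missing is the explicit global description of $\tclass$ in the one surviving configuration.
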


\begin{proof} We argue by contradiction and assume that $\eta = (0,1,2)$ is the tuple of dimensions of a tritangent class $\tclass$ on a fixed curve $\Gamma$. Since $\dim \bclassns <\dim \bclass$, we know  that $\bclass$ contains a special cell of dimension one. We let $(\Lambda, P, P',P'')$ be a tritangent tuple corresponding to a member in the relative interior of such a cell. In the notation of~\autoref{fig:specialConfigurations}, we know that $P''$ has type (3c). Thus, the support of $\bclassns$ must include one of the last three members described in   \autoref{lm:specialConfigurations}.

  Among them, the only one that is compatible with the condition $\dim \bclassns = 0$ is the one with a type (5a) tangency at $v_0$, and a type (3c) tangency at $P$, corresponding to the partial Newton subdivision of $\sextic$  seen in~\autoref{fig:specialConfigurations} (III). In this situation, $\bclass$ is a segment, and $|\tclass|$ is obtained by attaching to its endpoint outside $|\bclassns|$ (recording the unique $4$-valent member of $|\bclass|$) a ray with direction $(-1,-1,-1)$. Thus, $\dim \tclass = 1$, which contradicts our choice of $\eta$.
      \end{proof}

        \begin{figure}[t]
          \includegraphics[scale=0.35]{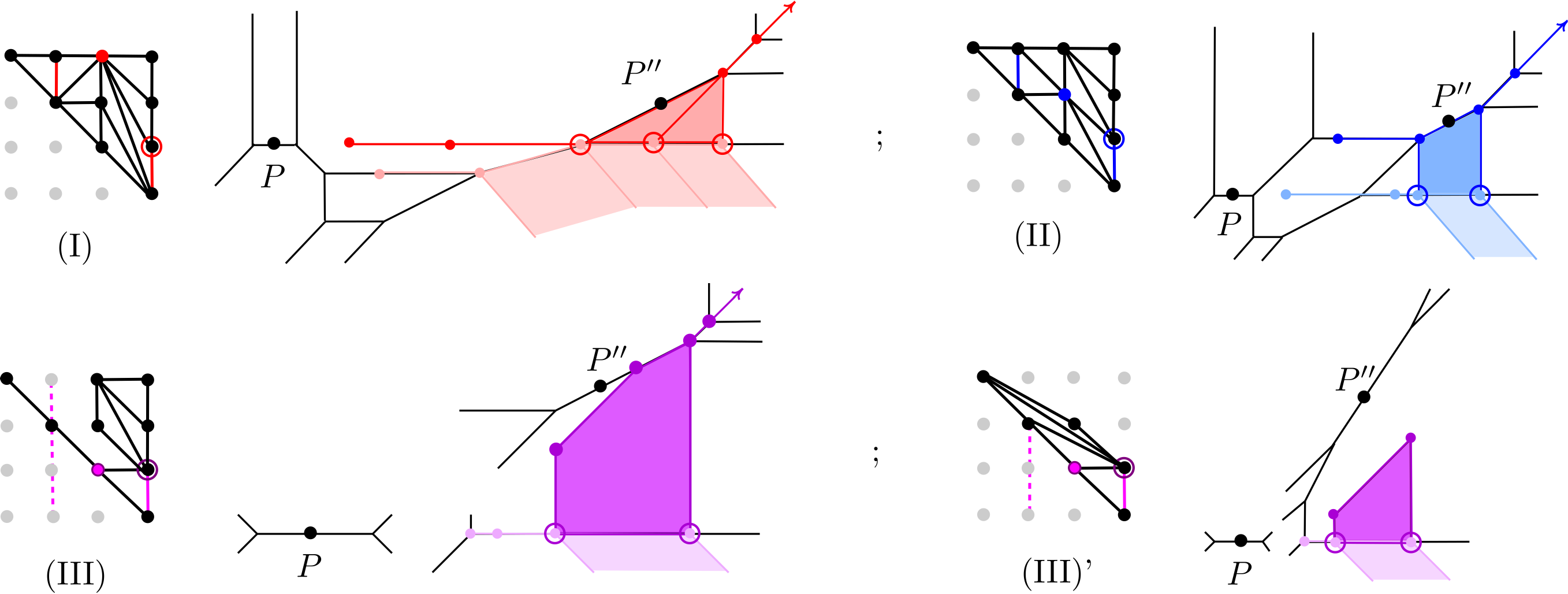}
          \caption{Partial Newton subdivisions of $\sextic$ when $\Gamma$ admits a tritangent class with dimension tuple $(1,2,3)$ and the corresponding classes for a tritangent tuple $(\Lambda, P,P',P'')$ in a top-dimensional special cell of the given class.   \label{fig:123NewtonSubdivision}}
                  \end{figure}

      \begin{lemma}\label{lm:no123} The tuple of dimensions for a tritangent class cannot have value $(1,2,3)$.
      \end{lemma}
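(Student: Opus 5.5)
We argue by contradiction, following the template of \autoref{lm:no012}. Suppose $\tclass$ has dimension tuple $(1,2,3)$. Since $\dim \bclassns<\dim\bclass=2$, \autoref{pr:dimSpecialCells} shows that $\bclass$ contains a $2$-dimensional special cell $\sC$. Pick a generic tuple $(\Lambda,P,P',P'')$ in its relative interior, normalized as in \autoref{fig:specialConfigurations}. Then $\Lambda$ is trivalent with a slope one edge. The tangencies $P'$ (type (3a)) and $P$ lie on the negative horizontal leg, and $P''$ lies on the positive vertical leg. Maximality of $\sC$ and $\dim\sC=2$ force $P''$ to have type (1a).

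The proof of \autoref{lm:ineqBoundedDimensions} allows us to discard the subdivisions (I), (II) and (IV). In those cases the neighbouring non-special cells produced by \autoref{lm:specialConfigurations} already have dimension $2$ or $3$, contradicting $\dim\bclassns=1$. Only subdivision (III) survives, together with the constraint that $P$ has type (3c) or (2a), so that the neighbouring non-special cell is $1$-dimensional. This pins down the partial Newton subdivision of $\sextic$ to the configurations recorded in \autoref{fig:123NewtonSubdivision}.

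Next we use $\dim\tclass=3$. By \autoref{pr:3dCells} and \autoref{pr:unbounded}, a $3$-dimensional cell of $\tclass$ has a generic member with three type (1a)/(1b) tangencies. This cell is unbounded or lies outside $\bclass$, since $\dim\bclass=2$. Up to symmetry, the member is one of $\Lambda_1,\Lambda_2,\Lambda_3$ from \autoref{fig:3optionsNonGeneric}.

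The core of the argument is to check that none of these three members is compatible with the subdivision forced by the special cell. For each $\Lambda_i$ we read off the cells of the Newton subdivision it imposes: the triangles dual to its vertex chambers and the edges carrying its (1a)/(1b) tangencies. Bidegree and convexity should show that these cells overlap incompatibly with those of \autoref{fig:123NewtonSubdivision}. For example, both configurations would need distinct edges through $(3,1)$ or $(2,0)$ with conflicting neighbours.

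If such a direct incompatibility fails for some $\Lambda_i$, we fall back on the local moves from the proof of \autoref{lm:topDimensionalBoundedProper}. Sliding the vertices of $\Lambda_1$ or $\Lambda_2$ produces a (4b) or (4a) member. That member lies in a $2$-dimensional cell of $\bclassns$ by \autoref{lem:Type4And4pCells} and \autoref{cor:tritangentClasses}, contradicting $\dim\bclassns=1$. The move for $\Lambda_3$ instead yields a member with three diagonal tangencies. We must show this member is incompatible with subdivision (III) or forces a $2$-dimensional non-special cell.

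The main obstacle is this final case analysis. It requires checking that the subdivisions imposed by $\Lambda_3$ and by the special cell in case (III) cannot coexist in a unimodular triangulation of the $3\times 3$ square. We would do this by comparing the slopes and labels of the dual edges adjacent to the boundary point $(3,1)$ (up to $\Dn{4}$).
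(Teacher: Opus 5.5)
Your setup matches the paper's. You correctly identify the $2$-dimensional special cell with $P''$ of type (1a). You also correctly use \autoref{lm:topDimensionalBoundedProper} (the equality case $\dim\bclassns=1$) to reduce the $3$-dimensional cell to the $\Lambda_3$ configuration, whose vertices can be pushed together to a member $\Lambda'$ with one (1a) and two (4a) diagonal tangencies.

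The gap is the step you yourself call the core. You plan to show that the subdivision data forced by $\Lambda_3$ (three edges on a main diagonal of the square) cannot coexist with the data forced by the special cell. This cannot work. \autoref{fig:123NewtonSubdivision} records exactly the partial subdivisions in which both sets of constraints hold at once: the three antidiagonal edges, $P$ of type (3c), and the vertex $v$ that forms a triangle with $(3,0),(3,1)$ also forming one with the vertical edge $P^{\vee}$. So there is no incompatibility in a unimodular triangulation to find. Your fallback, that this member forces a $2$-dimensional non-special cell, also does not follow from local information at $\Lambda_3$. The class $\tclass_6$ in \autoref{ex:1etc} has tuple $(1,1,3)$, so by \autoref{lm:topDimensionalBoundedProper} it contains a $\Lambda_3$-type member while $\dim\bclassns=1$.

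The missing idea is that the contradiction concerns which cells belong to the same tritangent class, not whether the Newton subdivision is realizable. The paper first uses $\Lambda'$ and the special tuple to pin the partial subdivision of $\sextic$ down to the finitely many configurations in \autoref{fig:123NewtonSubdivision}. It then determines the entire tritangent class of $(\Lambda,P,P',P'')$ in each configuration and finds $\dim\tclass=2$, which contradicts $\dim\tclass=3$.

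Two smaller points:
\begin{enumerate}
\item Your reduction to subdivision (III) borrows the argument of \autoref{lm:ineqBoundedDimensions}, which is written for $\dim\bclassns=0$. Your dimension counts $2,2,1,3$ for the neighbouring cells assume $P$ is of type (1a). With $\dim\bclassns=1$, what the paper extracts is that $P$ has type (3c), which is sharper than your ``(3c) or (2a)''.
\item \autoref{fig:123NewtonSubdivision} only becomes available after the antidiagonal edges coming from $\Lambda'$ are imposed. Citing it before you use $\dim\tclass=3$ is therefore premature.
\end{enumerate}
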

      \begin{proof} We argue by contradiction. Fix a curve $\Gamma$ and a tritangent class $\tclass$ with tuple of dimensions $\eta = (1,2,3)$.   By~\autoref{lm:topDimensionalBoundedProper} we know that the 
        complex $\bclassns$ contains a trivalent member $\Lambda'$ with three diagonal tangencies, one of type (1a) and two of type (4a). Thus, the Newton subdivision of $\sextic$ includes the three edges on one of the two main diagonals of the square of side length three.

        On the other hand, the inequality $\dim \bclassns <\dim \bclass=2$ confirms that $\bclass$ has a 2-dimensional cell with a special tritangent tuple $(\Lambda, P, P', P'')$ with $P''$ of type (1a) tangency, in the notation of~\autoref{fig:specialConfigurations}. In turn, one of the first four members seen in~\autoref{lm:specialConfigurations} must lie in the support of $\bclassns$. The dimension of $\bclassns$ confirms that $P$ has type (3c).

        These facts combined with the existence of $\Lambda'$ in $|\bclass|$ confirms that the three antidiagonal edges in the square of side length three are cells in  the Newton subdivision of $\sextic$. Letting  $v$ be the vertex of the subdivision forming a triangle with the vertices $(3,0)$ and $(3,1)$, we conclude that $v$ also forms a triangle with the vertical edge $P^{\vee}$. These two observations restrict the partial Newton subdivision of $\sextic$ to the ones depicted in~\autoref{fig:123NewtonSubdivision}. The picture shows the corresponding tritangent classes for each input $(\Lambda, P,P',P'')$ following our drawing conventions from~\autoref{ex:realizableLifts}. In all cases, we see that $\dim \tclass = 2$, which contradicts our choice of $\eta$.
              \end{proof}

\normalsize

  \bibliographystyle{plain}

  \vspace{2ex}
  

  \noindent
\textbf{\small{Authors' addresses:}}
\smallskip
\

\noindent
\small{M.A.\ Cueto,  Mathematics Department, The Ohio State University, 231 W 18th Ave, Columbus, OH 43210, USA.
\\
\noindent \emph{Email address:} \texttt{cueto.5@osu.edu}}
\vspace{2ex}

\noindent
\small{H.~Markwig, Eberhard Karls Universit\"at T\"ubingen, Fachbereich Mathematik, Auf der Morgenstelle 10, 72108 T\"ubingen, Germany.
  \\
  \noindent \emph{Email address:} \texttt{hannah@math.uni-tuebingen.de}}
\vspace{2ex}

\noindent
\small{Y.~Ren, Mathematical Sciences \& Computer Science Building, Upper Mountjoy Campus, Stockton Road, Durham University, DH1 3LE, UK.
 \\
  \noindent \emph{Email address:} \texttt{yue.ren2@durham.ac.uk}}

\end{document}